\documentclass[11pt]{amsart}

\usepackage[margin=0.9in]{geometry}
\usepackage{amsfonts}
\usepackage{amsmath}
\usepackage{amssymb}
\usepackage{mathtools}
\usepackage{booktabs,longtable,array}
\usepackage{enumitem}
\usepackage{hyperref}
\usepackage[mathscr]{eucal}
\newcommand{\os}{\mathrm{os}}
\newcommand{\cyc}{\mathrm{cyc}}
\newcommand{\sub}{\mathrm{sub}}

\DeclareMathOperator{\lcm}{lcm}
\DeclareMathOperator{\Rad}{Rad}
\DeclareMathOperator{\Soc}{Soc}
\DeclareMathOperator{\Aut}{Aut}
\DeclareMathOperator{\Out}{Out}
\DeclareMathOperator{\PSL}{PSL}
\DeclareMathOperator{\PSU}{PSU}
\DeclareMathOperator{\PSp}{PSp}
\newcommand{\POmega}{\mathrm{P}\Omega}
\DeclareMathOperator{\Sp}{Sp}

\newtheorem{theorem}{Theorem}[section]
\newtheorem{proposition}[theorem]{Proposition}
\newtheorem{corollary}[theorem]{Corollary}
\newtheorem{question}[theorem]{Question}
\newtheorem{remark}[theorem]{Remark}
\newtheorem{lemma}[theorem]{Lemma}

\title{Group Structure from Subgroup and Cyclic Subgroup Counts}

\author{Angsuman Das}
\address{Department of Mathematics, Presidency University, Kolkata, India}
\email{angsuman.maths@presiuniv.ac.in}

\author{Hiranya Kishore Dey}
\address{Department of Mathematics, Indian Institute of Technology, Jammu, India}
\email{hiranya.dey@iitjammu.ac.in}

\author{Cesar Galindo}
\address{Departamento de Matem\'aticas, Universidad de los Andes, Bogot\'a, D.C. 111711, Colombia}
\email{cn.galindo1116@uniandes.edu.co}

\author{Khyati Sharma}
\address{Department of Mathematical Sciences, Indian Institute of Science Education and Research, Berhampur, India}
\email{khyatisharma0907@gmail.com}

\subjclass[2020]{Primary 20D10; Secondary 20D15, 20D60, 20E07}

\keywords{subgroup counts; cyclic subgroups; finite groups; \(ZM\)-groups; metacyclic groups; solvable groups}

\begin{document}

\begin{abstract}
For a finite group \(G\), let \(\sub(G)\) be the number of subgroups of \(G\),
let \(\cyc(G)\) be the number of cyclic subgroups, and let \(\pi(G)\) be the
number of distinct prime divisors of \(|G|\).  We study the normalized counts
\(\lambda(G)=\sub(G)/2^{\pi(G)}\) and
\(\eta(G)=\cyc(G)/2^{\pi(G)}\).  We prove that \(\eta(G)<5/4\) or
\(\lambda(G)<3/2\) implies that \(G\) is cyclic of squarefree order.
The inequalities \(\eta(G)<2\) and \(\lambda(G)<5/2\) each force all Sylow
subgroups to be cyclic, and hence imply metacyclicity.  For the subgroup count,
we give an exact arithmetic criterion in the parameters of the corresponding
\(ZM\)-presentation.  We determine all values with \(1<\eta(G)<2\) and
\(1<\lambda(G)<5/2\), and prove that \(\lambda(G)<59/8\) or \(\eta(G)<4\)
implies solvability.  Both solvability bounds are sharp.  We also describe how
cyclic direct factors of coprime order affect the two normalized counts.
\end{abstract}

\maketitle
\setcounter{tocdepth}{1}
\tableofcontents

\section{Introduction}
\label{sec:intro}
Several numerical invariants have been used to study the structure of finite
groups, including commuting probability
\cite{eberhard, guralnick-robinson, gustafson, neumann-com}, sums and averages
of element orders
\cite{asad-joa, her-joa, khukhro-joa-2021, tarnauceanu-israel}, and order
sequences \cite{cameron-dey}.

We study two subgroup-counting invariants:
\[
        \sub(G)=|\{H\le G\}|,
        \qquad
        \cyc(G)=|\{H\le G:H\text{ is cyclic}\}|.
\]
Here \(H\le G\) means that \(H\) is a subgroup of \(G\).  For a positive
integer \(n\), let \(d(n)\) denote the number of positive divisors of \(n\), and
let \(\pi(n)\) denote the number of distinct prime divisors of \(n\).  If \(G\)
is finite, we put \(\pi(G)=\pi(|G|)\).  We use \(\pi(1)=0\),
\(d(1)=1\), and the convention that empty products equal one.
In products of the form \(\prod_{p\mid n}\), the index runs over prime
divisors only.
The subgroup counts grow with the number of distinct prime divisors of
\(|G|\).  Indeed, Theorem~\ref{thm:Richards} gives
\[
        \cyc(G)\ge d(|G|)\ge 2^{\pi(G)}
\]
for every finite group \(G\).  Thus the cyclic groups of squarefree order have
the smallest possible values of both \(\sub(G)\) and \(\cyc(G)\), once we
normalize by \(2^{\pi(G)}\).

We therefore define
\[
        \lambda(G)=\frac{\sub(G)}{2^{\pi(G)}} ,
        \qquad
        \eta(G)=\frac{\cyc(G)}{2^{\pi(G)}} .
\]
Thus \(\lambda(1)=\eta(1)=1\).  They satisfy
\[
        \lambda(G)\ge1,\qquad \eta(G)\ge1,
\]
and cyclic groups of squarefree order have \(\lambda=\eta=1\).  This
normalization also respects coprime direct products: if \(p\nmid |G|\), then direct
product with \(C_p\) doubles both \(\sub(G)\) and \(\cyc(G)\), while also
increasing \(\pi(G)\) by one.  Hence \(\lambda(G)\) and \(\eta(G)\) are
unchanged by adjoining a cyclic factor of new prime order.

Our first results give gaps above the minimum value \(1\).  If
\(\eta(G)<5/4\), or if \(\lambda(G)<3/2\), then the corresponding normalized
count is already equal to \(1\), and \(G\) is cyclic of squarefree order.  

Each of the inequalities
\(\eta(G)<2\) and \(\lambda(G)<5/2\) forces all Sylow subgroups of \(G\)
to be cyclic.  Hence \(G\) is metacyclic and has a \(ZM(m,n,r)\)-presentation,
recalled in Section~\ref{sec:preli}.  The subgroup-count formula for these
groups gives an exact arithmetic characterization of the parameters satisfying
\(\lambda(G)<5/2\).  These structural reductions also determine the first
nontrivial normalized values:
\[
        1<\eta(G)<2,
        \qquad
        1<\lambda(G)<\frac52.
\]
They are
\[
        \frac54,\frac32,\frac74,\frac{15}{8}
        \quad\text{for }\eta,
        \qquad
        \frac32,2,\frac94
        \quad\text{for }\lambda.
\]

We also prove that each of the following inequalities implies solvability:
\[
        \eta(G)<4,\qquad \lambda(G)<\frac{59}{8}.
\]
Both constants are sharp.  The subgroup-count theorem is proved by reducing to
almost simple groups and using existing results on groups with few subgroups.
For the cyclic-subgroup theorem, we prove a lower bound for finite nonabelian
simple groups, treating each family separately.

The main conclusions are summarized in Table~\ref{tab:main-results}.

\begin{table}[ht]
    \centering
    \begingroup
    \renewcommand{\arraystretch}{1.0}
    \begin{tabular}{||c|c|>{\raggedright\arraybackslash}p{0.72\textwidth}||}
    \hline \hline
    Invariant & Threshold & Conclusion \\
    \hline
    \(\eta\) & \(<5/4\) &
    \(\eta(G)=1\); \(G\) is cyclic of squarefree order (Theorem~\ref{thm:nilpotency-criterion}).\\
    \(\eta\) & \(<2\) &
    \(G\) has cyclic Sylow subgroups and is metacyclic; the values in \((1,2)\) are listed in Theorem~\ref{thm:small-eta-values}.\\
    \(\eta\) & \(<4\) &
    \(G\) is solvable (Theorem~\ref{thm:main}).\\
    \hline
    \(\lambda\) & \(<3/2\) &
    \(\lambda(G)=1\); \(G\) is cyclic of squarefree order (Theorem~\ref{thm:nilpotency-criterion-Sub}).\\
    \(\lambda\) & \(<5/2\) &
    \(G\) is a \(ZM(m,n,r)\)-group satisfying an explicit parameter inequality (Corollary~\ref{cor:sub-bound-zm-classification}).\\
    \(\lambda\) & \(<59/8\) &
    \(G\) is solvable (Theorem~\ref{thm:solvable}).\\
    \hline \hline
    \end{tabular}
    \endgroup
    \caption{Normalized threshold results}
    \label{tab:main-results}
\end{table}

The groups \(S_3\), \(A_4\), and \(A_5\) show that these inequalities
must be strict.  For \(S_3\), we have
\[
        \eta(S_3)=\frac54,\qquad \lambda(S_3)=\frac32.
\]
The group \(A_4\) lies on the next boundary:
\[
        \eta(A_4)=2,\qquad \lambda(A_4)=\frac52,
\]
and \(A_5\) lies on both solvability boundaries:
\[
        \eta(A_5)=4,\qquad \lambda(A_5)=\frac{59}{8}.
\]
Multiplying these examples by cyclic groups of new prime order preserves the
corresponding normalized value.

The first-gap and metacyclicity proofs use induction, order-sequence comparisons,
and direct subgroup counts.  For the solvability theorem for \(\lambda\),
we reduce to a list of almost simple groups and use the subgroup-count theorem
of Das--Mandal \cite[arXiv version, Theorem~2.1]{das-com} when the group order
has exactly three distinct prime divisors.  For \(\eta\), the main input is a lower bound for
the number of cyclic subgroups of every finite nonabelian simple group.  The
estimate needed for the reduction is slightly stronger than \(\eta(S)\ge4\):
we prove
\[
        \cyc(S)\ge 2^{\pi(\Aut(S))+2}
\]
for every finite nonabelian simple group \(S\).

To study the groups below the solvability bounds, we separate cyclic direct
factors of coprime order.  The contribution of such a factor \(C\) to either
normalized count is
\[
        \delta(C)=\frac{d(|C|)}{2^{\pi(C)}}.
\]
After separating such a factor, the subgroup-count condition becomes
\[
        \lambda(S)\delta(C)<\frac{59}{8};
\]
and the cyclic-subgroup condition becomes
\[
        \eta(S)\delta(C)<4.
\]
These formulas reduce the problem to groups without coprime cyclic direct
factors and specify which cyclic factors can be adjoined without exceeding
the bounds.

The paper is organized as follows.  In Section~\ref{sec:preli}, we collect the
preliminaries used throughout the paper.  Section~\ref{sec:nilpo} proves the
first gaps for \(\eta\) and \(\lambda\), isolating the cyclic groups of
squarefree order.  Section~\ref{sec:sup-solv} treats the next thresholds and
culminates in the arithmetic characterization of \(ZM\)-groups below
\(\lambda<5/2\).  Section~\ref{sec:solv} proves the solvability theorem for
\(\lambda<59/8\), and Section~\ref{sec:sub-threshold-structure} records the
corresponding decomposition by coprime cyclic direct factors.
Section~\ref{sec:cyc-solv} states the solvability theorem for \(\eta<4\)
and the required lower bound for finite nonabelian simple groups.
Section~\ref{sec:from-simple} assembles the simple-group estimates and proves
the solvability theorem.  Section~\ref{sec:cyc-threshold-structure} records the
parallel decomposition below \(\eta<4\).  The detailed auxiliary estimates
and Lie-type notation are collected in Appendix~\ref{app:auxiliary-estimates}.
Appendices~\ref{sec:alternating}--\ref{sec:remaining-cases} treat the alternating
groups, the groups \(\PSL(2,q)\), the remaining Lie-type groups, and the
residual finite cases, including the sporadic groups.  The GAP verification
code and complete reference output are provided in Online Resource~1.

\section{Preliminaries}
\label{sec:preli}

We collect the counting results needed below and use standard finite-group terminology
as in \cite{Isac-ams, Scott}.  We write \(C_n\) for the cyclic group of order
\(n\), \(S_n\) for the symmetric group on \(n\) letters, and \(o(g)\) for the
order of an element \(g\).  We denote Euler's totient function by \(\phi\)
and the product of the distinct prime divisors of \(n\) by
\(\operatorname{rad}(n)\).
The following result of Richards \cite{richards}
says that, among groups of a fixed order \(n\), the cyclic group has the
smallest number of cyclic subgroups.

\begin{theorem}[Richards]
\label{thm:Richards}
Let \(G\) be a group of order \(n\).  Then
\[
        \cyc(G)\ge \cyc(C_n),
\]
with equality if and only if \(G\cong C_n\).
\end{theorem}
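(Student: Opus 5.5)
The plan is to count cyclic subgroups through their generators.  Every cyclic subgroup \(H\le G\) of order \(k\) has exactly \(\phi(k)\) generators, and every element of \(G\) generates exactly one cyclic subgroup.  Writing \(a_k(G)\) for the number of elements of order \(k\) in \(G\), this gives
\[
        \cyc(G)=\sum_{k\mid n}\frac{a_k(G)}{\phi(k)} .
\]
For \(C_n\) we have \(a_k(C_n)=\phi(k)\) for every \(k\mid n\), so \(\cyc(C_n)=d(n)\).  It therefore suffices to show \(\cyc(G)\ge d(n)\) for every group of order \(n\), with equality only when \(G\) is cyclic.

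The key step is to show that each divisor \(k\mid n\) yields a distinct cyclic subgroup of \(G\) in an injective way.  My first attempt would be to compare the element-order distributions of \(G\) and \(C_n\) through Frobenius' theorem: for each \(k\mid n\), the number of solutions of \(x^k=1\) in \(G\) is a multiple of \(k\), and it is at least \(k\) because the identity is a solution.  Thus
\[
        \sum_{j\mid k} a_j(G)\ \ge\ k=\sum_{j\mid k}\phi(j)\qquad\text{for all } k\mid n .
\]
The weights \(1/\phi(j)\) are not monotone in the divisor lattice, however, so these cumulative inequalities do not immediately give \(\sum_k a_k/\phi(k)\ge\sum_k 1\).

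I therefore expect to pass to a cleaner combinatorial argument.  Group the elements of \(G\) by the cyclic subgroup they generate; a cyclic subgroup of order \(k\) contains exactly one cyclic subgroup of each order dividing \(k\).  Next, define \(c_k\) to be the number of cyclic subgroups of order \(k\).  Frobenius' count of solutions of \(x^k=1\) gives \(\sum_{j\mid k}\phi(j)c_j\equiv 0 \pmod k\), with this sum at least \(k\).  I would then prove by induction on \(k\), along the divisor lattice, that \(c_k\ge1\) whenever \(k\mid n\) is ``forced''.  This step is the main obstacle, because it is false in general: \(Q_8\) has no element of order \(8\).  So the injection from divisors to subgroups cannot be by order, and the argument must be more global.

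The route I would actually commit to is induction on \(n\) via the Sylow structure.  For \(p\)-groups, a group of order \(p^a\) has at least one subgroup of each order \(p^j\), and a noncyclic \(p\)-group has more than one subgroup of order \(p\), or is generalized quaternion.  Counting cyclic subgroups of a \(p\)-group by layers, one should obtain at least \(a+1=d(p^a)\), with equality only for cyclic groups; the generalized quaternion case has several cyclic subgroups of order \(4\).  For general \(n\), I would take a Sylow \(p\)-subgroup \(P\) and use an injection \(H\mapsto\) (its \(p\)-part, its \(p'\)-part) together with induction on a \(p'\)-Hall-like piece.  Since Hall subgroups need not exist, I would instead use Frobenius' theorem in the form above to compare against \(C_n\).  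In the end, I expect to rely on citing Richards \cite{richards}, since the paper attributes the result to him, with the \(p\)-group case as the honest core.
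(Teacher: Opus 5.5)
\textbf{There is a genuine gap: your proposal never proves the theorem for general $n$.} The paper does not prove this statement either. It quotes it from Richards, and only remarks that Amiri's theorem that $\os(C_n)$ strongly dominates $\os(G)$, together with Lemma~\ref{lem:strong-dom-implies-less-cyc}, gives another proof. So your closing fallback of citing Richards coincides with the paper.

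The route you say you would commit to fails as written. Splitting a cyclic subgroup into its $p$-part and $p'$-part gives nothing inductive without a Hall $p'$-subgroup, and such subgroups need not exist: $A_5$ has no subgroup of order $15$ or $20$. The substitute you then name is the Frobenius comparison you had just set aside. Even the $p$-group case is only asserted. The existence of subgroups of every order $p^j$ does not give cyclic ones. A layer-by-layer count would have to trade missing top layers (small exponent) against extra cyclic subgroups lower down, and that bookkeeping is never supplied.

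The missing idea is that the Frobenius route you abandoned does finish the proof. Your reason for dropping it is also mistaken: $1/\phi$ is monotone along divisibility, since $j\mid k$ implies $\phi(j)\mid\phi(k)$. What actually matters is the sign of the M\"obius-inverted weights.

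Put $s_j(G)=|\{x\in G:x^j=1\}|$ and let $\mu$ be the M\"obius function. Then $a_k(G)=\sum_{j\mid k}\mu(k/j)s_j(G)$, hence
\[
        \cyc(G)=\sum_{k\mid n}\frac{a_k(G)}{\phi(k)}=\sum_{j\mid n}w_j\,s_j(G),
        \qquad
        w_j=\sum_{m\mid n/j}\frac{\mu(m)}{\phi(jm)} .
\]
Write $p^{a}\Vert n$ and $p^{b}\Vert j$. Then $w_j$ is a product over $p\mid n$ of local factors, and each factor is nonnegative:
\begin{itemize}
\item $1/\phi(p^{a})$ if $b=a$;
\item $1/p^{b}$ if $1\le b<a$;
\item $(p-2)/(p-1)$ if $b=0$.
\end{itemize}
Frobenius gives $s_j(G)\ge j=s_j(C_n)$ for every $j\mid n$. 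Therefore $\cyc(G)\ge\sum_{j\mid n}w_j\,j=\cyc(C_n)=d(n)$.

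For the equality case, note that $w_j>0$ unless $n$ is even and $j$ is odd. So equality forces $s_j(G)=j$ for every other divisor $j$. Now take $n$ even and $j\mid n$ odd. An involution lies in $\{x:x^{2j}=1\}$ but not in $\{x:x^j=1\}$, so $j\le s_j(G)<s_{2j}(G)=2j$. Frobenius then forces $s_j(G)=j$ here too. Consequently $a_n(G)=\sum_{j\mid n}\mu(n/j)\,j=\phi(n)>0$, so $G$ has an element of order $n$ and is cyclic.
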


The following lemma from \cite{Garonzi-lima} connects the orders of the
elements of \(G\) with the number of cyclic subgroups.

\begin{lemma}
\label{lem:cyclic-subgroup-number-in-terms-order}
Let \(G\) be a finite group.  Then
\[
        \cyc(G)=\sum_{g\in G}\frac{1}{\phi(o(g))}.
\]
\end{lemma}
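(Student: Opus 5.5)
The plan is to partition \(G\) according to the cyclic subgroups that elements generate. Every \(g\in G\) generates exactly one cyclic subgroup, namely \(\langle g\rangle\). So
\[
        G=\bigsqcup_{\substack{C\le G\\ C\text{ cyclic}}}\{g\in G:\langle g\rangle=C\},
\]
and this union is disjoint. This decomposition is the only structural input; the rest is bookkeeping.

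Next, for a fixed cyclic subgroup \(C\) of order \(m\), I identify the block \(\{g:\langle g\rangle=C\}\). It is exactly the set of generators of \(C\), and it has \(\phi(m)\) elements. Every \(g\) in this block satisfies \(o(g)=m\). Hence each summand \(1/\phi(o(g))\) in the block equals \(1/\phi(m)\), and the block contributes
\[
        \sum_{\langle g\rangle=C}\frac{1}{\phi(o(g))}=\phi(m)\cdot\frac{1}{\phi(m)}=1 .
\]

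Finally, I sum over the blocks of the partition. Since each cyclic subgroup contributes exactly \(1\),
\[
        \sum_{g\in G}\frac{1}{\phi(o(g))}=\sum_{\substack{C\le G\\ C\text{ cyclic}}}1=\cyc(G).
\]

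There is no genuine obstacle in this argument. The only point that needs care is the bijection between the generators of a cyclic group of order \(m\) and the units modulo \(m\), which gives the count \(\phi(m)\). The degenerate case also works: the trivial subgroup is generated only by the identity, and \(\phi(1)=1\), so it contributes \(1\) as required.
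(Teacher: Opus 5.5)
Your argument is correct and complete. Partitioning \(G\) by the cyclic subgroup \(\langle g\rangle\) that each element generates, so that each block consists of the \(\phi(|C|)\) generators of \(C\) and contributes exactly \(1\), is the standard justification of this identity. The paper gives no proof of its own here and simply quotes the lemma from Garonzi--Lima, so your write-up supplies the standard argument that the citation relies on.
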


\begin{theorem}
\label{thm:product-cyclic-grps}
Let \(G\) and \(H\) be finite groups.  Then
\[
        \cyc(G \times H) \geq \cyc(G)\cyc(H).
\]
If \((|G|,|H|)=1\), then equality holds.
\end{theorem}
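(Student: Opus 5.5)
We prove the inequality with an explicit injection, and then show it is a bijection in the coprime case.

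\emph{Injection.} Send a pair \((A,B)\) of cyclic subgroups \(A\le G\), \(B\le H\) to the subgroup \(A\times B\le G\times H\). This map is injective, because projecting \(A\times B\) onto the first and second coordinates recovers \(A\) and \(B\). The image \(A\times B\) need not be cyclic, however. For instance, \(C_2\times C_2\) arises from \(A=B=C_2\). So instead we use the diagonal construction. Pick generators \(a\) of \(A\) and \(b\) of \(B\), and consider \(\langle (a,b)\rangle\). Its projections onto the two factors are exactly \(A\) and \(B\), since the projection of \(\langle(a,b)\rangle\) to the first coordinate is \(\langle a\rangle\), and similarly for the second. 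Choose the generators once for each cyclic subgroup, and define
\[
\Phi(A,B)=\langle (a_A,b_B)\rangle .
\]
This is a cyclic subgroup of \(G\times H\). Applying the two coordinate projections to it recovers \(A\) and \(B\), so \(\Phi\) is injective. This gives \(\cyc(G\times H)\ge \cyc(G)\cyc(H)\).

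\emph{Coprime case.} Suppose \((|G|,|H|)=1\). Let \(K\le G\times H\) be cyclic, generated by \((g,h)\), and set \(m=o(g)\), \(n=o(h)\). Then \(m\) divides \(|G|\) and \(n\) divides \(|H|\), so \(\gcd(m,n)=1\). By the Chinese remainder theorem, \(\langle (g,h)\rangle=\langle g\rangle\times\langle h\rangle\). In particular \(K=\pi_1(K)\times\pi_2(K)\), where \(\pi_1,\pi_2\) are the coordinate projections.

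Now take any generators \(a\) of \(\pi_1(K)\) and \(b\) of \(\pi_2(K)\). Their orders are again coprime, so by the same Chinese remainder argument
\[
\langle (a,b)\rangle=\pi_1(K)\times\pi_2(K)=K .
\]
Hence \(\Phi(\pi_1(K),\pi_2(K))=K\), so \(\Phi\) is surjective and equality holds.

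As a cross-check, the coprime case also follows from Lemma~\ref{lem:cyclic-subgroup-number-in-terms-order}. Since \(o((g,h))=o(g)o(h)\) with coprime factors and \(\phi\) is multiplicative on coprime arguments, the sum over \(G\times H\) factors as the product of the two sums.

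The only step needing care is the non-cyclicity of \(A\times B\) in the general injection. Using the diagonal generator \((a,b)\) with a fixed choice of generators, together with the projection argument, resolves it.
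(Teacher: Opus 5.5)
Your proof is correct, but it takes a different route from the paper's. The paper works with elements. By Lemma~\ref{lem:cyclic-subgroup-number-in-terms-order} it writes \(\cyc(G\times H)=\sum_{(g,h)}1/\phi(\lcm(o(g),o(h)))\). It then compares this termwise with \(\cyc(G)\cyc(H)=\sum_{(g,h)}1/(\phi(o(g))\phi(o(h)))\) using \(\phi(\lcm(a,b))\le\phi(a)\phi(b)\). Equality in the coprime case comes from multiplicativity of \(\phi\), which is exactly your cross-check.

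You work with subgroups instead. The diagonal map \((A,B)\mapsto\langle(a_A,b_B)\rangle\) is injective because the coordinate projections recover \(A\) and \(B\). In the coprime case it is onto because every cyclic subgroup splits as the product of its projections. Your route needs neither the element-sum lemma nor any totient arithmetic, and it shows concretely which cyclic subgroups are counted. The paper's route is a two-line computation once that lemma is available, and it matches the element-order methods used elsewhere in the paper.

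The two arguments are views of one count. The cyclic subgroups with projections exactly \(A\) and \(B\) are generated by the \(\phi(|A|)\phi(|B|)\) pairs of generators. Each such subgroup has \(\phi(\lcm(|A|,|B|))\) generators, so there are exactly \(\phi(\gcd(|A|,|B|))\) of them, and \(\cyc(G\times H)=\sum_{A,B}\phi(\gcd(|A|,|B|))\). Your injection says each fiber is nonempty. The paper's totient inequality is the same fact in the form \(\phi(\gcd(a,b))\ge1\), via \(\phi(a)\phi(b)=\phi(\lcm(a,b))\phi(\gcd(a,b))\).

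This also shows that equality holds precisely when \(\gcd(|A|,|B|)\le2\) for all cyclic \(A\le G\), \(B\le H\). So coprimality is sufficient but not necessary; for example, \(\cyc(C_2\times C_2)=4=\cyc(C_2)^2\).

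In a write-up, drop the false start with \(A\times B\) and go straight to the diagonal map.
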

\begin{proof}
For any \((g,h) \in G \times H\), we have
\[
        o(g,h)=\lcm(o(g),o(h)).
\]
Therefore,
\[
\cyc(G \times H)
= \sum_{(g,h) \in G \times H} \frac{1}{\phi(\lcm(o(g),o(h)))}.
\]
We now use the inequality \(\phi(\lcm(a,b))\le \phi(a)\phi(b)\) to get
\begin{align*}
\cyc(G) \cyc(H)
= & \sum _{g \in G} \frac{1}{\phi(o(g))}  \sum _{h \in H} \frac{1}{\phi(o(h))}   \\
= & \sum _{g \in G}  \sum _{h \in H} \frac{1}{\phi(o(g))}  \frac{1}{\phi(o(h))} \\
\leq & \sum _{g \in G, h \in H} \frac{1}{\phi(o(g,h))}
 = \cyc(G \times H).\\
\end{align*}
If \((|G|,|H|)=1\), then \(o(g)\) and \(o(h)\) are coprime for all
\(g\in G\) and \(h\in H\), and hence
\(\phi(o(g,h))=\phi(o(g))\phi(o(h))\).  The preceding inequalities are then
equalities.
\end{proof}

We next recall the order sequence of a group from \cite{cameron-dey}.  For a
finite group \(G\) of order \(n\), its order sequence is
\[
        \os(G)=(o(g_1),o(g_2),\ldots,o(g_n)),
\]
where \(o(g_i)\le o(g_{i+1})\) for \(1\le i\le n-1\).  For two groups \(G\)
and \(H\) of order \(n\), we say that \(\os(G)\) dominates \(\os(H)\) if
\(o(g_i)\ge o(h_i)\) for \(1\le i\le n\).  We say that \(\os(G)\) strongly
dominates \(\os(H)\) if there is a bijection \(f:G\to H\) such that
\(o(f(g))\mid o(g)\) for all \(g\in G\).  The following observation is
immediate from the definition.

 \begin{lemma}
\label{lem:strong-dom-implies-less-cyc}
Let \(G\) and \(H\) be finite groups of the same order.  If \(\os(G)\)
strongly dominates \(\os(H)\), then \(\cyc(G)\le \cyc(H)\).
 \end{lemma}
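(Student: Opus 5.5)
The plan is to apply Lemma~\ref{lem:cyclic-subgroup-number-in-terms-order} to both groups and compare the two sums term by term through the bijection supplied by strong domination. Fix a bijection \(f:G\to H\) with \(o(f(g))\mid o(g)\) for all \(g\in G\). Since \(f\) is a bijection, summing over \(h\in H\) is the same as summing over \(h=f(g)\) with \(g\) ranging over \(G\). Hence
\[
        \cyc(H)=\sum_{h\in H}\frac{1}{\phi(o(h))}
        =\sum_{g\in G}\frac{1}{\phi(o(f(g)))} .
\]
This puts \(\cyc(G)\) and \(\cyc(H)\) as sums over the same index set.

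Next we compare the summands one at a time. For positive integers \(a\mid b\) we have \(\phi(a)\mid\phi(b)\), and in particular \(\phi(a)\le\phi(b)\). To see this, use the multiplicative formula
\[
        \phi(n)=\prod_{p^e\parallel n}p^{e-1}(p-1).
\]
Every prime power dividing \(a\) exactly has the form \(p^{e'}\) with \(e'\le e\), where \(p^e\) divides \(b\) exactly, so each factor for \(a\) divides the corresponding factor for \(b\). Taking \(a=o(f(g))\) and \(b=o(g)\) gives
\[
        \frac{1}{\phi(o(g))}\le\frac{1}{\phi(o(f(g)))}\qquad\text{for every } g\in G.
\]
Summing over \(g\in G\) yields \(\cyc(G)\le\cyc(H)\).

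There is no real obstacle here. The only point needing care is the elementary divisibility \(\phi(a)\mid\phi(b)\) for \(a\mid b\), which the multiplicative formula above settles.
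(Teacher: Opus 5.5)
Your proof is correct and is the argument the paper has in mind when it calls this observation immediate from the definition; the paper gives no separate proof. The argument is to reindex the sum in Lemma~\ref{lem:cyclic-subgroup-number-in-terms-order} through the bijection and use \(\phi(a)\le\phi(b)\) whenever \(a\mid b\).
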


Amiri \cite{amiri-jpaa} showed that for every positive integer \(n\),
\(\os(C_n)\) strongly dominates \(\os(G)\) for every non-cyclic group \(G\) of
order \(n\).  This gives another proof of Theorem~\ref{thm:Richards}.
Answering \cite[Question~1.5]{amiri-amiri}, Amiri \cite{amiri-jaco} also
proved the following result.

\begin{theorem}
\label{thm:amiri-jaco}
Let \(G\) be a finite group of order \(n\), and let \(p\mid n\) be prime.  If a
Sylow \(p\)-subgroup of \(G\) is neither cyclic nor generalized quaternion,
then the order sequence of \(C_{n/p}\times C_p\) strongly dominates the order
sequence of \(G\).
 \end{theorem}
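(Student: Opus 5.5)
The plan is to build the required bijection with Hall's marriage theorem. Put \(H=C_{n/p}\times C_p\). Form the bipartite graph on \(H\sqcup G\) in which \(x\in H\) is joined to \(g\in G\) exactly when \(o(g)\mid o(x)\). A perfect matching is precisely a bijection \(f:H\to G\) with \(o(f(x))\mid o(x)\), which is what strong domination asks for.

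The neighbourhood of a set \(X\subseteq H\) depends only on the set of orders occurring in \(X\). It consists of all \(g\in G\) with \(o(g)\in\mathcal D\), where \(\mathcal D\) is the set of all divisors of the orders occurring in \(X\), so \(\mathcal D\) is closed under taking divisors. Replacing \(X\) by all elements of \(H\) with order in \(\mathcal D\) only makes Hall's condition harder, so it suffices to prove, for every divisor-closed set \(\mathcal D\) of divisors of \(n\),
\[
        N_G(\mathcal D):=|\{g\in G: o(g)\in\mathcal D\}|\;\ge\; N_H(\mathcal D):=|\{x\in H: o(x)\in\mathcal D\}|.
\]

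The first input is the case where \(\mathcal D\) is the full set of divisors of a single \(d\mid n\). Then \(N_G(\mathcal D)\) is the number of solutions of \(x^d=1\) in \(G\). By Frobenius' theorem this number is a multiple of \(d\), so it is at least \(d\). In \(H\) the same count is \(d\) if \(p^2\nmid d\) or \(p\nmid n/p\), and \(dp\)-type larger only when the \(p\)-part of \(d\) is exactly \(p\). Indeed, writing \(d=p^{b}d'\) with \(p\nmid d'\), we have \(N_H=d'\cdot|\{y\in C_{p^{a-1}}\times C_p: y^{p^b}=1\}|\), where \(p^a\) is the \(p\)-part of \(n\); this equals \(d'p^{b}\) if \(b\ge2\) or \(b=0\), and \(d'p^{2}\) if \(b=1\) (with \(a\ge2\)). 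So the only deficit to cover is the solutions of \(x^{p d'}=1\).

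Here the hypothesis on \(P\) enters. Since \(P\) is neither cyclic nor generalized quaternion, it has more than one subgroup of order \(p\), hence at least \(p+1\) of them. Then \(G\) has at least \(p^2-1\) elements of order \(p\). I then want to upgrade this to: the number of solutions of \(x^{pd'}=1\) in \(G\) is at least \(p^2d'\). For this I would count the elements \(g=g_pg_{p'}\) by their \(p\)-part and use Frobenius on \(C_G(g_p)\) for each element \(g_p\) of order dividing \(p\). For a general divisor-closed \(\mathcal D\), the plan is to write \(\mathcal D\) as a union of the full divisor sets of its maximal elements \(d_1,\dots,d_k\) and apply inclusion–exclusion. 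The intersections are again full divisor sets of gcds, and \(H\), being abelian, has its counts given by a multiplicative function of \(d\).

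I expect this union step to be the main obstacle. Inclusion–exclusion is not monotone, so the single-divisor inequalities do not immediately give the inequality for unions. What I would try first is to split by \(p\)-part: write each element as \(g_pg_{p'}\), and compare, fibre by fibre over the \(p\)-part, the number of \(p'\)-parts whose order lies in the relevant slice of \(\mathcal D\). This reduces to comparing the \(p'\)-structure of centralizers \(C_G(g_p)\) with the cyclic group \(C_{n_{p'}}\), which can be handled by Richards-type domination as in Amiri's result that \(\os(C_m)\) strongly dominates every group of order \(m\). The extra elements of order \(p\) supplied by the hypothesis on \(P\) then absorb the surplus of \(H\) in the layer where the \(p\)-part is exactly \(p\). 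If this fibrewise comparison fails because the centralizers are too small, I would instead cite Amiri \cite{amiri-jaco} directly, since this statement is quoted from that paper.
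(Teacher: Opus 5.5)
\textbf{There is a genuine gap: your argument restates the theorem but does not prove its core inequality.} The paper does not prove this theorem either; it quotes it from Amiri \cite{amiri-jaco}. So the only complete argument in your proposal is the fallback citation, which is what the paper does. The Hall reduction is correct and loses nothing: strong domination is equivalent to \(N_G(\mathcal D)\ge N_H(\mathcal D)\) for every divisor-closed \(\mathcal D\). The whole content of the theorem therefore lies in these inequalities, and none of your steps proves them.

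First, your count in \(H\) is wrong. Write \(n=p^am\) with \(p\nmid m\), so \(H\cong C_m\times C_{p^{a-1}}\times C_p\). For \(d=p^bd'\), the equation \(x^d=1\) has \(d'p^{b+1}\) solutions in \(H\) for every \(1\le b\le a-1\), and \(d'p^a\) for \(b=a\). It is not \(d'p^b\) when \(b\ge2\). For example, all eight elements of \(C_4\times C_2\) satisfy \(x^4=1\), while Frobenius' theorem only guarantees four solutions in \(G\). So the deficit is not confined to the layer \(b=1\): you need \(|\{g\in G:g^{p^bd'}=1\}|\ge p^{b+1}d'\) for every \(1\le b\le a-1\). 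For \(d'=1\) this does follow from the hypothesis on \(P\). A \(p\)-group that is neither cyclic nor generalized quaternion has a non-cyclic subgroup of each order \(p^{b+1}\) with \(1\le b\le a-1\), and such a subgroup has exponent at most \(p^b\). For \(d'>1\) you give no argument.

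Second, the fibrewise comparison you propose for combining the \(p\)- and \(p'\)-parts cannot work. In \(H\) the fibre over each \(p\)-element has exactly \(m\) elements. In \(G\) the fibre over a nontrivial \(p\)-element \(u\) is \(u\) times the set of \(p'\)-elements of \(C_G(u)\), and this may be \(\{u\}\) alone. Take \(G=A_4\) and \(p=2\), so \(H=C_6\times C_2\).
\begin{itemize}
\item Each involution of \(A_4\) has centralizer \(V_4\), so its fibre has one element, against three in \(H\).
\item Hall's inequality for \(\mathcal D=\{1,2,3,6\}\) still holds, with equality \(12=12\), only because the fibre over \(1\) has nine elements in \(A_4\) and three in \(H\).
\item Hence, in this example, every admissible bijection sends some elements of \(H\) with nontrivial \(p\)-part to elements of \(G\) with trivial \(p\)-part.
\end{itemize}
So the comparison must be global. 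The cyclic domination theorem of Richards and Amiri compares groups of the same order, so it says nothing about \(C_G(g_p)\), whose \(p'\)-part is in general smaller than \(m\).

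Third, as you note yourself, the passage from single divisor sets to arbitrary divisor-closed sets is left open. Inclusion--exclusion does not preserve inequalities, and Hall's condition is needed for every such \(\mathcal D\).

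To finish, you would need a global counting argument such as Amiri's, not the layer-by-layer and fibre-by-fibre estimates you sketch.
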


The following result from \cite[Theorem~8]{cameron-dey} compares the order
sequences of coprime extensions with those of direct products.

\begin{theorem}
\label{t:extension}
Let \(G\) and \(H\) be groups of coprime order, and suppose that \(G\) is
abelian.  Let \(K\) be any extension of \(G\) by \(H\).  Then
\(\os(G\times H)\) strongly dominates \(\os(K)\).
\end{theorem}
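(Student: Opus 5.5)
The plan is to transfer the order data along an explicit bijection and then compare element by element. Write \(|G|=m\) and \(|H|=n\) with \(\gcd(m,n)=1\), and let \(N\trianglelefteq K\) be the normal subgroup with \(N\cong G\) and \(K/N\cong H\). By the Schur--Zassenhaus theorem, \(N\) has a complement \(Q\le K\) with \(Q\cong H\), so \(K=NQ\) and every element of \(K\) has a unique expression \(xq\) with \(x\in N\) and \(q\in Q\). This gives a bijection \(f:G\times H\to K\), obtained by fixing isomorphisms \(G\cong N\) and \(H\cong Q\) and sending \((x,q)\) to \(xq\). By the definition of strong domination, it then suffices to prove that \(o(xq)\mid \lcm(o(x),o(q))=o(x,q)\) for all \(x\in N\) and \(q\in Q\).

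To prove this divisibility, set \(k=o(q)\), which divides \(n\). Compute \((xq)^k\): moving all the copies of \(q\) to the right gives
\[
(xq)^k = x\cdot x^{q^{-1}}\cdots x^{q^{-(k-1)}}\, q^k = x\,x^{q^{-1}}\cdots x^{q^{-(k-1)}} ,
\]
where \(x^{g}=g^{-1}xg\), so conjugation by \(q^{-1}\) maps \(x\) to \(qxq^{-1}\). Since \(N\) is abelian and normal, the product \(y\) on the right lies in \(N\). Each conjugate \(x^{q^{-i}}\) has order \(o(x)\), so each is annihilated by raising to the power \(o(x)\). Because \(N\) is abelian, it follows that \(y^{o(x)}=1\). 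Hence \((xq)^{k\,o(x)}=1\), and therefore \(o(xq)\mid k\,o(x)\).

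Since \(o(x)\mid m\), \(k\mid n\) and \(\gcd(m,n)=1\), we have \(k\,o(x)=\lcm(o(x),o(q))=o(x,q)\) in \(G\times H\). This is exactly the required divisibility, so \(f\) witnesses that \(\os(G\times H)\) strongly dominates \(\os(K)\).

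The main obstacle is the existence of the complement. When \(N\) is abelian, this part of Schur--Zassenhaus is elementary: the obstruction is a class in \(H^2(H,N)\), and this group vanishes because it is killed both by \(|H|\) and by \(|N|\), which are coprime. No solvability hypothesis on \(H\) is needed for this. Everything after the complement is obtained is a routine computation.
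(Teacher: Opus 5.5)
Your proof is correct. The paper gives no proof of this statement; it imports it from Cameron--Dey \cite[Theorem~8]{cameron-dey}. Your route is the standard one: take a Schur--Zassenhaus complement $Q$, use the bijection $(x,q)\mapsto xq$, and note that $(xq)^{o(q)}$ is a product of $N$-conjugates of $x$, hence killed by $o(x)$. It is also the same norm computation the paper itself uses in Proposition~\ref{prop:cyc-abelian-hall-formula}, where coprimality even yields the exact order $o(ab)=k_b\,o(N_b(a))$.
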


\begin{corollary}
\label{cor:cyc-nm}\label{lem:coprime-extension}
Let \(G\) and \(H\) be groups of coprime order, and suppose that \(G\) is
abelian.  Let \(K\) be any extension of \(G\) by \(H\).  Then
\[
        \cyc(G\times H)\le \cyc(K).
\]
\end{corollary}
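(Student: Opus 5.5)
This corollary is a direct consequence of Theorem~\ref{t:extension} combined with Lemma~\ref{lem:strong-dom-implies-less-cyc}, so the plan is simply to verify that the hypotheses of the lemma are met.

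First, we check that the groups being compared have the same order. Since \(K\) is an extension of \(G\) by \(H\), there is a short exact sequence
\[
        1\to G\to K\to H\to 1,
\]
and therefore
\[
        |K|=|G|\,|H|=|G\times H|.
\]

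Next, we obtain the domination. By Theorem~\ref{t:extension}, whose hypotheses (coprime orders, \(G\) abelian) are exactly those of the corollary, \(\os(G\times H)\) strongly dominates \(\os(K)\). Concretely, there is a bijection \(f:G\times H\to K\) with \(o(f(x))\mid o(x)\) for all \(x\in G\times H\).

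Finally, we apply Lemma~\ref{lem:strong-dom-implies-less-cyc} with \(G\times H\) in the dominating role and \(K\) in the dominated role. This gives \(\cyc(G\times H)\le\cyc(K)\).

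If the lemma were not taken as given, its justification would go through Lemma~\ref{lem:cyclic-subgroup-number-in-terms-order}. Divisibility \(a\mid b\) implies \(\phi(a)\le\phi(b)\), hence \(1/\phi(o(f(x)))\ge 1/\phi(o(x))\) term by term. Summing over the bijection \(f\) yields the inequality.

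There is no real obstacle here. The only point needing care is the direction of the domination: the group whose order sequence dominates, namely \(G\times H\), is the one with fewer cyclic subgroups.
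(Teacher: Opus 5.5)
Your proposal is correct and is exactly the paper's route: the statement is recorded as an immediate consequence of Theorem~\ref{t:extension} (strong domination of \(\os(K)\) by \(\os(G\times H)\)) combined with Lemma~\ref{lem:strong-dom-implies-less-cyc}, with the domination direction you identify. Your order check and your justification of the lemma via \(\cyc(X)=\sum_{x\in X}1/\phi(o(x))\) and \(a\mid b\Rightarrow\phi(a)\le\phi(b)\) are also correct.
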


We also use the following elementary observation.
\begin{lemma}
\label{lem:distinct-Sub-semi-distinct-Sub}
Let \(G=H\rtimes K\), where \(H\triangleleft G\).  If \(L\le K\), then \(HL\)
is a subgroup of \(G\).  Moreover, distinct subgroups \(L\le K\) give distinct
subgroups \(HL\).
\end{lemma}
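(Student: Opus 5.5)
The statement is elementary, so the plan is a direct verification in two parts: first that \(HL\) is a subgroup, then that \(L\mapsto HL\) is injective on subgroups of \(K\).

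For the first part, I use that \(H\triangleleft G\). For any \(l\in L\) we have \(lH=Hl\), so \(HL=LH\) as sets. The standard criterion (a product of two subgroups is a subgroup if and only if they permute) then shows \(HL\le G\).

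I can also check this by hand. Take \(h_1l_1,h_2l_2\in HL\). Then
\[
(h_1l_1)(h_2l_2)^{-1}=h_1\,(l_1l_2^{-1})\,h_2^{-1}\,(l_1l_2^{-1})^{-1}\,(l_1l_2^{-1}).
\]
Since \(H\) is normal, the middle factor \((l_1l_2^{-1})h_2^{-1}(l_1l_2^{-1})^{-1}\) lies in \(H\). Hence the whole product lies in \(HL\). Nonemptiness is clear, since \(1\in HL\).

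For the second part, I show that \(L\) can be recovered from \(HL\) via the identity
\[
HL\cap K=L.
\]
The inclusion \(L\subseteq HL\cap K\) is trivial. Conversely, suppose \(k=hl\in K\) with \(h\in H\) and \(l\in L\). Then \(h=kl^{-1}\in H\cap K\). Because \(G=H\rtimes K\), we have \(H\cap K=1\), so \(h=1\) and therefore \(k=l\in L\).

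It follows that if \(HL_1=HL_2\), then intersecting both sides with \(K\) gives \(L_1=L_2\). So distinct subgroups of \(K\) give distinct subgroups \(HL\).

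There is no real obstacle here. The only point needing care is that injectivity depends on \(H\cap K=1\), which is exactly the semidirect product hypothesis. Without that trivial intersection the injectivity claim could fail.
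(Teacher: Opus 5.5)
Your proof is correct and takes essentially the same approach as the paper. The paper states this lemma as an elementary observation without a separate proof, and where it justifies the distinctness (in the proof of Lemma~\ref{lem:sup-v4-q8-bound}) it uses exactly your identity \(HL\cap K=L\), which relies on \(H\cap K=1\).
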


We also use known formulas for groups whose Sylow subgroups are all
cyclic.  Such groups are often called \(ZM\)-groups.  By the theorem of
Zassenhaus, every \(ZM\)-group is metacyclic \cite{Scott}, but the converse is
not true in general.  Thus \(ZM\)-groups are the metacyclic groups with the
additional condition that all Sylow subgroups are cyclic.  In the notation of
T\u{a}rn\u{a}uceanu--T\'{o}th, they have presentations
\[
        ZM(m,n,r)=\langle a,b\mid a^m=b^n=1,\ b^{-1}ab=a^r\rangle,
\]
where
\[
        (m,n)=(m,r-1)=1,\qquad r^n\equiv1\pmod m.
\]
Calhoun described the subgroup lattice of these cyclic extensions of cyclic
groups \cite{Calhoun1987}; T\u{a}rn\u{a}uceanu--T\'{o}th record the resulting
formulas for both the number of subgroups and the number of cyclic subgroups
of \(ZM(m,n,r)\) \cite[Section~3.4]{TarnauceanuToth2015}.

\begin{proposition}\label{prop:zm-counts}
Let \(G=ZM(m,n,r)\), and write
\[
        R(n_1)=\sum_{j=0}^{n/n_1-1}r^{jn_1}
        \qquad (n_1\mid n).
\]
When \(r^{n_1}\ne1\), this equals \((r^n-1)/(r^{n_1}-1)\); throughout,
any such quotient denotes this geometric sum even when its denominator
vanishes.
Then
\[
        \sub(G)=
        \sum_{m_1\mid m}\sum_{n_1\mid n}
        \gcd(m_1,R(n_1)).
\]
Moreover,
\[
        \cyc(G)=
        \sum_{\substack{m_1\mid m,\ n_1\mid n\\ m/m_1\mid r^{n_1}-1}}
        \gcd(m_1,R(n_1)).
\]

For the local factorizations, write \(p^{a_p}\Vert m\), let \(d_p\) be the
order of \(r\) modulo \(p\), and put
\[
(c_p(n_1),h_p(n_1))=
\begin{cases}
(a_p+1,a_p+1),&d_p\mid n_1,\\
\left(p^{a_p},\displaystyle\sum_{i=0}^{a_p}p^i\right),&d_p\nmid n_1.
\end{cases}
\]
Then
\[
        \cyc(G)=\sum_{n_1\mid n}\prod_{p\mid m}c_p(n_1),\qquad
        \sub(G)=\sum_{n_1\mid n}\prod_{p\mid m}h_p(n_1).
\]
For \(m=1\), the products are empty and equal \(1\); both counts are
\(d(n)\), as required for \(ZM(1,n,1)\cong C_n\).
\end{proposition}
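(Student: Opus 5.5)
The plan is to derive both parts of the statement from Calhoun's description of the subgroups of a cyclic-by-cyclic group, and then to obtain the local factorizations by multiplicativity over the primes dividing \(m\).

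\textbf{Parametrizing subgroups.} Put \(A=\langle a\rangle\cong C_m\), a normal subgroup with \(G/A\cong C_n\). Every subgroup \(H\le G\) meets \(A\) in \(H\cap A=\langle a^{m/m_1}\rangle\) for a unique \(m_1\mid m\), and its image \(HA/A\) is \(\langle bA\rangle^{n_1}\) for a unique \(n_1\mid n\). The quotient \(H/(H\cap A)\) is cyclic, so
\[
H=\langle a^{m/m_1},\, b^{n_1}a^{s}\rangle
\]
for some \(s\), which matters only modulo \(m/m_1\).

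\textbf{Counting the admissible \(s\).} The element \(c=b^{n_1}a^s\) must satisfy \(c^{n/n_1}\in\langle a^{m/m_1}\rangle\). Using \(a^b=a^r\), one computes \(c^{n/n_1}=a^{sR'}\), where \(R'\) equals \(R(n_1)\) up to a unit, since conjugation by \(b^{n_1}\) acts as \(a\mapsto a^{r^{n_1}}\); one should check the direction of the action, but the unit does not affect the count. The condition is therefore \((m/m_1)\mid sR(n_1)\) in \(\mathbb Z/m\). This is a linear congruence in \(s\) modulo \(m/m_1\); I expect its number of solutions to be \(\gcd(m/m_1,R(n_1))\). Reindexing \(m_1\mapsto m/m_1\) then gives the sub formula. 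Normality of \(\langle a^{m/m_1}\rangle\) guarantees that each admissible pair generates a subgroup with exactly these invariants. Distinct classes of \(s\) give distinct subgroups, because \(s\) is recovered from the coset \(c\langle a^{m/m_1}\rangle\).

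\textbf{Cyclic subgroups.} Such an \(H\) is cyclic precisely when it equals \(\langle c\rangle\) for a suitable choice of \(s\). Since \(\langle c\rangle\cap A=\langle c^{n/n_1}\rangle\), this means \(c^{n/n_1}\) must generate \(\langle a^{m/m_1}\rangle\). Equivalently, \(H\) is abelian-metacyclic with cyclic quotient and suitable gcd conditions. I expect this to reduce to the requirement that \(\langle a^{m/m_1}\rangle\) be centralized by \(b^{n_1}\), i.e.\ \(m_1\mid r^{n_1}-1\) after reindexing. Indeed, if \(b^{n_1}\) acted nontrivially, a cyclic group containing it would be abelian, which is a contradiction. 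Conversely, coprimality \((m,n)=1\) lets one split \(H\cong \langle a^{m/m_1}\rangle\times C\) with \(C\) cyclic of order coprime to it. This yields the stated condition \(m/m_1\mid r^{n_1}-1\) in the original indexing.

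\textbf{Local factorization.} Fix \(n_1\). Both summands are multiplicative in \(m_1\) over the prime powers \(p^{a_p}\Vert m\). If \(d_p\mid n_1\), then \(r^{n_1}\equiv1\pmod p\), and hence modulo \(p^{a_p}\) as well. For this lifting step I would use \((m,r-1)=1\) together with \(p\nmid n\) and a lifting-the-exponent argument, giving \(R(n_1)\equiv n/n_1\), which is a unit modulo \(p\). Each \(p\)-part of \(m_1\) therefore contributes \(1\), for a total of \(a_p+1\), and every such factor is cyclic-admissible. If \(d_p\nmid n_1\), then \(r^{n_1}-1\) is a unit modulo \(p\), while \(r^n\equiv1\pmod{p^{a_p}}\), so \(p^{a_p}\mid R(n_1)\). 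Each \(p^i\) then contributes \(p^i\) to the sub count, giving \(\sum_i p^i\). In the cyclic count only \(m/m_1\) coprime to \(p\) survives, i.e.\ the full \(p\)-part, contributing \(p^{a_p}\).

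\textbf{Main obstacle.} I expect the main obstacle to be the careful bookkeeping in the coset parametrization: the exact form of \(c^{n/n_1}\) and the orientation of the gcd under reindexing. Beyond that, one can simply cite Calhoun and T\u{a}rn\u{a}uceanu--T\'oth for both formulas.
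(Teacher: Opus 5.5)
Your proposal is correct, but for the two global formulas it takes a different route from the paper. The paper does not derive them: it cites Calhoun for \(\sub(G)\) and T\u{a}rn\u{a}uceanu--T\'{o}th for \(\cyc(G)\), and proves only the local factorizations.

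You instead rederive both formulas by parametrizing \(H\) by \((m_1,n_1,s)\), with \(|H\cap A|=m_1\), \(HA=\langle b^{n_1}\rangle A\), and \(s\) taken modulo \(m/m_1\). This works, and there is no unit ambiguity. From \(b^{-1}ab=a^r\) one gets \((b^{n_1}a^s)^k=b^{kn_1}a^{s(1+r^{n_1}+\cdots+r^{(k-1)n_1})}\), so \(c^{n/n_1}=a^{sR(n_1)}\) exactly. Hence the number of subgroups with invariants \((m_1,n_1)\) is \(\gcd(m/m_1,R(n_1))\).

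Your cyclicity criterion also holds once it is phrased via \(c\) rather than \(b^{n_1}\). The subgroup \(H\) need not contain \(b^{n_1}\), but \(c\) acts on the abelian group \(A\) exactly as \(b^{n_1}\) does. So \(H\) is abelian iff \(c\) centralizes \(H\cap A\), and an abelian \(H\) with \((m_1,n/n_1)=1\) is cyclic.

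Your route buys self-containment and a structural point the citation hides: cyclicity depends only on the pair \((m_1,n_1)\), which is why the cyclic formula is literally a subsum of the subgroup formula. The paper's route is shorter and concentrates on the local factorization, which is the form used later.

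At the step you flagged as the main obstacle, your labels are swapped. In your indexing, ``\(\langle a^{m/m_1}\rangle\) is centralized by \(b^{n_1}\)'' means \(m_1\mid r^{n_1}-1\) \emph{before} reindexing, paired with the weight \(\gcd(m/m_1,R(n_1))\). Substituting \(m_1\mapsto m/m_1\) in both gives the paper's condition \(m/m_1\mid r^{n_1}-1\) with weight \(\gcd(m_1,R(n_1))\). Pairing the condition from one indexing with the weight from the other gives the wrong count, for instance \(3\) instead of \(5\) for \(S_3=ZM(3,2,2)\).

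Your local factorization is essentially the paper's. Instead of lifting the exponent, the paper notes that \(r^{n_1}\equiv1\pmod p\) already gives \(R(n_1)\equiv n/n_1\not\equiv0\pmod p\). Then \((r^{n_1}-1)R(n_1)=r^n-1\equiv0\pmod{p^{a_p}}\) forces \(p^{a_p}\mid r^{n_1}-1\). The hypothesis \((m,r-1)=1\) is not needed for this step.
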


\begin{proof}
The first formula is the subgroup-count formula for cyclic extensions of
cyclic groups proved in \cite{Calhoun1987}.  The second is the
cyclic-subgroup formula for \(ZM\)-groups recorded by
T\u{a}rn\u{a}uceanu--T\'{o}th \cite[Equation~(8)]{TarnauceanuToth2015}.

We prove the local factorizations.  Since \(r^n\equiv1\pmod m\), we have
\(d_p\mid n\) for every \(p\mid m\).  If \(d_p\mid n_1\), the geometric sum gives
\(R(n_1)\equiv n/n_1\not\equiv0\pmod p\), because \((m,n)=1\).
The identity \((r^{n_1}-1)R(n_1)=r^n-1\) then implies
\(p^{a_p}\mid r^{n_1}-1\).  Thus every choice \(p^i\), \(0\le i\le a_p\),
for the \(p\)-part of \(m_1\) is admissible in both sums and has local weight
\(1\).  If \(d_p\nmid n_1\), then \(p\nmid r^{n_1}-1\), so
\(p^{a_p}\mid R(n_1)\).  In the cyclic sum only \(i=a_p\) is admissible,
with local weight \(p^{a_p}\); in the subgroup sum all \(i\) are admissible,
with local weight \(p^i\).

For fixed \(n_1\), these choices of exponents form a Cartesian product:
the divisibility condition is checked separately at each prime, and
\(\gcd(m_1,R(n_1))=\prod_{p\mid m}\gcd(p^{v_p(m_1)},R(n_1))\).
Multiplying the local weights and summing over \(n_1\mid n\) gives the
asserted factorizations.
\end{proof}

\subsection{\texorpdfstring{The group \(\PSL(2,q)\)}{The group PSL(2,q)} and an inequality}

In this subsection, we record the number of involutions of \(\PSL(2,q)\) for
an odd prime power \(q\) and state an inequality used later to bound its
subgroup count.  The involution count follows from the rank-one structure of
\(\PSL(2,q)\); see Malle--Testerman
\cite[Theorem~14.2, Corollaries~24.6 and~24.11, Propositions~25.3 and~26.6]{MalleTesterman2011}.

\begin{lemma}
\label{lem:num-invol}
The number of involutions in \(\PSL(2,q)\), for \(q\) an odd prime power, is
\[
        \frac{q(q+1)}{2}\quad\text{if }q\equiv1\pmod4,
        \qquad
        \frac{q(q-1)}{2}\quad\text{if }q\equiv3\pmod4.
\]
\end{lemma}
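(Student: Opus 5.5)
The plan is to count involutions in \(\PSL(2,q)\), \(q\) odd, through the action of \(\PSL(2,q)\) on its involutions by conjugation. The first step is to show that all involutions are conjugate. The second is to identify the centralizer of one involution and compute its order. The count is then the index of that centralizer in \(|\PSL(2,q)|=q(q^2-1)/2\).

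For the first step, lift an involution \(x\in\PSL(2,q)\) to \(\tilde x\in\mathrm{SL}(2,q)\). We have \(\tilde x^2=\pm I\), and \(\tilde x^2=I\) forces \(\tilde x=\pm I\), because \(-I\) is the unique involution of \(\mathrm{SL}(2,q)\). Hence \(\tilde x^2=-I\), so \(\tilde x\) is semisimple of order \(4\) with eigenvalues \(\pm i\), where \(i^2=-1\). Its trace is therefore \(0\). Elements of \(\mathrm{SL}(2,q)\) with trace \(0\) are non-scalar and have characteristic polynomial \(t^2+1\). This polynomial is separable because \(q\) is odd, so any two such elements are conjugate in \(\mathrm{GL}(2,q)\). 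Since \(\PGL(2,q)\) normalizes \(\PSL(2,q)\), every involution is determined up to \(\PGL(2,q)\)-conjugacy.

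For the second step, work in \(\PGL(2,q)\). There the centralizer of the image of \(\tilde x\) is the normalizer of the torus \(\mathbb F_q[\tilde x]^\times/\mathbb F_q^\times\), which is dihedral of order \(2(q-\epsilon)\). Here \(\epsilon=1\) when \(-1\) is a square in \(\mathbb F_q\), that is \(q\equiv1\pmod4\) (split torus), and \(\epsilon=-1\) when \(q\equiv3\pmod4\) (nonsplit torus). Thus \(\PGL(2,q)\) has exactly \(q(q^2-1)/(2(q-\epsilon))=q(q+\epsilon)/2\) such elements. Each of them lies in \(\PSL(2,q)\), because it comes from a trace-zero element of determinant \(1\) after scaling. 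I expect scaling to be possible because \(\det(\lambda \tilde x)=\lambda^2\) and \(t^2+1\) has a representative in \(\mathrm{SL}\). Every involution of \(\PSL(2,q)\) arises this way by the first step, so the count is \(q(q+\epsilon)/2\), which is the stated formula.

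The main obstacle is keeping the \(\PGL\) versus \(\PSL\) bookkeeping consistent. One must check that every element of \(\PGL(2,q)\) conjugate to \(x\) really lies in \(\PSL(2,q)\). Since \(\PSL(2,q)\) is normal in \(\PGL(2,q)\), conjugation preserves membership, and this settles it. An alternative that avoids \(\PGL\) entirely is to compute the centralizer of \(x\) directly in \(\PSL(2,q)\) as a dihedral group of order \(q-\epsilon\), which is the standard rank-one fact cited from Malle--Testerman. Its index \(\tfrac{q(q^2-1)/2}{q-\epsilon}=q(q+\epsilon)/2\) gives the same count and confirms the answer; this index route also shows that the involutions form a single \(\PSL(2,q)\)-class.
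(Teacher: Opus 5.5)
Your proof is correct, but it takes a different route from the paper's. The paper gives no standalone argument for this lemma. It cites the rank-one structure in Malle--Testerman, and in the proof of Lemma~\ref{lem:psl2-formula} it recovers the count from maximal tori:
\begin{itemize}
\item every involution is the unique involution of a cyclic torus of even order;
\item that torus is split of order $(q-1)/2$ when $q\equiv1\pmod4$, and nonsplit of order $(q+1)/2$ when $q\equiv3\pmod4$;
\item the number of such tori is $|\PSL(2,q)|/(q\mp1)=q(q\pm1)/2$, because each torus normalizer has twice the torus order.
\end{itemize}
This tacitly uses that each involution lies in exactly one torus of the relevant type.

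You instead compute the size of one conjugacy class by linear algebra. The relation $\tilde x^2=-I$ shows that every involution is the image of a trace-zero element of $\mathrm{SL}(2,q)$, and these elements form a single $\mathrm{GL}(2,q)$-class. The centralizer of $\bar x$ in $\mathrm{PGL}(2,q)$ is the image of $\{g:g\tilde xg^{-1}=\pm\tilde x\}$. It has order $2(q-\epsilon)$, because $\mathbb F_q[\tilde x]\cong\mathbb F_q[t]/(t^2+1)$ is $\mathbb F_q\times\mathbb F_q$ or $\mathbb F_{q^2}$, and $\tilde x$ is conjugate to $-\tilde x$. This fills in the identification with the dihedral torus normalizer that you asserted without proof.

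What each route buys: yours is self-contained and shows directly that the involutions form one class. The paper uses torus bookkeeping because the same tori are needed for all semisimple elements in the cyclic-subgroup formula of Lemma~\ref{lem:psl2-formula}.

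One small point: your ``scaling'' sentence is unnecessary, since a $\mathrm{GL}(2,q)$-conjugate of $\tilde x$ already has determinant $1$. The normality of $\PSL(2,q)$ in $\mathrm{PGL}(2,q)$, which you invoke afterwards, is the correct justification.
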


The involution count is also recovered in the proof of
Lemma~\ref{lem:psl2-formula} in Appendix~\ref{sec:psl2}.

\begin{lemma}
\label{lem:ineq-primes}
Let \(q\) be an odd prime power, put \(N=q(q+1)(q-1)/2\), and put
\(t=\pi(N)\).  Then
\[
        q\ge7 \Longrightarrow q(q-1)\ge5\cdot2^t,
        \qquad
        q\ge37 \Longrightarrow q(q-1)\ge59\cdot2^{t-1}.
\]
\end{lemma}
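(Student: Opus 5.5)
We need to prove Lemma~\ref{lem:ineq-primes}: for odd \(q\), with \(N=q(q^2-1)/2\) and \(t=\pi(N)\), we have \(q(q-1)\ge5\cdot2^t\) when \(q\ge7\), and \(q(q-1)\ge59\cdot2^{t-1}\) when \(q\ge37\).

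The plan is to bound \(2^t\) from above by a divisor-counting argument on the three pairwise nearly coprime factors \(q\), \(q-1\), \(q+1\). Write \(q=p^f\). The primes dividing \(N\) are \(p\), the prime \(2\) (since \(q\) is odd, \(q^2-1\) is divisible by \(8\), so \(2\mid N\)), and the odd primes of \(q-1\) and \(q+1\). Because \(\gcd(q-1,q+1)=2\), these odd primes are distinct. Hence
\[
t = 2+\omega'\bigl(q-1\bigr)+\omega'\bigl(q+1\bigr),
\]
where \(\omega'(x)\) counts the odd prime divisors of \(x\).

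Next we bound \(2^{\omega'(x)}\). If \(x\) is even with \(k\) odd prime divisors, then \(x\ge 2\cdot 3\cdot5\cdots p_{k+1}\), the product of \(2\) and the first \(k\) odd primes. A crude but uniform bound is \(2^{\omega'(x)}\le x/c\) for small \(x\) and \(2^{\omega'(x)}\le \sqrt{x}\) once the odd prime divisors are at least \(5\), since \(3\) is the only odd prime below \(4\). The cleaner route is to apply the bound to the product: the odd parts of \(q-1\) and \(q+1\) are coprime, and their product \(M\) satisfies \(M\le (q^2-1)/8\). Every odd prime other than \(3\) exceeds \(4\), so \(2^{\omega(M)}\le 2\sqrt{M/3}\cdot\sqrt{3}\) at worst; more precisely, \(2^{\omega(M)}\le c\sqrt{M}\) with \(c=2/\sqrt3\). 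This gives
\[
2^t\le 4\cdot \tfrac{2}{\sqrt3}\sqrt{(q^2-1)/8}<\tfrac{8}{\sqrt{24}}\,q\approx1.64\,q.
\]
Then \(5\cdot2^t\le 8.2\,q\le q(q-1)\) holds once \(q\ge 10\), and \(59\cdot2^{t-1}\le 48.3\,q\le q(q-1)\) holds once \(q\ge 50\).

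The remaining cases are a finite check. The first inequality must still be verified for \(q\in\{7,9\}\). For \(q=7\), \(N=168\), \(t=3\), and \(42\ge40\). For \(q=9\), \(N=360\), \(t=3\), and \(72\ge40\). The second inequality must be verified for the odd prime powers \(37\le q<50\), namely \(37,41,43,47,49\), each computing \(t\) directly. For example, \(q=37\) gives \(N=37\cdot 38\cdot 36/2\) with primes \(\{2,3,19,37\}\), so \(t=4\) and \(1332\ge 472\). The other four values go the same way, with \(t\le5\) and \(59\cdot16=944\le q(q-1)\).

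The main obstacle is making the constant in \(2^{\omega(M)}\le c\sqrt M\) small enough that the leftover finite range stays short. If the constant is too weak, the fix is to sharpen it by excluding \(5\) as well: \(2^{\omega(M)}\le c'M^{1/3}\) for primes \(\ge8\) is not needed, since extending the explicit table a bit is cheap.
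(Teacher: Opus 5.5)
Your proof is correct, but it takes a different route from the paper's.

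The paper never compares \(2^t\) with \(\sqrt{q^2-1}\). It puts \(s=\pi\bigl((q^2-1)/2\bigr)=t-1\), uses the primorial inequality \(P_s\le (q^2-1)/2\le q(q-1)\), and splits on \(s\). For \(s\ge4\) (respectively \(s\ge5\)) the growth \(P_s\ge210\cdot2^{s-4}\) (respectively \(P_s\ge2310\cdot2^{s-5}\)) already beats \(10\cdot2^s=5\cdot2^t\) (respectively \(59\cdot2^s=59\cdot2^{t-1}\)). For small \(s\) the crude bounds \(q(q-1)\ge110\) for \(q\ge11\) and \(q(q-1)\ge1332\) for \(q\ge37\) suffice, so only \(q=7,9\) are checked by hand.

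You instead use the exact count \(t=2+\omega(M)\), with \(M\) the odd part of \(q^2-1\), together with \(8\mid q^2-1\). You then apply the estimate \(2^{\omega(n)}\le\frac{2}{\sqrt3}\sqrt n\) for odd \(n\), which is Lemma~\ref{lem:pi-estimates}(a), the same tool the paper uses in Proposition~\ref{prop:psl2}. This gives the linear bound \(2^t<\frac{4}{\sqrt6}\,q\), far stronger than the lemma requires, and it settles both inequalities for all large \(q\) at once. The price is the short table \(q\in\{37,41,43,47,49\}\) for the second inequality. The paper's argument is more self-contained and needs no table for the second inequality; yours yields a sharper estimate that can be reused.

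Two points to tidy. First, justify the constant: for odd \(n\), \(4^{\omega(n)}=\operatorname{rad}(n)\prod_{r\mid n}(4/r)\le\frac43\operatorname{rad}(n)\), since only \(r=3\) gives a factor exceeding \(1\). Second, make the finite check explicit. Either record \(t=5,5,4,4\) for \(q=41,43,47,49\), or note uniformly that for \(q\le49\) the odd part of \(q^2-1\) is at most \(300<3\cdot5\cdot7\cdot11\). Then \(t\le5\) and \(59\cdot2^{t-1}\le944<1332\le q(q-1)\). The exploratory remarks about \(2^{\omega'(x)}\le x/c\) and the closing paragraph play no role in the argument and can be dropped.
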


The proof is given in Appendix~\ref{app:auxiliary-estimates}.

\section{First gaps for the normalized counts}
\label{sec:nilpo}

We prove that \(\eta(G)<5/4\) or \(\lambda(G)<3/2\) implies that \(G\)
is cyclic of squarefree order.  In either case, \(\eta(G)=\lambda(G)=1\).

\begin{lemma}\label{lem:squarefree-semidirect-cyclic-count}
Let \(p\) be a prime, let \(m\) be squarefree with \((p,m)=1\), and let
\[
        X=C_p\rtimes_\varphi C_m.
\]
Suppose that the action is nontrivial.  Let \(k=|\operatorname{Im}\varphi|\).
Then \(k>1\), \(k\mid m\), and
\[
        \cyc(X)=p\,d(m)-(p-2)d(m/k).
\]
\end{lemma}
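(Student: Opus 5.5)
A direct route is to use the element-order formula of Lemma~\ref{lem:cyclic-subgroup-number-in-terms-order}, though one could also specialize Proposition~\ref{prop:zm-counts}; I sketch the direct count.

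\emph{Preliminary facts.} Since \(\varphi\colon C_m\to\Aut(C_p)\cong C_{p-1}\) is nontrivial, its image has order \(k>1\). The image is a quotient of \(C_m\), so \(k\mid m\). Write \(C_m=\langle b\rangle\), and let \(K=\ker\varphi=\langle b^k\rangle\), which has order \(m/k\).

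\emph{Sorting elements.} Every element of \(X\) has the form \(a^i b^j\) with \(a\) generating \(C_p\). Fix \(j\) and put \(c=b^j\).
\begin{itemize}
\item If \(c\in K\), then \(c\) centralizes \(C_p\), so \(\langle a,c\rangle\cong C_p\times\langle c\rangle\) is cyclic. Each \(a^i c\) then has order \(o(c)\) if \(i\equiv0\), and \(p\,o(c)\) otherwise.
\item If \(c\notin K\), then \(c\) acts on \(C_p\) by a nontrivial automorphism of order \(e>1\). Then \((a^ic)^{o(c)}=a^{i(1+r+\dots+r^{o(c)-1})}\), where \(r\ne1\) has multiplicative order \(e\) modulo \(p\). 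Since \(e\mid o(c)\), this exponent sum is \(0\) modulo \(p\). Hence every element of the coset \(C_p c\) has order exactly \(o(c)\); it is not \(p\)-divisible because \(p\nmid m\) and the projection to \(C_m\) has order \(o(c)\).
\end{itemize}

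\emph{Summing.} Apply \(\cyc(X)=\sum 1/\phi(o(g))\).
\begin{itemize}
\item For \(c\in K\), the coset contributes
\[
\frac{1}{\phi(o(c))}+\frac{p-1}{\phi(p)\phi(o(c))}=\frac{2}{\phi(o(c))}.
\]
Summing over \(c\in K\) gives \(2\cyc(K)=2d(m/k)\).
\item For \(c\notin K\), the coset contributes \(p/\phi(o(c))\). Summing over \(c\in C_m\setminus K\) gives \(p\bigl(d(m)-d(m/k)\bigr)\), using \(\cyc(C_n)=d(n)\) for cyclic groups.
\end{itemize}
The total is \(p\,d(m)-(p-2)d(m/k)\), as required.

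The only delicate step is the order computation in the non-centralizing cosets: showing the geometric sum \(\sum r^t\) vanishes modulo \(p\) over a full multiple of the order of \(r\). Squarefreeness of \(m\) is not actually needed for the count; it only keeps \(K\) cyclic of the stated order, which holds anyway.
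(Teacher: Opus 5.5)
Your proof is correct, but it takes a different route from the paper's. The paper does not count elements. It uses that \(m\) is squarefree to split \(C_m=C_k\times C_{m/k}\) with \(C_{m/k}=\ker\varphi\), which is possible because \(\gcd(k,m/k)=1\). This gives \(X\cong (C_p\rtimes C_k)\times C_{m/k}\) with faithful action on the first factor. The paper then reads off \(\cyc(C_p\rtimes C_k)=2+p(d(k)-1)\) from the local factorization in Proposition~\ref{prop:zm-counts}: here \(d_p=k\), so only \(n_1=k\) has weight \(2\) and every other divisor of \(k\) has weight \(p\). Finally it multiplies by \(d(m/k)\) using Theorem~\ref{thm:product-cyclic-grps}.

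You instead sort the cosets \(C_pb^j\) by whether \(b^j\) lies in \(\ker\varphi\), compute element orders directly, and sum \(1/\phi(o(g))\). The key step, the vanishing of \(1+s+\dots+s^{o(c)-1}\) modulo \(p\) in the non-centralizing cosets, is handled correctly.

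The paper's version is a two-line specialization inside its \(ZM\) framework. However, it depends on the cited \(ZM\) subgroup-count formula, and it genuinely needs \(m\) squarefree for the direct-product splitting. Your argument is self-contained apart from Lemma~\ref{lem:cyclic-subgroup-number-in-terms-order}. As you observe, it proves the formula for every \(m\) coprime to \(p\), including cases where \(C_m=C_k\times C_{m/k}\) fails. For example, it correctly gives \(12\) for \(C_5\rtimes C_4\) with faithful action and \(7\) for \(C_3\rtimes C_4\).
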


\begin{proof}
This is the specialization of Proposition~\ref{prop:zm-counts} to the group
\(X=C_p\rtimes_\varphi C_m\).  The kernel of \(\varphi\) has
order \(m/k\), and
\[
        X\cong (C_p\rtimes C_k)\times C_{m/k},
\]
with faithful action on the first factor; the cyclic-subgroup formula in
Proposition~\ref{prop:zm-counts} gives
\[
        \cyc(C_p\rtimes C_k)=2+p(d(k)-1).
\]
Since the two factors have coprime orders,
Theorem~\ref{thm:product-cyclic-grps} gives
\[
        \cyc(X)=d(m/k)\bigl(2+p(d(k)-1)\bigr)
        =p\,d(m)-(p-2)d(m/k).
\]
\end{proof}

\begin{theorem}
\label{thm:nilpotency-criterion}
Let \(G\) be a finite group.  If
\[
        \eta(G)<\frac54,
\]
then \(\eta(G)=1\), and \(G\) is cyclic of squarefree order.
\end{theorem}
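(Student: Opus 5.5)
We argue by contraposition: assuming \(G\) is not cyclic of squarefree order, we show \(\cyc(G)\ge \frac54\,2^{\pi(G)}\).

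\textbf{Case 1: \(|G|\) is not squarefree.} Then some \(p^2\) divides \(n=|G|\). By Theorem~\ref{thm:Richards}, \(\cyc(G)\ge d(n)\). Writing \(n=\prod p^{a_p}\), we have \(d(n)=\prod (a_p+1)\ge 3\cdot 2^{\pi(G)-1}\), so \(\eta(G)\ge 3/2>5/4\). This case is immediate.

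\textbf{Case 2: \(|G|\) is squarefree but \(G\) is not cyclic.} Every Sylow subgroup has prime order, so \(G\) is a \(ZM\)-group \(ZM(m,n,r)\) with \(m,n\) squarefree and \((m,n)=1\). Since \(G\) is noncyclic, \(r\not\equiv 1\pmod m\), so some prime \(p\mid m\) has \(d_p>1\). We want a lower bound from Proposition~\ref{prop:zm-counts}:
\[
\cyc(G)=\sum_{n_1\mid n}\prod_{q\mid m}c_q(n_1),
\]
where \(c_q(n_1)\) equals \(2\) if \(d_q\mid n_1\) and \(q\) otherwise. Each factor is at least \(2\), and the factor at \(p\) equals \(p\ge 3\) whenever \(d_p\nmid n_1\). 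Hence
\[
\cyc(G)\ge 2^{\pi(m)}\Bigl(d(n)+\tfrac{p-2}{2}\,\#\{n_1\mid n: d_p\nmid n_1\}\Bigr).
\]
Since \(n\) is squarefree, \(d(n)=2^{\pi(n)}\). The divisors of \(n\) not divisible by \(d_p\) number \(d(n)-d(n/d_p)\ge d(n)/2\), because \(d_p>1\) and \(d_p\mid n\) is squarefree. This gives
\[
\eta(G)\ge 1+\frac{p-2}{4}\ge \frac54.
\]

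Alternatively, one can pick a normal \(C_p\) with a nontrivial complement action and apply Lemma~\ref{lem:squarefree-semidirect-cyclic-count}: \(\cyc(X)=p\,d(m)-(p-2)d(m/k)\ge d(m)\bigl(p-(p-2)/2\bigr)=d(m)(p+2)/2\). With \(d(m)=2^{\pi(m)}\) and \(p\ge3\), this yields \(\eta\ge 5/4\), matching \(S_3\). This route needs a reduction to the form \(X\times\) coprime cyclic, which the ZM formula makes unnecessary. I will therefore use the direct ZM computation.

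The main point needing care is confirming that a noncyclic squarefree-order group really has some \(d_p>1\) in its ZM presentation, and that the count of divisors \(n_1\) with \(d_p\nmid n_1\) is at least half of \(d(n)\). Both follow from squarefreeness: the divisors of \(n\) split into pairs \(\{n_1,\,n_1 q\}\) for a fixed prime \(q\mid d_p\), and at most one element of each pair is a multiple of \(d_p\).

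Combining the two cases, \(\eta(G)<5/4\) forces \(G\) to be cyclic of squarefree order. In that case \(\cyc(G)=d(n)=2^{\pi(G)}\), so \(\eta(G)=1\).
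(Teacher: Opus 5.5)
Your proof is correct, but in the squarefree case it takes a different route from the paper. The paper argues by contrapositive induction on \(t=\pi(G)\). It takes the normal Sylow subgroup for the largest prime, writing \(G=C_{p_t}\rtimes H\), and applies the induction hypothesis to \(H\). Combined with the coprime-extension comparison (Corollary~\ref{cor:cyc-nm}, which rests on the order-sequence result Theorem~\ref{t:extension}), this forces \(H\) to be cyclic. Only then does it evaluate the single family \(C_{p_t}\rtimes_\varphi C_m\) via Lemma~\ref{lem:squarefree-semidirect-cyclic-count}, obtaining \(\cyc(G)/2^{t-1}=p_t-(p_t-2)/2^{\pi(k)}\ge 5/2\).

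You instead observe that squarefree order already makes every Sylow subgroup cyclic, so \(G\cong ZM(m,n,r)\) at once. You then read the bound directly off the local factorization in Proposition~\ref{prop:zm-counts}, using a divisor-pairing count to show that at least half the \(n_1\mid n\) avoid \(d_p\). This removes both the induction and the order-sequence input. The price is that you invoke the full Zassenhaus presentation and the general product formula, rather than only the prime-kernel special case. Your argument is essentially the squarefree specialization of the method the paper itself uses later in Theorem~\ref{thm:small-eta-values}. The paper's inductive scheme is chosen so that it runs in parallel with the subgroup-count version, Theorem~\ref{thm:nilpotency-criterion-Sub}.

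Two points you left implicit are automatic. First, the condition \((m,r-1)=1\) gives \(d_p>1\) for \emph{every} prime \(p\mid m\), and \(m>1\) exactly when \(G\) is noncyclic. Second, \(d_p>1\) rules out \(p=2\), which is what justifies your use of \(p\ge 3\).
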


\begin{proof}
The case \(G=1\) is immediate, so assume \(G\ne1\).
Put \(t=\pi(G)\).  The hypothesis reads
\[
        \cyc(G)<5\cdot2^{t-2}.
\]
We prove first that \(G\) is nilpotent.  We argue by contrapositive induction
on \(t\).  The case \(t=1\) is clear.  Assume \(t\ge2\), and suppose
that \(G\) is not nilpotent.  By
Theorem~\ref{thm:Richards},
\[
        d(|G|)=\cyc(C_{|G|})\le \cyc(G).
\]
If \(|G|\) is not squarefree, then
\[
        d(|G|)\ge 3\cdot 2^{t-1}>5\cdot 2^{t-2}.
\]
Thus we may assume that \(|G|\) is squarefree.

Write
\[
        |G|=p_1p_2\cdots p_t,\qquad p_1<p_2<\cdots<p_t.
\]
A group of squarefree order is supersolvable, and its Sylow subgroup
corresponding to the largest prime divisor is normal.  Therefore
\[
        G=C_{p_t}\rtimes H,
        \qquad |H|=m=p_1p_2\cdots p_{t-1}.
\]
We first show that \(H\) is nilpotent.  If not, the induction hypothesis gives
\[
        \cyc(H)\ge 5\cdot 2^{t-3}.
\]
By Corollary~\ref{cor:cyc-nm} and Theorem~\ref{thm:product-cyclic-grps},
\[
        \cyc(G)\ge \cyc(C_{p_t}\times H)
        =2\cyc(H)
        \ge 5\cdot 2^{t-2},
\]
as required.  Hence \(H\) is nilpotent.  Since \(H\) has squarefree order, it
is cyclic, and therefore
\[
        G=C_{p_t}\rtimes_\varphi C_m.
\]
If the action is trivial, then \(G\) is cyclic, hence nilpotent, a
contradiction.  Thus the action is nontrivial.  Let
\(k=|\operatorname{Im}\varphi|\).  By
Lemma~\ref{lem:squarefree-semidirect-cyclic-count},
\begin{equation}
\label{eqn:cyc-p-nbyp}
        \cyc(G)=p_t\cdot 2^{t-1}-(p_t-2)\cdot 2^{t-1-\pi(k)},
\end{equation}
because \(m\) is squarefree and \(k>1\).  Hence
\begin{equation}
\label{eqn:cyc-p-nbyp-2}
        \frac{\cyc(G)}{2^{t-1}}
        =p_t-\frac{p_t-2}{2^{\pi(k)}}.
\end{equation}
If \(p_t\ge5\), then the right hand side of \eqref{eqn:cyc-p-nbyp-2} is at
least \((p_t+2)/2\ge7/2\).  If \(p_t=3\), then it is at least
\[
        3-\frac12=\frac52.
\]
Therefore \(\cyc(G)\ge 5\cdot 2^{t-2}\) in all cases.
Hence the assumed inequality forces \(G\) to be nilpotent.  The same divisor
comparison at the beginning of the proof also forces \(|G|\) to be squarefree.
A nilpotent group of squarefree order is cyclic.  For a cyclic group of
squarefree order, \(\cyc(G)=2^{\pi(G)}\), and hence \(\eta(G)=1\).
\end{proof}

\begin{remark}
\label{rem:nilpotency}
Theorem~\ref{thm:nilpotency-criterion} is a rigidity statement: below the first
gap one gets the extremal cyclic squarefree groups.  The constant \(5/4\) is
sharp, since \(S_3\) and its products with cyclic groups of new prime order have
\(\eta=5/4\) and are not nilpotent.
\end{remark}

\begin{lemma}
\label{lem:squarefree-semidirect-subgroup-count}
Let \(p\) be a prime, let \(m\) be a squarefree positive integer with
\((p,m)=1\), and let
\[
        X=C_p\rtimes_{\varphi}C_m.
\]
Assume that the action is nontrivial, and put
\[
        k=|\operatorname{Im}\varphi|.
\]
Then
\[
        \sub(X)=(p+1)d(m)-(p-1)d(m/k).
\]
\end{lemma}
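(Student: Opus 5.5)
We will mirror the proof of Lemma~\ref{lem:squarefree-semidirect-cyclic-count}, replacing the cyclic-subgroup formula with the subgroup formula of Proposition~\ref{prop:zm-counts}.

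First, the decomposition.  Since \(C_m\) is cyclic, the kernel of \(\varphi\) is the unique subgroup of \(C_m\) of order \(m/k\).  Because \(m\) is squarefree, \(k\) and \(m/k\) are coprime, so \(C_m\cong C_k\times C_{m/k}\).  The factor \(C_{m/k}\) acts trivially on \(C_p\), hence is central in \(X\), and we get
\[
        X\cong (C_p\rtimes C_k)\times C_{m/k},
\]
with \(C_k\) acting faithfully on \(C_p\).

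Second, the count for the faithful factor \(Y=C_p\rtimes C_k=ZM(p,k,r)\), where \(r\) has order exactly \(k\) modulo \(p\).  Here \(d_p=k\), and the local factorization of Proposition~\ref{prop:zm-counts} with \(a_p=1\) gives \(h_p(n_1)=2\) when \(k\mid n_1\), which happens only for \(n_1=k\), and \(h_p(n_1)=1+p\) for the other \(d(k)-1\) divisors.  Hence
\[
        \sub(Y)=2+(p+1)(d(k)-1).
\]
As a sanity check, for \(S_3\) (\(p=3\), \(k=2\)) this gives \(2+4=6\), which is correct.

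Third, the coprime direct product.  The subgroup count is multiplicative over factors of coprime order, because every subgroup of \(A\times B\) with \((|A|,|B|)=1\) is \(A_1\times B_1\); this is the analogue for \(\sub\) of Theorem~\ref{thm:product-cyclic-grps}.  It applies since \((pk,m/k)=1\), which follows from \((p,m)=1\) and squarefreeness.  Using \(d(m)=d(k)d(m/k)\), we obtain
\[
        \sub(X)=d(m/k)\bigl(2+(p+1)(d(k)-1)\bigr)=(p+1)d(m)-(p-1)d(m/k).
\]

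The only delicate point is the decomposition in the first step, which needs squarefreeness.  Alternatively, one can bypass it by applying Proposition~\ref{prop:zm-counts} directly to \(ZM(p,m,r)\): \(h_p(n_1)=2\) exactly when \(k\mid n_1\), and there are \(d(m/k)\) such divisors \(n_1\), while the remaining divisors contribute \(p+1\) each.  This gives \(2d(m/k)+(p+1)(d(m)-d(m/k))\), which is the same formula.

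The hypothesis \((p,m)=1\) and nontriviality (\(k>1\), hence \(r\ne1\) and \((p,r-1)=1\)) ensure that the \(ZM\) parameters are valid, so Proposition~\ref{prop:zm-counts} applies.
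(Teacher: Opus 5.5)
Your proposal is correct and matches the paper's proof: the same splitting \(X\cong (C_p\rtimes C_k)\times C_{m/k}\), the same count \(\sub(C_p\rtimes C_k)=2+(p+1)(d(k)-1)=(p+1)d(k)-(p-1)\) from the local factorization in Proposition~\ref{prop:zm-counts}, and the same coprime multiplicativity of subgroup counts. Your alternative, applying the proposition directly to \(ZM(p,m,r)\), is also valid, and it shows that the squarefree hypothesis is not actually needed for the formula.
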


\begin{proof}
This is the corresponding specialization of Proposition~\ref{prop:zm-counts}.
As above,
\[
        X\cong (C_p\rtimes C_k)\times C_{m/k},
\]
with faithful action on the first factor.  The subgroup-count formula in
Proposition~\ref{prop:zm-counts} gives
\[
        \sub(C_p\rtimes C_k)=(p+1)d(k)-(p-1).
\]
Since the second factor is cyclic and has coprime order, subgroups split over
the direct product.  Hence
\[
        \sub(X)=d(m/k)\bigl((p+1)d(k)-(p-1)\bigr)
        =(p+1)d(m)-(p-1)d(m/k).
\]
\end{proof}

The proof of the next result follows the same reduction as the proof of
Theorem~\ref{thm:nilpotency-criterion}, but the final semidirect-product
count uses Lemma~\ref{lem:squarefree-semidirect-subgroup-count}.

\begin{theorem}
\label{thm:nilpotency-criterion-Sub}
Let \(G\) be a finite group.  If
\[
        \lambda(G)<\frac32,
\]
then \(\lambda(G)=1\), and \(G\) is cyclic of squarefree order.
\end{theorem}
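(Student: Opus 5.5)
We follow the reduction in the proof of Theorem~\ref{thm:nilpotency-criterion}, now with the hypothesis written as \(\sub(G)<3\cdot2^{t-1}\), where \(t=\pi(G)\) and \(G\ne1\).

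The first step is to show that \(|G|\) is squarefree. Since every cyclic subgroup is a subgroup, Theorem~\ref{thm:Richards} gives
\[
\sub(G)\ge\cyc(G)\ge d(|G|).
\]
If \(|G|\) is not squarefree, then \(d(|G|)\ge 3\cdot2^{t-1}\), which contradicts the hypothesis. So \(|G|\) is squarefree.

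The second step is to show that \(G\) is nilpotent, by contrapositive induction on \(t\). The case \(t=1\) is trivial, since \(G\) is then cyclic of prime order. For \(t\ge2\), suppose \(G\) is not nilpotent. Write \(|G|=p_1\cdots p_t\) with \(p_1<\cdots<p_t\). A group of squarefree order has a normal Sylow subgroup for its largest prime, so \(G=C_{p_t}\rtimes H\) with \(|H|=m=p_1\cdots p_{t-1}\).

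\emph{Case 1: \(H\) is not nilpotent.} By induction, \(\sub(H)\ge 3\cdot2^{t-2}\). Lemma~\ref{lem:distinct-Sub-semi-distinct-Sub} shows that the subgroups \(C_{p_t}L\) with \(L\le H\) are pairwise distinct. The subgroups \(L\le H\) themselves have order coprime to \(p_t\), so they are distinct from every \(C_{p_t}L\). Hence
\[
\sub(G)\ge 2\sub(H)\ge 3\cdot2^{t-1}.
\]
This is the one place where the subgroup argument departs from the cyclic one. Corollary~\ref{cor:cyc-nm} is not available for \(\sub\), so this elementary doubling argument replaces it.

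\emph{Case 2: \(H\) is nilpotent.} Then \(H\) is cyclic of squarefree order, so \(G=C_{p_t}\rtimes_\varphi C_m\), and the action is nontrivial because \(G\) is not nilpotent. Put \(k=|\operatorname{Im}\varphi|>1\). Lemma~\ref{lem:squarefree-semidirect-subgroup-count} gives
\[
\frac{\sub(G)}{2^{t-1}}=(p_t+1)-\frac{p_t-1}{2^{\pi(k)}}\ge (p_t+1)-\frac{p_t-1}{2}=\frac{p_t+3}{2}.
\]
Since \(p_t\ge3\), this is at least \(3\). So \(\sub(G)\ge3\cdot2^{t-1}\), again a contradiction.

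Therefore \(G\) is nilpotent of squarefree order, hence cyclic, and \(\sub(G)=d(|G|)=2^t\), so \(\lambda(G)=1\).

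The main point to check is Case 1: that \(C_{p_t}\) is the normal factor, so Lemma~\ref{lem:distinct-Sub-semi-distinct-Sub} applies, and that the subgroups \(L\) and \(C_{p_t}L\) are all distinct, which holds because their orders differ in divisibility by \(p_t\). The rest is direct arithmetic.
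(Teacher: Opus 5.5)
Your proof is correct and follows the paper's argument essentially step for step. Richards' bound forces squarefree order, and the doubling argument with the families \(L\) and \(C_{p_t}L\) handles a non-nilpotent complement \(H\). In the remaining case, Lemma~\ref{lem:squarefree-semidirect-subgroup-count} gives \(\sub(G)/2^{t-1}\ge (p_t+3)/2\ge 3\), exactly as in the paper.
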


\begin{proof}
The case \(G=1\) is immediate, so assume \(G\ne1\).
Put \(t=\pi(G)\).  The hypothesis reads
\[
        \sub(G)<6\cdot2^{t-2}.
\]
We argue by induction on \(t\).  First, Theorem~\ref{thm:Richards}
gives
\[
        d(|G|)\le \cyc(G)\le \sub(G).
\]
If \(|G|\) is not squarefree, then
\[
        d(|G|)\ge 3\cdot 2^{t-1}=6\cdot 2^{t-2}.
\]
Thus the strict inequality in the hypothesis forces \(|G|\) to be squarefree.
For \(t=1\), this already implies that \(G\) has prime order, and the result
follows.

Assume now that \(t\ge2\), and write
\[
        |G|=p_1p_2\cdots p_t,\qquad p_1<p_2<\cdots<p_t.
\]
A group of squarefree order is supersolvable, and its Sylow subgroup
corresponding to the largest prime divisor is normal.  Thus
\[
        G=P\rtimes H,\qquad P\cong C_{p_t},\quad |H|=p_1p_2\cdots p_{t-1}.
\]
We claim that \(H\) is nilpotent.  If \(H\) is not nilpotent, then the
induction hypothesis, applied contrapositively to \(H\), gives
\[
        \sub(H)\ge 6\cdot 2^{t-3}.
\]
Inside \(G=P\rtimes H\), the subgroups \(K\le H\) and the subgroups \(PK\) with
\(K\le H\) form two disjoint families.  Hence
\[
        \sub(G)\ge 2\sub(H)\ge 6\cdot 2^{t-2},
\]
contrary to the hypothesis.  Therefore \(H\) is nilpotent.  Since \(H\) has
squarefree order, it is cyclic, and we may write
\[
        G\cong C_{p_t}\rtimes_{\varphi}C_m,
        \qquad m=p_1p_2\cdots p_{t-1}.
\]

If the action is trivial, then \(G\) is cyclic of squarefree order.  Suppose
that the action is nontrivial, and put
\[
        k=|\operatorname{Im}\varphi|.
\]
By Lemma~\ref{lem:squarefree-semidirect-subgroup-count},
\[
        \sub(G)
        =(p_t+1)d(m)-(p_t-1)d(m/k).
\]
Since \(m\) is squarefree, \(d(m)=2^{t-1}\), and therefore
\[
        \frac{\sub(G)}{2^{t-1}}
        =
        p_t+1-\frac{p_t-1}{2^{\pi(k)}}.
\]
The action is nontrivial; hence \(\pi(k)\ge1\).  Also \(p_t\ge3\).  Hence
\[
        \frac{\sub(G)}{2^{t-1}}
        \ge p_t+1-\frac{p_t-1}{2}
        =\frac{p_t+3}{2}\ge3.
\]
Thus \(\sub(G)\ge 6\cdot 2^{t-2}\), again contradicting the hypothesis.
Therefore the action is trivial, and \(G\) is cyclic of squarefree order.  For
such a group, \(\sub(G)=2^{\pi(G)}\), and hence \(\lambda(G)=1\).
\end{proof}

\begin{remark}
\label{rem:sub-nilpotency}
Theorem~\ref{thm:nilpotency-criterion-Sub} has the same rigidity feature.  The
first nonabelian examples occur at the boundary: \(S_3\), and its products with
cyclic groups of new prime order, have \(\lambda=3/2\) and are not nilpotent.
\end{remark}

Combining Theorems~\ref{thm:nilpotency-criterion}
and~\ref{thm:nilpotency-criterion-Sub}, we obtain the following immediate
consequence.

\begin{corollary}\label{cor:initial-threshold-lower-bounds}
If \(G\) is a finite group and either
\[
        \eta(G)<\frac54
        \qquad\text{or}\qquad
        \lambda(G)<\frac32,
\]
then \(\eta(G)=\lambda(G)=1\), and \(G\) is cyclic of squarefree order.
\end{corollary}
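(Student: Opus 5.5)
The plan is to reduce the corollary directly to Theorems~\ref{thm:nilpotency-criterion} and~\ref{thm:nilpotency-criterion-Sub}, splitting according to which of the two hypotheses holds.

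In the first case, \(\eta(G)<5/4\), Theorem~\ref{thm:nilpotency-criterion} applies as stated. It gives that \(G\) is cyclic of squarefree order and that \(\eta(G)=1\).

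In the second case, \(\lambda(G)<3/2\), Theorem~\ref{thm:nilpotency-criterion-Sub} applies. It gives the same structural conclusion, namely that \(G\) is cyclic of squarefree order, together with \(\lambda(G)=1\).

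In either case \(G\cong C_n\) with \(n\) squarefree, so it only remains to compute both normalized counts for such a group. Every subgroup of \(C_n\) is cyclic, and there is exactly one subgroup for each divisor of \(n\). Hence
\[
        \sub(G)=\cyc(G)=d(n)=2^{\pi(n)},
\]
the last equality holding because \(n\) is squarefree. Dividing by \(2^{\pi(G)}=2^{\pi(n)}\) gives \(\eta(G)=\lambda(G)=1\), as claimed. The trivial group is covered by the stated conventions \(d(1)=1\) and \(\pi(1)=0\).

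I expect no real obstacle here. The only point needing care is that each theorem by itself yields equality only for its own invariant, so the equality for the other invariant has to come from the explicit computation \(\sub(C_n)=\cyc(C_n)=d(n)\) for cyclic \(C_n\).
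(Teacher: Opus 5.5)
Your proposal is correct and follows the paper, which obtains the corollary immediately by combining Theorems~\ref{thm:nilpotency-criterion} and~\ref{thm:nilpotency-criterion-Sub}. Your explicit computation \(\sub(C_n)=\cyc(C_n)=d(n)=2^{\pi(n)}\) for squarefree \(n\) spells out the step, also noted in the paper's introduction, that gives the equality for the other invariant.
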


\section{\texorpdfstring{Cyclic Sylow subgroups and \(ZM\)-group criteria}{Cyclic Sylow subgroups and ZM-group criteria}}
 \label{sec:sup-solv}

We now consider the thresholds \(\eta(G)<2\) and \(\lambda(G)<5/2\).
Both force every Sylow subgroup of \(G\) to be cyclic, so \(G\) is a
\(ZM\)-group and, in particular, metacyclic and supersolvable \cite{Scott}.
Nonabelian examples already occur, such as \(S_3\), so neither threshold
forces nilpotency.

We first prove the cyclic-subgroup result and determine the values of \(\eta\)
in \((1,2)\).  For the subgroup count, we establish supersolvability, then
cyclic Sylow subgroups, and finally an arithmetic criterion in the \(ZM\)
parameters and the values of \(\lambda\) in \((1,5/2)\).

\begin{theorem}
\label{thm:sup-solv-criterion-cyclic-subgroups}
Let \(G\) be a group with
\[
        |G|=\prod_{i=1}^k p_i^{r_i},
        \qquad p_1<p_2<\cdots<p_k,
\]
and assume that at least one \(r_i\) is larger than \(1\).  Suppose also that
no Sylow subgroup of \(G\) is generalized quaternion.  If
\[
        \cyc(G)
        < d(|G|)\min_{r_i>1}\left\{\frac{2r_i}{r_i+1}\right\},
\]
then every Sylow subgroup of \(G\) is cyclic.
\end{theorem}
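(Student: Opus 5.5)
Argue by contrapositive: suppose some Sylow \(p\)-subgroup of \(G\) is not cyclic, say for \(p=p_j\), and show that
\[
\cyc(G)\ge d(|G|)\min_{r_i>1}\frac{2r_i}{r_i+1}.
\]
A non-cyclic \(p\)-group has order at least \(p^2\), so \(r_j\ge2\) and \(j\) is one of the indices over which the minimum runs. By hypothesis this Sylow subgroup is not generalized quaternion, so Theorem~\ref{thm:amiri-jaco} applies: the order sequence of \(C_{n/p}\times C_p\), with \(n=|G|\), strongly dominates that of \(G\). Lemma~\ref{lem:strong-dom-implies-less-cyc} then gives
\[
\cyc(G)\ge \cyc(C_{n/p}\times C_p).
\]
It therefore suffices to compute the right-hand side and compare it with \(d(n)\).

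Write \(n=p^{r}m\) with \(r=r_j\ge 2\) and \(p\nmid m\). Then
\[
C_{n/p}\times C_p\cong (C_{p^{r-1}}\times C_p)\times C_m,
\]
and the two factors have coprime orders. By Theorem~\ref{thm:product-cyclic-grps},
\[
\cyc(C_{n/p}\times C_p)=\cyc(C_{p^{r-1}}\times C_p)\,d(m).
\]
Next count cyclic subgroups of \(A=C_{p^{r-1}}\times C_p\) via Lemma~\ref{lem:cyclic-subgroup-number-in-terms-order}, grouping elements by order.
\begin{itemize}
\item The identity contributes \(1\).
\item There are \(p^2-1\) elements of order \(p\), contributing \((p^2-1)/(p-1)=p+1\).
\item For \(2\le k\le r-1\), there are \(p^{k}-p^{k-1}\) elements of order \(p^k\) (the number of elements of order dividing \(p^k\) in \(A\) is \(p^{k+1}\)). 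Since \(\phi(p^k)=p^{k-1}(p-1)\), each such \(k\) contributes \(p\).
\end{itemize}
Hence
\[
\cyc(A)=1+(p+1)+(r-2)p=(r-1)p+2.
\]
Using \(p\ge 2\), we get \(\cyc(A)\ge 2r\). Therefore
\[
\cyc(G)\ge 2r\,d(m)=\frac{2r}{r+1}\,d(n),
\]
because \(d(n)=(r+1)d(m)\). This is at least \(d(n)\min_{r_i>1}\frac{2r_i}{r_i+1}\), which contradicts the hypothesis. So every Sylow subgroup must be cyclic.

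The only step needing care is the element count in \(C_{p^{r-1}}\times C_p\), including the edge case \(r=2\), where \(A=C_p\times C_p\) and \(\cyc(A)=p+2\), which still satisfies the formula. One also has to check that the inequality \((r-1)p+2\ge 2r\) is used in the right direction. Beyond that, the argument is a direct chain of the quoted results.
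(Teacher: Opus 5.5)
Your proof is correct and follows the paper's route (Amiri's theorem, then Lemma~\ref{lem:strong-dom-implies-less-cyc}, then a count in \(C_{n/p}\times C_p\)). The only difference is the final count: you split off \(C_m\) coprimely and compute \(\cyc(C_{p^{r-1}}\times C_p)=(r-1)p+2\) exactly, whereas the paper applies the non-coprime supermultiplicative inequality \(\cyc(C_{n/p}\times C_p)\ge\cyc(C_{n/p})\cyc(C_p)=2d(n/p)\) from Theorem~\ref{thm:product-cyclic-grps} directly. One small slip: for \(2\le k\le r-1\), the number of elements of order \(p^k\) in \(C_{p^{r-1}}\times C_p\) is \(p^{k+1}-p^k\), not \(p^k-p^{k-1}\), as your own parenthetical count shows, and it is this number that makes each such \(k\) contribute \(p\).
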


\begin{proof}
Put \(n=|G|\).  Suppose that the Sylow \(p_i\)-subgroup of \(G\) is not
cyclic.  Since no Sylow subgroup is generalized quaternion, Theorem
\ref{thm:amiri-jaco} gives a bijection from \(G\) onto
\(C_{n/p_i}\times C_{p_i}\) such that element orders in \(G\) divide the
orders of their images.  Hence
\[
        \cyc(G)\ge \cyc(C_{n/p_i}\times C_{p_i}).
\]
By Theorem~\ref{thm:product-cyclic-grps},
\[
\begin{aligned}
        \cyc(G)
        &\ge \cyc(C_{n/p_i}\times C_{p_i}) \\
        &\ge \cyc(C_{n/p_i})\cyc(C_{p_i})
         =2r_i\prod_{j\ne i}(r_j+1)\\
        &=d(|G|)\frac{2r_i}{r_i+1}.
\end{aligned}
\]
In particular,
\[
        \cyc(G)\ge
        d(|G|)\min_{r_j>1}\left\{\frac{2r_j}{r_j+1}\right\},
\]
contrary to the hypothesis.  Therefore every Sylow subgroup of \(G\) is
cyclic.
\end{proof}

The preceding estimate gives the threshold \(\eta<2\).

\begin{theorem}
\label{thm:super-solvability-criterion}
Let \(G\) be a finite group.  If
\[
        \eta(G)<2,
\]
then every Sylow subgroup of \(G\) is cyclic.  In particular, \(G\) is
metacyclic.
\end{theorem}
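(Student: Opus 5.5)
The plan is to feed the hypothesis into Theorem~\ref{thm:sup-solv-criterion-cyclic-subgroups}. The generalized quaternion case, which that theorem excludes, will be handled separately by the divisor bound from Theorem~\ref{thm:Richards}. Put \(t=\pi(G)\) and write \(|G|=\prod_{i=1}^t p_i^{r_i}\). The hypothesis \(\eta(G)<2\) reads \(\cyc(G)<2^{t+1}\).

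First, if every \(r_i=1\), then every Sylow subgroup has prime order and is cyclic, so there is nothing to prove. Assume therefore that some \(r_i>1\).

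Next, suppose some Sylow subgroup is generalized quaternion. Then it has order \(2^{a}\) with \(a\ge3\), so the exponent of \(2\) in \(|G|\) is at least \(3\). Theorem~\ref{thm:Richards} then gives
\[
        \cyc(G)\ge d(|G|)\ge (a+1)\,2^{t-1}\ge 4\cdot 2^{t-1}=2^{t+1},
\]
which contradicts the hypothesis. Hence no Sylow subgroup is generalized quaternion.

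Now Theorem~\ref{thm:sup-solv-criterion-cyclic-subgroups} applies. It suffices to check that its threshold is at least \(2^{t+1}\). For any index \(i\) with \(r_i\ge2\),
\[
        d(|G|)\,\frac{2r_i}{r_i+1}=2r_i\prod_{j\ne i}(r_j+1)\ge 2\cdot2\cdot2^{t-1}=2^{t+1}.
\]
Taking the minimum over such \(i\) shows that the threshold in Theorem~\ref{thm:sup-solv-criterion-cyclic-subgroups} is at least \(2^{t+1}>\cyc(G)\). That theorem then shows that every Sylow subgroup of \(G\) is cyclic.

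Finally, a group all of whose Sylow subgroups are cyclic is a \(ZM\)-group, hence metacyclic by the theorem of Zassenhaus recalled in Section~\ref{sec:preli}. This gives the last assertion.

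I expected the generalized quaternion Sylow \(2\)-subgroups to be the main obstacle, since Theorem~\ref{thm:amiri-jaco} says nothing about them. The first idea that comes to mind, comparing order sequences with \(C_{n/2^a}\times Q_{2^a}\), is not available from earlier results. However, the crude bound \(d(|G|)\le\cyc(G)\) already suffices, because a generalized quaternion group has order at least \(8\). So the only real care needed is in the arithmetic of the two displayed bounds.
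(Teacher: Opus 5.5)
Your proof is correct and follows the paper's argument step for step. Both proofs dispose of the squarefree case, exclude generalized quaternion Sylow subgroups via \(d(|G|)\ge 4\cdot 2^{t-1}=2^{t+1}\), and then verify \(2r_i\prod_{j\ne i}(r_j+1)\ge 2^{t+1}\) so that Theorem~\ref{thm:sup-solv-criterion-cyclic-subgroups} applies, with metacyclicity coming from Zassenhaus.
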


\begin{proof}
Put \(t=\pi(G)\).  The hypothesis reads
\[
        \cyc(G)<2^{t+1}.
\]
Write
\[
        |G|=\prod_{i=1}^t p_i^{r_i},
        \qquad p_1<p_2<\cdots<p_t.
\]
If all \(r_i=1\), then all Sylow subgroups have prime order and hence are
cyclic.

Assume now that some \(r_i>1\).  We first rule out generalized quaternion Sylow
subgroups.  If \(G\) has such a Sylow subgroup, then \(8\mid |G|\).  By
Theorem~\ref{thm:Richards},
\[
        \cyc(G)\ge d(|G|)\ge 4\cdot2^{t-1}=2^{t+1},
\]
contrary to the hypothesis.  Thus no Sylow subgroup of \(G\) is generalized
quaternion.

It remains only to compare the hypothesis with the threshold in
Theorem~\ref{thm:sup-solv-criterion-cyclic-subgroups}.  Since some \(r_i>1\),
\[
        d(|G|)
        \min_{r_i>1}\left\{\frac{2r_i}{r_i+1}\right\}
        =
        \min_{r_i>1}\left\{2r_i\prod_{j\ne i}(r_j+1)\right\}
        \ge 2^{t+1}.
\]
Therefore
\[
        \cyc(G)<2^{t+1}
        \le
        d(|G|)
        \min_{r_i>1}\left\{\frac{2r_i}{r_i+1}\right\}.
\]
Theorem~\ref{thm:sup-solv-criterion-cyclic-subgroups} now implies that every
Sylow subgroup of \(G\) is cyclic.

In all cases, every Sylow subgroup of \(G\) is cyclic.  Hence \(G\) is
metacyclic \cite{Scott}.
\end{proof}

\begin{remark}
\label{rem:supersolvability}
The constant \(2\) is sharp for the metacyclic conclusion: \(A_4\) has
\(\eta(A_4)=2\) and is not metacyclic.  Below this boundary nonabelian examples
still occur, for instance \(S_3\) and its products with cyclic groups of new
prime order.
\end{remark}

Combining Theorem~\ref{thm:super-solvability-criterion} with the
cyclic-subgroup formula in Proposition~\ref{prop:zm-counts} determines the
possible values of \(\eta\) below \(2\).

\begin{theorem}\label{thm:small-eta-values}
Let \(G\) be a finite group.  If
\[
        1<\eta(G)<2,
\]
then
\[
        \eta(G)\in\left\{\frac54,\,\frac32,\,\frac74,\,\frac{15}{8}\right\}.
\]
All four values occur.
\end{theorem}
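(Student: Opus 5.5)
By Theorem~\ref{thm:super-solvability-criterion}, the hypothesis \(\eta(G)<2\) makes every Sylow subgroup cyclic. Hence \(G\cong ZM(m,n,r)\) with \((m,n)=(m,r-1)=1\), and \(\pi(G)=\pi(m)+\pi(n)\). I will use the local factorization of Proposition~\ref{prop:zm-counts}:
\[
        \cyc(G)=\sum_{n_1\mid n}\prod_{p\mid m}c_p(n_1).
\]
For \(n_1\) fixed, let \(S(n_1)\) be the set of primes \(p\mid m\) with \(d_p\nmid n_1\). Each such prime contributes \(p^{a_p}\), and each other prime contributes \(a_p+1\). The plan is to bound the sum from below term by term and see what room the bound \(2^{\pi(m)+\pi(n)+1}\) leaves.

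\emph{Reductions.} The term \(n_1=n\) contributes \(d(m)\), and every term is at least \(d(m)\), since \(p^{a}\ge a+1\). So \(\cyc(G)\ge d(n)d(m)\), and this is \(d(|G|)\) by Theorem~\ref{thm:Richards}. If \(|G|\) were not squarefree, then \(d(|G|)\ge 3\cdot 2^{t-1}\). The aim is to show this still forces the count to be at least \(2^{t+1}\) unless the group is cyclic, or to find the remaining cases explicitly. Since \(\eta>1\), the group is nonabelian by Theorem~\ref{thm:nilpotency-criterion}, so some \(d_p>1\). For each \(n_1\mid n\) with \(d_p\nmid n_1\), the factor \(a_p+1\) is replaced by \(p^{a_p}\ge 3\), since \(p\) is odd or \(p\ge3\) when \(d_p>1\).

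Write \(m=m_0m_1\), where \(m_0\) is the product of the prime powers with \(d_p=1\) (a central cyclic direct factor), and split off coprime cyclic direct factors of \(n\) as well. Both contribute a factor \(\delta=d(\cdot)/2^{\pi(\cdot)}\ge1\), with equality exactly for squarefree factors. So I will first treat the core group and multiply by \(\delta\) afterwards, noting that \(\delta\ge 3/2\) already restricts matters severely.

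\emph{Core case.} In the core, suppose a single prime \(p\mid m\) has \(a_p=1\), and \(n\) has the form with the action of order \(k=d_p\). Lemma~\ref{lem:squarefree-semidirect-cyclic-count}, in the version with \(n\) not necessarily squarefree, gives
\[
        \eta=\frac{p\,d(n)-(p-2)d(n/k)}{2^{1+\pi(n)}}.
\]
I will enumerate the possibilities:
\begin{itemize}
\item \(p=3\), \(n=2\) gives \(5/4\) (this is \(S_3\));
\item \(p=3\), \(n=4\) with \(k=2\) gives \((9+2\cdot\text{?})\); precisely \(\cyc(C_3\rtimes C_4)=3\cdot3-1\cdot2=7\), so \(\eta=7/4\);
\item \(p=5\), \(n=2\) gives \((10-3)/4=7/4\);
\item \(p=3\), \(n=2\) times a factor \(C_2\)-type extension, or \(p=7\), \(k=2\) gives \((14-5)/4=9/4\), which is excluded;
\item \(p=7\), \(k=3\) (\(C_7\rtimes C_3\)) gives \(9/4\), excluded;
\item \(C_3\rtimes C_8\) gives \((12-3)/4\)? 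That is \(9/4\), excluded.
\end{itemize}
Multiplying \(S_3\) by \(C_{q^2}\) gives \(\tfrac54\cdot\tfrac32=\tfrac{15}{8}\). A second value is \(3/2\); a candidate is \(C_3\rtimes C_4\) with \(k=4\), which is impossible, so I will look instead at two primes in \(m\) acted on by the same \(C_2\) (for example \(D_{15}\)). There \(\cyc=1\cdot4\cdot\ldots\): the \(n_1=1\) term is \(15\), the \(n_1=2\) term is \(4\), total \(19\), giving \(19/8>2\), excluded. So I must search more carefully, for instance \(S_3\times C_{p^2}\) type, \(C_9\rtimes C_2\) (\(\cyc=9+3=12\), giving \(3\)), and \(C_{5}\rtimes C_4\) with \(k=4\): \(5\cdot3-3\cdot1=12\), giving \(3/2\). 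This is \(F_{20}\), which supplies the value \(3/2\).

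\emph{Main obstacle.} The real work is the exhaustive bounding that shows nothing else lies below \(2\). I will argue that each additional prime in \(m\) with nontrivial action, each exponent \(a_p\ge2\) at a nontrivially acted prime, each \(p\ge7\), and any non-squarefree cyclic factor other than the single \(\delta=3/2\) case with \(S_3\) pushes the normalized term-by-term lower bound to at least \(2\). The bounds come from the \(n_1=1\) term, which is at least \(\prod_{p\in S(1)}p^{a_p}\), together with the trivial term \(d(m)\) for divisors of \(n\) that are multiples of the order of \(r\). I expect the case analysis over \(d(n)\) and \(k\) (namely \(n=2,4,8\) and \(k\mid p-1\) with \(p\in\{3,5\}\)) to be finite but fiddly. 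Exhibiting \(S_3\), \(F_{20}\), \(C_5\rtimes C_2\), and \(S_3\times C_{q^2}\) shows that all four values occur.
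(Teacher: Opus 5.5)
\textbf{There is a genuine gap, and two concrete steps are wrong.}

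First, the inference ``since \(\eta>1\), the group is nonabelian by Theorem~\ref{thm:nilpotency-criterion}'' is invalid. That theorem only says that \(\eta<5/4\) forces \(G\) to be cyclic of squarefree order. Cyclic groups of non-squarefree order have \(\eta=\prod_i(a_i+1)/2>1\). Those with exactly one exponent equal to \(2\), such as \(C_4\), have \(\eta=3/2\). This cyclic case is exactly where the value \(3/2\) comes from, and your argument discards it.

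Second, your substitute witness for \(3/2\) is miscomputed. For \(F_{20}=C_5\rtimes C_4\) with faithful action, the cyclic subgroups are the trivial one, one of order \(5\), five of order \(2\) and five of order \(4\). So \(\cyc(F_{20})=12\) and \(\pi(F_{20})=2\), giving \(\eta(F_{20})=12/4=3\). Your own formula gives the same, since its denominator is \(2^{1+\pi(4)}=4\). In fact no nonabelian group has \(\eta=3/2\). So the proposal neither shows that \(3/2\) occurs nor correctly describes which groups lie in \((1,2)\).

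Third, and most importantly, the exhaustive bounding you defer to the ``main obstacle'' paragraph is the whole content of the theorem, and it is not carried out. Your enumeration only treats one prime \(p\) dividing \(m\) to the first power, with small \(n\). Nothing shows that \(m\) is a prime in \(\{3,5\}\), that the part of \(\langle b\rangle\) acting nontrivially is a \(2\)-group, or that the odd part of \(n\) only contributes the factor \(d(u)/2^{\pi(u)}\). Splitting off the Sylow subgroups of \(\langle b\rangle\) that centralize \(\langle a\rangle\) does not remove those acting nontrivially, and these may have odd order (e.g.\ \(d_7=3\)).

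The missing steps, as in the paper, are:
\begin{enumerate}
\item[(i)] By Theorem~\ref{thm:Richards}, \(d(|G|)<2^{t+1}\). Hence at most one exponent of \(|G|\) equals \(2\) and all others equal \(1\).
\item[(ii)] Take \(p^a\Vert m\) and put \(s=\pi(d_p)\). Restrict the sum in Proposition~\ref{prop:zm-counts} to the \(2^{\pi(n)}\) divisors \(\prod_q q^{\varepsilon_q v_q(n)}\) of \(n\). This gives \(\eta(G)\ge((2^s-1)p^a+2)/2^{s+1}\ge(p^a+2)/4\), which forces \(a=1\) and \(p\in\{3,5\}\).
\item[(iii)] The case \(m=15\) gives \(\eta\ge19/8\), and \(m=5\) with \(d_5=4\) gives \(\eta\ge3\). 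Both are excluded.
\item[(iv)] Hence \(r\equiv-1\pmod p\). Writing \(n=2^bu\) with \(u\) odd, one gets \(\cyc(G)=(p+2b)d(u)\), so \(\eta(G)=\frac{p+2b}{4}\cdot\frac{d(u)}{2^{\pi(u)}}\). Step (i) then leaves only \(5/4\), \(7/4\) and \(15/8\).
\end{enumerate}
Together with the cyclic value \(3/2\), this gives the stated list. Your witnesses \(S_3\), \(D_{10}\) (or \(C_3\rtimes C_4\)), and \(S_3\times C_{q^2}\) with \(q\ge5\) prime are correct for the three nonabelian values. The value \(3/2\) needs a cyclic witness such as \(C_4\).
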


\begin{proof}
Put \(t=\pi(G)\).  Since
\[
        \cyc(G)<2^{t+1},
\]
Theorem~\ref{thm:Richards} gives
\[
        d(|G|)\le\cyc(G)<2^{t+1}.
\]
It follows that \(|G|\) is either squarefree or has exactly one prime divisor
with exponent \(2\), and all other prime divisors occur to exponent \(1\).
Indeed, an exponent at least \(3\), or two distinct exponents at least \(2\),
would give \(d(|G|)\ge2^{t+1}\).

If \(G\) is cyclic and \(|G|=\prod_i p_i^{a_i}\), then
\[
        \eta(G)=\frac{d(|G|)}{2^{\pi(G)}}
        =\prod_i\frac{a_i+1}{2}.
\]
The order pattern just obtained shows that the only value of this product in
\((1,2)\) is \(3/2\), attained when exactly one exponent is \(2\).

Assume now that \(G\) is not cyclic.  By
Theorem~\ref{thm:super-solvability-criterion}, all Sylow subgroups of \(G\) are
cyclic.  Hence
\[
        G=ZM(m,n,r)
\]
with \(m,n>1\).  We use the notation of Proposition~\ref{prop:zm-counts}.

We first restrict the possible values of \(m\).  The defining conditions
force \(m\) to be odd: if \(2\mid m\), then \(r\) would be odd and
\((m,r-1)>1\).  Put \(h=\pi(m)\), \(k=\pi(n)\), and use the factors
\(c_p(n_1)\) defined in Proposition~\ref{prop:zm-counts}.
For \(p^a\Vert m\), we have \(d_p>1\), since \((m,r-1)=1\).  Put
\(s=\pi(d_p)\ge1\), and restrict the outer sum to
\[
\mathcal D(n)=\left\{\prod_{q\mid n}q^{\varepsilon_q v_q(n)}:
                    \varepsilon_q\in\{0,1\}\right\}.
\]
Unique factorization makes these \(2^k\) divisors distinct.  Since
\(d_p\mid n\), precisely \(2^{k-s}\) of them are divisible by \(d_p\):
they are the choices with \(\varepsilon_q=1\) for every \(q\mid d_p\).
For the other choices \(c_p(n_1)=p^a\), whereas on these \(2^{k-s}\)
choices \(c_p(n_1)=a+1\ge2\).  Every other factor \(c_\ell(n_1)\) is at
least \(2\).  Therefore the exact product formula gives
\[
        \cyc(G)\ge
        2^{h-1}\bigl((2^k-2^{k-s})p^a+2^{k-s+1}\bigr),
\]
and hence
\[
        \eta(G)\ge\frac{(2^s-1)p^a+2}{2^{s+1}}
                  \ge\frac{p^a+2}{4}.
\]
The last inequality uses \(s\ge1\) and \(p^a\ge3\).  Since \(\eta(G)<2\),
we obtain \(a=1\) and \(p\in\{3,5\}\).  Thus \(m\mid15\).

To exclude \(m=15\), write \(n=2^b u\) with \(u\) odd.  The order of
\(r\) modulo \(3\) is \(2\); its order modulo \(5\) is \(2\) or \(4\).
For each squarefree divisor \(v\mid u\), if \(d_5=2\), the distinct
choices \(n_1=v,2v\) contribute respectively
\(c_3(n_1)c_5(n_1)=15,4\).  There are \(2^{\pi(u)}\) such divisors \(v\),
and the resulting choices are all distinct, so
\[
        \eta(G)\ge\frac{(15+4)2^{\pi(u)}}{2^{\pi(u)+3}}
                  =\frac{19}{8}>2.
\]
If \(d_5=4\), then \(b\ge2\), and the choices \(n_1=v,2v,4v\) give
products \(15,10,4\), respectively.  The same count gives
\(\eta(G)\ge29/8>2\).  Thus \(m=3\) or \(m=5\).

If \(m=5\) and \(d_5=4\), the choices \(n_1=v,2v,4v\), again for every
squarefree \(v\mid u\), have local weights \(5,5,2\).  This gives
\(\eta(G)\ge(5+5+2)/4=3\), also impossible.  Therefore in every remaining
case \(d_m=2\), and \(r\equiv-1\pmod m\).

Write
\[
        n=2^b u,\qquad b\ge1,\qquad u\text{ odd}.
\]
Let \(p=m\); thus \(p=3\) or \(p=5\).  Since \(d_p=2\), the local
factorization in Proposition~\ref{prop:zm-counts} gives
\(c_p(n_1)=p\) for odd \(n_1\), and \(c_p(n_1)=2\) for even \(n_1\).
There are \(d(u)\) odd divisors of \(n\) and \(bd(u)\) even ones.
Therefore
\[
        \cyc(G)=p\,d(u)+2b\,d(u)=(p+2b)d(u).
\]
Consequently
\[
        \eta(G)
        =
        \frac{p+2b}{4}\cdot
        \frac{d(u)}{2^{\pi(u)}}.
\]
Put
\[
        \Delta(u)=\frac{d(u)}{2^{\pi(u)}}.
\]
The order pattern at the beginning of the proof implies \(b\le2\).  If
\(b=2\), then \(u\) is squarefree and \(\Delta(u)=1\).  If \(b=1\), then
\(\Delta(u)\in\{1,3/2\}\).

If \(p=3\), then
\[
        \eta(G)=\frac{3+2b}{4}\Delta(u).
\]
The condition \(\eta(G)<2\) leaves only the following possibilities:
\[
        b=1,\ \Delta(u)=1;\qquad
        b=1,\ \Delta(u)=\frac32;\qquad
        b=2,\ \Delta(u)=1.
\]
These give respectively
\[
        \eta(G)=\frac54,\qquad \frac{15}{8},\qquad \frac74.
\]
If \(p=5\), then
\[
        \eta(G)=\frac{5+2b}{4}\Delta(u).
\]
The strict inequality forces \(b=1\) and \(\Delta(u)=1\), giving
\[
        \eta(G)=\frac74.
\]
Together with the cyclic case, this gives exactly the asserted list.

Finally, all four values occur:
\[
        \eta(S_3)=\frac54,\qquad
        \eta(C_4)=\frac32,\qquad
        \eta(D_{10})=\frac74.
\]
Also \(ZM(3,50,2)\) has \(m=3\), \(b=1\), and
\(d(25)/2=3/2\).  The displayed expression for \(\eta(G)\) gives
\[
        \eta(ZM(3,50,2))=\frac{15}{8}.
\]
\end{proof}

\subsection{\texorpdfstring{The threshold \(\lambda<5/2\) and its arithmetic criterion}{The threshold lambda<5/2 and its arithmetic criterion}}

For the subgroup-count threshold, we begin with supersolvability.  The lemmas
following the next theorem establish the reductions needed for its proof.

\begin{theorem}
\label{thm:sup-solv-and-subgroups}
Let \(G\) be a finite group.  If
\[
        \lambda(G)<\frac52,
\]
then \(G\) is supersolvable.
\end{theorem}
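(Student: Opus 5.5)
We argue by contrapositive induction on \(|G|\), and show that \(\lambda(G)<5/2\) forces \(G\) to be solvable with enough structure to be supersolvable. Put \(t=\pi(G)\); the hypothesis reads \(\sub(G)<5\cdot2^{t-1}\). Throughout we use \(\sub(G)\ge\cyc(G)\ge d(|G|)\), which comes from Theorem~\ref{thm:Richards}.

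The first step restricts the order. If \(p^3\mid|G|\) for some prime \(p\), or if two primes occur with exponent at least \(2\), then \(d(|G|)\ge 2^{t+1}\). This alone does not beat \(5\cdot2^{t-1}\), so we refine it.

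\begin{itemize}
\item If some exponent \(r_i\ge4\), or two exponents are at least \(2\) with one of them \(\ge3\), then \(d(|G|)\ge5\cdot2^{t-1}\).
\item In the remaining cases, some Sylow \(p\)-subgroup \(P\) is non-cyclic of order \(p^2\), \(p^3\) or similar.
\end{itemize}

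In the second case we count the subgroups of \(G\) as a disjoint union over orders \(\ell\mid|G|\), with at least one subgroup of each order realized where the group is solvable. For the prime \(p\), however, \(P\) contributes extra subgroups. A non-cyclic group of order \(p^2\) has \(p+3\ge5\) subgroups, against \(3\) for \(C_{p^2}\). This excess should propagate to a factor of at least \(5/3\) on the \(p\)-part, which gives \(\lambda\ge(5/3)(3/2)=5/2\). Making this rigorous requires an injection: for a Hall \(p'\)-subgroup \(K\) and each \(L\le P\), count subgroups \(LK'\) with \(K'\le K\). These need not be subgroups, so instead we take \(L\le P\) and \(M\le N_G(L)\)-compatible pieces. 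This is where the argument becomes delicate.

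To avoid that delicacy, the cleaner route is a normal Sylow or minimal normal subgroup reduction. Let \(N\) be a minimal normal subgroup. Subgroups of \(G/N\) lift to subgroups containing \(N\), giving a first injection. The subgroups of \(N\) itself (other than \(N\)) give a second, disjoint family. Hence
\[
\sub(G)\ge\sub(G/N)+\sub(N)-1 .
\]
\begin{itemize}
\item If \(N\) is nonsolvable, then \(N\) contains a nonabelian simple group, and the counts are already far above the bound, since \(A_5\) gives \(59\) subgroups.
\item If \(N\) is elementary abelian of order \(p^k\) with \(k\ge2\), then \(\sub(N)\ge p+3\).
\end{itemize}
In the second case, combining the bound \(\sub(G/N)\ge 2^{\pi(G/N)}\) with the inequality above, and tracking whether \(p\) still divides \(|G/N|\), gives \(\sub(G)\ge5\cdot2^{t-1}\). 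When \(\pi\) drops this is immediate. When \(\pi\) does not drop, we need \(\sub(G/N)\) itself to be large, or we multiply by coprime factors as in Lemma~\ref{lem:distinct-Sub-semi-distinct-Sub}. So every minimal normal subgroup is cyclic of prime order. Moreover \(G/N\) satisfies the hypothesis, because \(\lambda(G/N)\le\lambda(G)\) up to the factor from a possible drop in \(\pi\). We control that factor using the families \(H\) and \(NH\) as in the proof of Theorem~\ref{thm:nilpotency-criterion-Sub}, which double the count. By induction \(G/N\) is then supersolvable, and so is \(G\), since \(N\) is cyclic.

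The main obstacle is the inequality \(\lambda(G/N)<5/2\) when \(\pi(G/N)=\pi(G)\). In that case \(\sub(G)\ge\sub(G/N)+\sub(N)-1\) only adds a constant \(1\), not a multiplicative factor. I would handle it by choosing \(N\) inside a Sylow subgroup and using a complement, as in Lemma~\ref{lem:distinct-Sub-semi-distinct-Sub}, to get the two disjoint families \(\{H\}\) and \(\{NH\}\). Alternatively, I would first invoke the order restriction above to reduce to Sylow orders \(p^2\), where the exceptional groups can be checked directly.
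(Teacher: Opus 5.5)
Your inductive skeleton is sound and is essentially the paper's (compare Lemma~\ref{lem:sup-sub-normal-prime-sylow}). Once some minimal normal subgroup \(N\) has prime order, \(G/N\) inherits the hypothesis: via the families \(L\) and \(NL\) when \(N\) is a Sylow subgroup, and trivially when \(\pi(G/N)=\pi(G)\), because \(\sub(G/N)\le\sub(G)\). So the case you single out as the main obstacle is in fact the easy one. What is missing is the proof that a minimal normal subgroup of prime order exists, i.e.\ the exclusion of the other two kinds of minimal normal subgroup.

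\textbf{Elementary abelian \(N\).} For \(N\cong C_p^k\) with \(k\ge2\), the additive inequality \(\sub(G)\ge\sub(G/N)+\sub(N)-1\) cannot produce a bound proportional to \(2^{t-1}\). If \(N\cong C_p^2\) is a normal Sylow subgroup with a complement of squarefree order, it gives only \(2^{t-1}+p+2\); for \(p=2\), \(t=5\) that is \(20\) against the required \(80\). So ``immediate when \(\pi\) drops'' is false, and when \(N\) is not Sylow you give no argument at all. Both cases need multiplicative inputs, which are the rigorous form of the ``excess propagates'' heuristic you abandoned:
\begin{enumerate}
\item When \(N\) is a normal Sylow subgroup with complement \(H\), the paper uses Corollary~\ref{cor:cyc-nm} to get \(\cyc(G)\ge\cyc(N)\cyc(H)\ge(p+2)2^{t-1}\). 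It then adds the \(2^{t-1}\) noncyclic subgroups \(NL\).
\item Otherwise it uses Amiri's domination theorem, packaged as Theorem~\ref{thm:sup-solv-criterion-cyclic-subgroups}. This applies because a Sylow subgroup containing \(C_p\times C_p\) is neither cyclic nor generalized quaternion, and it gives \(\cyc(G)\ge6\cdot2^{t-1}\) in the surviving order patterns \(p^3m\) and \(p^2q^2m\) (\(m\) squarefree).
\end{enumerate}
Your divisor restriction should also exclude three squared primes, and you should note that groups with all Sylow subgroups cyclic are supersolvable.

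\textbf{Nonsolvable \(N\).} This is a more serious hole. That \(A_5\) has \(59\) subgroups says nothing about \(\sub(G)\) versus \(5\cdot2^{\pi(G)-1}\) when \(|G|\) has many prime divisors. The counting tools above fall short precisely here: for \(|G|=4m\) with \(m\) odd and squarefree, Theorem~\ref{thm:sup-solv-criterion-cyclic-subgroups} yields only \(\cyc(G)\ge4\cdot2^{t-1}\). The paper proceeds in four steps:
\begin{enumerate}
\item The order restriction reduces to a simple normal subgroup \(S\) of order \(4m'\) with \(m'\) odd and squarefree. The case of a Sylow \(2\)-subgroup \(Q_8\) is ruled out with Brauer--Suzuki.
\item Walter's classification (Lemma~\ref{lem:simple-four-squarefree}) gives \(S\cong\PSL(2,q)\) with \(q\) prime, and \(\Aut(S)\) introduces no new prime.
\item This forces \(G=S\) in a minimal counterexample.
\item The involution count \(q(q-1)/2\ge5\cdot2^{t-1}\) from Lemmas~\ref{lem:num-invol} and~\ref{lem:ineq-primes} finishes the case.
\end{enumerate}
Some input about simple groups of this strength is needed. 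Alternatively, you could cite the independently proved Theorem~\ref{thm:solvable}, which is an even heavier result. As it stands, your proposal establishes the easy inductive step but neither of the two cases that carry the content of the theorem.
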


For the proof of this theorem, a counterexample means a finite
non-supersolvable group \(G\) satisfying the hypothesis of
Theorem~\ref{thm:sup-solv-and-subgroups}, with \(t=\pi(G)\).  Thus
\[
        \sub(G)<5\cdot2^{t-1}.
\]
We first reduce a minimal counterexample to three possible orders.

\begin{lemma}\label{lem:sup-sub-order-reduction}
Let \(G\) be a counterexample of minimal order, and put \(t=\pi(G)\).  Then
\(t\ge4\).  Moreover, if
\[
        |G|=p_1^{\alpha_1}\cdots p_t^{\alpha_t},
        \qquad \alpha_1\ge\alpha_2\ge\cdots\ge\alpha_t,
\]
then one of the following holds:
\[
        |G|=p_1^3p_2\cdots p_t,\qquad
        |G|=p_1^2p_2^2p_3\cdots p_t,\qquad
        |G|=p_1^2p_2\cdots p_t.
\]
\end{lemma}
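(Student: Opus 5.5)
The lemma has two parts: the constraint on the order comes from comparing divisor counts, and the bound \(t\ge4\) comes from ruling out small \(t\).

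\emph{Order pattern.} Theorem~\ref{thm:Richards} and \(\cyc(G)\le\sub(G)\) give
\[
d(|G|)\le\sub(G)<5\cdot2^{t-1},
\qquad\text{that is,}\qquad
\prod_{i=1}^t\frac{\alpha_i+1}{2}<\frac52 .
\]
Each factor \((\alpha_i+1)/2\) lies in \(\{1,\tfrac32,2,\tfrac52,\dots\}\), so the allowed patterns are:
\begin{itemize}
\item all \(\alpha_i=1\);
\item one exponent equal to \(2\);
\item one exponent equal to \(3\);
\item two exponents equal to \(2\), since \(9/4<5/2\).
\end{itemize}
A single exponent \(\ge4\) gives a factor \(\ge 5/2\). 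Two exponents with one \(\ge3\) give product \(\ge 3\). Three exponents \(\ge2\) give \(27/8\). To reach the three listed shapes, the squarefree case must be excluded. A group of squarefree order is supersolvable (as used in the proof of Theorem~\ref{thm:nilpotency-criterion-Sub}), so it cannot be a counterexample.

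\emph{The bound \(t\ge4\).} Since \(G\) is not supersolvable, its order is not squarefree, and \(G\) is not nilpotent. I will handle \(t\le3\) case by case.
\begin{itemize}
\item \(t=1\): \(G\) is a \(p\)-group, hence supersolvable.
\item \(t=2\): the order is \(p^2q\), \(p^3q\), or \(p^2q^2\), and \(\sub(G)<10\). I will show every non-supersolvable group of these orders has at least \(10\) subgroups. The candidates are \(A_4\) and \(C_2^2\rtimes C_9\)-type groups of order \(p^2q\) (\(\sub(A_4)=10\)), \(S_4\) and \(\mathrm{SL}(2,3)\) of order \(24\), and groups of order \(p^2q^2\) such as \(C_3^2\rtimes C_4\) or \(C_2^2\rtimes C_9\)-style extensions. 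The key structural fact is that a non-supersolvable group of these orders has a normal \(p\)-subgroup \(N\cong C_p^2\) with an irreducible action of a \(q\)-element. That forces \(q\mid p+1\), and then \(N\) alone contributes \(p+3\ge5\) subgroups.
\item \(t=3\): the bound is \(\sub(G)<20\), and a similar argument applies.
\end{itemize}

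For \(t=2,3\) I will use the following uniform device. Let \(M\) be a minimal normal subgroup that is non-cyclic. Such an \(M\) exists: the group is solvable, because \(d(|G|)\) forces at most \(p^3\) Sylow parts and small configurations; and if every chief factor were cyclic, \(G\) would be supersolvable. Then \(M\cong C_p^k\) with \(k\in\{2,3\}\) and has at least \(p+3\) subgroups. A complement \(H\) of order coprime to \(p\) contributes \(\sub(H)\) further subgroups, and the family \(\{MK: K\le H\}\) contributes more by Lemma~\ref{lem:distinct-Sub-semi-distinct-Sub}. Conjugates of \(H\), which number at least \(p^k/|N_M(H)|\), add more again. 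The resulting lower bound should be roughly
\[
(p+3)+2\sub(H)+(\text{conjugates})\ \ge\ 5\cdot2^{t-1}.
\]
I must also use minimality: every proper subgroup and quotient with the same \(\pi\) has \(\lambda\) at least as small, so proper subgroups and quotients are supersolvable. This makes \(G\) a minimal non-supersolvable group, whose structure is classical (Doerk): \(G=P\rtimes Q\) with \(P/\Phi(P)\) an irreducible chief factor of rank \(\ge2\). That reduces everything to such pairs.

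\emph{Main obstacle.} The hard step is showing \(t\ge4\). It requires a uniform lower bound \(\sub(P\rtimes Q)\ge5\cdot2^{t-1}\) for minimal non-supersolvable groups with \(t\le3\). I would try to count the \(p^k\) conjugate complements of \(Q\) together with the subgroups of \(P\), and reduce \(\pi\)-dependence by splitting off cyclic factors of new primes.
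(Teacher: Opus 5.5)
Your derivation of the three order shapes is the paper's argument: \(d(|G|)\le\cyc(G)\le\sub(G)<5\cdot2^{t-1}\), together with the fact that squarefree order forces supersolvability. The gap is the claim \(t\ge4\). You only outline a plan for it and call it the ``main obstacle'', so it is not proved. The plan also rests on several false steps. The paper does not argue structurally here at all. It obtains \(t\ge4\) from the Betz--Nash classification of groups with fewer than \(20\) subgroups: for \(t=2\) one needs \(\sub(G)<10\), where \(A_4\) with \(10\) subgroups is the boundary case, and for \(t=3\) one needs \(\sub(G)<20\).

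The specific failures are these.
(i) You assert that groups with \(t=3\) in the admissible order shapes are solvable. This is false. The groups \(A_5\) (order \(2^2\cdot3\cdot5\)), \(\PSL(2,7)\) (order \(2^3\cdot3\cdot7\)), \(S_5\), \(\mathrm{SL}(2,5)\) and \(A_5\times C_3\) all have \(d(|G|)<20\). None of them is a counterexample, but your ``uniform device'' needs an abelian non-cyclic minimal normal subgroup, so it cannot treat them. They need a separate argument, for instance \(\sub(G)\ge\sub(S)\ge59\) for a nonabelian simple section \(S\), using monotonicity of \(\sub\) under subgroups and quotients and the Das--Mandal theorem.
(ii) Minimality of \(G\) does not make \(G\) minimal non-supersolvable. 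Only sections \(X\) with \(\pi(X)=\pi(G)\) inherit \(\lambda(X)<5/2\). A subgroup \(H\) with \(\pi(H)=t-1\) would need \(\sub(H)<5\cdot2^{t-2}\), and \(\sub(H)<\sub(G)\) does not give this. So Doerk's structure theorem is not available.
(iii) ``Some chief factor is non-cyclic'' does not produce a non-cyclic \emph{minimal normal} subgroup. Your ``key structural fact'' for \(t=2\) is also false as stated. The group \(\mathrm{SL}(2,3)\), of order \(2^3\cdot3\), has minimal normal subgroup \(C_2\) and no normal \(C_2^2\). The group \(C_2^3\rtimes C_7\), with \(d(56)=8<10\), has an irreducible normal \(C_2^3\), and \(7\nmid 2+1\).
(iv) The closing estimate \((p+3)+2\sub(H)+(\text{conjugates})\ge5\cdot2^{t-1}\) is only heuristic (``should be roughly''). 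It also double counts the trivial subgroup and \(M\).

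To close the gap, either cite the Betz--Nash classification as the paper does, or carry out a genuine minimal-counterexample case analysis on the three order shapes, which you have already shown hold for every \(t\). That analysis must treat nonsolvable groups separately. In the solvable case it should obtain a non-cyclic minimal normal subgroup by the quotient argument: a cyclic minimal normal \(N\) of order \(p\) with \(p\mid|G/N|\) makes \(G/N\), hence \(G\), supersolvable by minimality. A normal Sylow subgroup of prime order must be excluded separately.
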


\begin{proof}
For \(t=1\), the group is nilpotent.  If \(t=2\), the hypothesis gives
\(\sub(G)<10\), and if \(t=3\), it gives \(\sub(G)<20\).  The classification
of groups with fewer than \(20\) subgroups by Betz--Nash
\cite{betz-nash} shows that no such group is a counterexample; \(A_4\) is the
boundary nonsupersolvable example for \(t=2\), and it has \(10\) subgroups.
Thus \(t\ge4\).

If all \(\alpha_i=1\), then \(G\) has squarefree order and is supersolvable.
Therefore some \(\alpha_i>1\).  By Theorem~\ref{thm:Richards},
\[
        \sub(G)\ge \cyc(G)\ge d(|G|).
\]
If \(\alpha_1\ge4\), then
\[
        d(|G|)\ge5\cdot2^{t-1},
\]
contrary to the hypothesis.  Thus \(\alpha_1\le3\).  If
\(\alpha_1=3\) and \(\alpha_2\ge2\), then
\[
        d(|G|)\ge4\cdot3\cdot2^{t-2}=6\cdot2^{t-1},
\]
again impossible.  Hence \(\alpha_1=3\) forces
\(\alpha_i=1\) for \(i\ge2\).  Finally, if
\(\alpha_1=\alpha_2=\alpha_3=2\), then
\[
        d(|G|)\ge27\cdot2^{t-3}>5\cdot2^{t-1}.
\]
The three displayed possibilities follow.
\end{proof}

We also use the following classification consequence.

\begin{lemma}\label{lem:simple-four-squarefree}
Let \(S\) be a nonabelian simple group of order \(4m\), where \(m\) is odd and
squarefree.  Then
\[
        S\cong \PSL(2,q),
\]
where \(q\) is an odd prime and \((q-1)(q+1)/4\) is squarefree.  Moreover,
\(\Aut(S)\cong \operatorname{PGL}(2,q)\), and \(\Aut(S)\) has no prime divisor
outside \(|S|\).
\end{lemma}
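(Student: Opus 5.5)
We plan to argue from the structure of a Sylow \(2\)-subgroup.  Since \(|S|=4m\) with \(m\) odd, a Sylow \(2\)-subgroup \(P\) of \(S\) has order \(4\).  If \(P\) were cyclic, Burnside's normal \(p\)-complement theorem (or Cayley's argument for cyclic Sylow \(2\)-subgroups) would give a normal \(2\)-complement, contradicting simplicity.  Hence \(P\cong C_2\times C_2\).  Nonabelian simple groups with Klein four Sylow \(2\)-subgroups are classified by the Gorenstein--Walter theorem (dihedral Sylow \(2\)-subgroups): \(S\cong\PSL(2,q)\) with \(q\equiv 3,5\pmod 8\), \(q\) odd, or \(S\cong A_7\), which has dihedral Sylow \(2\)-subgroups of order \(8\) and so does not occur here.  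Thus \(S\cong\PSL(2,q)\) with \(q\equiv\pm3\pmod 8\).

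Next we use the squarefree hypothesis.  Write \(q=\ell^f\).  Then \(|S|=q(q-1)(q+1)/2\), and the \(\ell\)-part of \(|S|\) is \(q\).  Since \(|S|/4=m\) is squarefree and \(\ell\) is odd, we need \(f=1\), so \(q=\ell\) is an odd prime.  The remaining factor \(m/q=(q-1)(q+1)/8\) must also be squarefree.  Since \(q\equiv\pm3\pmod8\), exactly one of \(q\pm1\) is \(\equiv 2\pmod 4\) and the other is \(\equiv4\pmod 8\), so \((q-1)(q+1)/4\) equals \(2\) times an odd squarefree number, which is squarefree.  This gives the stated condition.  Conversely, squarefreeness of \((q^2-1)/4\) forces \(8\nmid q^2-1\) up to the single factor \(2\), recovering \(q\equiv\pm3\pmod8\), which is consistent with the classification step.

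For the automorphism group, we will use the standard fact that \(\Aut(\PSL(2,\ell^f))\cong \mathrm{P\Gamma L}(2,\ell^f)\).  With \(f=1\) the field automorphism group is trivial, so \(\Aut(S)\cong\operatorname{PGL}(2,q)\).  Its order is \(q(q^2-1)=2|S|\), so every prime divisor of \(|\Aut(S)|\) already divides \(|S|\).

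We expect the main obstacle to be the classification step, which depends on the deep Gorenstein--Walter theorem.  We will cite it rather than prove it, and we must be careful to exclude \(A_7\) and \(\PSL(2,q)\) with \(q\equiv\pm1\pmod 8\), whose Sylow \(2\)-subgroups have order at least \(8\).  As an alternative route, one could use Walter's classification of simple groups with abelian Sylow \(2\)-subgroups.  The arithmetic steps are routine.
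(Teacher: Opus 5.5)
Your argument is correct and has the same skeleton as the paper's. The Sylow $2$-subgroup is forced to be $C_2\times C_2$, a classification theorem identifies $S$ as some $\PSL(2,q)$, squarefreeness of $m$ forces $q$ to be prime, and $\Aut(S)\cong\operatorname{PGL}(2,q)$ has order $2|S|$.

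The one real difference is which classification you cite. You use Gorenstein--Walter, with the four-group counted as dihedral. Then the only extraneous case is $A_7$, and $A_5\cong\PSL(2,4)$ appears automatically as $\PSL(2,5)$. The paper instead uses Walter's abelian-Sylow classification in Craven's form. This forces it to discard $J_1$ and the Ree groups by the order of their Sylow $2$-subgroups, and to reduce $\PSL(2,2^f)$ to $f=2$ by hand. Since $C_2\times C_2$ is both abelian and dihedral, either theorem applies; the choice only changes which exclusions have to be made.

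Your closing remark on the converse is misstated. For odd $q$, $8$ always divides $q^2-1$, so what you mean is $16\nmid q^2-1$. That remark is not needed for the lemma.
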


\begin{proof}
A Sylow \(2\)-subgroup \(P\) of \(S\) has order \(4\).  It cannot be cyclic:
otherwise Burnside's normal \(2\)-complement theorem would give a nontrivial
proper normal subgroup of \(S\).  Thus \(P\cong C_2\times C_2\).
Walter's classification \cite{Walter1969}, in the form stated in
\cite[Theorem~10]{Craven2009}, leaves the possibilities
\[
\begin{gathered}
\PSL(2,2^f)\ (f\ge2),\qquad
\PSL(2,u)\ (u\ge5\text{ an odd prime power},\ u\equiv3,5\pmod8),\\
J_1,\qquad {}^2G_2(3^{2e+1})\ (e\ge1).
\end{gathered}
\]
The Sylow \(2\)-subgroups of \(J_1\) and of the Ree groups in this list have
order \(8\) \cite[Section~3]{Craven2009}, so neither can occur.  Since
\(|\PSL(2,2^f)|=2^f(2^{2f}-1)\), the even-field case forces \(f=2\), and
\(\PSL(2,4)\cong A_5\cong\PSL(2,5)\).  In the odd-field case, write
\(u=p^f\).  The factor \(p^f\) in
\(|\PSL(2,p^f)|=p^f(p^{2f}-1)/2\) divides the squarefree integer \(m\),
so \(f=1\).

Hence in every case \(S\cong\PSL(2,q)\) for an odd prime \(q\ge5\).
The order identity \(4m=q(q^2-1)/2\) gives
\[
        \frac{(q-1)(q+1)}4=\frac{2m}{q},
\]
which is squarefree because \(m/q\) is odd and squarefree.
Finally, the automorphisms of \(\PSL(2,p^f)\) are generated by the
\(\operatorname{PGL}(2,p^f)\)-action and field automorphisms
\cite{White2013}.  Since \(q\) is prime, there are no nontrivial field
automorphisms, and
\[
        \Aut(S)\cong\operatorname{PGL}(2,q),\qquad
        |\Aut(S)|=q(q^2-1)=2|S|.
\]
As \(2\mid |S|\), this introduces no new prime divisor.
\end{proof}

\begin{lemma}\label{lem:sup-sub-normal-prime-sylow}
A minimal counterexample has no normal Sylow subgroup of prime order.
\end{lemma}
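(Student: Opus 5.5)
We argue by contradiction.  Suppose a minimal counterexample \(G\), with \(t=\pi(G)\), has a normal Sylow \(p\)-subgroup \(P\cong C_p\).  By Schur--Zassenhaus, \(G=P\rtimes H\) with \(|H|=|G|/p\) coprime to \(p\), so \(\pi(H)=t-1\).  The plan has two steps.  First, show that \(H\) satisfies the hypothesis of Theorem~\ref{thm:sup-solv-and-subgroups}; by minimality of \(|G|\), \(H\) is then supersolvable.  Second, show that \(G\) itself is supersolvable, which contradicts the choice of \(G\).

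For the first step, we use the two disjoint families of subgroups from the proof of Theorem~\ref{thm:nilpotency-criterion-Sub}.  The subgroups \(K\le H\) have order coprime to \(p\).  The subgroups \(PK\) are distinct by Lemma~\ref{lem:distinct-Sub-semi-distinct-Sub}, and each has order divisible by \(p\).  Hence
\[
        \sub(G)\ge 2\sub(H).
\]
Thus \(2\sub(H)<5\cdot 2^{t-1}\), that is,
\[
        \sub(H)<5\cdot2^{t-2}=\tfrac52\cdot 2^{\pi(H)},
\]
so \(\lambda(H)<5/2\).  Since \(|H|<|G|\), the group \(H\) is not a counterexample, and therefore \(H\) is supersolvable.

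For the second step, note that \(P\) is a normal subgroup of prime order, hence cyclic.  Also \(G/P\cong H\) is supersolvable.  Supersolvability is closed under extensions by a cyclic normal subgroup: if \(N\trianglelefteq G\) is cyclic and \(G/N\) is supersolvable, then \(G\) is supersolvable.  The reason is that a chief series of \(G/N\) lifts to a normal series of \(G\) above \(N\) with cyclic factors.  Below \(N\), every subgroup of the cyclic group \(N\) is characteristic in \(N\), hence normal in \(G\), so the series can be refined through \(N\) with cyclic factors.  In our situation \(N=P\) has prime order, so we simply prepend \(1\trianglelefteq P\) to the lifted series.  This gives a normal series of \(G\) with cyclic factors, so \(G\) is supersolvable.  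This contradiction proves the lemma.

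The only point requiring care is the counting step.  The two families must be genuinely disjoint and their members distinct, which the order argument and Lemma~\ref{lem:distinct-Sub-semi-distinct-Sub} handle.  The normalization must also shift correctly, since \(\pi(H)=t-1\) because \(p\nmid|H|\).  I do not expect a real obstacle; the lemma is essentially the same inductive reduction used in Theorem~\ref{thm:nilpotency-criterion-Sub}.
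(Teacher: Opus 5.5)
Your proposal is correct and matches the paper's proof. Both use the Schur--Zassenhaus splitting \(G=P\rtimes H\), the doubling bound \(\sub(G)\ge2\sub(H)\) from the disjoint families \(L\) and \(PL\), minimality applied to \(H\), and the fact that an extension of a supersolvable group by a normal subgroup of prime order is supersolvable. The only difference is presentational: you first derive \(\lambda(H)<5/2\) and then invoke minimality, whereas the paper splits into the cases \(H\) non-supersolvable and \(H\) supersolvable.
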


\begin{proof}
Let \(G\) be a minimal counterexample, let \(t=\pi(G)\), and suppose that
\(P\) is a normal Sylow subgroup of prime order.  By Schur--Zassenhaus,
\[
        G=P\rtimes H,\qquad \pi(H)=t-1.
\]
If \(H\) is non-supersolvable, then the minimality of \(G\) gives
\[
        \sub(H)\ge5\cdot2^{t-2}.
\]
The subgroups \(L\le H\) and \(PL\), with \(L\le H\), form two disjoint
families of subgroups of \(G\), and the second family has cardinality
\(\sub(H)\) by Lemma~\ref{lem:distinct-Sub-semi-distinct-Sub}.  Therefore
\[
        \sub(G)\ge2\sub(H)\ge5\cdot2^{t-1},
\]
a contradiction.  If \(H\) is supersolvable, then \(G/P\cong H\) is
supersolvable and \(P\) is cyclic of prime order, whence \(G\) is
supersolvable, again a contradiction.
\end{proof}

\begin{lemma}\label{lem:sup-sub-double-square}
A minimal counterexample cannot have order
\[
        p_1^2p_2^2p_3\cdots p_t.
\]
\end{lemma}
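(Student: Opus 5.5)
The plan is to show that the order pattern \(p_1^2p_2^2p_3\cdots p_t\) is too rigid to leave room for a non-cyclic Sylow subgroup. Once all Sylow subgroups are cyclic, the group is a \(ZM\)-group, and \(ZM\)-groups are supersolvable, so \(G\) would not be a counterexample. No induction or appeal to minimality should be needed in this case; only the counting hypothesis \(\sub(G)<5\cdot2^{t-1}=10\cdot2^{t-2}\) and the order shape are used.

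\textbf{Step 1: every Sylow subgroup is cyclic.} Let \(G\) be a minimal counterexample with \(|G|=p_1^2p_2^2p_3\cdots p_t\). The Sylow \(p_i\)-subgroups for \(i\ge3\) have prime order and are cyclic. The Sylow \(p_1\)- and \(p_2\)-subgroups have order \(p^2\), so each is either \(C_{p^2}\) or \(C_p\times C_p\). In particular none is generalized quaternion, since those have order at least \(8\). So I would apply Theorem~\ref{thm:sup-solv-criterion-cyclic-subgroups} directly. Here the exponents larger than \(1\) are \(r_1=r_2=2\), so
\[
        d(|G|)\min_{r_i>1}\left\{\frac{2r_i}{r_i+1}\right\}
        =9\cdot2^{t-2}\cdot\frac43=12\cdot2^{t-2}.
\]
Equivalently, if the Sylow \(p_1\)-subgroup were \(C_{p_1}\times C_{p_1}\), Theorem~\ref{thm:amiri-jaco} and Theorem~\ref{thm:product-cyclic-grps} would give
\[
        \cyc(G)\ge 2\,d(|G|/p_1)=2\cdot 2\cdot 3\cdot 2^{t-2}=12\cdot2^{t-2}.
\]

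\textbf{Step 2: compare with the hypothesis.} Since \(\sub(G)\ge\cyc(G)\), we have
\[
        \cyc(G)\le\sub(G)<10\cdot2^{t-2}<12\cdot2^{t-2}.
\]
So the hypothesis of Theorem~\ref{thm:sup-solv-criterion-cyclic-subgroups} holds, and every Sylow subgroup of \(G\) is cyclic.

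\textbf{Step 3: conclude.} By Step~1, \(G\) is a \(ZM\)-group, and \(ZM\)-groups are metacyclic and supersolvable \cite{Scott}. This contradicts the assumption that \(G\) is a counterexample, so no minimal counterexample has this order.

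The only point needing care is the arithmetic margin in Step~1: the noncyclic Sylow bound \(12\cdot2^{t-2}\) must exceed the hypothesis \(10\cdot2^{t-2}\). This holds comfortably, so I do not expect a real obstacle in this case. I expect the genuinely harder cases to be the orders \(p_1^3p_2\cdots p_t\) and \(p_1^2p_2\cdots p_t\), where a Sylow \(2\)-subgroup can be \(Q_8\) or the margin is smaller. Those cases will require Lemma~\ref{lem:sup-sub-normal-prime-sylow} and Lemma~\ref{lem:simple-four-squarefree}.
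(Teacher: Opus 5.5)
Your proof is correct and is essentially the paper's argument: since $p_1^2$ and $p_2^2$ rule out generalized quaternion Sylow subgroups, Theorem~\ref{thm:sup-solv-criterion-cyclic-subgroups} shows that a noncyclic Sylow subgroup would force $\sub(G)\ge\cyc(G)\ge \tfrac43\,d(|G|)=6\cdot2^{t-1}$, contradicting $\sub(G)<5\cdot2^{t-1}$. The only difference is direction: the paper starts from non-supersolvability to obtain a noncyclic Sylow subgroup, while you deduce that all Sylow subgroups are cyclic and then invoke supersolvability of $ZM$-groups, which is the same step.
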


\begin{proof}
For this order pattern, no Sylow subgroup is generalized quaternion.  Since \(G\) is not
supersolvable, Theorem~\ref{thm:sup-solv-criterion-cyclic-subgroups} gives
\[
        \sub(G)\ge\cyc(G)
        \ge d(|G|)\frac43
        =3^2\cdot2^{t-2}\cdot\frac43
        =6\cdot2^{t-1},
\]
contrary to the hypothesis.
\end{proof}

\begin{lemma}\label{lem:sup-sub-cube}
A minimal counterexample cannot have order
\[
        p_1^3p_2\cdots p_t.
\]
\end{lemma}

\begin{proof}
If the Sylow \(p_1\)-subgroup is not generalized quaternion, then
Theorem~\ref{thm:sup-solv-criterion-cyclic-subgroups} gives
\[
        \sub(G)\ge\cyc(G)
        \ge d(|G|)\frac64
        =4\cdot2^{t-1}\cdot\frac64
        =6\cdot2^{t-1},
\]
a contradiction.  Hence \(p_1=2\), and the Sylow \(2\)-subgroup \(P_1\) is
isomorphic to \(Q_8\).

Assume first that \(G\) has a normal Sylow subgroup.  If this subgroup is
\(P_r\) with \(r>1\), then it has prime order, contradicting
Lemma~\ref{lem:sup-sub-normal-prime-sylow}.

It remains, under the assumption that some Sylow subgroup is normal, to consider
the case where the normal Sylow subgroup is \(P_1\cong Q_8\).  Then
Schur--Zassenhaus gives \(G=P_1\rtimes H\), where \(|H|=p_2\cdots p_t\).  By
Theorem~\ref{thm:Richards},
\[
        \cyc(G)\ge d(|G|)=4\cdot2^{t-1}.
\]
Also \(H\) has at least \(d(|H|)=2^{t-1}\) subgroups.  For every \(L\le H\), the
subgroup \(P_1L\) is non-cyclic, and these subgroups are distinct by
Lemma~\ref{lem:distinct-Sub-semi-distinct-Sub}.  They are disjoint from the
cyclic subgroups counted above.  Thus
\[
        \sub(G)\ge4\cdot2^{t-1}+2^{t-1}=5\cdot2^{t-1},
\]
a contradiction.

We may therefore assume that \(G\) has no normal Sylow subgroup.  If \(G\) is
solvable and \(N\) is a minimal normal subgroup of \(G\), then \(N\) is
elementary abelian.  It cannot have odd order, because the odd Sylow subgroups
have prime order and no Sylow subgroup is normal.  It also cannot have order
\(4\), because \(Q_8\) contains no subgroup isomorphic to Klein's four-group.
Hence \(|N|=2\).  Then \(G/N\) is non-supersolvable; otherwise \(G\) would be
supersolvable.  Since \(\pi(G/N)=t\), the minimality of \(G\) gives
\[
        \sub(G)\ge\sub(G/N)\ge5\cdot2^{t-1},
\]
a contradiction.

Thus \(G\) is nonsolvable.  The same quotient argument rules out abelian
minimal normal subgroups.  Hence a minimal normal subgroup has the form
\(S^m\), where \(S\) is nonabelian simple.  Every nonabelian simple group has
order divisible by \(4\): this follows from Feit--Thompson and Burnside's
normal \(p\)-complement theorem.  Since \(|G|_2=8\), we must have \(m=1\).  Let
\(S\) be this simple normal subgroup.  Its Sylow \(2\)-subgroup has order \(4\)
or \(8\).  The order \(8\) case is ruled out by Brauer--Suzuki, since then the
Sylow \(2\)-subgroup is generalized quaternion.  In the order \(4\) case, the
Sylow \(2\)-subgroup of \(S\) is cyclic, being contained in \(Q_8\), and all odd
Sylow subgroups of \(S\) have prime order.  Thus all Sylow subgroups of \(S\)
are cyclic; hence \(S\) is supersolvable, a contradiction.
\end{proof}

\begin{lemma}\label{lem:sup-sub-square}
A minimal counterexample cannot have order
\[
        p_1^2p_2\cdots p_t.
\]
\end{lemma}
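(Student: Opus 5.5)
The plan is to split on the structure of the Sylow \(p_1\)-subgroup \(P\), which has order \(p_1^2\). Throughout, \(G\) is a minimal counterexample of order \(p_1^2p_2\cdots p_t\), so that \(t\ge4\) by Lemma~\ref{lem:sup-sub-order-reduction}, and
\[
        \sub(G)<5\cdot2^{t-1},\qquad d(|G|)=3\cdot2^{t-1}.
\]

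\emph{Reduction to \(P\cong C_{p_1}\times C_{p_1}\).} If \(P\) were cyclic, every Sylow subgroup of \(G\) would be cyclic. Then \(G\) would be a \(ZM\)-group and hence supersolvable \cite{Scott}, which is a contradiction. So \(P\cong C_{p_1}\times C_{p_1}\); this group is not generalized quaternion. Theorem~\ref{thm:sup-solv-criterion-cyclic-subgroups} with \(r_1=2\) gives
\[
        \cyc(G)\ge d(|G|)\cdot\frac43=4\cdot2^{t-1}.
\]
It therefore suffices to exhibit \(2^{t-1}\) non-cyclic subgroups of \(G\).

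\emph{Solvable case.} Let \(N\) be a minimal normal subgroup; it is elementary abelian. If \(|N|\) is a prime \(q\ne p_1\), then \(N\) is a normal Sylow subgroup of prime order, which Lemma~\ref{lem:sup-sub-normal-prime-sylow} excludes. If \(|N|=p_1\), then \(G/N\) has squarefree order and is supersolvable. Since \(N\) is cyclic, \(G\) would then be supersolvable, a contradiction. Hence \(N=P\) is normal, and Schur--Zassenhaus gives \(G=P\rtimes H\) with \(|H|=p_2\cdots p_t\). By Lemma~\ref{lem:distinct-Sub-semi-distinct-Sub} the subgroups \(PL\), for \(L\le H\), are distinct. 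Each contains \(P\), so each is non-cyclic. Their number is \(\sub(H)\ge d(|H|)=2^{t-1}\). Adding these to the cyclic count gives \(\sub(G)\ge5\cdot2^{t-1}\), a contradiction.

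\emph{Nonsolvable case.} The argument above applies to any abelian minimal normal subgroup, so some minimal normal subgroup is \(S^k\) with \(S\) nonabelian simple. Every nonabelian simple group has order divisible by \(4\), and \(p_1^2\) is the only square dividing \(|G|\). Hence \(p_1=2\), \(k=1\), and \(|S|=4m\) with \(m\) odd and squarefree. By Lemma~\ref{lem:simple-four-squarefree}, \(S\cong\PSL(2,q)\) with \(q\) an odd prime, and \(\Aut(S)\cong\operatorname{PGL}(2,q)\), of order \(2|S|\).

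Put \(C=C_G(S)\). Then \(C\cap S=1\), so \(|SC|=|S||C|\). Since \(|G|_2=4=|S|_2\) and every odd prime divides \(|G|\) only once, \(|C|\) is odd, squarefree, and coprime to \(|S|\). The quotient \(G/SC\) embeds in \(\Out(S)\cong C_2\), while \(|G/SC|\) is odd; hence \(G=S\times C\). Because the orders are coprime, subgroups split over the product, so
\[
        \sub(G)=\sub(S)\sub(C)\ge\sub(S)\,2^{\pi(C)}.
\]
It remains to show \(\sub(S)\ge5\cdot2^{\pi(S)-1}\).

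For \(q=5\), \(S\cong A_5\) has \(59\ge20\) subgroups. For \(q\ge7\), count only the involutions: by Lemma~\ref{lem:num-invol} there are at least \(q(q-1)/2\) of them, each generating a distinct subgroup of order \(2\). Lemma~\ref{lem:ineq-primes} gives
\[
        \frac{q(q-1)}2\ge5\cdot2^{\pi(S)-1},
\]
so \(\sub(S)\ge5\cdot2^{\pi(S)-1}\). With \(\pi(G)=\pi(S)+\pi(C)\), this yields \(\sub(G)\ge5\cdot2^{t-1}\), a contradiction.

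I expect the main obstacle to be the nonsolvable branch: specifically, forcing \(G=S\times C\) by the coprimality and \(2\)-part argument, and then checking that the involution bound covers every \(q\ge7\) while \(q=5\) is handled directly.
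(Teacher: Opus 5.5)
Your proof is correct, and its skeleton is the paper's: exclude abelian minimal normal subgroups, identify the nonabelian one as \(\PSL(2,q)\) by Lemma~\ref{lem:simple-four-squarefree}, and finish with the involution count and Lemma~\ref{lem:ineq-primes}. The local steps differ in two places.

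First, you get \(\cyc(G)\ge4\cdot2^{t-1}\) globally from Theorem~\ref{thm:sup-solv-criterion-cyclic-subgroups}, and you dispose of a cyclic \(P_1\) by the \(ZM\)-group argument. The paper needs a cyclic count only when \(P_1\triangleleft G\), and there it obtains it from Corollary~\ref{cor:cyc-nm} together with \(\cyc(P_1)=p_1+2\). It handles a cyclic \(P_1\) through its characteristic subgroup of order \(p_1\).

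Second, and more substantively, the nonsolvable endgame differs. The paper writes \(G=S\rtimes K\) by Schur--Zassenhaus and notes that the action is trivial because \(\Aut(S)\) adds no primes. It then invokes the minimal-counterexample property (no normal Sylow subgroup) to force \(K=1\), so \(G=S\), and \(t\ge4\) gives \(q\ge7\). You instead split \(G=S\times C_G(S)\) using \(\Out(S)\cong C_2\) and the \(2\)-part, and conclude by multiplicativity of \(\sub\) over coprime direct products. Your route needs no minimality at that stage and shows outright that every such \(\PSL(2,q)\times C\) has \(\lambda\ge5/2\). The small cost is checking \(q=5\) separately via \(\sub(A_5)=59\); the paper avoids that check by reducing to \(G=S\).
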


\begin{proof}
Suppose first that \(G\) has a normal Sylow subgroup.  If this subgroup is
\(P_r\) with \(r>1\), then it has prime order, contradicting
Lemma~\ref{lem:sup-sub-normal-prime-sylow}.

Now suppose that the normal Sylow subgroup is \(P_1\).  Then
\[
        G=P_1\rtimes H,\qquad |H|=|G|/p_1^2.
\]
If \(P_1\) is cyclic, let \(N\) be its unique subgroup of order \(p_1\).  Then
\(N\) is characteristic in \(P_1\), hence normal in \(G\), and \(G/N\) has
squarefree order.  Thus \(G/N\) is supersolvable, and since \(N\) is cyclic of
prime order, \(G\) is supersolvable, a contradiction.  Hence
\[
        P_1\cong C_{p_1}\times C_{p_1},
        \qquad \cyc(P_1)=p_1+2.
\]
Using Corollary~\ref{cor:cyc-nm}, Theorem~\ref{thm:product-cyclic-grps}, and
Theorem~\ref{thm:Richards} for \(H\), we obtain
\[
        \cyc(G)\ge \cyc(P_1)\cyc(H)
        \ge (p_1+2)\cdot2^{t-1}
        \ge4\cdot2^{t-1}.
\]
Moreover, \(H\) has at least \(d(|H|)=2^{t-1}\) subgroups.  For each
\(L\le H\), the subgroup \(P_1L\) is non-cyclic, and these subgroups are
distinct by Lemma~\ref{lem:distinct-Sub-semi-distinct-Sub}.  They are disjoint
from the cyclic subgroups counted above.  Therefore
\[
        \sub(G)\ge4\cdot2^{t-1}+2^{t-1}=5\cdot2^{t-1},
\]
a contradiction.

We may assume that no Sylow subgroup of \(G\) is normal.  If \(G\) is solvable
and \(N\) is a minimal normal subgroup, then \(N\) is elementary abelian.  It
cannot have order \(p_r\) for \(r>1\), and it cannot have order \(p_1^2\),
because either case would give a normal Sylow subgroup.  Hence \(|N|=p_1\).
Then \(G/N\) has squarefree order and is supersolvable, forcing \(G\) to be
supersolvable, a contradiction.  Thus \(G\) is nonsolvable.

The same quotient argument rules out abelian minimal normal subgroups.  Hence
a minimal normal subgroup has the form \(S^m\), where \(S\) is nonabelian
simple.  Since every nonabelian simple group has order divisible by \(4\), the
order condition forces \(p_1=2\) and \(m=1\).  Thus \(G\) has a nonabelian
simple normal subgroup \(S\) of order
\[
        |S|=4p_2\cdots p_k
        \qquad (3\le k\le t).
\]
By Lemma~\ref{lem:simple-four-squarefree}, \(S\cong \PSL(2,q)\), where \(q\) is
an odd prime and \((q-1)(q+1)/4\) is squarefree.

By Schur--Zassenhaus, \(G=S\rtimes_\varphi K\), where
\(|K|=p_{k+1}\cdots p_t\) and \(\varphi:K\to\Aut(S)\).  By
Lemma~\ref{lem:simple-four-squarefree}, \(\Aut(S)\) has no prime divisor
outside \(|S|\).  Since \((|S|,|K|)=1\), the homomorphism \(\varphi\) is
trivial, and \(G\cong S\times K\).  If \(K\ne1\), then the Sylow subgroup of
\(K\) for the largest prime divisor of \(|K|\) is normal in \(K\), and hence is
a normal Sylow subgroup of \(G\), a contradiction.  Therefore \(K=1\), and
\[
        G=S\cong \PSL(2,q).
\]
Since \(t\ge4\), we have \(q\ge7\).  Put
\[
        N_q=|\PSL(2,q)|=\frac{q(q^2-1)}2.
\]
Then \(t=\pi(N_q)\).  By Lemma~\ref{lem:ineq-primes},
\[
        q(q-1)\ge5\cdot2^t.
\]
By Lemma~\ref{lem:num-invol}, \(G\) has at least \(q(q-1)/2\) involutions, and
each involution generates a distinct cyclic subgroup.  Thus
\[
        \sub(G)\ge\cyc(G)\ge\frac{q(q-1)}2\ge5\cdot2^{t-1},
\]
the final contradiction.
\end{proof}

\begin{proof}[Proof of Theorem~\ref{thm:sup-solv-and-subgroups}]
Assume that a counterexample exists, and choose one of minimal order.  By
Lemma~\ref{lem:sup-sub-order-reduction}, its order has one of the three forms
listed there.  These three possibilities are excluded respectively by
Lemmas~\ref{lem:sup-sub-cube}, \ref{lem:sup-sub-double-square}, and
\ref{lem:sup-sub-square}.  This contradiction proves the theorem.
\end{proof}

\begin{lemma}\label{lem:sup-v4-q8-bound}
Let \(G\) be supersolvable.  If a Sylow \(2\)-subgroup of \(G\) is isomorphic
to \(V_4\) or to \(Q_8\), then
\[
        \sub(G)\ge 5\cdot 2^{\pi(G)-1}.
\]
\end{lemma}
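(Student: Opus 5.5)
Let \(t=\pi(G)\) and let \(P\) be a Sylow \(2\)-subgroup of \(G\).  The plan is to use supersolvability to find a normal Hall \(2'\)-subgroup, and then to count two disjoint families of subgroups.  In a supersolvable group, the Sylow subgroup for the largest prime is normal, and more generally there is a Sylow tower following decreasing primes.  Hence the Hall \(2'\)-subgroup \(H\) (of order \(|G|/|P|\)) is normal and \(G=H\rtimes P\).  Thus \(\pi(H)=t-1\), and by Theorem~\ref{thm:Richards}, \(\sub(H)\ge\cyc(H)\ge d(|H|)\ge2^{t-1}\).

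\emph{Case \(P\cong Q_8\).}  For each subgroup \(L\le H\) and each subgroup \(Q\le P\), consider the set \(LQ\).  We need \(LQ\) to be a subgroup, which requires \(Q\) to normalize \(L\); this is not automatic.  So instead we fix the Hall \(2'\)-subgroup \(H\) and use the cyclic-subgroup counts of \(Q_8\).  By Corollary~\ref{cor:cyc-nm} with the abelian-ness requirement, \(H\) need not be abelian, so this route needs care.

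A cleaner route: \(\sub(G)\ge \cyc(G)+\#\{\text{noncyclic subgroups}\}\).  The noncyclic subgroups include \(HQ\) for \(Q\le P\) with \(Q\) noncyclic, and these are subgroups because \(H\trianglelefteq G\).  By Lemma~\ref{lem:distinct-Sub-semi-distinct-Sub} the subgroups \(HQ\), \(Q\le P\), are pairwise distinct.  In addition, for every \(L\le H\) (there are at least \(2^{t-1}\)), the subgroups \(L\) and the subgroups \(LP^g\) arise.  The main count will come from \(\cyc(G)\): since \(P\) is not cyclic, Theorem~\ref{thm:amiri-jaco} applies when \(P\cong V_4\), giving \(\cyc(G)\ge \cyc(C_{n/2}\times C_2)\ge 2\cdot d(|G|)/2\cdot\ldots\).

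Concretely, with \(|G|=4m\) and \(m\) odd, we get \(\cyc(G)\ge \cyc(C_{2m})\cyc(C_2)=2\cdot2d(m)=4d(m)\ge 4\cdot2^{t-1}\).  Then we add the noncyclic subgroups \(HP\), which is \(G\) itself, and \(L\cdot P'\) for appropriate \(L\).  Actually the simplest suffices: we need only \(2^{t-1}\) noncyclic subgroups.  For \(L\le H\), the subgroup \(N_G(L)\) contains a Sylow \(2\)-subgroup of itself; instead take the \(2^{t-1}\) subgroups \(K_L=L\cdot P\) for \(L\le H\) normalized by \(P\).  The divisor-many characteristic subgroups of a supersolvable group are not guaranteed, so this step is the main obstacle.

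What I would try first is the Hall-system approach.  The group \(H\) is supersolvable of odd order, and \(P\) acts on it coprimely.  By coprime action, \(P\) normalizes a \(P\)-invariant Sylow \(p\)-subgroup \(H_p\) of \(H\) for each odd prime \(p\), and products over subsets of primes of these \(H_p\) (a \(P\)-invariant Sylow basis) are \(P\)-invariant Hall subgroups.  This gives \(2^{t-1}\) distinct \(P\)-invariant Hall subgroups \(L\), hence \(2^{t-1}\) distinct noncyclic subgroups \(LP\).  Together with the \(\ge 4\cdot2^{t-1}\) cyclic subgroups, this yields \(\sub(G)\ge5\cdot2^{t-1}\).

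For \(P\cong Q_8\), where Theorem~\ref{thm:amiri-jaco} does not apply, use \(d(|G|)=4d(m)\ge 4\cdot2^{t-1}\) from Theorem~\ref{thm:Richards} for the cyclic count, and the same \(2^{t-1}\) noncyclic subgroups \(LP\).
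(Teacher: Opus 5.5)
Your final argument (the last two paragraphs) is a valid route that differs from the paper's, but one step is not justified as written. The earlier abandoned attempts should simply be deleted.

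\textbf{The unjustified step.} Picking, for each odd prime \(p\), some \(P\)-invariant Sylow \(p\)-subgroup \(H_p\) of \(H\) by coprime action does not produce a Sylow basis. Independently chosen Sylow subgroups need not permute, so their products need not be subgroups. For example, take \(G=(C_{31}\rtimes C_{15})\times V_4\) with \(C_{15}\) acting faithfully. Here \(P\) acts trivially on \(H=C_{31}\rtimes C_{15}\), so every Sylow subgroup of \(H\) is \(P\)-invariant. Yet a Sylow \(3\)-subgroup and a Sylow \(5\)-subgroup lying in different complements of \(C_{31}\) do not permute.

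A \(P\)-invariant Sylow basis does exist, but this needs an argument. Take a Sylow basis of the solvable group \(G\), conjugated so that its \(2\)-member is \(P\). Its odd members \(S_p\) lie in the normal Hall subgroup \(H\). Each \(S_p\) is normalized by \(P\), because \(S_pP\) is a subgroup and \(S_p=S_pP\cap H\trianglelefteq S_pP\).

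Simpler still, drop \(P\)-invariance altogether. By Hall's theorem, for each set \(J\) of odd primes dividing \(|G|\), some Hall \((J\cup\{2\})\)-subgroup contains \(P\). These \(2^{t-1}\) subgroups are noncyclic and have pairwise distinct orders.

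\textbf{Comparison with the paper.} With this repair, your count goes through. You have \(\cyc(G)\ge4\cdot2^{t-1}\): for \(V_4\) via Theorem~\ref{thm:amiri-jaco} and \(\cyc(C_{2m}\times C_2)\ge4d(m)\), and for \(Q_8\) via Theorem~\ref{thm:Richards} and \(d(8m)=4d(m)\). Adding the \(2^{t-1}\) noncyclic subgroups gives the bound.

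The paper argues by induction on \(|G|\). It splits off the normal Sylow subgroup \(Q\) for the largest prime and writes \(G=Q\rtimes K\). It then doubles via the disjoint families \(L\) and \(QL\) for \(L\le K\), down to the base cases \(\sub(V_4)=5\) and \(\sub(Q_8)=6\). That proof is elementary and uses supersolvability exactly to obtain the normal Sylow subgroup at each step.

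Your route relies on Amiri's domination theorem and Hall theory. In the Hall-subgroup form, however, it needs only solvability. So it also covers non-supersolvable groups such as \(A_4\), where \(\sub(A_4)=10\) attains the bound.
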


\begin{proof}
We argue by induction on \(|G|\).  If \(G\) is a \(2\)-group, then
\(G\cong V_4\) or \(G\cong Q_8\), and the assertion follows from
\[
        \sub(V_4)=5,\qquad \sub(Q_8)=6.
\]

Suppose that \(G\) has an odd prime divisor.  Let \(q\) be the largest
prime divisor of \(|G|\), and let \(P\) be a Sylow \(q\)-subgroup of \(G\).
Since \(G\) is supersolvable, its ordered Sylow tower implies that
\(P\triangleleft G\) \cite{Scott}.  Thus \(P\) is a normal Hall subgroup,
and Schur--Zassenhaus gives
\[
        G=P\rtimes K.
\]
The subgroup \(K\) is supersolvable, its Sylow \(2\)-subgroup is isomorphic
to that of \(G\), and \(\pi(K)=\pi(G)-1\).
For every \(L\le K\), both \(L\) and \(PL\) are subgroups of \(G\).
The two families are disjoint, since \(P\ne1\) and \(P\cap K=1\).
Moreover, \(PL\cap K=L\), so the subgroups in the second family are
pairwise distinct.  By induction,
\[
        \sub(G)\ge 2\sub(K)
        \ge 2\cdot5\cdot2^{\pi(K)-1}
        =5\cdot2^{\pi(G)-1}.
\]
This proves the lemma.
\end{proof}

\begin{theorem}\label{prop:sub-bound-metacyclic}
Let \(G\) be a finite group.  If
\[
        \lambda(G)<\frac52,
\]
then every Sylow subgroup of \(G\) is cyclic.  In particular, \(G\) is
metacyclic.
\end{theorem}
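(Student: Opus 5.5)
We first use Theorem~\ref{thm:sup-solv-and-subgroups}: the hypothesis \(\sub(G)<5\cdot2^{t-1}\), with \(t=\pi(G)\), makes \(G\) supersolvable. We then rule out non-cyclic Sylow subgroups by comparing element orders, as in the proof of Theorem~\ref{thm:super-solvability-criterion}. The only new difficulty is that the threshold for \(\sub\) is \(5/2\) rather than \(2\). The obstructions there are a Sylow \(2\)-subgroup isomorphic to \(V_4\) or \(Q_8\), and these are handled by Lemma~\ref{lem:sup-v4-q8-bound}.

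Write \(|G|=\prod p_i^{r_i}\). If all \(r_i=1\), there is nothing to prove. Otherwise we bound \(d(|G|)\le\cyc(G)\le\sub(G)<5\cdot2^{t-1}\), using Theorem~\ref{thm:Richards}. Exactly as in Lemma~\ref{lem:sup-sub-order-reduction}, this leaves three order patterns:
\[
p^2\cdot(\text{squarefree}),\qquad p^3\cdot(\text{squarefree}),\qquad p^2q^2\cdot(\text{squarefree}).
\]
Suppose some Sylow \(p\)-subgroup \(P\) is not cyclic.

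\emph{Case: \(P\) is generalized quaternion.} Then \(p=2\), and the order pattern forces \(|P|=8\), so \(P\cong Q_8\). Lemma~\ref{lem:sup-v4-q8-bound} gives \(\sub(G)\ge5\cdot2^{t-1}\), a contradiction.

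\emph{Case: \(P\) is not generalized quaternion, and no other Sylow subgroup is generalized quaternion.} We apply the estimate in the proof of Theorem~\ref{thm:sup-solv-criterion-cyclic-subgroups}:
\[
\sub(G)\ge\cyc(G)\ge d(|G|)\frac{2r}{r+1},\qquad r=v_p(|G|).
\]
For \(p^3\) this gives \(4\cdot2^{t-1}\cdot\frac32=6\cdot2^{t-1}\). For \(p^2q^2\) it gives \(9\cdot2^{t-2}\cdot\frac43=6\cdot2^{t-1}\). Both contradict the hypothesis. In the \(p^2q^2\) pattern no Sylow subgroup can be generalized quaternion anyway, since those have order at least \(8\).

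\emph{Case: \(v_p(|G|)=2\) and \(P\cong C_p\times C_p\).} The bound above only gives \(3\cdot2^{t-1}\cdot\frac43=4\cdot2^{t-1}\). This is the main obstacle, because the non-cyclic subgroups have to supply the missing \(2^{t-1}\).

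For \(p=2\), we have \(P\cong V_4\) and Lemma~\ref{lem:sup-v4-q8-bound} finishes.

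For odd \(p\), we mimic the proof of Lemma~\ref{lem:sup-v4-q8-bound}. We induct on \(|G|\) to prove \(\sub(G)\ge5\cdot2^{t-1}\) for supersolvable \(G\) with an elementary abelian Sylow subgroup of order \(p^2\). If \(G=P\), then \(\sub(P)=p+3\ge6\), which suffices. Otherwise, let \(q\) be the largest prime divisor of \(|G|\).
\begin{itemize}[label={}]
\item If \(q\ne p\), the Sylow \(q\)-subgroup \(Q\) is normal by the Sylow tower, so \(G=Q\rtimes K\). The families \(L\) and \(QL\) for \(L\le K\) give \(\sub(G)\ge2\sub(K)\), and we conclude by induction.
\item If \(q=p\) is the largest prime, then \(P\triangleleft G\) and \(G=P\rtimes H\) with \(|H|\) squarefree. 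Corollary~\ref{cor:cyc-nm} gives \(\cyc(G)\ge\cyc(P)\cyc(H)\ge(p+2)2^{t-1}\ge5\cdot2^{t-1}\), which is already enough.
\end{itemize}

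In all cases we reach a contradiction, so every Sylow subgroup is cyclic. Metacyclicity then follows from Zassenhaus' theorem \cite{Scott}.
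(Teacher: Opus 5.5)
Your proof is correct, but it takes a longer route than the paper's in the case \(P\cong C_p\times C_p\) with \(p\) odd.

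\textbf{How the paper argues.} It does not use the product bound \(\cyc(C_{n/p}\times C_p)\ge\cyc(C_{n/p})\cyc(C_p)\) from the proof of Theorem~\ref{thm:sup-solv-criterion-cyclic-subgroups}. After Amiri's theorem it writes \(C_{|G|/p}\times C_p\cong C_m\times C_{p^{r-1}}\times C_p\) with \(p\nmid m\). It then computes exactly \(\cyc(C_{p^{r-1}}\times C_p)=(r-1)p+2\), which gives \(\cyc(G)\ge 2^{t-1}\bigl((r-1)p+2\bigr)\). The strict hypothesis forces \((r-1)p+2<5\), so \(P\cong V_4\), in one line. The quaternion branch forces \(Q_8\) via \(d(|G|)\ge (r+1)2^{t-1}\), and Lemma~\ref{lem:sup-v4-q8-bound} finishes.

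\textbf{What your route costs.} Your coarser estimate loses exactly the factor \((p+2)/4\) in the pattern \(p^2\cdot(\text{squarefree})\). That loss is why you need the order-pattern reduction and a separate Sylow-tower induction for odd \(p\). The induction is valid:
\begin{itemize}
\item subgroups of supersolvable groups are supersolvable;
\item the Sylow subgroup for the largest prime is normal;
\item when \(P\) itself is normal, Corollary~\ref{cor:cyc-nm} gives \((p+2)2^{t-1}\).
\end{itemize}
But it only recovers, under an extra normality reduction, the value \(\cyc(P)=p+2\) that the exact count supplies directly. It also makes the odd-\(p\) case depend on supersolvability. In the paper, supersolvability is needed only for the \(V_4\) and \(Q_8\) cases.

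\textbf{A minor point.} Your qualifier that no other Sylow subgroup is generalized quaternion is unnecessary. Amiri's theorem only requires the particular \(P\) to be neither cyclic nor generalized quaternion.
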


\begin{proof}
Put \(t=\pi(G)\).  The hypothesis reads
\[
        \sub(G)<5\cdot2^{t-1}.
\]
By Theorem~\ref{thm:sup-solv-and-subgroups}, the group \(G\) is
supersolvable.  Suppose that \(G\) has a non-cyclic Sylow \(p\)-subgroup \(P\).
Write
\[
        |G|=p^r m,\qquad (p,m)=1.
\]

Assume first that \(P\) is not generalized quaternion.  By
Theorem~\ref{thm:amiri-jaco} and Lemma~\ref{lem:strong-dom-implies-less-cyc},
\[
        \cyc(G)\ge \cyc(C_{|G|/p}\times C_p).
\]
Since
\[
        C_{|G|/p}\times C_p
        \cong C_m\times C_{p^{r-1}}\times C_p,
\]
and the factors \(C_m\) and \(C_{p^{r-1}}\times C_p\) have coprime orders,
Theorem~\ref{thm:product-cyclic-grps} gives
\[
        \cyc(C_{|G|/p}\times C_p)
        =d(m)\,\cyc(C_{p^{r-1}}\times C_p).
\]
A direct count in \(C_{p^{r-1}}\times C_p\) gives
\[
        \cyc(C_{p^{r-1}}\times C_p)=(r-1)p+2.
\]
Indeed, besides the trivial subgroup, there are \(p+1\) cyclic subgroups of
order \(p\), and for each \(2\le j\le r-1\) there are \(p\) cyclic subgroups of
order \(p^j\).  Since \(d(m)\ge2^{t-1}\) and \(\sub(G)\ge\cyc(G)\), the
strict hypothesis forces
\[
        (r-1)p+2<5.
\]
Thus \(p=2\) and \(r=2\), and \(P\cong V_4\).

Now assume that \(P\) is generalized quaternion.  Then \(p=2\) and
\(|P|=2^r\) with \(r\ge3\).  Theorem~\ref{thm:Richards} gives
\[
        \sub(G)\ge\cyc(G)\ge d(|G|)\ge (r+1)2^{t-1}.
\]
The strict hypothesis therefore forces \(r=3\), and hence \(P\cong Q_8\).

We have shown that any non-cyclic Sylow subgroup of \(G\) must be a Sylow
\(2\)-subgroup isomorphic to \(V_4\) or to \(Q_8\).  This is impossible by
Lemma~\ref{lem:sup-v4-q8-bound}, because \(G\) is supersolvable and satisfies
\(\sub(G)<5\cdot2^{t-1}\).  Hence every Sylow subgroup of \(G\) is cyclic.

Finite groups with cyclic Sylow subgroups are metacyclic \cite{Scott}.
\end{proof}

Theorem~\ref{prop:sub-bound-metacyclic} and the subgroup-count formula now
give an exact arithmetic criterion for the \(ZM(m,n,r)\) parameters below
\(\lambda<5/2\).

\begin{corollary}\label{cor:sub-bound-zm-classification}
Let \(G\) be a finite group.  Then
\[
        \lambda(G)<\frac52
\]
if and only if \(G\cong ZM(m,n,r)\), where
\[
        (m,n)=(m,r-1)=1,\qquad r^n\equiv1\pmod m,
\]
and
\[
        \sum_{m_1\mid m}\sum_{n_1\mid n}
        \gcd(m_1,R(n_1))
        <5\cdot2^{\pi(mn)-1}.
\]
Here \(R(n_1)\) is the geometric sum in Proposition~\ref{prop:zm-counts}.
\end{corollary}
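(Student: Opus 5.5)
The corollary follows by combining Theorem~\ref{prop:sub-bound-metacyclic} with the subgroup-count formula of Proposition~\ref{prop:zm-counts}. The only extra input is the classical fact (Hölder--Burnside--Zassenhaus, see \cite{Scott}) that a finite group all of whose Sylow subgroups are cyclic admits a presentation \(ZM(m,n,r)\) with \((m,n)=(m,r-1)=1\) and \(r^n\equiv1\pmod m\). Conversely, every such presentation defines a group of order \(mn\). I would therefore prove the two implications separately, and then reconcile the normalizing exponent \(\pi(G)\) with \(\pi(mn)\).

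\emph{Forward direction.} Assume \(\lambda(G)<5/2\). By Theorem~\ref{prop:sub-bound-metacyclic}, every Sylow subgroup of \(G\) is cyclic, so \(G\cong ZM(m,n,r)\) with the stated parameter conditions. Since \(|G|=mn\), we have \(\pi(G)=\pi(mn)\). The hypothesis \(\sub(G)<\tfrac52\cdot2^{\pi(G)}=5\cdot2^{\pi(mn)-1}\) then becomes the displayed double sum inequality, by the formula for \(\sub(G)\) in Proposition~\ref{prop:zm-counts}. The geometric-sum convention for \(R(n_1)\) when \(r^{n_1}=1\) is exactly the one fixed there, so no extra case analysis is needed.

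\emph{Reverse direction.} If \(G\cong ZM(m,n,r)\) satisfies the arithmetic conditions, then Proposition~\ref{prop:zm-counts} identifies the double sum with \(\sub(G)\), and \(|G|=mn\) gives \(2^{\pi(G)}=2^{\pi(mn)}\). Dividing the inequality by \(2^{\pi(mn)}\) yields \(\lambda(G)<5/2\).

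\emph{Main obstacle.} The only delicate point is verifying that the presentation really has order \(mn\), with \(\langle a\rangle\) of order exactly \(m\) and \(\langle a\rangle\cap\langle b\rangle=1\). This is what makes \(\pi(G)=\pi(mn)\) rather than a possibly smaller value. It follows from the standard construction of the semidirect product \(C_m\rtimes C_n\), with \(b\) acting by \(a\mapsto a^r\). This construction is well defined because \(r^n\equiv1\pmod m\) and \(\gcd(r,m)=1\); the latter holds since \(r^n\equiv1\) forces \(r\) to be a unit modulo \(m\). Any group satisfying the relations is a quotient of this semidirect product of order \(mn\), and the semidirect product itself satisfies them, so the presented group has order exactly \(mn\). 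Beyond this, the corollary is a direct restatement and requires no further estimates.
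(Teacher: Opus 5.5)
Your proposal is correct and follows the paper's proof. Theorem~\ref{prop:sub-bound-metacyclic} gives cyclic Sylow subgroups and hence a $ZM(m,n,r)$ presentation, and Proposition~\ref{prop:zm-counts} then converts $\sub(G)<5\cdot 2^{\pi(G)-1}$ into the displayed inequality, in both directions. Your check that the presented group has order exactly $mn$, so that $\pi(G)=\pi(mn)$, is a detail the paper leaves implicit; the paper instead notes explicitly that the cyclic case $m=1$, $r=1$ is included, which your argument also covers because the double sum then reduces to $d(n)$.
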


\begin{proof}
By Theorem~\ref{prop:sub-bound-metacyclic},
every group with \(\lambda<5/2\) has cyclic Sylow subgroups, and hence is a
\(ZM\)-group.  The displayed
inequality is exactly the condition obtained by applying
Proposition~\ref{prop:zm-counts}.  The converse is immediate from the same
proposition.  This includes \(m=1\), with \(r=1\): the inequality then
reduces to \(d(n)<5\cdot2^{\pi(n)-1}\) for the cyclic group \(C_n\).
\end{proof}

\begin{theorem}\label{thm:small-lambda-values}
Let \(G\) be a finite group.  If
\[
        1<\lambda(G)<\frac52,
\]
then
\[
        \lambda(G)\in\left\{\frac32,\,2,\,\frac94\right\}.
\]
All three values occur.
\end{theorem}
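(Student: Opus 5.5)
The plan is to follow the proof of Theorem~\ref{thm:small-eta-values} step by step, replacing the factors \(c_p(n_1)\) by the factors \(h_p(n_1)\) of Proposition~\ref{prop:zm-counts}. Put \(t=\pi(G)\). By Theorem~\ref{prop:sub-bound-metacyclic}, \(G\cong ZM(m,n,r)\). Theorem~\ref{thm:Richards} gives \(d(|G|)\le\sub(G)<5\cdot2^{t-1}\), so
\[
        \prod_p\frac{a_p+1}{2}<\frac52 ,
\]
where \(a_p=v_p(|G|)\).

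\textbf{Cyclic case.} If \(G\) is cyclic, then \(\lambda(G)=\prod_p (a_p+1)/2\). The only values of this product in \((1,5/2)\) are:
\begin{itemize}
\item \(3/2\), from one exponent equal to \(2\);
\item \(2\), from one exponent equal to \(3\);
\item \(9/4\), from two exponents equal to \(2\).
\end{itemize}
An exponent \(4\) gives \(5/2\), and any other pattern is larger. This already shows that all three values occur, for example by \(C_4\), \(C_8\) and \(C_{36}\). Note also \(\lambda(S_3)=3/2\).

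\textbf{Restricting \(m\) when \(G\) is not cyclic.} Now let \(m>1\); as before, \(m\) is odd and \(d_p>1\) for each \(p\mid m\). Take \(p^a\Vert m\) and \(s=\pi(d_p)\), and restrict the outer sum to the \(2^k\) divisors in \(\mathcal D(n)\). Exactly as in the \(\eta\) argument, with \(h_p\in\{a+1,\ \sigma(p^a)\}\) and every other factor at least \(2\), this gives
\[
        \lambda(G)\ge\frac{(2^s-1)\sigma(p^a)+2}{2^{s+1}}\ge\frac{\sigma(p^a)+2}{4},
\]
where \(\sigma(p^a)=\sum_{i\le a}p^i\). The condition \(\lambda<5/2\) forces \(\sigma(p^a)<8\), hence \(p^a\in\{3,5\}\) and \(m\mid 15\).

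\textbf{Excluding the remaining bad cases.} Write \(n=2^bu\) with \(u\) odd, and let \(v\) run over the squarefree divisors of \(u\).
\begin{itemize}
\item \(m=15\), \(d_5=2\): the choices \(n_1=v,2v\) give weights \(4\cdot6=24\) and \(4\), so \(\lambda\ge 28/8>5/2\).
\item \(m=15\), \(d_5=4\): the weights are even larger, so this case is excluded as well.
\item \(m=5\), \(d_5=4\): the choices \(n_1=v,2v,4v\) give weights \(6,6,2\), so \(\lambda\ge14/4\).
\end{itemize}
Hence \(m=p\in\{3,5\}\) with \(r\equiv-1\pmod p\).

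\textbf{Exact formula and final enumeration.} Now \(h_p(n_1)=p+1\) for odd \(n_1\) and \(h_p(n_1)=2\) for even \(n_1\). Counting odd and even divisors of \(n\) exactly gives
\[
        \sub(G)=(p+1+2b)\,d(u),\qquad \lambda(G)=\frac{p+1+2b}{4}\,\Delta(u),
\]
with \(\Delta(u)=d(u)/2^{\pi(u)}\in\{1,3/2,2,9/4,\dots\}\).
\begin{itemize}
\item \(p=3\), \(b=1\): \(\lambda=\tfrac32\Delta\), and \(\lambda<5/2\) leaves \(\Delta\in\{1,3/2\}\), giving \(3/2\) and \(9/4\).
\item \(p=3\), \(b=2\): \(\lambda=2\Delta\), so \(\Delta=1\) and \(\lambda=2\).
\item \(p=3\), \(b\ge3\): \(\lambda\ge10/4\), excluded.
\item \(p=5\), \(b=1\): \(\lambda=2\Delta\), so \(\lambda=2\).
\item \(p=5\), \(b\ge2\): excluded.
\end{itemize}
The noncyclic case therefore contributes nothing new, and the list is exactly \(\{3/2,2,9/4\}\).

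\textbf{Main obstacle.} The delicate step is the lower bound via \(\mathcal D(n)\) in the subgroup setting. One must check that the non-\(p\) factors \(h_\ell(n_1)\) are at least \(2\) in both branches, which holds since \(\sigma(\ell^a)\ge 4\) and \(a+1\ge2\). One must also confirm the \(m=15\) computations with the \(\sigma\)-weights.
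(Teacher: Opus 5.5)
Your proof is correct. Its overall skeleton matches the paper's proof: the cyclic case, then restricting \(m\), excluding \(d_5=4\), the exact formula \(\sub(G)=(p+1+2b)\,d(u)\), and the final enumeration.

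The one real difference is how you restrict \(m\).

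\emph{Your route.} You carry over the \(\mathcal D(n)\) argument from the proof of Theorem~\ref{thm:small-eta-values}, with \(c_p\) replaced by \(h_p\). This gives the per-prime bound \(\lambda(G)\ge(\sigma(p^a)+2)/4\), which fixes the prime and its exponent together (\(p^a\in\{3,5\}\)). Because the bound sees only one prime of \(m\) at a time, you then need separate weight computations to exclude \(m=15\).

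\emph{The paper's route.} It restricts to the \(2^t\) pairs \(m_1\mid\operatorname{rad}(m)\), \(n_1\mid\operatorname{rad}(n)\). It bounds each excess \(\gcd(m_1,R(n_1))-1\) below by \(\sum_{p\in E(m_1,n_1)}(p-1)\), and sums to get the additive bound \(\lambda(G)\ge 1+\frac14\sum_{p\mid m}(p-1)\). This controls all primes of \(m\) at once, so \(m=15\) (where \(2+4=6\)) is excluded immediately. However, the bound ignores exponents, so the paper needs a second restriction, \(n_1=q^{\varepsilon b}w\), to force \(a=1\).

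The two routes have comparable length. Yours has the advantage of making the \(\eta\) and \(\lambda\) proofs exactly parallel.

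Your remark that for \(m=15\), \(d_5=4\) ``the weights are even larger'' is right and easy to make explicit. The choices \(n_1=v,2v\) give weights \(4\cdot 6=24\) and \(2\cdot 6=12\), so \(\lambda(G)\ge 36/8>5/2\).
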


\begin{proof}
By Corollary~\ref{cor:sub-bound-zm-classification}, the hypothesis
\(\lambda(G)<5/2\) implies that \(G\) has cyclic Sylow subgroups.  If \(G\) is
cyclic and
\[
        |G|=\prod_i p_i^{a_i},
\]
then
\[
        \lambda(G)=\frac{d(|G|)}{2^{\pi(G)}}
        =\prod_i\frac{a_i+1}{2}.
\]
The only values of this product in the interval \((1,5/2)\) are
\[
        \frac32,\qquad 2,\qquad \frac94.
\]
Indeed, an exponent \(a_i\ge4\) already gives a factor at least \(5/2\); an
exponent \(3\) together with any other exponent larger than \(1\) gives a
factor at least \(3\); and three exponents equal to \(2\) give
\((3/2)^3>5/2\).  Thus the cyclic case is settled.

Assume now that \(G\) is not cyclic.  Write
\[
        G=ZM(m,n,r)
\]
with \(m,n>1\), and put \(t=\pi(mn)\).  For each prime \(p\mid m\),
let \(d_p>1\) be the order of \(r\) modulo \(p\), and fix a prime
\(q_p\mid d_p\).  In particular \(q_p\mid n\), and \(q_p\ne p\).

In the subgroup sum of Proposition~\ref{prop:zm-counts}, restrict to the
\(2^t\) pairs
\[
        m_1\mid\operatorname{rad}(m),\qquad
        n_1\mid\operatorname{rad}(n).
\]
Each pair contributes at least \(1\).  For such a pair define
\[
        E(m_1,n_1)=\{p\mid m:p\mid m_1,\ q_p\nmid n_1\}.
\]
If \(p\in E(m_1,n_1)\), then \(d_p\nmid n_1\), so
\(p\mid R(n_1)\).  Thus, including the empty-set case,
\[
\begin{aligned}
\gcd(m_1,R(n_1))-1
 &\ge\prod_{p\in E(m_1,n_1)}p-1\\
 &\ge\sum_{p\in E(m_1,n_1)}(p-1).
\end{aligned}
\]
The second inequality follows by expanding
\(\prod_{p\in E}(1+(p-1))\).  For each fixed \(p\mid m\), precisely
\(2^{t-2}\) of the selected pairs have \(p\mid m_1\) and
\(q_p\nmid n_1\).  Summing the displayed termwise inequality therefore
justifies adding all these excesses, even when their sets of pairs overlap:
\[
        \sub(G)\ge2^t+2^{t-2}\sum_{p\mid m}(p-1),\qquad
        \lambda(G)\ge1+\frac14\sum_{p\mid m}(p-1).
\]
Since \(\lambda(G)<5/2\), the sum \(\sum_{p\mid m}(p-1)\) is less than \(6\).
Also \(2\nmid m\), because \((m,r-1)=1\).  Thus \(m\) is divisible by exactly
one prime, and that prime is \(3\) or \(5\).

We next show that this prime occurs only to the first power.  Suppose
\(m=p^a\), with \(p\in\{3,5\}\), and choose \(q\mid d_p\).  Write
\(n=q^b v\), where \(q\nmid v\).  Restrict the outer sum in the local
factorization of Proposition~\ref{prop:zm-counts} to
\[
        n_1=q^{\varepsilon b}w,\qquad
        \varepsilon\in\{0,1\},\quad w\mid\operatorname{rad}(v).
\]
These divisors are distinct.  Since \(q\nmid w\), we have
\(h_p(w)=\sum_{i=0}^a p^i\), whereas \(h_p(q^bw)\ge a+1\).
Consequently
\[
        \sub(G)\ge2^{\pi(v)}\left(\sum_{i=0}^a p^i+a+1\right),\qquad
        \lambda(G)\ge\frac{1+p+\cdots+p^a+(a+1)}4.
\]
For \(p=3\) or \(p=5\), this lower bound is already at least \(5/2\) when
\(a\ge2\).  Hence
\[
        m=p\in\{3,5\}.
\]

Let \(d\) be the order of \(r\) modulo \(p\).  If \(p=3\), then \(d=2\).  If
\(p=5\), then \(d=2\) or \(4\).  The case \(d=4\) is impossible under the
strict bound.  Indeed, then write \(n=2^b v\) with \(b\ge2\) and \(v\) odd.
For each \(w\mid\operatorname{rad}(v)\), the three distinct divisors
\(n_1=w,2w,4w\) have local weights
\(h_p(n_1)=p+1,p+1,2\), respectively.  These choices give
\[
        \lambda(G)\ge
        \frac{2^{\pi(v)}(2p+4)}{2^{\pi(v)+2}}=\frac{2p+4}{4}.
\]
For \(p=5\) this is \(7/2\).  Thus in all remaining cases
\[
        r\equiv -1\pmod p.
\]

Write
\[
        n=2^b u,
        \qquad b\ge1,\qquad u\text{ odd}.
\]
Since \(d_p=2\), the local factorization in
Proposition~\ref{prop:zm-counts} gives \(h_p(n_1)=p+1\) for odd \(n_1\)
and \(h_p(n_1)=2\) for even \(n_1\).  Counting these divisors gives
\[
        \sub(G)=(p+1)d(u)+2bd(u)=(p+2b+1)d(u).
\]
Consequently
\[
        \lambda(G)
        =\frac{p+2b+1}{4}\cdot
        \frac{d(u)}{2^{\pi(u)}}.
\]
Put
\[
        \Delta(u)=\frac{d(u)}{2^{\pi(u)}}.
\]
Then \(\Delta(u)\ge1\), and the only values of \(\Delta(u)\) below \(5/3\)
are \(1\) and \(3/2\), while the only value below \(5/4\) is \(1\).

If \(p=3\), then
\[
        \lambda(G)=\frac{b+2}{2}\Delta(u).
\]
The condition \(\lambda(G)<5/2\) leaves only the following possibilities:
\[
        b=1,\ \Delta(u)=1;\qquad
        b=1,\ \Delta(u)=\frac32;\qquad
        b=2,\ \Delta(u)=1.
\]
These give respectively
\[
        \lambda(G)=\frac32,\qquad \frac94,\qquad 2.
\]
If \(p=5\), then
\[
        \lambda(G)=\frac{b+3}{2}\Delta(u).
\]
The strict inequality forces \(b=1\) and \(\Delta(u)=1\), and hence
\[
        \lambda(G)=2.
\]
This proves the asserted list of possible values.

Finally, all three values occur.  For example,
\[
        \lambda(C_4)=\frac32,\qquad
        \lambda(C_8)=2,\qquad
        \lambda(C_{36})=\frac94.
\]
They also occur among nonabelian \(ZM\)-groups:
\[
        ZM(3,2,2),\qquad ZM(3,4,2),\qquad ZM(3,50,2)
\]
have normalized subgroup counts \(3/2,2,\) and \(9/4\), respectively.
\end{proof}

\begin{question}\label{ques:next-normalized-values}
Are there only finitely many normalized values in the intervals
\[
        2\le\eta(G)<4,\qquad \frac52\le\lambda(G)<\frac{59}{8}?
\]
\end{question}

\section{\texorpdfstring{The subgroup-count solvability threshold \(\lambda<59/8\)}{The subgroup-count solvability threshold lambda<59/8}}
\label{sec:solv}

We next prove the subgroup-count solvability threshold \(\lambda(G)<59/8\).
The sharp example is \(A_5\), which has \(59\) subgroups.  As before,
multiplying by cyclic groups of new prime order preserves the normalized value
\(\lambda(G)\), which leads to the scale \(59\cdot2^{\pi(G)-3}\).

The proof is a minimal-counterexample argument.  First we remove normal
\(p\)-subgroups and normal Sylow subgroups.  Then the socle is a product of
nonabelian simple groups.  The numerical hypothesis restricts the
prime-power pattern of \(|G|\), and this reduces the possible simple factors to
a short list.  The final step treats the almost simple case.

\begin{proposition}\label{suzuki-prop}
The Suzuki groups \(Sz(2^{2n+1})\), with \(n\ge1\), are the only nonabelian
finite simple groups whose orders are not divisible by \(3\)
\cite{suzuki-groups-not-divisible-by-3}.
\end{proposition}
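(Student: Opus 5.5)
The plan is to derive the statement from the classification of finite simple groups, as in \cite{suzuki-groups-not-divisible-by-3}, by checking divisibility of \(|S|\) by \(3\) family by family. Let \(S\) be a nonabelian finite simple group. By the classification, \(S\) is an alternating group \(A_n\) with \(n\ge5\), one of the \(26\) sporadic groups, or a group of Lie type over \(\mathbb F_q\). I will show that \(3\mid|S|\) in every case except \(S\cong Sz(2^{2n+1})\), and then check that the Suzuki groups really have order prime to \(3\).

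The alternating and sporadic cases are immediate. For \(n\ge5\), \(3\) divides \(|A_n|=n!/2\). For the sporadic groups I read off the known orders; each one is divisible by \(3\), and already \(|M_{11}|=2^4\cdot3^2\cdot5\cdot11\).

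For the groups of Lie type, the key observation is that the order formula contains \(q^{N}\) and a factor of the form \(q^2-1\) (coming from a root subgroup \(\mathrm{SL}_2(q)\) or \(\PSL(2,q)\)), divided by a center of order \(\gcd(\cdot)\). I will treat the factors in order.
\begin{itemize}
\item If \(3\mid q\), the \(p\)-part \(q^N\) of \(|S|\) is divisible by \(3\).
\item If \(3\nmid q\), then \(q^2\equiv1\pmod3\), so \(3\mid q^2-1\).
\item The central quotient, which is at most \(\gcd(n,q-1)\) in type \(A\) and similar in the other types, cannot remove every factor of \(3\). This needs a short check. For \(\PSL(2,q)\), \(|S|=q(q^2-1)/\gcd(2,q-1)\), and dividing by \(2\) does not affect the \(3\)-part. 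For higher rank, the order contains further factors such as \(q^3-1\) or \(q^4-1\), which supply extra powers of \(3\).
\end{itemize}
This handles all untwisted groups and the twisted groups \({}^2A_n\), \({}^2D_n\), \({}^3D_4\), \({}^2E_6\), since each contains a factor \(q^2-1\) or \(q^2-1\) times a cyclotomic value. The Ree groups \({}^2G_2(3^{2m+1})\) have characteristic \(3\). The groups \({}^2F_4(2^{2m+1})\) and the Tits group have the factor \(q^2-1\) with \(q=2^{2m+1}\equiv2\pmod3\), so \(3\mid q^2-1\).

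This leaves the Suzuki groups \({}^2B_2(q)\), where \(q=2^{2n+1}\) and \(|Sz(q)|=q^2(q^2+1)(q-1)\). Since \(q\equiv2\pmod3\), we have \(q-1\equiv1\) and \(q^2+1\equiv2\pmod3\). So \(3\nmid|Sz(q)|\), and for \(n\ge1\) these groups are simple and nonabelian.

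The main obstacle is not any single calculation but the reliance on the classification: the sporadic check is a table lookup, and the Lie-type check needs uniform control of the central denominators. If one wants to avoid the full classification, one could instead cite Thompson's classification of minimal simple groups (N-groups) together with Glauberman's argument. For the present paper, however, I would simply invoke the classification and the order formulas as above.
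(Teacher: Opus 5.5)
The paper gives no proof of this proposition. It quotes the result from the literature via \cite{suzuki-groups-not-divisible-by-3} and uses it as a black box in Lemma~\ref{lem:solv-unique-minimal-normal}. Your route is genuinely different and correct: you run through the classification and read off \(3\)-divisibility from the order formulas. In the context of this paper it costs nothing extra, because the main results already rest on the classification and on the same order formulas (compare Lemma~\ref{lem:solv-possible-simple-socles}). The citation buys only brevity.

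Two small points.

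First, the central-quotient step is only gestured at, but it closes routinely:
\begin{itemize}
\item The center of the universal version has order divisible by \(3\) only in four cases: \(\PSL(n,q)\) with \(3\mid(n,q-1)\), \(\PSU(n,q)\) with \(3\mid(n,q+1)\), \(E_6(q)\) with \(3\mid q-1\), and \({}^2E_6(q)\) with \(3\mid q+1\). In types \(B,C,D,E_7\) the center is a \(2\)-group, and in the remaining types it is trivial.
\item In each of these four cases, the numerator of the order formula has two factors divisible by \(q\mp1\): namely \(q^2-1\) and \(q^3\mp1\) in the linear and unitary cases, and \(q^2-1\) and \(q^5\mp1\) for \(E_6\) and \({}^2E_6\).
\item Hence the \(3\)-adic valuation of the numerator is at least \(2v_3(q\mp1)\). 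Since \(v_3(d)\le v_3(q\mp1)\), the quotient still has \(v_3\ge v_3(q\mp1)\ge1\).
\end{itemize}

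Second, your closing aside about avoiding the classification is not an argument. A simple group of order prime to \(3\) is not known a priori to have solvable local subgroups, so the classification of minimal simple groups or of \(N\)-groups does not apply directly. Your main proof does not depend on this aside, so correctness is unaffected.
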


\begin{lemma}\label{almost-simple-lemma}
If \(G\) is a finite group with a unique minimal normal subgroup \(S\), and
\(S\) is nonabelian simple, then \(G\) is almost simple.
\end{lemma}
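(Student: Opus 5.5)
The plan is the standard centralizer argument, with \(C_G(S)\) as the key object. First I will show that \(C_G(S)=1\). The centralizer \(C=C_G(S)\) is normal in \(G\): it is the centralizer of a normal subgroup, and for \(g\in G\) we have \(gC_G(S)g^{-1}=C_G(gSg^{-1})=C_G(S)\). Since \(S\) is nonabelian simple, \(Z(S)=1\), and hence \(C\cap S=Z(S)=1\). Now suppose \(C\ne1\). Then \(C\) contains a minimal normal subgroup of \(G\); for instance, take a minimal element among the nontrivial normal subgroups of \(G\) contained in \(C\). By uniqueness this minimal normal subgroup equals \(S\), so \(S\le C\). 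This gives \(S=S\cap C=1\), a contradiction. Hence \(C_G(S)=1\).

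Next I will use the conjugation action. Since \(S\trianglelefteq G\), conjugation gives a homomorphism
\[
        \theta:G\to\Aut(S),\qquad \theta(g)(s)=gsg^{-1},
\]
whose kernel is exactly \(C_G(S)=1\). Thus \(\theta\) is injective. Its restriction to \(S\) is the map \(S\to\operatorname{Inn}(S)\), which is an isomorphism because \(Z(S)=1\). Therefore
\[
        \operatorname{Inn}(S)\cong\theta(S)\le\theta(G)\le\Aut(S),
\]
and since \(G\cong\theta(G)\), the group \(G\) is almost simple with socle \(S\). Equivalently, \(S\le G\le\Aut(S)\) after identification. I would also note that uniqueness of the minimal normal subgroup shows \(\Soc(G)=S\).

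There is no real obstacle here. The only point needing care is the step ``\(C\ne1\) contains a minimal normal subgroup of \(G\)'', which holds because \(C\) is normal in \(G\) and \(G\) is finite. The rest is the definition of almost simple.
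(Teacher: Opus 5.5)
Your proof is correct and is essentially the paper's argument: a nontrivial $C_G(S)$ would contain the unique minimal normal subgroup $S$, so $C_G(S)=1$, and conjugation then embeds $G$ in $\Aut(S)$ with $S\mapsto\mathrm{Inn}(S)$. The only cosmetic difference is that you reach the contradiction via $S\cap C_G(S)=Z(S)=1$, while the paper says $S\le C_G(S)$ would make $S$ abelian.
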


\begin{proof}
By the \(N/C\)-theorem, \(G/C_G(S)\) embeds in \(\Aut(S)\).  The centralizer
\(C_G(S)\) is normal in \(G\).  If it were nontrivial, it would contain a
minimal normal subgroup of \(G\), necessarily equal to \(S\).  This is
impossible because \(S\) is nonabelian.  Hence \(C_G(S)=1\), and conjugation
gives
\[
        S\le G\le \Aut(S).
\]
\end{proof}

\begin{lemma}\label{lem:solv-minimal-reductions}
Let \(G\) be a nonsolvable group of minimal order among all groups satisfying
\[
        \sub(G)<59\cdot2^{\pi(G)-3}.
\]
Put \(t=\pi(G)\), and assume \(t\ge4\).  Then:
\begin{enumerate}[label=(\alph*)]
\item \(G\) has no normal Sylow subgroup;
\item \(G\) has no nontrivial normal \(p\)-subgroup;
\item every minimal normal subgroup of \(G\) is a direct product \(S^k\), where
      \(S\) is nonabelian simple;
\item \(G\) has no nontrivial normal subgroup of odd order, and a Sylow
      \(2\)-subgroup of \(G\) is not generalized quaternion.
\end{enumerate}
\end{lemma}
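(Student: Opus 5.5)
Throughout, let \(G\) be the minimal counterexample of the statement, so \(\sub(G)<59\cdot2^{t-3}\) with \(t=\pi(G)\ge4\). Minimality means every nonsolvable group of smaller order has \(\sub\ge59\cdot2^{\pi-3}\). The basic device is the one already used in Lemma~\ref{lem:sup-sub-normal-prime-sylow} and Lemma~\ref{lem:sup-v4-q8-bound}. If \(N\triangleleft G\), then \(\sub(G)\ge\sub(G/N)\). If \(G=P\rtimes H\) with \(P\) a normal Hall subgroup, then the families \(\{L\}\) and \(\{PL\}\), \(L\le H\), are disjoint, so \(\sub(G)\ge2\sub(H)\) by Lemma~\ref{lem:distinct-Sub-semi-distinct-Sub}.

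\emph{Step (a).} Suppose \(P\) is a normal Sylow \(p\)-subgroup. Schur--Zassenhaus gives \(G=P\rtimes H\) with \(\pi(H)=t-1\). Since \(P\) is solvable and \(G\) is not, \(H\cong G/P\) is nonsolvable and has smaller order. Minimality then gives \(\sub(H)\ge59\cdot2^{t-4}\), hence \(\sub(G)\ge2\sub(H)\ge59\cdot2^{t-3}\), a contradiction. (When \(t-1\ge4\) this is the minimality hypothesis directly. When \(t-1=3\), I will cite the Das--Mandal subgroup-count theorem \cite{das-com}, the three-prime input mentioned in the introduction, which gives \(\sub(H)\ge59\) for nonsolvable \(H\) with \(\pi(H)=3\). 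Every nonsolvable group has \(\pi\ge3\) by Burnside's \(p^aq^b\) theorem.)

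\emph{Step (b).} Let \(N\ne1\) be a normal \(p\)-subgroup. Then \(G/N\) is nonsolvable and smaller. If \(\pi(G/N)=t\), minimality gives \(\sub(G)\ge\sub(G/N)\ge59\cdot2^{t-3}\). If \(\pi(G/N)<t\), then \(N\) is a Sylow \(p\)-subgroup, which is normal, contradicting (a). The edge case \(\pi(G/N)=3\) is again handled by \cite{das-com}.

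\emph{Steps (c) and (d).} Part (c) follows from (b): a minimal normal subgroup is characteristically simple, and an abelian one would be a normal elementary abelian \(p\)-subgroup. For (d), a nontrivial normal subgroup of odd order is solvable by Feit--Thompson, so it contains a nontrivial characteristic elementary abelian subgroup, for instance the last nontrivial term of its derived series followed by a minimal characteristic subgroup. That subgroup is normal in \(G\), contradicting (b). For the quaternion claim, suppose a Sylow \(2\)-subgroup is generalized quaternion. Brauer--Suzuki gives \(O_{2'}(G)Z(P)/O_{2'}(G)=Z(G/O_{2'}(G))\). Since \(O_{2'}(G)=1\) by the first half of (d), \(G\) has a central subgroup of order \(2\), contradicting (b).

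The main obstacle is the bookkeeping when the quotient or complement drops to \(\pi=3\). There the inductive hypothesis, which assumes \(t\ge4\), does not apply, and the externally cited three-prime classification must be invoked carefully.
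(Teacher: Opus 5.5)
Your proposal is correct and follows the paper's proof essentially step for step: the $2\sub(H)$ doubling via Schur--Zassenhaus for (a), the quotient comparison $\sub(G)\ge\sub(G/N)$ for (b), the structure of minimal normal subgroups for (c), and Feit--Thompson plus Brauer--Suzuki for (d). The only difference is your detour through Das--Mandal when $\pi(H)=3$, which is unnecessary: $G$ is minimal among \emph{all} nonsolvable groups satisfying the inequality (the hypothesis $t\ge4$ constrains only $G$), so minimality already gives $\sub(H)\ge59$ directly, and in (b) minimality is only applied when $\pi(G/N)=t\ge4$.
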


\begin{proof}
Suppose first that \(P\) is a normal Sylow subgroup of \(G\).  Then
Schur--Zassenhaus gives \(G=P\rtimes H\), with \(H\cong G/P\).  The quotient
\(H\) is nonsolvable; otherwise \(G\) would be solvable.  Since \(H\) has
smaller order than \(G\), the minimal choice of \(G\) gives
\[
        \sub(H)\ge 59\cdot2^{\pi(H)-3}
        =59\cdot2^{t-4}.
\]
For each subgroup \(L\le H\), the subgroups \(L\) and \(PL\) of \(G\) are
distinct, and the subgroups \(PL\) are pairwise distinct by
Lemma~\ref{lem:distinct-Sub-semi-distinct-Sub}.  Hence
\[
        \sub(G)\ge2\sub(H)\ge59\cdot2^{t-3},
\]
a contradiction.  This proves (a).

Let \(N\) be a nontrivial normal \(p\)-subgroup of \(G\).  If \(N\) is Sylow,
then this contradicts (a).  Otherwise \(\pi(G/N)=t\).  The quotient \(G/N\) is
nonsolvable, and the quotient map gives \(\sub(G/N)\le\sub(G)\).  By
minimality,
\[
        \sub(G)\ge \sub(G/N)\ge59\cdot2^{t-3},
\]
again a contradiction.  This proves (b).

A minimal normal subgroup of a finite group is a direct product of isomorphic
simple groups.  Part (b) excludes elementary abelian minimal normal subgroups,
and hence every minimal normal subgroup has the form \(S^k\) with \(S\)
nonabelian simple.  This proves (c).

If \(G\) had a nontrivial normal subgroup \(M\) of odd order, then \(M\) would
be solvable by the Feit--Thompson theorem \cite{FeitThompson1963}.  It would therefore contain a
minimal normal subgroup of \(G\), necessarily elementary abelian, contradicting
(b).  Finally, if a Sylow \(2\)-subgroup of \(G\) were generalized quaternion,
then \(O_{2'}(G)=1\), where \(O_{2'}(G)\) is the largest normal subgroup
of odd order, and the Brauer--Suzuki theorem
\cite{BrauerSuzuki1959} would force \(G\) to have a central involution, again
contradicting (b).  This proves (d).
\end{proof}

\begin{lemma}\label{lem:solv-order-patterns}
Let \(G\) be as in Lemma~\ref{lem:solv-minimal-reductions}, and write
\[
        |G|=p_1^{\alpha_1}\cdots p_t^{\alpha_t},
        \qquad \alpha_1\ge\alpha_2\ge\cdots\ge\alpha_t.
\]
Then \(|G|\) has one of the forms listed in Table~\ref{poss-order}.
\end{lemma}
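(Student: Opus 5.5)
The plan is to turn the numerical hypothesis into a constraint on the exponent vector \((\alpha_1,\dots,\alpha_t)\), and then sharpen it using the structural facts in Lemma~\ref{lem:solv-minimal-reductions}.

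\textbf{Step 1: the crude bound.} By Theorem~\ref{thm:Richards}, \(d(|G|)\le\cyc(G)\le\sub(G)\). The hypothesis \(\sub(G)<59\cdot2^{t-3}\) then gives
\[
\prod_{i=1}^t\frac{\alpha_i+1}{2}<\frac{59}{8}.
\]
Already this bounds \(\alpha_1\le 13\), and it allows only a handful of indices with \(\alpha_i\ge2\).

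\textbf{Step 2: arithmetic from nonsolvability.} Since \(G\) is nonsolvable, it has a nonabelian simple section. Feit--Thompson together with Burnside's normal \(p\)-complement theorem then give \(4\mid |G|\), so \(p_1=2\) or at least \(\alpha_{2}\)-type exponent \(\ge2\) at the prime \(2\). Proposition~\ref{suzuki-prop} gives \(3\mid|G|\) unless a Suzuki group occurs. In the Suzuki case \(2^6\) and the primes \(5,7,13\) (or larger Suzuki primes) are forced. I will keep both branches.

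\textbf{Step 3: the refined bound.} By Lemma~\ref{lem:solv-minimal-reductions}(d), the Sylow \(2\)-subgroup is neither cyclic (Burnside) nor generalized quaternion. So Theorem~\ref{thm:sup-solv-criterion-cyclic-subgroups}, or directly Theorem~\ref{thm:amiri-jaco} with Lemma~\ref{lem:strong-dom-implies-less-cyc} and Theorem~\ref{thm:product-cyclic-grps}, yields
\[
\sub(G)\ge\cyc(G)\ge d(|G|/2^{r})\bigl((r-1)\cdot2+2\bigr)=2r\,d(|G|/2^r),
\]
where \(2^r\Vert|G|\). The factor at the prime \(2\) thus improves from \((r+1)/2\) to \(r\). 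The same argument applies at any other prime whose Sylow subgroup is noncyclic, with factor \(((r-1)p+2)/2\). Where it helps, I will also add the noncyclic subgroups \(P_1L\) as in the proofs of Lemmas~\ref{lem:sup-sub-cube} and~\ref{lem:sup-sub-square}.

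\textbf{Step 4: enumeration.} The refined inequality is
\[
r\prod_{p_i\ne2}\frac{\alpha_i+1}{2}<\frac{59}{8}.
\]
With \(r\ge2\), I enumerate all solutions: \(r\in\{2,\dots,7\}\), together with the admissible exponents at the odd primes, most of them equal to \(1\). For instance \(r=2\) allows odd exponents with \(\prod(\alpha_i+1)/2<59/16\), so at most two odd primes squared or one cubed, and so on. These patterns are exactly the entries of Table~\ref{poss-order}.

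The main obstacle is making this case list complete yet short enough to match the table. That means checking carefully which borderline products (for example \(r=3\) with one odd square, giving \(9/2\), or \(r=2\) with a cube, giving \(4\)) fall below \(59/8\). It also requires that the Suzuki branch be either listed or excluded, since \(r=6\) alone gives \(6<59/8\). I would compute each borderline product exactly rather than estimate.
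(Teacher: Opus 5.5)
Your argument is correct and is essentially the paper's. Lemma~\ref{lem:solv-minimal-reductions}(d) and Burnside make the Sylow \(2\)-subgroup neither cyclic nor generalized quaternion, and the Amiri comparison behind Theorem~\ref{thm:sup-solv-criterion-cyclic-subgroups} then gives the inequality you enumerate. The one difference is that you keep the factor \(r\) at the prime \(2\) instead of the paper's weaker \(\min_{\alpha_i>1}2\alpha_i/(\alpha_i+1)\). Your list is therefore a strict subset of Table~\ref{poss-order}, not "exactly" its entries (for instance \(2^5p^2\) gives \(15/2>59/8\)), and containment is all the lemma requires.

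Only minor corrections are needed:
\begin{itemize}
\item For \(r=2\), the condition \(\prod_{p_i\ne2}(\alpha_i+1)/2<59/16\) also allows the odd exponent patterns \((4),(5),(6),(3,2),(2,2,2)\), not just two odd squares or one odd cube. These still fall in rows 2, 3 and 5.
\item The Suzuki branch needs no separate treatment, because \(2^6\) times a squarefree odd number is row 1.
\item Steps 1--2 can simply be dropped. So can the \(P_1L\) device, which is unavailable anyway, since Lemma~\ref{lem:solv-minimal-reductions}(a) excludes normal Sylow subgroups.
\end{itemize}
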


\begin{proof}
The group \(G\) is nonsolvable, hence not supersolvable.  By
Lemma~\ref{lem:solv-minimal-reductions}, its Sylow \(2\)-subgroup is not
generalized quaternion.  Moreover its Sylow \(2\)-subgroup is not cyclic:
otherwise Burnside's normal \(p\)-complement theorem would give a nontrivial
normal subgroup of odd order \cite{Isac-ams}, contradicting
Lemma~\ref{lem:solv-minimal-reductions}.  Thus Theorem~\ref{thm:sup-solv-criterion-cyclic-subgroups}
applies by contraposition: since not all Sylow subgroups of \(G\) are cyclic,
\[
        \sub(G)\ge \cyc(G)\ge
        d(|G|)\min_{\alpha_i>1}\left\{\frac{2\alpha_i}{\alpha_i+1}\right\}.
\]
Combining this with \(\sub(G)<59\cdot2^{t-3}\) gives
\[
        d(|G|)\min_{\alpha_i>1}\left\{\frac{2\alpha_i}{\alpha_i+1}\right\}
        <59\cdot2^{t-3}.
\]

Let \(\alpha_1,\ldots,\alpha_\ell\) be the exponents larger than \(1\), still
ordered decreasingly.  Since \(2a/(a+1)\) is increasing for \(a\ge1\), the
minimum is attained at \(\alpha_\ell\), and the last inequality becomes
\[
        2\alpha_\ell\prod_{i<\ell}(\alpha_i+1)2^{t-\ell}
        <59\cdot2^{t-3}.
\]
If \(\ell=1\), this gives \(\alpha_1\le7\).  In this case the unique repeated
prime is \(2\), by the preceding paragraph.  If \(\ell=2\), it gives either
\((\alpha_1,\alpha_2)=(a,2)\) with \(2\le a\le6\), or
\((\alpha_1,\alpha_2)=(3,3)\).  If \(\ell=3\), it gives
\((\alpha_1,\alpha_2,\alpha_3)=(a,2,2)\) with \(2\le a\le3\).  If
\(\ell=4\), it gives \((2,2,2,2)\).  No solution is possible for
\(\ell\ge5\).  These are exactly the five rows of Table~\ref{poss-order}.
\end{proof}

\begin{table}[ht]
    \centering
    \begin{tabular}{@{}cl@{}}
\toprule
Case & Order \\
\midrule
1 & \(2^{\alpha_1}p_2p_3\cdots p_t\), with \(2\le\alpha_1\le7\) \\
2 & \(p_1^{\alpha_1}p_2^2p_3\cdots p_t\), with \(2\le\alpha_1\le6\) \\
3 & \(p_1^{\alpha_1}p_2^2p_3^2p_4\cdots p_t\), with \(2\le\alpha_1\le3\) \\
4 & \(p_1^3p_2^3p_3\cdots p_t\) \\
5 & \(p_1^2p_2^2p_3^2p_4^2p_5\cdots p_t\) \\
\bottomrule
    \end{tabular}
    \caption{Possible orders of a minimal counterexample.}
    \label{poss-order}
\end{table}

\begin{lemma}\label{lem:solv-simple-minimal-normal}
Let \(G\) satisfy the hypotheses of Lemma~\ref{lem:solv-minimal-reductions}.
Then every minimal normal subgroup of \(G\) is nonabelian simple.
\end{lemma}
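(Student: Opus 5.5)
By Lemma~\ref{lem:solv-minimal-reductions}(c), every minimal normal subgroup of \(G\) has the form \(N=S^k\) with \(S\) nonabelian simple. The task is to show \(k=1\), and the plan is to bound \(k\) using the prime-power patterns in Table~\ref{poss-order}.

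First I would argue arithmetically. Every nonabelian simple group has order divisible by \(4\), so \(2^{2k}\) divides \(|G|\). By Proposition~\ref{suzuki-prop}, either \(3\mid |S|\) or \(S\) is a Suzuki group. Consider the case \(3\mid|S|\) first. Then \(3^k\) divides \(|G|\), and \(S\) also has a third prime divisor \(r\) (Burnside's \(p^aq^b\) theorem), so \(r^k\) divides \(|G|\) as well.

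If \(k\ge2\), then \(|G|\) has at least three primes \(2,3,r\) with exponents \(\ge2\), and the exponent of \(2\) is at least \(4\). Checking Table~\ref{poss-order}:
\begin{itemize}
\item Cases 1, 2 and 4 allow at most two repeated primes, so they are impossible.
\item Case 3 requires the largest exponent to be at most \(3\), which contradicts \(2^4\mid|G|\).
\item Case 5 requires all repeated exponents to equal \(2\), which again contradicts \(2^4\mid|G|\).
\end{itemize}
Hence \(k=1\) in this case.

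For a Suzuki group \(S=Sz(2^{2n+1})\), we have \(2^{2(2n+1)}\mid|S|\), so the exponent of \(2\) in \(|G|\) is at least \(6k\). Moreover \(|S|\) has at least three odd prime divisors, since \(q^2+1=(q+r+1)(q-r+1)\) with \(r=\sqrt{2q}\), together with \(q-1\). Let me check \(Sz(8)\): its order is \(2^6\cdot5\cdot7\cdot13\). If \(k\ge2\), then \(2^{12}\mid|G|\), which exceeds every exponent bound in Table~\ref{poss-order} (the maximum allowed is \(7\)). So again \(k=1\).

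The main obstacle I expect is exactly this case analysis: making sure no row of Table~\ref{poss-order} can accommodate the squared primes of \(S^2\). The key fact is that \(S^2\) forces at least three distinct squared primes, with the prime \(2\) appearing to exponent \(\ge4\). One should cite the standard fact that a nonabelian simple group has at least three prime divisors, and that \(8\mid|S|\) or \(4\mid|S|\) suffices here (\(2^{2k}\ge 16\)).

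If a cleaner route were needed, a fallback would be a counting argument. Use Lemma~\ref{lem:solv-minimal-reductions}(d) to see that the Sylow \(2\)-subgroup of \(S^2\) is far from cyclic, then apply Theorem~\ref{thm:amiri-jaco} to bound \(\cyc(G)\) below by \(d(|G|)\) times a large factor. The table argument should already suffice, however.
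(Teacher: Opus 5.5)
Your argument is correct and is essentially the paper's: for \(k\ge2\) you get \(2^4\mid|G|\) together with two further primes appearing to exponent at least \(2\), and no row of Table~\ref{poss-order} allows this. The case split via Proposition~\ref{suzuki-prop} is unnecessary: since \(|S|\) is even, Burnside's \(p^aq^b\)-theorem already gives two odd prime divisors of \(|S|\), and the paper uses these two primes uniformly without treating the Suzuki groups separately.
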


\begin{proof}
Let \(N\cong S^k\) be a minimal normal subgroup, as in
Lemma~\ref{lem:solv-minimal-reductions}.  If \(k\ge2\), then \(2^4\mid |N|\):
indeed \(4\mid |S|\) for every nonabelian simple group, by Feit--Thompson
\cite{FeitThompson1963} and Burnside's normal \(p\)-complement theorem
\cite{Isac-ams}.  Also, by Burnside's
\(p^aq^b\)-theorem, \(|S|\) has at least two odd prime divisors, and both of
them occur with exponent at least \(2\) in \(|S|^k\).  No order in
Table~\ref{poss-order} has these divisibility properties.  Hence \(k=1\).
\end{proof}

\begin{lemma}\label{lem:solv-possible-simple-socles}
Let \(G\) satisfy the hypotheses of Lemma~\ref{lem:solv-minimal-reductions},
and let \(S\) be a minimal normal subgroup of \(G\).  Then \(S\) is one of the
groups listed in Table~\ref{poss-S}.
\end{lemma}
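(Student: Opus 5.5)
We have not seen Table~\ref{poss-S}, so the plan is to produce the list it should contain. By Lemma~\ref{lem:solv-simple-minimal-normal}, \(S\) is nonabelian simple, and \(|S|\) divides \(|G|\). So \(|S|\) has one of the prime-power patterns of Table~\ref{poss-order}, restricted to the primes dividing \(|S|\). In particular, at most four primes divide \(|S|\) to exponent \(\ge2\). The exponents are bounded: \(2\)-part at most \(2^7\), and every odd exponent at most \(6\), with the finer joint restrictions of the table. Also \(4\mid|S|\), and \(|S|\) has at least two odd prime divisors by Burnside's theorem.

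The strategy is to run through the classification and keep only the simple groups whose orders fit one of the five patterns.

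\emph{Alternating groups.} For \(n\ge 9\), we have \(v_3(n!/2)\ge4\) and \(v_2\ge6\), and the primes \(5,7\) both occur squared at \(n\ge 10\), \(n\ge14\). Comparing with the table leaves only small \(n\); I expect \(A_5,A_6,A_7,A_8\) to survive, and I would check \(A_9,\dots\) directly.

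\emph{Sporadic groups.} These are read off from their orders. Most have \(2\)-parts exceeding \(2^7\), or too many squared primes. Likely survivors are \(J_1\), \(M_{11}\), \(M_{12}\), \(M_{22}\), \(J_2\), \(J_3\), \(M_{23}\), \(HS\), \(McL\); I would check each against the five rows.

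\emph{Lie type.} Here the bound on the \(2\)-part and on the defining characteristic exponent is the key constraint. In defining characteristic \(p\), the \(p\)-part is \(q^N\), with \(N\) the number of positive roots. Since \(p\)-exponents are at most \(7\) (for \(p=2\)) or \(6\) (for odd \(p\)), this bounds \(q^N\) to at most \(2^7\) or \(p^6\). That leaves rank one or small rank over tiny fields: \(\PSL(2,q)\), \(\PSL(3,q)\), \(\PSU(3,q)\), \(\PSp(4,q)\), \(G_2(2)'\), \(Sz(8)\), \(Sz(32)\), \({}^2F_4(2)'\), and so on.

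For \(\PSL(2,q)\), \(q\) odd, the \(p\)-part \(q=p^f\) needs \(f\le6\). The main work is the \(2\)-part and the squared-prime count of \((q^2-1)/2\). Here \(|S|_2\le 2^7\) forces \(q\equiv\pm1 \pmod{2^{k}}\) with \(k\) small, but infinitely many primes \(q\) survive. So the entry for this family will be a parametric one, \(\PSL(2,q)\) with arithmetic conditions on \(q^2-1\), rather than a finite list. For \(\PSL(2,2^f)\), we need \(f\le7\).

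For the other small-rank families, evaluate \(q\in\{2,3,4,5,\dots\}\) until the \(q\)-part or the \(2\)-part exceeds the table, using the standard order formulas. Where the table allows (e.g.\ \(\PSL(3,q)\) with \(q\) odd), I would again record a parametric condition.

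The main obstacle is organizing the Lie-type case so that it is genuinely exhaustive. Bounding the rank uniformly (via \(N\le 7\)) is straightforward, but controlling the cross-characteristic \(2\)-parts and squared odd primes in \(\Phi_d(q)\) factors needs care. I would first use Zsigmondy primes to show that the rank is at most \(2\) except for finitely many \((type,q)\), then check those cases by direct order factorization. For \(Sz(q)\), Proposition~\ref{suzuki-prop} and \(|Sz(q)|_2=q^2\le2^7\) give \(q=8\) only.
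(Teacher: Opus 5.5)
There is a genuine gap in the Lie-type step. You never extract from Table~\ref{poss-order} the cross-prime constraints that do the real work. For distinct primes $r,s$ one has $r^8\nmid|S|$, $r^4s^3\nmid|S|$ and $r^7s^2\nmid|S|$: no row has an exponent $\ge4$ together with another exponent $\ge3$, and exponent $7$ occurs only when no other prime is squared.

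The second constraint removes odd characteristic in one stroke. Put $a=v_2(q-1)$ and $b=v_2(q+1)$, so $\min(a,b)=1$ and $a+b\ge3$. The order formulas give:
\begin{itemize}
\item $v_2(|S|)=2a+b\ge4$ for $\PSL(3,q)$ and $a+2b\ge4$ for $\PSU(3,q)$;
\item $v_2(|S|)=2a+2b\ge6$ for $\PSp(4,q)$ and $G_2(q)$;
\item $v_2(|S|)\ge7$ for $\PSL(4,q)$ and $\PSU(4,q)$.
\end{itemize}
In all six families $p^3\mid|S|$, so $2^4p^3\mid|S|$, which fits no row.

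Your plan instead expects families such as $\PSL(3,q)$ with $q$ odd to survive ``with a parametric condition''. That expectation is false. A list containing such families is not the list of Table~\ref{poss-S}, so the lemma would not follow.

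The tool you propose for this step cannot close the gap either. Zsigmondy primes only contribute new primes to the first power, and every row of Table~\ref{poss-order} tolerates those, so they never yield a contradiction. In particular, ``rank at most $2$ except for finitely many types and fields'' is not reachable that way. $\PSL(4,p)$ and $\PSU(4,p)$ pass the unipotent-exponent test for every odd $p$ and are eliminated only by the $2$-adic count.

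The remaining exclusions in your plan are only deferred (``I expect'', ``likely''), and some of the guesses are off. With the three obstructions they become immediate.

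\emph{Characteristic $2$.} The unipotent exponent leaves only $q\in\{2,4\}$ in dimension $3$ and $q=2$ in the other families.
\begin{itemize}
\item $\PSL(3,2)\cong\PSL(2,7)$, $\PSL(4,2)\cong A_8$, $\PSp(4,2)'\cong A_6$ and ${}^2G_2(3)'\cong\PSL(2,8)$ fall into other rows.
\item $\PSU(3,2)$ and ${}^2B_2(2)$ are solvable.
\item $\PSU(4,2)$ (order $2^6\cdot3^4\cdot5$), $G_2(2)'\cong\PSU(3,3)$ (order $2^5\cdot3^3\cdot7$) and ${}^2F_4(2)'$ (order divisible by $2^{11}$) are excluded.
\end{itemize}
This leaves exactly $\PSL(3,4)$, $\PSU(3,4)$ and $Sz(8)$.

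\emph{Sporadic groups.} $HS$ is already out, since $2^9$ divides its order. $M_{12}$ (divisible by $2^6\cdot3^3$) and $M_{22},M_{23},J_2,J_3,McL$ (each divisible by $2^7\cdot3^2$) are excluded by the joint obstructions. Only $M_{11}$ and $J_1$ remain.

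\emph{Alternating and rank-one groups.} Your own observation $2^6\cdot3^4\mid|A_9|\mid|A_n|$ disposes of every $n\ge9$ via $r^4s^3\nmid|S|$, so no case-by-case check is needed. $\PSL(2,q)$ needs no analysis at all, because Table~\ref{poss-S} lists the whole family.
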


\begin{proof}
By Lemma~\ref{lem:solv-simple-minimal-normal}, \(S\) is nonabelian simple, and
\(|S|\) divides \(|G|\).  Table~\ref{poss-order} therefore gives the following
three obstructions, for any distinct primes \(r,s\):
\[
        r^8\nmid |S|,\qquad r^4s^3\nmid |S|,\qquad r^7s^2\nmid |S|.
\]
We apply these to the families in the classification of finite simple groups.

If \(S=A_n\) with \(n\ge9\), then
\(2^6\cdot3^4\mid |A_9|\mid |A_n|\), contradicting the second obstruction.
Thus only \(A_5,A_6,A_7,A_8\) remain.  Among the sporadic groups, the
\textit{Atlas} orders \cite{ATLAS} show that the only ones not divisible by
\(2^8\) are
\[
        M_{11},\ M_{12},\ M_{22},\ M_{23},\ J_1,\ J_2,\ J_3,\ McL.
\]
Now \(2^6\cdot3^3\mid |M_{12}|\), and \(2^7\cdot3^2\) divides the order of
each of \(M_{22},M_{23},J_2,J_3,McL\).  The second and third obstructions
exclude these groups, leaving only \(M_{11}\) and \(J_1\).

For Lie type, write \(q=p^f\).  The order formulas
\cite[Theorem~1.6.5, Table~1.3, and Remark~1.6.6(a)]{GeckMalle2016}
give the defining-characteristic exponents below.  Here the parameter in
\({}^2B_2(q),{}^2G_2(q),{}^2F_4(q)\) is the field size, the square of the
parameter used for those three families in the cited table.  Passing from the
simply connected group to its central quotient does not change the \(p\)-part.
The exceptional small derived groups are treated separately below.
\[
\begin{array}{c|c|l}
\text{type} & v_p(|S|) & \text{possibilities with }v_p(|S|)\le7\\ \hline
A_{n-1},\ {}^2A_{n-1} & fn(n-1)/2
    & n=2;\ n=3,\ f\le2;\ n=4,\ f=1\\
B_n,\ C_n\ (n\ge2) & fn^2 & n=2,\ f=1\\
D_n,\ {}^2D_n\ (n\ge4) & fn(n-1) & \text{none}\\
G_2 & 6f & f=1\\
{}^2B_2\ (f\ge3\text{ odd}) & 2f & f=3\\
{}^2G_2\ (f\ge3\text{ odd}) & 3f & \text{none}\\
{}^3D_4,\ {}^2F_4 & 12f & \text{none}\\
F_4,\ E_6,\ {}^2E_6,\ E_7,\ E_8 & Nf,\ N\ge24 & \text{none}
\end{array}
\]
The low-rank identifications \(B_2=C_2\), \(D_3=A_3\), and
\({}^2D_3={}^2A_3\) introduce no further families.  The rank-one family is
\(\PSL(2,q)\).  Apart from it and the small derived groups, the only remaining
possibilities are
\[
\PSL(3,p^f),\ \PSU(3,p^f)\ (f\le2),\quad
\PSL(4,p),\ \PSU(4,p),\ \PSp(4,p),\ G_2(p),\ Sz(8).
\]

We next exclude all odd characteristics in the six classical or \(G_2\)
families in this display, uniformly in \(p\).  For odd \(q\), put
\(a=v_2(q-1)\) and \(b=v_2(q+1)\).  Then \(\min(a,b)=1\) and \(a+b\ge3\).
The order formulas, including the central divisors
\(\gcd(3,q-1),\gcd(3,q+1),\gcd(4,q-1),\gcd(4,q+1),2,1\), respectively,
give
\[
\begin{array}{c|c|c}
S & v_2(|S|) & \text{lower bound}\\ \hline
\PSL(3,q) & 2a+b & 4\\
\PSU(3,q) & a+2b & 4\\
\PSL(4,q) & 3a+2b+1-\min(2,a) & 7\\
\PSU(4,q) & 2a+3b+1-\min(2,b) & 7\\
\PSp(4,q) & 2a+2b & 6\\
G_2(q) & 2a+2b & 6
\end{array}
\]
For example, these valuations follow from
\(v_2(q^2+1)=1\), \(v_2(q^3-1)=a\), and \(v_2(q^3+1)=b\).
Each of these groups also has defining-characteristic part divisible by
\(p^3\), so \(2^4p^3\mid |S|\), contradicting the second obstruction.

In characteristic \(2\), the remaining field sizes are consequently
\(q=2,4\) in dimension \(3\), and \(q=2\) in the other four families.
The group \(\PSU(4,2)\) has order \(2^6\cdot3^4\cdot5\), so is excluded.
The group \(\PSU(3,2)\) is solvable of order \(72\).  The small isomorphisms
\cite{ATLAS}
\[
        \PSL(3,2)\cong\PSL(2,7),\qquad
        \PSL(4,2)\cong A_8,\qquad
        \PSp(4,2)'\cong A_6
\]
place these cases in earlier rows.  Also,
\(G_2(2)'\cong\PSU(3,3)\) has order \(2^5\cdot3^3\cdot7\), and the Tits
group \({}^2F_4(2)'\) has order \(2^{11}\cdot3^3\cdot5^2\cdot13\);
both are excluded by the displayed obstructions.  Finally,
\({}^2G_2(3)'\cong\PSL(2,8)\) is already included, while \({}^2B_2(2)\)
is solvable \cite{ATLAS,MalleTesterman2011}.  Thus the only new possibilities
are
\[
\begin{array}{c|c}
S & |S|\\ \hline
\PSL(3,4) & 2^6\cdot3^2\cdot5\cdot7\\
\PSU(3,4) & 2^6\cdot3\cdot5^2\cdot13\\
Sz(8) & 2^6\cdot5\cdot7\cdot13
\end{array}
\]
Together with the alternating, sporadic, and rank-one cases above, these give
exactly the list of possibilities asserted in Table~\ref{poss-S}.
\end{proof}

\begin{table}[ht]
    \centering
    \begin{tabular}{@{}clcc@{}}
\toprule
Case & \(S\) & \(|S|\) & \(|\Out(S)|\)\\
\midrule
1  & \(M_{11}\) & \(2^4\cdot3^2\cdot5\cdot11\) & \(1\) \\
2  & \(J_1\) & \(2^3\cdot3\cdot5\cdot7\cdot11\cdot19\) & \(1\) \\
3  & \(A_5\) & \(2^2\cdot3\cdot5\) & \(2\) \\
4  & \(A_6\) & \(2^3\cdot3^2\cdot5\) & \(4\) \\
5  & \(A_7\) & \(2^3\cdot3^2\cdot5\cdot7\) & \(2\) \\
6  & \(A_8\) & \(2^6\cdot3^2\cdot5\cdot7\) & \(2\) \\
7  & \(\PSL(3,4)\) & \(2^6\cdot3^2\cdot5\cdot7\) & \(12\) \\
8  & \(\PSU(3,4)\) & \(2^6\cdot3\cdot5^2\cdot13\) & \(4\) \\
9  & \(Sz(8)\) & \(2^6\cdot5\cdot7\cdot13\) & \(3\) \\
10  & \(\PSL(2,q),\ q=p^f\) &
\(\begin{cases}
q(q^2-1),&p=2,\\
q(q^2-1)/2,&p\text{ odd}
\end{cases}\)
&
\(\begin{cases}
f,&p=2,\\
2f,&p\text{ odd}
\end{cases}\)\\
\bottomrule
    \end{tabular}
    \caption{Possible simple minimal normal subgroups.}
    \label{poss-S}
\end{table}

\begin{lemma}\label{lem:solv-unique-minimal-normal}
Let \(G\) satisfy the hypotheses of Lemma~\ref{lem:solv-minimal-reductions}.
Then \(G\) has a unique minimal normal subgroup.
\end{lemma}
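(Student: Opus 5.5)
We argue by contradiction.  Suppose \(G\) has two distinct minimal normal subgroups \(S_1\) and \(S_2\).  By Lemma~\ref{lem:solv-simple-minimal-normal}, both are nonabelian simple.  Since they are distinct minimal normal subgroups, \(S_1\cap S_2=1\), and hence \(S_1S_2\cong S_1\times S_2\) is a normal subgroup of \(G\).

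The plan is to obtain a contradiction from \(|S_1|\,|S_2|\) dividing \(|G|\), using the prime-power patterns of Table~\ref{poss-order}.  Each \(S_i\) appears in Table~\ref{poss-S}.  By Feit--Thompson and Burnside's normal \(p\)-complement theorem, \(4\) divides each \(|S_i|\), so \(2^4\mid |G|\).  By Burnside's \(p^aq^b\)-theorem, each \(|S_i|\) has at least two odd prime divisors.  Every group in Table~\ref{poss-S} has order divisible by \(3\) or is \(Sz(8)\) (Proposition~\ref{suzuki-prop}), and every such order is divisible by \(5\) or \(7\).

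The key step is to show that some odd prime divides both \(|S_1|\) and \(|S_2|\).  Such a prime \(r\) then occurs in \(|G|\) with exponent at least \(2\), alongside \(2^4\).  I will check whether this fits the rows of Table~\ref{poss-order}.
\begin{itemize}
\item Rows with a single repeated prime (Case~1) are excluded at once.
\item Case~2 would force \(|G|=2^a r^2 \cdots\) with \(a\ge 4\), where every other prime divides \(|S_1|\,|S_2|\) exactly once.
\end{itemize}

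Finding the common odd prime is the main obstacle, because Table~\ref{poss-S} contains the infinite family \(\PSL(2,q)\).  The first thing I would try is a direct-product counting estimate instead of pure arithmetic.  Subgroups \(H_1\times H_2\) with \(H_i\le S_i\) are distinct subgroups of \(G\).  So \(\sub(G)\ge \sub(S_1)\sub(S_2)\), and when \(S_1\cong S_2\) diagonal subgroups add further terms.

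The goal is to compare this with \(59\cdot 2^{t-3}\), where \(t\le \pi(S_1)+\pi(S_2)+\pi(G/S_1S_2)\).  We need a lower bound \(\sub(S)\ge c\cdot 2^{\pi(S)}\) with \(c^2\) comfortably above \(59/8\).  Already \(\sub(A_5)=59\) gives \(c=59/8\), and such bounds should follow from involution counts like Lemma~\ref{lem:num-invol} and Lemma~\ref{lem:ineq-primes} for \(\PSL(2,q)\).  From this we get \(\sub(S_1\times S_2)\ge (59/8)^2\, 2^{\pi(S_1S_2)}\).

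The factor \(G/S_1S_2\) must also be controlled: primes of \(|G|\) outside \(|S_1S_2|\) could double the threshold.  The plan is to use the same trick as in Lemma~\ref{lem:solv-minimal-reductions}(a).  A Hall-type complement for primes not dividing \(|S_1S_2|\) contributes a factor \(2\) per prime, via the families \(L\) and \(S_1S_2L\).  This needs an argument that such subgroups \(S_1S_2L\) exist and are pairwise distinct.

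If that route is messy, I would fall back on the arithmetic route.  It suffices to observe that each order in Table~\ref{poss-S} is divisible by \(3\) or \(5\).  If both \(3\) and \(5\) divide only one factor each, a case check with the table shows the exponents \(2^4\), plus a second square from each group's own order (e.g. \(3^2\) in \(A_6\), \(q^2-1\) structure for \(\PSL(2,q)\)), violate Table~\ref{poss-order}.  That case check is what I expect to be laborious.
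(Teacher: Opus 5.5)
\textbf{Gap: the primes outside \(|S_1S_2|\) are never controlled, so the proposal does not prove the lemma.}

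Your fallback arithmetic route cannot work. Its auxiliary claim is false: \(|\PSL(2,17)|=2^4\cdot3^2\cdot17\) is divisible by neither \(5\) nor \(7\). More seriously, a common odd prime gives no contradiction. The group \(A_5\times\PSL(2,7)\) has order \(2^5\cdot3^2\cdot5\cdot7\), and \(A_5\times\PSL(2,13)\) has order \(2^4\cdot3^2\cdot5\cdot7\cdot13\). Both fit Case~2 of Table~\ref{poss-order}. So divisibility checks against Tables~\ref{poss-order} and~\ref{poss-S} alone cannot exclude two minimal normal subgroups, and a subgroup count is unavoidable.

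Your counting route is the right one. The bound you want follows from minimality, with no need for involution counts. Each \(S_i\) is a nonsolvable proper subgroup of \(G\), so \(\sub(S_i)\ge59\cdot2^{\pi(S_i)-3}\). Since \(2\) divides both orders,
\[
\sub(S_1S_2)\ge59^2\,2^{\pi(S_1)+\pi(S_2)-6}\ge\frac{59}{4}\cdot59\cdot2^{\pi(S_1S_2)-3}.
\]

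The genuine gap is the treatment of primes of \(|G|\) that do not divide \(|S_1S_2|\). The mechanism you propose does not give what you need. For a complement \(K\) carrying these primes, the families \(L\) and \(S_1S_2L\) with \(L\le K\) supply only \(2\sub(K)\) subgroups. That is an additive term, not a factor \(2\) per prime multiplying \(\sub(S_1S_2)\). The doubling in Lemma~\ref{lem:solv-minimal-reductions}(a) works because there the complement is the large nonsolvable factor to which minimality applies; here the roles are reversed.

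The missing idea is structural:
\begin{enumerate}
\item Every minimal normal subgroup is nonabelian, so \(C_G(\Soc(G))=1\). Hence \(G\) embeds in \(\prod_j\Aut(S_j)\), the product over the minimal normal subgroups \(S_j\).
\item A Sylow \(\ell\)-subgroup with \(\ell\nmid|\Soc(G)|\) must then act nontrivially on some \(S_j\), which forces \(\ell\mid|\Out(S_j)|\).
\item By Table~\ref{poss-S} and the exponent bounds, this happens only for \(Sz(8)\) with \(\ell=3\), and for \(\PSL(2,p^f)\) with \(f\in\{5,7\}\).
\end{enumerate}
The paper uses this to show that such an \(S\) must be the unique minimal normal subgroup. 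It then rules out a third minimal normal subgroup and \(S\cong T\) by arithmetic, deduces \(G=S\times T\), and only then applies the count above.

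Alternatively, the same input shows that there are at most three minimal normal subgroups, each contributing at most one new prime. Your count then closes, because it reduces to \(59^{s-1}\ge2^{3s-2}\) for \(s=2,3\). Without this step, the term \(\pi(G/S_1S_2)\) in your bound for \(t\) is uncontrolled and the argument does not finish.
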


\begin{proof}
Let \(S\) be a minimal normal subgroup.  Suppose first that
\(\pi(S)<\pi(G)\), and let \(\ell\) be a prime divisor of \(|G|\) not dividing
\(|S|\).  If a Sylow \(\ell\)-subgroup \(Q\) does not centralize \(S\), then
conjugation gives a nontrivial image of \(Q\) in \(\Out(S)\).  Thus
\(\ell\mid |\Out(S)|\).

Using Table~\ref{poss-S}, the only cases in which \(\Out(S)\) can contribute a
new prime are \(S\cong Sz(8)\), where the new prime is \(3\), and
\(S\cong \PSL(2,p^f)\), where the new prime divides \(f\).  Since
Table~\ref{poss-order} bounds every exponent in \(|G|\) by \(7\), the latter
case forces \(f=5\) or \(f=7\).  If \(f=7\), then the order patterns force
\(p=2\).  For \(S\cong Sz(8)\) and \(S\cong\PSL(2,128)\), the
\(2\)-parts of \(|S|\) are \(2^6\) and \(2^7\), respectively.
A distinct minimal normal subgroup would centralize \(S\), intersect it
trivially, and contribute at least another factor \(4\), contradicting
Table~\ref{poss-order}.  Thus \(S\) is unique in these two cases.
If \(f=5\), then either \(S\) is already forced to be the unique
minimal normal subgroup, or \(p=2\).  Indeed, if \(p\) is odd, then
\(|\PSL(2,p^5)|\) is divisible by \(p^5\) and by \(4\); another nonabelian
simple minimal normal subgroup would add another factor \(4\), producing
exponents incompatible with Table~\ref{poss-order}.  Thus the only remaining
case with \(f=5\) is
\[
        S\cong \PSL(2,32),\qquad
        |S|=2^5\cdot3\cdot11\cdot31.
\]
In this last case another minimal normal subgroup would have order \(4m\) with
\(m\) odd, squarefree, and not divisible by \(3\).  By
Lemma~\ref{lem:simple-four-squarefree}, the simple groups of order \(4m\) are
\(A_5\) and the groups \(\PSL(2,r)\) with \(r\) prime and
\((r-1)(r+1)/4\) squarefree.  All of them have order divisible by \(3\).  Hence
this last case is impossible.  Consequently, if \(S\) is not unique, then every
prime divisor of \(|G|\) outside \(|S|\) centralizes \(S\).

Assume now, toward a contradiction, that \(G\) has at least two minimal normal
subgroups.  Choose two of them, \(S\) and \(T\).  They centralize one another,
and hence \(S\times T\le G\).  They are not isomorphic; otherwise
Table~\ref{poss-order} is violated by the square exponents coming from two
copies of the same nonabelian simple group.

There cannot be a third minimal normal subgroup.  Indeed, three nonabelian
simple direct factors would force \(2^6\mid |G|\).  If all three factors have
order divisible by \(3\), then \(3^3\mid |G|\), and the simultaneous
divisibility by \(2^6\) and \(3^3\) is excluded by Table~\ref{poss-order}.  If
one factor has order not divisible by \(3\), then it is a Suzuki group by
Proposition~\ref{suzuki-prop}.  Such a factor already contributes \(2^6\), and
the other two nonabelian simple factors contribute another factor \(2^4\).
This would force \(2^{10}\mid |G|\), again impossible by
Table~\ref{poss-order}.

Therefore \(S\) and \(T\) are the only minimal normal subgroups.  The
centralizer of \(S\times T\) is trivial, and the \(N/C\)-theorem gives
\[
        S\times T\le G\le \Aut(S\times T)=\Aut(S)\times\Aut(T),
\]
Here the nonisomorphic nonabelian simple groups \(S\) and \(T\) have
trivial centers and no common nontrivial direct factor, so the last
equality follows from \cite[Theorem~3.2]{aut-direct}.
If \(S\times T<G\) and \(\pi(S\times T)=\pi(G)\), then the minimality of \(G\)
gives
\[
        \sub(G)\ge\sub(S\times T)\ge59\cdot2^{\pi(G)-3},
\]
a contradiction.  Thus \(G/(S\times T)\) introduces a new prime.  This prime
is outside both \(|S|\) and \(|T|\).  Since \(C_G(S\times T)=1\), a Sylow
subgroup for this prime acts nontrivially on \(S\) or on \(T\), and hence gives
a new prime divisor of \(\Out(S)\) or of \(\Out(T)\).  The first paragraph of
the proof then says that the corresponding minimal normal subgroup would have
to be unique, a contradiction.  Hence \(G=S\times T\).

Finally, by the minimal choice of \(G\), every nonsolvable group \(X\) of
smaller order satisfies
\[
        \sub(X)\ge59\cdot2^{\pi(X)-3}.
\]
Thus
\[
        \sub(G)\ge \sub(S)\sub(T)
        \ge 59^2\,2^{\pi(S)+\pi(T)-6}.
\]
Here the first inequality is obtained by taking direct products \(A\times B\)
with \(A\le S\) and \(B\le T\).
Since \(S\) and \(T\) both have even order,
\(\pi(G)\le\pi(S)+\pi(T)-1\), and the last lower bound is at least
\(59\cdot2^{\pi(G)-3}\), a contradiction.  Hence \(G\) has a unique minimal
normal subgroup.
\end{proof}

\begin{table}[ht]
\centering
\caption{Finite simple cases checked from tables of marks.}
\label{tab:solvable-finite-simple-checks}
\begin{tabular}{@{}lcc@{}}
\toprule
Groups \(S\) & largest bound needed & \(\min \sub(S)\)\\
\midrule
\(A_7,A_8,\PSL(3,4),M_{11},J_1,\PSU(3,4),Sz(8)\) & \(472\) & \(3786\)\\
\(\PSL(2,q)\), \(q=11,13,16,19,23,25,27,29,31,32,64\) & \(236\) & \(620\)\\
\bottomrule
\end{tabular}
\end{table}

The second column is chosen large enough for both the simple case and the
almost simple cases in which one outer automorphism prime is added.  The values
in Table~\ref{tab:solvable-finite-simple-checks} are obtained from the
GAP/TomLib tables of marks \cite{GAP,TomLib}: if \(T\) is the table of marks of
\(S\), then
\[
        \sub(S)=\sum \operatorname{LengthsTom}(T),
\]
the sum of the lengths of the conjugacy classes of subgroups.  The verifier
in Online Resource~1 checks every group in both rows separately,
including the displayed minima and the bounds computed from \(|\Aut(S)|\).
The complete successful reference output is archived there and confirms
both rows. The recorded GAP environment also includes AtlasRep
\cite{AtlasRep}, a dependency of TomLib.

\begin{theorem}
\label{thm:solvable}
Let \(G\) be a finite group.  If
\[
        \lambda(G)<\frac{59}{8},
\]
then \(G\) is solvable.
\end{theorem}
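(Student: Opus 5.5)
I will argue by minimal counterexample: let \(G\) be nonsolvable of minimal order with \(\sub(G)<59\cdot2^{\pi(G)-3}\), and put \(t=\pi(G)\).

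\textbf{Small values of \(t\).} These are handled first. Every nonabelian simple group has at least three prime divisors, so \(t\ge3\). If \(t=3\), the hypothesis reads \(\sub(G)<59\). Here I would invoke the Das--Mandal subgroup-count theorem \cite[arXiv version, Theorem~2.1]{das-com}, which I expect to say that a nonsolvable group whose order has exactly three prime divisors has at least \(59\) subgroups, with equality only for \(A_5\). So \(t\ge4\), and Lemmas~\ref{lem:solv-minimal-reductions}--\ref{lem:solv-unique-minimal-normal} apply. They give a unique minimal normal subgroup \(S\), which is nonabelian simple and lies in Table~\ref{poss-S}. By Lemma~\ref{almost-simple-lemma}, \(S\le G\le\Aut(S)\). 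Moreover \(\pi(G)\le\pi(S)+1\), because any new prime must divide \(|\Out(S)|\), and from the table at most one new prime can occur (\(3\) for \(Sz(8)\), or a prime dividing \(f\) for \(\PSL(2,p^f)\)). Since \(\sub(G)\ge\sub(S)\), it suffices to show
\[
\sub(S)\ge 59\cdot 2^{\pi(\Aut(S))-3}
\]
for every \(S\) in the table with \(\pi(G)\ge4\).

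\textbf{Finite list.} For the non-rank-one cases \(A_7,A_8,\PSL(3,4),M_{11},J_1,\PSU(3,4),Sz(8)\), the required bound is at most \(472\). Table~\ref{tab:solvable-finite-simple-checks} gives \(\sub(S)\ge3786\), which settles these. The cases \(A_5\) and \(A_6\) have \(\pi(\Aut S)=3\), so they fall under \(t=3\) and are excluded there.

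\textbf{Family \(\PSL(2,q)\).} This is the main case.
\begin{itemize}
\item For odd \(q\ge37\), count only cyclic subgroups of order \(2\). Lemma~\ref{lem:num-invol} gives at least \(q(q-1)/2\) involutions. Lemma~\ref{lem:ineq-primes} gives \(q(q-1)\ge59\cdot2^{t'-1}\), where \(t'=\pi(S)\). Hence
\[
\sub(S)\ge 59\cdot2^{t'-2}=59\cdot2^{(t'+1)-3},
\]
which already covers one extra outer prime.
\item For even \(q=2^f\), I would use the \(q^2-1\) elements of order \(3\) or the \(q+1\) Sylow \(2\)-subgroups together with their \(2^f-1\) involutions. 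Simplest is to count the \(q^2-1\) involutions, which lie in \(q^2-1\) distinct cyclic subgroups. Then a crude bound \(2^{\pi(q^2-1)+2}\le q^2\) should hold for \(q\ge128\). The cases \(q=16,32,64\) are in Table~\ref{tab:solvable-finite-simple-checks}, and \(q=4,8\) have \(t\le3\).
\item The remaining odd \(q<37\) with \(t\ge4\) are \(11,13,19,23,25,27,29,31\). These are checked in Table~\ref{tab:solvable-finite-simple-checks} (bound \(236\), minimum \(620\)). Small \(q\) such as \(5,7,9,17\) have \(\pi\le3\), apart from field-automorphism extensions, which are handled via \(t=3\).
\end{itemize}

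The main obstacle I expect is bookkeeping in the almost simple step: making sure that \(\pi(G)\le\pi(S)+1\), and that every \(\PSL(2,q)\) case with \(t\ge4\) falls under either the asymptotic involution bound or the finite table. In particular, odd \(q\) just below \(37\) and even \(q\) need to be verified against the table's list.
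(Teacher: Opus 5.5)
Your argument is essentially the paper's. It uses the same base cases ($t\le2$ by Burnside, $t=3$ by Das--Mandal), the same reduction to an almost simple group $S\le G\le\Aut(S)$, and the same closing inputs: the table-of-marks check, whose bound column already allows for one outer prime, and the involution counts combined with Lemma~\ref{lem:ineq-primes}. The only difference is bookkeeping: you bound $\sub(S)$ against $\pi(\Aut(S))$ uniformly, whereas the paper uses the minimality of $G$ to dispose of the case $S<G$ with $\pi(S)=\pi(G)$.

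Two fixes are needed.

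\emph{The even-$q$ inequality.} For even $q$ the inequality you need is $q^2-1\ge 59\cdot 2^{\pi(q^2-1)-1}$. Your $2^{\pi(q^2-1)+2}\le q^2$ is too weak by a factor $59/8$. The correct inequality only has to be checked at $q=128$, where $16383\ge236$. This is because $v_2(|G|)\le7$ by Table~\ref{poss-order}, which forces $q\le128$.

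\emph{The single outer prime.} The same exponent bound, giving $f\le7$, is what actually justifies your claim that $\Out(\PSL(2,p^f))$ contributes at most one new prime. Table~\ref{poss-S} alone does not give this.
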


\begin{proof}
Put \(t=\pi(G)\).  The hypothesis reads
\[
        \sub(G)<59\cdot2^{t-3}.
\]
For \(t\le2\), the assertion follows from Burnside's
\(p^aq^b\)-theorem.  For \(t=3\), it follows from the theorem of
Das--Mandal that every group with fewer than \(59\) subgroups is solvable
\cite[arXiv version, Theorem~2.1]{das-com}.  Suppose therefore that a counterexample exists,
and choose one of minimal order.  Then \(t\ge4\).

The reductions above apply to \(G\).  By Lemmas~\ref{lem:solv-minimal-reductions}
and~\ref{lem:solv-unique-minimal-normal}, \(G\) has a unique minimal normal
subgroup \(S\), and \(S\) is nonabelian simple.  Hence
Lemma~\ref{almost-simple-lemma} gives
\[
        S\le G\le \Aut(S).
\]

If \(S<G\) and \(\pi(S)=\pi(G)\), then the minimality of \(G\) gives
\[
        \sub(G)\ge\sub(S)\ge59\cdot2^{t-3},
\]
a contradiction.  Thus either \(G=S\), or \(S<G\) and \(\pi(S)<\pi(G)\).

Assume first that \(S<G\) and \(\pi(S)<\pi(G)\).  By Table~\ref{poss-S}, the
extra prime must come from \(\Out(S)\).  Hence \(S\cong Sz(8)\), or
\[
        S\cong \PSL(2,32),\qquad S\cong \PSL(2,128),
        \qquad\text{or}\qquad S\cong \PSL(2,p^5)
\]
with \(p\) odd.  The cases \(S\cong Sz(8)\) and \(S\cong\PSL(2,32)\) are
covered by Table~\ref{tab:solvable-finite-simple-checks}, since
\(\sub(G)\ge\sub(S)\).
For \(S=\PSL(2,128)\), the involutions alone give
\[
        \sub(G)\ge\sub(S)\ge 128^2-1>59\cdot2^2,
\]
while here \(\pi(G)=5\).  Finally, if \(S=\PSL(2,p^5)\) with \(p\) odd, then
\(q=p^5\ge37\), and \(\pi(G)=\pi(S)+1\).  Lemmas~\ref{lem:num-invol}
and~\ref{lem:ineq-primes} give
\[
        \sub(G)\ge\sub(S)\ge \frac{q(q-1)}2
        \ge 59\cdot2^{\pi(S)-2}
        =59\cdot2^{\pi(G)-3},
\]
again a contradiction.

We are left with \(G=S\) simple.  By Table~\ref{poss-S}, either \(S\) is one of
the finite cases in Table~\ref{tab:solvable-finite-simple-checks}, or
\(S\cong\PSL(2,q)\).  The finite cases contradict the displayed table.  For
\(\PSL(2,q)\), the cases \(q=11,13,16,19,23,25,27,29,31,32,64\) are covered by
Table~\ref{tab:solvable-finite-simple-checks}; the cases
\(q=4,5,7,8,9,17\) have \(\pi(S)=3\) and were already covered by the base case.
If \(q=128\), the involution count gives
\[
        \sub(S)\ge q^2-1>59\cdot2^{\pi(S)-3}.
\]
If \(q\) is odd and \(q\ge37\), then Lemmas~\ref{lem:num-invol}
and~\ref{lem:ineq-primes} give
\[
        \sub(S)\ge \frac{q(q-1)}2
        \ge 59\cdot2^{\pi(S)-2}
        >59\cdot2^{\pi(S)-3}.
\]
This final contradiction proves the theorem.
\end{proof}

\begin{remark}\label{rem:solvable-tight}
The constant \(59/8\) in Theorem~\ref{thm:solvable} is tight.  The group
\(A_5\) is not solvable and satisfies
\[
        \lambda(A_5)=\frac{59}{8}.
\]
If \(p_1,\ldots,p_r\) are distinct primes larger than \(5\), then
\[
        A_5\times C_{p_1}\times\cdots\times C_{p_r}
\]
is not solvable and still has normalized value \(\lambda=59/8\).
\end{remark}

\section{\texorpdfstring{Reduced cores below \(\lambda<59/8\)}{Reduced cores below lambda<59/8}}
\label{sec:sub-threshold-structure}

Theorem~\ref{thm:solvable} reduces the study of groups with
\(\lambda(G)<59/8\) to solvable groups.  We first separate their coprime
cyclic direct factors and then give exact criteria for several families.
These criteria do not constitute a classification of all solvable groups
below the threshold.

The decomposition established in Corollary~\ref{cor:sub-block-decomposition}
has the form
\[
        G\cong A\times S\times C,
\]
with pairwise coprime factor orders.  Here \(A\) is cyclic of squarefree
order and does not affect \(\lambda\); \(C\) collects the remaining central
cyclic Sylow subgroups; and \(S\), if nontrivial, has no central cyclic Sylow
subgroup.  The numerical condition is
\[
        \lambda(S)\delta(C)<\frac{59}{8},\qquad
        \delta(C)=\frac{d(|C|)}{2^{\pi(C)}}.
\]

Call a finite group \(K\) squarefree-reduced if it has no central Sylow
subgroup of prime order.  A fundamental block for the subgroup-count threshold
is a nontrivial squarefree-reduced group with no central cyclic Sylow subgroup
and with \(\lambda<59/8\).  Squarefree reduction alone is weaker: it may leave
central cyclic Sylow subgroups of order \(\ell^a\), with \(a\ge2\).

We call a nontrivial finite group externally cyclically indecomposable if it
cannot be written as \(H\times D\), where \(H\ne1\), \(D\ne1\) is cyclic,
and \((|H|,|D|)=1\).  Every fundamental block has this property.  Cyclic
direct factors whose primes already divide the order of the complementary
factor are not removed by the coprime decomposition.

\begin{question}\label{ques:finitely-many-sub-blocks}
Are there only finitely many fundamental blocks \(S\) with
\[
        \lambda(S)<\frac{59}{8}?
\]
\end{question}

\begin{lemma}\label{lem:solv-threshold-reduced-core}
Let \(G\) be a finite group with \(t=\pi(G)\) and
\[
        \sub(G)<59\cdot2^{t-3}.
\]
Then
\[
        G\cong A\times K,
\]
where \(A\) is cyclic of squarefree order, \((|A|,|K|)=1\), the group \(K\) is
squarefree-reduced, and
\[
        \sub(K)<59\cdot2^{\pi(K)-3}.
\]
Conversely, every such direct product satisfies the inequality for \(G\) if
and only if \(K\) satisfies the displayed inequality.
\end{lemma}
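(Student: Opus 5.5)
The plan is to strip off central Sylow subgroups of prime order one at a time, and then check that this does not change the normalized count.

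\emph{Splitting off $A$.} Let $A$ be the product of all central Sylow subgroups of $G$ of prime order. Each such Sylow subgroup $P$ is central and a normal Hall subgroup. By Schur--Zassenhaus, $G=P\rtimes H$, and centrality of $P$ makes this a direct product $G=P\times H$. Doing this for every such prime gives $G\cong A\times K$, where $A$ is cyclic of squarefree order (a product of coprime cyclic groups of prime order) and $(|A|,|K|)=1$.

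\emph{$K$ is squarefree-reduced.} Suppose $K$ had a central Sylow $\ell$-subgroup $Q$ of prime order. Since $\ell\nmid |A|$, $Q$ is also a Sylow $\ell$-subgroup of $G$. It is central in $G$, because it commutes with $A$ and with $K$. Then $Q$ would already have been absorbed into $A$, a contradiction. So $K$ has no central Sylow subgroup of prime order.

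\emph{The counts multiply.} I will show $\sub(A\times K)=d(|A|)\,\sub(K)=2^{\pi(A)}\sub(K)$ whenever $(|A|,|K|)=1$. The key point is that a subgroup $L$ of a direct product of coprime-order groups is the product of its projections. Since $|A|$ and $|K|$ are coprime, $L$ contains the Hall subgroups $L\cap A$ and $L\cap K$, which are its elements of order dividing $|A|$ and $|K|$ respectively. Hence $L=(L\cap A)\times(L\cap K)$. Conversely every product $U\times V$ with $U\le A$, $V\le K$ is a subgroup, and distinct pairs give distinct subgroups. So $\sub(G)=\sub(A)\sub(K)$, and $\sub(A)=2^{\pi(A)}$ because $A$ is cyclic of squarefree order.

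\emph{Conclusion.} Since $\pi(G)=\pi(A)+\pi(K)$, we get
\[
\sub(G)<59\cdot2^{\pi(G)-3}\iff 2^{\pi(A)}\sub(K)<59\cdot 2^{\pi(A)+\pi(K)-3}\iff \sub(K)<59\cdot2^{\pi(K)-3}.
\]
This gives both the forward statement and the converse. The converse uses only the multiplicativity of $\sub$ for coprime products, not the reduced property of $K$.

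The only step needing care is the Hall-subgroup splitting of subgroups of $A\times K$. The remaining details are bookkeeping: the case $A=1$ is trivial, and if $K=1$ we use $\sub(1)=1$ and $\pi(1)=0$, so the displayed inequality reads $1<59/8$, which holds.
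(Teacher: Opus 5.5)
Your proof is correct and follows essentially the same route as the paper. You split off central Sylow subgroups of prime order via Schur--Zassenhaus, use that subgroups of a coprime direct product split as products to get $\sub(G)=2^{\pi(A)}\sub(K)$, and then compare the normalized inequalities. The only difference is cosmetic: you remove all such factors at once and explicitly check that $K$ inherits no central Sylow subgroup of prime order, whereas the paper removes them one at a time by iteration.
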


\begin{proof}
Suppose that \(P\) is a central Sylow subgroup of prime order \(p\).  By
Schur--Zassenhaus, \(G=P\rtimes H\), where \(H\cong G/P\).  Since \(P\le Z(G)\),
the action is trivial and \(G=P\times H\).  Because the two factors have
coprime orders, every subgroup of \(G\) is a direct product of a subgroup of
\(P\) and a subgroup of \(H\).  Hence
\[
        \sub(G)=2\sub(H),
        \qquad
        \pi(G)=\pi(H)+1.
\]
Thus \(G\) satisfies the strict inequality if and only if \(H\) does.  Iterating
this argument removes all central Sylow subgroups of prime order and gives the
decomposition \(G\cong A\times K\).  The converse follows from the same
multiplicativity.
\end{proof}

\begin{lemma}\label{lem:external-cyclic-normal-form}
Let \(K\) be a finite group.  Let \(C\) be the product of all
central cyclic Sylow subgroups of \(K\).  Then \(C\) is cyclic, this choice of
\(C\) is canonical, and
\[
        K\cong S\times C,
\]
with \((|S|,|C|)=1\).  Moreover, either \(S=1\), or \(S\) has no central
cyclic Sylow subgroup and is externally cyclically indecomposable.  If \(K\) is squarefree-reduced and
\[
        C\cong C_{\ell_1^{a_1}}\times\cdots\times C_{\ell_r^{a_r}},
\]
with distinct primes \(\ell_i\), then every nontrivial exponent satisfies
\(a_i\ge2\).
\end{lemma}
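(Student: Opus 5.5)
Let \(\ell_1,\ldots,\ell_r\) be the primes for which a Sylow \(\ell_i\)-subgroup \(C_i\) of \(K\) is central and cyclic.  A central Sylow subgroup is normal, so it is the \emph{unique} Sylow \(\ell_i\)-subgroup.  Hence the set of central cyclic Sylow subgroups is determined by \(K\) alone, and \(C=C_1\cdots C_r\) is canonical.  The \(C_i\) are central, hence commute pairwise, and have pairwise coprime orders.  So \(C\cong C_1\times\cdots\times C_r\) is cyclic (CRT) and central.  It is a normal Hall subgroup, so Schur--Zassenhaus gives a complement \(S\) with \(K=C\rtimes S\).  Centrality makes the action trivial, so \(K\cong S\times C\) with \((|S|,|C|)=1\).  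Note that \(S\cong K/C\) is determined up to isomorphism.  In fact \(S\) is the unique normal Hall \(\pi(C)'\)-subgroup, since in a coprime direct product with \(C\) abelian, \(S\) consists of the \(\pi(C)'\)-elements.

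Next, suppose \(S\ne1\) and \(S\) has a central cyclic Sylow \(\ell\)-subgroup \(Q\).  Then \(\ell\nmid|C|\), so \(Q\) is a Sylow \(\ell\)-subgroup of \(K\).  It is central in \(K=S\times C\) and cyclic, so \(\ell\) would be one of the \(\ell_i\), a contradiction.  For external cyclic indecomposability, suppose \(S=H\times D\) with \(H\ne1\), \(D\ne1\) cyclic, and \((|H|,|D|)=1\).  Pick a prime \(\ell\mid|D|\).  The Sylow \(\ell\)-subgroup of \(S\) is the Sylow \(\ell\)-subgroup of \(D\), because \(\ell\nmid|H|\).  It is therefore cyclic and central in \(S\), which contradicts the previous sentence.

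Finally, assume \(K\) is squarefree-reduced.  If some \(a_i=1\), then \(C_i\) is a central Sylow subgroup of \(K\) of prime order, which contradicts the definition.  Hence every nontrivial exponent satisfies \(a_i\ge2\).

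Every step here is routine.  The only point needing care is canonicity: "the product of all central cyclic Sylow subgroups" must be well defined.  This follows from uniqueness of normal Sylow subgroups.
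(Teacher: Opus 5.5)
Your proof is correct and follows the paper's argument essentially step for step. You build \(C\) from the (unique, because normal) central cyclic Sylow subgroups, split it off by Schur--Zassenhaus using centrality, and then derive the absence of central cyclic Sylow subgroups in \(S\), external cyclic indecomposability, and the exponent condition exactly as the paper does. The only minor variation is how you show \(S\) is unique. You identify \(S\) as the set of elements of order coprime to \(|C|\). The paper instead uses the conjugacy part of Schur--Zassenhaus together with the fact that conjugation by the central subgroup \(C\) is trivial.
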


\begin{proof}
If \(K=1\), take \(S=C=1\).  Otherwise, let \(p\) run over the primes for which the Sylow \(p\)-subgroup \(P_p\) of
\(K\) is cyclic and central, and put \(C=\prod P_p\).  The factors have
coprime orders and commute, hence \(C\) is cyclic.  Since \(C\) is a normal
Hall subgroup of \(K\), Schur--Zassenhaus gives a complement \(S\).  Because
\(C\le Z(K)\), this complement is normal, and therefore
\[
        K\cong S\times C.
\]
The subgroup \(C\) is canonical by construction.  Moreover, the complement is
unique: Schur--Zassenhaus conjugacy for complements applies because \(C\) is
solvable, and conjugation by elements of \(C\) is trivial.

If \(S\ne1\) and \(S\) had a central cyclic Sylow subgroup \(Q\), then \(Q\)
would also be a central cyclic Sylow subgroup of \(K\), because \(K=S\times C\)
and \((|S|,|C|)=1\).  This contradicts the definition of \(C\).  Thus \(S\)
has no central cyclic Sylow subgroup.

If \(S\ne1\), then \(S\) is externally cyclically indecomposable.  Indeed, a
decomposition \(S=H\times D\), with \(H\ne1\), \(D\ne1\) cyclic, and
\((|H|,|D|)=1\), would make each Sylow subgroup of \(D\) a central cyclic
Sylow subgroup of \(S\).

If \(K\) is squarefree-reduced and some \(a_i=1\), then the Sylow
\(\ell_i\)-subgroup of \(C\) is central of prime order in \(K\), contradicting
the definition of squarefree-reduced.
\end{proof}

For a cyclic group \(C\) with \(|C|=\prod_i \ell_i^{a_i}\), the factor
\(\delta(C)\) is
\[
        \delta(C)=\prod_i\frac{a_i+1}{2}.
\]

\begin{lemma}\label{lem:sub-cyclic-factor-over-block}
Let \(S\) be a finite group and let \(C\) be cyclic, with
\((|S|,|C|)=1\).  Then
\[
        \lambda(S\times C)=\lambda(S)\delta(C).
\]
Consequently \(S\times C\) satisfies \(\lambda<59/8\)
if and only if
\[
        \lambda(S)\delta(C)<\frac{59}{8}.
\]
\end{lemma}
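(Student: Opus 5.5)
The plan is to prove the multiplicativity \(\sub(S\times C)=\sub(S)\,d(|C|)\) together with \(\pi(S\times C)=\pi(S)+\pi(C)\). Dividing the first identity by \(2^{\pi(S)+\pi(C)}\) then gives \(\lambda(S\times C)=\lambda(S)\cdot d(|C|)/2^{\pi(C)}=\lambda(S)\delta(C)\). The final equivalence is then a restatement of this identity.

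For the subgroup count I would use the standard splitting of subgroups in a direct product of coprime-order groups. Let \(H\le S\times C\), and let \(\rho_S,\rho_C\) be the two projections. Since \((|S|,|C|)=1\), we have \(|H|=|H\cap S|\cdot|H\cap C|\). To see this, note that \(H\) is a group of order dividing \(|S||C|\); its \(\pi(S)\)-Hall part lies in \(S\) and its \(\pi(C)\)-part lies in \(C\). More concretely, for \(h=(s,c)\in H\), choose an integer \(e\) with \(e\equiv1\pmod{o(s)}\) and \(e\equiv0\pmod{o(c)}\). This is possible because \(o(s)\mid |S|\) and \(o(c)\mid |C|\) are coprime. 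Then \(h^e=(s,1)\in H\), and similarly \((1,c)\in H\). Hence \(H=\rho_S(H)\times\rho_C(H)\). Conversely, every pair \(A\le S\), \(B\le C\) gives the distinct subgroup \(A\times B\). So \(\sub(S\times C)=\sub(S)\sub(C)\), and \(\sub(C)=d(|C|)\) because \(C\) is cyclic. The argument is exactly the one already used in Lemma~\ref{lem:solv-threshold-reduced-core} for a factor of prime order.

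The prime count is additive because the orders are coprime, so the prime divisors of \(|S|\) and \(|C|\) are disjoint. The degenerate cases \(S=1\) or \(C=1\) are covered by the conventions \(\pi(1)=0\) and \(d(1)=1\). For the consequence, \(S\times C\) satisfies \(\lambda<59/8\) exactly when \(\lambda(S)\delta(C)<59/8\), by substituting the identity. I do not expect a genuine obstacle. The only point needing care is the element-power argument showing that every subgroup splits, which I have given explicitly above rather than citing Goursat.
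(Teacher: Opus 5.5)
Your proposal is correct and follows the paper's proof: multiply subgroup counts over the coprime direct factors, use \(\sub(C)=d(|C|)\) and the additivity of \(\pi\), then divide by \(2^{\pi(S)+\pi(C)}\). Your element-power argument (taking \(e\equiv1\pmod{o(s)}\), \(e\equiv0\pmod{o(c)}\)) supplies the splitting \(H=\rho_S(H)\times\rho_C(H)\), which the paper asserts without proof.
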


\begin{proof}
Since the two factors have coprime order, subgroup counts multiply:
\[
        \sub(S\times C)=\sub(S)\sub(C)=\sub(S)d(|C|).
\]
Also \(\pi(S\times C)=\pi(S)+\pi(C)\).  Dividing by
\(2^{\pi(S)+\pi(C)}\) gives the formula.
\end{proof}

\begin{corollary}\label{cor:sub-block-decomposition}
Let \(G\) be a finite group satisfying
\[
        \sub(G)<59\cdot2^{\pi(G)-3}.
\]
Then
\[
        G\cong A\times S\times C,
\]
where \(A\) is cyclic of squarefree order, \(C\) is cyclic,
\[
        (|A|,|S||C|)=1,\qquad (|S|,|C|)=1,
\]
and either \(S=1\) or \(S\) is a fundamental block for the subgroup-count
threshold.  With the convention \(\lambda(1)=1\), the remaining condition is
\[
        \lambda(S)\delta(C)<\frac{59}{8}.
\]
Moreover, if
\[
        C\cong C_{\ell_1^{a_1}}\times\cdots\times C_{\ell_r^{a_r}},
\]
with distinct primes \(\ell_i\), then every nontrivial exponent satisfies
\(a_i\ge2\).

Conversely, every product with these properties satisfies the displayed
subgroup-count inequality.
When \(S=1\), this is the purely cyclic reduced core case; the possible
exponent patterns for \(C\) are exactly the cyclic patterns listed in
Corollary~\ref{cor:nilpotent-sub-primary-blocks}.
\end{corollary}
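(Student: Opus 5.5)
The proof will be assembled from the three reduction lemmas just proved. Starting from \(G\) with \(\sub(G)<59\cdot2^{\pi(G)-3}\), I will first apply Lemma~\ref{lem:solv-threshold-reduced-core}. This writes \(G\cong A\times K\), where \(A\) is cyclic of squarefree order and \((|A|,|K|)=1\). The factor \(K\) is squarefree-reduced and satisfies \(\sub(K)<59\cdot2^{\pi(K)-3}\).

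Next I will apply Lemma~\ref{lem:external-cyclic-normal-form} to \(K\). This gives the canonical decomposition \(K\cong S\times C\), where \(C\) is the product of the central cyclic Sylow subgroups of \(K\) and \((|S|,|C|)=1\). Since \(A\) is coprime to \(K\), we get \((|A|,|S||C|)=1\). Because \(K\) is squarefree-reduced, the same lemma gives \(a_i\ge2\) for every prime factor \(\ell_i^{a_i}\) of \(C\).

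To see that \(S\), if nontrivial, is a fundamental block, I need three properties.
\begin{enumerate}[label=(\roman*)]
\item \(S\) has no central cyclic Sylow subgroup. This is part of Lemma~\ref{lem:external-cyclic-normal-form}.
\item \(S\) is squarefree-reduced. A central Sylow subgroup of prime order would be a central cyclic Sylow subgroup, so this follows from (i).
\item \(\lambda(S)<59/8\). By Lemma~\ref{lem:sub-cyclic-factor-over-block}, \(\lambda(K)=\lambda(S)\delta(C)\), and the bound on \(K\) says \(\lambda(K)<59/8\). Since \(\delta(C)\ge1\), this gives \(\lambda(S)\le\lambda(S)\delta(C)<59/8\).
\end{enumerate}
The same identity gives the displayed condition \(\lambda(S)\delta(C)<59/8\). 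When \(S=1\), the convention \(\lambda(1)=1\) makes this the condition \(\delta(C)<59/8\).

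For the converse, take any product \(A\times S\times C\) with the stated coprimality conditions and \(\lambda(S)\delta(C)<59/8\). Lemma~\ref{lem:sub-cyclic-factor-over-block} gives \(\lambda(S\times C)<59/8\). Adjoining \(A\) leaves \(\lambda\) unchanged, by the converse part of Lemma~\ref{lem:solv-threshold-reduced-core} or equivalently by Lemma~\ref{lem:sub-cyclic-factor-over-block} with \(\delta(A)=1\). Hence \(\lambda(G)<59/8\). The final remark about the purely cyclic case refers to a later corollary, so I will only note that when \(S=1\) the condition reduces to \(\prod_i(a_i+1)/2<59/8\) with all \(a_i\ge2\).

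I expect no step to be a real obstacle. The only points needing care are that the coprimality of \(A\) with \(S\) and \(C\) survives the second decomposition, and that a fundamental block requires both squarefree-reduction and the absence of central cyclic Sylow subgroups; both are handled above.
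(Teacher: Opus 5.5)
Your proposal is correct and follows the paper's own proof. It applies Lemma~\ref{lem:solv-threshold-reduced-core}, then splits \(K\cong S\times C\) via Lemma~\ref{lem:external-cyclic-normal-form}, and uses \(\lambda(K)=\lambda(S)\delta(C)\) with \(\delta(C)\ge1\) for both directions. The only difference is cosmetic: you deduce that \(S\) is squarefree-reduced from the absence of central cyclic Sylow subgroups, whereas the paper inherits it from \(K\); both deductions are immediate.
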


\begin{proof}
Apply Lemma~\ref{lem:solv-threshold-reduced-core} to write
\[
        G\cong A\times K,
\]
where \(A\) is cyclic squarefree, \((|A|,|K|)=1\), and \(K\) is
squarefree-reduced with \(\lambda(K)<59/8\).  Then apply
Lemma~\ref{lem:external-cyclic-normal-form} to \(K\):
\[
        K\cong S\times C,
\]
where \(C\) is cyclic, \((|S|,|C|)=1\), and either \(S=1\) or \(S\) has no
central cyclic Sylow subgroup.  Since \(K\) is squarefree-reduced, the same is
true of \(S\): otherwise a central Sylow subgroup of prime order in \(S\) would
also be a central Sylow subgroup of prime order in \(K\).  The statement about
the exponents of \(C\) is the last assertion of
Lemma~\ref{lem:external-cyclic-normal-form}.

Finally, Lemma~\ref{lem:sub-cyclic-factor-over-block} gives
\[
        \lambda(K)=\lambda(S)\delta(C),
\]
and the factor \(A\) is neutral for \(\lambda\).  Since
\(\delta(C)\ge1\) and \(\lambda(K)<59/8\), we have
\(\lambda(S)<59/8\).  Therefore, if \(S\ne1\), then \(S\) is a fundamental
block.  This proves the necessity.  The converse follows from the same equality
in Lemma~\ref{lem:sub-cyclic-factor-over-block}.
\end{proof}

\begin{corollary}\label{cor:sub-admissible-cyclic-decorations}
Let \(S\) be fixed with \(\lambda(S)<59/8\), and let
\[
        C\cong C_{\ell_1^{a_1}}\times\cdots\times C_{\ell_r^{a_r}},
        \qquad a_i\ge2,
\]
where the primes \(\ell_i\) are distinct and do not divide \(|S|\).  Then
\[
        S\times C
\]
satisfies \(\lambda<59/8\) if and only if
\[
        \lambda(S)\prod_{i=1}^r\frac{a_i+1}{2}<\frac{59}{8}.
\]
In particular, for a fixed block \(S\), there are only finitely many possible
exponent patterns \((a_1,\ldots,a_r)\) for non-squarefree coprime cyclic
factors.
\end{corollary}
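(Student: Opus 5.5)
The equivalence follows directly from Lemma~\ref{lem:sub-cyclic-factor-over-block}. Since the primes \(\ell_i\) do not divide \(|S|\), the group \(C\) is cyclic of order coprime to \(|S|\). That lemma then gives
\[
        \lambda(S\times C)=\lambda(S)\delta(C),
\]
and the explicit formula for \(\delta(C)\) recorded just before it gives
\[
        \delta(C)=\prod_{i=1}^r\frac{a_i+1}{2}.
\]
Substituting this into the condition \(\lambda(S\times C)<59/8\) yields exactly the displayed inequality, in both directions. The hypothesis \(\lambda(S)<59/8\) is not needed for the equivalence itself; it only guarantees that the case \(r=0\) (that is, \(C=1\)) is admissible.

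For the finiteness claim, I use \(\lambda(S)\ge1\), which holds for every finite group (Theorem~\ref{thm:Richards}). Any admissible pattern then satisfies
\[
        \prod_{i=1}^r\frac{a_i+1}{2}<\frac{59}{8}.
\]
Since each \(a_i\ge2\), each factor is at least \(3/2\). This bounds the number of factors by the largest \(r\) with \((3/2)^r<59/8\), which gives \(r\le4\).

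It also bounds each exponent individually. The remaining factors are all at least \(1\), so \((a_i+1)/2<59/8\), which gives \(a_i\le13\). Hence the tuples \((a_1,\ldots,a_r)\), taken up to ordering, lie in a finite set independent of \(S\). For fixed \(S\) the admissible patterns are those satisfying the sharper bound \(\prod(a_i+1)/2<59/(8\lambda(S))\), and these form a subset of that finite set. The primes \(\ell_i\) themselves are unrestricted, so the finiteness concerns only the exponent patterns, as the statement says.

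There is no real obstacle here. The only point needing care is to read ``exponent pattern'' as an unordered multiset of exponents, not a choice of primes, since infinitely many primes are available.
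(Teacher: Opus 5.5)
Your proof is correct and follows the paper's argument. The equivalence is Lemma~\ref{lem:sub-cyclic-factor-over-block} together with \(\delta(C)=\prod_i (a_i+1)/2\), and finiteness follows because each factor is at least \(3/2\) while the product is bounded. Using \(\lambda(S)\ge1\) to get the uniform bounds \(r\le4\) and \(a_i\le13\) is just a more explicit form of the paper's bound by \(59/(8\lambda(S))\).
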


\begin{proof}
This is Lemma~\ref{lem:sub-cyclic-factor-over-block} and the formula
\[
        \delta(C)=\prod_{i=1}^r\frac{a_i+1}{2}.
\]
The finiteness follows because every factor \((a_i+1)/2\) is at least \(3/2\)
and their product is bounded above by \(59/(8\lambda(S))\).
\end{proof}

\begin{corollary}\label{cor:sub-universal-decoration-patterns}
Let \(S\ne1\) be a fundamental block for the threshold \(\lambda<59/8\).  If
\[
        C\cong C_{\ell_1^{a_1}}\times\cdots\times C_{\ell_r^{a_r}},
        \qquad a_1\ge\cdots\ge a_r\ge2,
\]
is a cyclic group of order coprime to \(|S|\) such that \(S\times C\) still
satisfies \(\lambda<59/8\), then one of the following
holds:
\[
\begin{array}{c|c}
r&\text{possible exponent patterns}\\
\hline
0&\text{empty}\\
1&(a),\ 2\le a\le8\\
2&(a,2),\ 2\le a\le5,\quad\text{or }(3,3)\\
3&(3,2,2)\text{ or }(2,2,2).
\end{array}
\]
No admissible decoration has \(r\ge4\).
\end{corollary}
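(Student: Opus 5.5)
The argument is arithmetic: it combines Corollary~\ref{cor:sub-admissible-cyclic-decorations} with a lower bound for \(\lambda(S)\). Since \(S\ne1\) is a fundamental block, it is nontrivial and squarefree-reduced. We first note \(\lambda(S)\ge1\). The admissibility of \(C\) then forces
\[
        \prod_{i=1}^r\frac{a_i+1}{2}<\frac{59}{8\lambda(S)}\le\frac{59}{8},
\]
that is, \(\prod_i(a_i+1)<59\cdot2^{r-3}\). We could sharpen this using \(\lambda(S)\ge3/2\) when \(S\) is not cyclic of squarefree order (Theorem~\ref{thm:nilpotency-criterion-Sub}). The table, however, is stated as a universal list valid for every block, so we only prove necessity with the weakest bound \(\lambda(S)\ge1\). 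We do not claim that every pattern is attained for every \(S\).

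Next we enumerate the exponent patterns with \(a_1\ge\cdots\ge a_r\ge2\) satisfying \(\prod_i(a_i+1)/2<59/8=7.375\), working case by case in \(r\).
\begin{itemize}
\item[] For \(r=1\), we need \((a+1)/2<7.375\), so \(a\le13\).
\end{itemize}
This bound is much weaker than the claimed \(a\le8\), so the weak bound is not enough. We must use a better lower bound on \(\lambda(S)\) for a genuine block. Here is the key point: a fundamental block is not cyclic of squarefree order (it is nontrivial and has no central Sylow subgroup of prime order), so \(\lambda(S)\ge3/2\) by Theorem~\ref{thm:nilpotency-criterion-Sub}. Then \(\delta(C)<59/12\approx4.917\). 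A block could itself be cyclic of non-squarefree prime-power type, but such a factor has a central cyclic Sylow subgroup, so a block is never abelian-cyclic. Indeed, if a block were nilpotent with all Sylow subgroups cyclic, it would be cyclic, which is impossible. Hence \(S\) is not cyclic, and Theorem~\ref{thm:small-lambda-values} gives \(\lambda(S)\ge3/2\).

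With \(\delta(C)<59/12\), we check the cases.
\begin{itemize}
\item[] For \(r=1\): \((a+1)/2<4.92\) gives \(a\le8\).
\item[] For \(r=2\): \((a+1)\cdot3/4<4.92\) gives \(a\le5\) when \(a_2=2\); the pattern \((3,3)\) gives \(4\); and \((4,3)\) gives \(5\), which is excluded.
\item[] For \(r=3\): \((2,2,2)\) gives \(27/8\) and \((3,2,2)\) gives \(4.5\), both allowed; \((4,2,2)\) gives \(5.625\) and \((3,3,2)\) gives \(6\), both excluded.
\item[] For \(r=4\): the minimum is \((3/2)^4=81/16\approx5.06>4.92\), so no admissible decoration exists.
\end{itemize}
This reproduces the table exactly.

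The main obstacle is justifying \(\lambda(S)\ge3/2\) for every fundamental block. The plan is the following. Suppose \(S\) is cyclic; then each of its Sylow subgroups is central and cyclic, contradicting the definition of a block. So \(S\) is noncyclic, and Theorem~\ref{thm:small-lambda-values} together with Theorem~\ref{thm:nilpotency-criterion-Sub} gives \(\lambda(S)\ge3/2\). The rest is routine arithmetic.
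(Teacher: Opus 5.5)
Your proposal is correct and follows essentially the same route as the paper. Both arguments get \(\lambda(S)\ge 3/2\) from the fact that a nontrivial squarefree-reduced block cannot be cyclic of squarefree order (Theorem~\ref{thm:nilpotency-criterion-Sub}), deduce \(\delta(C)<59/12\), and check \(r=0,1,2,3\), with \(r\ge4\) excluded since \((3/2)^4>59/12\). The opening attempt with \(\lambda(S)\ge1\) is a dead end you can delete, and the appeal to Theorem~\ref{thm:small-lambda-values} is unnecessary.
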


\begin{proof}
Since \(S\) is squarefree-reduced and nontrivial, it is not cyclic of
squarefree order.  Corollary~\ref{cor:initial-threshold-lower-bounds} gives
\[
        \sub(S)\ge6\cdot2^{\pi(S)-2},
\]
and hence \(\lambda(S)\ge3/2\).  Therefore any admissible decoration satisfies
\[
        \prod_{i=1}^r\frac{a_i+1}{2}
        <\frac{59}{8\lambda(S)}
        \le \frac{59}{12}.
\]
Since every \(a_i\ge2\), the factor \((a_i+1)/2\) is at least \(3/2\), which
already excludes \(r\ge4\).  The displayed list is obtained by checking the
remaining cases \(r=0,1,2,3\) in the inequality above.
\end{proof}

\begin{remark}\label{rem:fundamental-block-reduction-sub}
Lemma~\ref{lem:solv-threshold-reduced-core} removes only neutral squarefree
cyclic factors.  Lemma~\ref{lem:external-cyclic-normal-form} then separates the
remaining central cyclic Sylow subgroups into a coprime cyclic factor \(C\).
The remaining factor \(S\), if nontrivial, is a fundamental block.  Cyclic direct factors whose primes already divide \(|S|\) remain inside
the primary part of \(S\).
\end{remark}

\begin{proposition}\label{prop:solv-threshold-terminal-families}
The following criteria characterize the groups satisfying \(\lambda<59/8\)
within each of the stated families.
\begin{enumerate}[label={\rm(\alph*)}]
\item If \(G\) is nilpotent and \(G=\prod_{p\mid |G|}P_p\) is its Sylow
decomposition, then
\[
        \sub(G)<59\cdot2^{\pi(G)-3}
        \quad\Longleftrightarrow\quad
        \prod_{p\mid |G|}\sub(P_p)<59\cdot2^{\pi(G)-3}.
\]
After the squarefree-reduced-core reduction, the possible \(p\)-group factors
are bounded: cyclic factors are \(C_{p^a}\), \(2\le a\le13\); if a
squarefree-reduced non-cyclic
factor has order \(p^a\), then
\[
\begin{array}{c|c}
p & \text{possible }a\\
\hline
2 & 2\le a\le5\\
3 & 2\le a\le4\\
5 & 2\le a\le3\\
7,11 & a=2.
\end{array}
\]

\item If all Sylow subgroups of \(G\) are cyclic, then
\[
        G\cong ZM(m,n,r),
\]
where
\[
        (m,n)=(m,r-1)=1,\qquad r^n\equiv1\pmod m.
\]
In this case \(G\) satisfies \(\lambda<59/8\) if and only if the sum
in Proposition~\ref{prop:zm-counts}
\[
        \sum_{m_1\mid m}\sum_{n_1\mid n}
        \gcd\left(m_1,\frac{r^n-1}{r^{n_1}-1}\right)
        <59\cdot2^{\pi(mn)-3}.
\]

\item More generally, if \(G=N\times ZM(m,n,r)\), where \(N\) is nilpotent and
\((|N|,mn)=1\), then \(G\) satisfies \(\lambda<59/8\) if and only if
\[
        \sub(N)
        \sum_{m_1\mid m}\sum_{n_1\mid n}
        \gcd\left(m_1,\frac{r^n-1}{r^{n_1}-1}\right)
        <59\cdot2^{\pi(N)+\pi(mn)-3}.
\]
\end{enumerate}
\end{proposition}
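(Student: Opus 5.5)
Parts (b) and (c) are direct consequences of Proposition~\ref{prop:zm-counts}, while part (a) needs a genuine numerical analysis.

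For part (a), the first equivalence follows from multiplicativity over coprime factors. For a nilpotent group \(G=\prod_p P_p\), every subgroup is the product of its Sylow intersections. Hence \(\sub(G)=\prod_p\sub(P_p)\), which is the same splitting used in Lemma~\ref{lem:sub-cyclic-factor-over-block}. After the reduction of Lemma~\ref{lem:solv-threshold-reduced-core}, each \(P_p\) is non-trivial of order at least \(p^2\). Each factor then satisfies \(\sub(P_p)/2\ge 3/2\). The constraint for a single factor \(P\) is therefore
\[
\frac{\sub(P)}{2}<\frac{59}{8}\prod_{q\ne p}\frac{2}{\sub(P_q)}\le\frac{59}{8},
\]
that is, \(\sub(P)\le 14\).
\begin{itemize}
\item \emph{Cyclic factors.} For \(P=C_{p^a}\) we have \(\sub(P)=a+1\), so \(a+1\le14\), which gives \(a\le13\).
\item \emph{Non-cyclic factors.} A non-cyclic \(p\)-group of order \(p^a\) contains \(C_p\times C_p\). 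I would bound \(\sub(P)\) from below by counting only the trivial subgroup, the whole group, and subgroups of orders \(p\) and \(p^{a-1}\). Alternatively, I would use the standard fact that a non-cyclic \(p\)-group has at least \(p+1\) subgroups of order \(p^j\) for each \(0<j<a\), apart from the generalized quaternion case. This gives roughly \(\sub(P)\ge 2+(a-1)(p+1)\), with \(Q_{2^a}\) handled separately; that case has many subgroups of order \(4\) once \(a\ge4\).
\item \emph{Reading off the table.} Imposing \(\sub(P)\le14\) yields the displayed ranges. For \(p\ge13\) we get \(p+3>14\), which excludes every \(a\ge2\). For \(p=7,11\) we get \(\sub(C_p\times C_p)=p+3\le14\) but \(a=3\) fails. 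For \(p=5\) the bound \(2+2\cdot6=14\) allows \(a\le3\). The cases \(p=3,2\) are similar.
\end{itemize}
The boundary cases \(p=2,a=5\) and \(p=3,a=4\) need exact minima, for example the groups \(C_{2^4}\times C_2\) and \(C_{3^3}\times C_3\). Here I would use the refined count that \(C_{p^{a-1}}\times C_p\) minimizes \(\sub\) among non-cyclic groups of order \(p^a\). Its subgroup count is explicit: \(\sub(C_{p^{a-1}}\times C_p)=(a-1)(p+1)+2\)-type formulas. The \(p=2\) case also needs the quaternion check.

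For part (b), all Sylow subgroups are cyclic, so the group is a \(ZM\)-group by Zassenhaus's theorem as recalled in Section~\ref{sec:preli}. The inequality is then exactly the subgroup-count formula of Proposition~\ref{prop:zm-counts} substituted into \(\sub(G)<59\cdot2^{\pi(G)-3}\), with \(\pi(G)=\pi(mn)\). I read the quotient as the geometric sum \(R(n_1)\), as in the convention of that proposition.

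For part (c), the orders of \(N\) and \(ZM(m,n,r)\) are coprime, so subgroups split and \(\sub(G)=\sub(N)\,\sub(ZM(m,n,r))\). Moreover, \(\pi(G)=\pi(N)+\pi(mn)\). Inserting the formula from part (b) gives the stated criterion.

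I expect the main obstacle to be the sharp lower bounds on \(\sub(P)\) for non-cyclic \(p\)-groups in part (a). These are needed to pin down the exact maximal exponents \(5,4,3,2\), and they require identifying the minimizer among non-cyclic groups of order \(p^a\) together with the quaternion exception.
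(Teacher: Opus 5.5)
Your treatment of the equivalence in (a), and of parts (b) and (c), matches the paper's. Both use coprime multiplicativity of \(\sub\) together with the \(ZM\) subgroup-count formula of Proposition~\ref{prop:zm-counts}, with \(\pi(G)=\pi(mn)\).

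The real difference is the exponent table in (a). The paper does not derive it: it cites Aivazidis--M\"uller (Theorems~A and~B) for the bounds and Betz--Nash for the small lists. You combine \(\sub(P)\le 14\), which follows from \(\sub(P)/2<59/8\), with the elementary estimate
\[
\sub(P)\ge 2+(a-1)(p+1),
\]
valid for every non-cyclic, non-generalized-quaternion \(p\)-group of order \(p^a\). This works and is self-contained:
\begin{itemize}
\item By Frobenius, the number of subgroups of order \(p^j\) is \(\equiv1\pmod p\).
\item If that number is \(1\) for some \(1\le j<a\), then \(P\) is cyclic, or generalized quaternion (the latter only when \(j=1\)). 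So each intermediate order contributes at least \(p+1\) subgroups.
\item For \(Q_{2^a}\) the same count gives \(\sub\ge 3a-1\), which excludes \(a\ge6\).
\item The group \(C_{p^{a-1}}\times C_p\) has exactly \(2+(a-1)(p+1)\) subgroups, so every range in the table is attained.
\end{itemize}
You therefore recover the whole table with no classification input. The citation route buys the paper brevity and the explicit lists of groups, which the statement does not require.

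Some minor corrections:
\begin{itemize}
\item Counting only the orders \(1,p,p^{a-1},p^a\), your first suggestion, gives \(2p+4\) independently of \(a\). It cannot bound \(a\); the count over every intermediate order is what must carry the argument.
\item The boundary exclusions (\(p=2,a=6\) and \(p=3,a=5\)) need only the lower bound, not exact minima. Exact minima matter only for sharpness.
\item \(C_{p^{a-1}}\times C_p\) is not the overall minimizer when \(p=2\), \(a=3\), since \(\sub(Q_8)=6<8\).
\item A non-cyclic \(p\)-group need not contain \(C_p\times C_p\): generalized quaternion groups do not. Your separate treatment of that case already covers this.
\end{itemize}
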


\begin{proof}
For nilpotent groups, every subgroup is the direct product of its intersections
with the Sylow subgroups.  The bounds for squarefree-reduced \(p\)-groups follow from
Aivazidis--M\"uller \cite[Theorems~A and~B]{AivazidisMuller2020}; the exact
small primary lists can be read from Betz--Nash \cite[Tables~1 and~3]{betz-nash}.
Groups with cyclic Sylow subgroups are precisely the \(ZM\)-groups \cite{Scott},
and the displayed formula is the subgroup-count formula recorded in
Proposition~\ref{prop:zm-counts}.  The mixed direct-product case follows from
coprime direct-product multiplicativity of subgroup counts.
\end{proof}

For the next refinement, call a nontrivial finite group cyclically
indecomposable if it cannot be written as \(H\times D\), where \(H\ne1\)
and \(D\ne1\) is cyclic, without any coprimality restriction.  This is stronger
than external cyclic indecomposability and also excludes cyclic direct factors
within a primary component.

\begin{proposition}\label{prop:terminal-blocks-sub}
For the nilpotent and \(ZM\)-groups in
Proposition~\ref{prop:solv-threshold-terminal-families}, cyclic
indecomposability is characterized as follows.
\begin{enumerate}[label={\rm(\alph*)}]
\item A nontrivial nilpotent group \(N=\prod_{p\mid |N|}P_p\) is cyclically
indecomposable if and only if either \(N\cong C_{p^a}\) for some prime \(p\),
or every nontrivial Sylow subgroup \(P_p\) is non-cyclic and cyclically
indecomposable as a \(p\)-group.

\item Let
\[
        G=ZM(m,n,r)=\langle a,b\mid a^m=b^n=1,\ b^{-1}ab=a^r\rangle
\]
be nonabelian.  For \(q\mid n\), write \(q^{e_q}\Vert n\).  Then \(G\) is
cyclically indecomposable if and only if
\[
        r^{\,n/q^{e_q}}\not\equiv1\pmod m
        \qquad\text{for every }q\mid n.
\]
That is, no Sylow subgroup of the cyclic complement \(\langle b\rangle\)
acts trivially on \(\langle a\rangle\).
\end{enumerate}
In both cases, the numerical condition for lying below \(\lambda<59/8\) is the
corresponding inequality in
Proposition~\ref{prop:solv-threshold-terminal-families}.
This criterion concerns cyclic indecomposability, rather than the
canonical block decomposition above.  For example, a cyclic group
\(C_{p^a}\) is cyclically indecomposable in the strong sense, but in
Lemma~\ref{lem:external-cyclic-normal-form} it belongs to the cyclic factor
\(C\), not to the nontrivial fundamental block \(S\).
\end{proposition}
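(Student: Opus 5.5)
Part~(a) comes down to the Sylow decomposition; I would prove the two implications separately.

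First suppose \(N\cong C_{p^a}\). Then \(N\) is cyclic. A decomposition \(N=H\times D\) with \(H\ne1\) and \(D\ne1\) cyclic would make \(N\) a product of two nontrivial \(p\)-groups, so \(N\) would have at least two subgroups of order \(p\). A cyclic \(p\)-group has only one. Hence \(C_{p^a}\) is cyclically indecomposable.

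Next suppose every \(P_p\) is non-cyclic and cyclically indecomposable, and that \(N=H\times D\) with \(D\ne1\) cyclic. Passing to Sylow subgroups gives \(P_p=H_p\times D_p\) for each \(p\). Choose \(p\) with \(D_p\ne1\). Indecomposability of \(P_p\) forces \(H_p=1\), so \(P_p=D_p\) is cyclic. This contradicts the hypothesis.

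Conversely, suppose \(N\) is cyclically indecomposable and not a cyclic \(p\)-group. If some \(P_p\) were cyclic, then either \(N=P_p\), which is excluded, or \(N=\bigl(\prod_{q\ne p}P_q\bigr)\times P_p\) is a forbidden decomposition. So every \(P_p\) is non-cyclic. If some \(P_p=H'\times D'\) with \(H'\ne1\) and \(D'\ne1\) cyclic, then
\[
        N=\Bigl(H'\times\prod_{q\ne p}P_q\Bigr)\times D'
\]
is again forbidden. Hence every \(P_p\) is cyclically indecomposable.

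For part~(b), the forward direction (indecomposable implies the congruence condition) is by contrapositive. Suppose \(r^{n/q^{e}}\equiv1\pmod m\) for some \(q\mid n\), where \(q^{e}\Vert n\). Put \(B_{q'}=\langle b^{q^{e}}\rangle\) and \(Q=\langle b^{n/q^{e}}\rangle\). The condition says that \(Q\) centralizes \(\langle a\rangle\). Since \(\langle b\rangle\) is abelian, \(Q\) is central in \(G\). Then
\[
        G=\bigl(\langle a\rangle\rtimes B_{q'}\bigr)\times Q,
\]
because the orders are coprime and \(Q\) is central. The first factor is nontrivial since \(G\) is nonabelian, and \(Q\) is cyclic and nontrivial since \(q\mid n\). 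So \(G\) is cyclically decomposable.

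The reverse direction is the main obstacle, because a cyclic factor \(D\) need not be coprime a priori. I would argue as follows. Suppose \(G=H\times D\) with \(D\ne1\) cyclic, and let \(\ell\) be a prime dividing \(|D|\). Since Sylow subgroups of \(G\) are cyclic, the Sylow \(\ell\)-subgroup \(H_\ell\times D_\ell\) is cyclic. This forces \(H_\ell=1\), so \(D_\ell\) is the full Sylow \(\ell\)-subgroup, and \(D\le Z(G)\). I then claim \(Z(G)\cap\langle a\rangle=1\). An element \(a^i\) is centralized by \(b\) exactly when \(a^{i(r-1)}=1\), and \((m,r-1)=1\) forces \(a^i=1\). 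Hence \(\ell\nmid m\), so \(\ell\mid n\). The full Sylow \(\ell\)-subgroup \(D_\ell\) is therefore conjugate to \(Q_\ell=\langle b^{n/\ell^{e}}\rangle\), where \(\ell^{e}\Vert n\). Since \(D_\ell\) is central, it is normal, so \(D_\ell=Q_\ell\). Thus \(Q_\ell\) centralizes \(a\), which gives \(r^{n/\ell^{e}}\equiv1\pmod m\). This is the negation of the stated condition for \(q=\ell\).

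Finally, the closing numerical sentence requires no proof beyond citing Proposition~\ref{prop:solv-threshold-terminal-families}.
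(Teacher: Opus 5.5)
Your proposal is correct and follows essentially the same route as the paper: direct factors of a nilpotent group split prime by prime, and for \(ZM(m,n,r)\) the subgroup \(\langle a\rangle\) has no nontrivial central elements, so (since all Sylow subgroups are cyclic) any cyclic direct factor contains a full central Sylow subgroup, which must be the Sylow subgroup \(\langle b^{n/q^{e_q}}\rangle\) of the complement. Your write-up is more careful than the paper's sketch: it separates the case \(N=P_p\) in part (a), exhibits the splitting \(G=(\langle a\rangle\rtimes\langle b^{q^{e}}\rangle)\times\langle b^{n/q^{e}}\rangle\) explicitly, and uses Sylow conjugacy together with normality to identify \(D_\ell\) with \(\langle b^{n/\ell^{e}}\rangle\).
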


\begin{proof}
For nilpotent groups, direct factors are obtained prime by prime.  If a Sylow
subgroup \(P_p\) has a nontrivial cyclic direct factor, then \(N\) has one as
well.
Conversely, any cyclic direct factor of \(N\) is the product of cyclic direct
factors from the Sylow subgroups.  Hence a nilpotent group is cyclically
indecomposable precisely in the cases stated.

Now let \(G=ZM(m,n,r)\) be nonabelian.  Since \((m,r-1)=1\), no nontrivial
element of \(\langle a\rangle\) is central.  Thus any cyclic direct factor must
come from the cyclic complement \(\langle b\rangle\).  The Sylow
\(q\)-subgroup of \(\langle b\rangle\) is generated by \(b^{n/q^{e_q}}\), and it
centralizes \(\langle a\rangle\) exactly when
\[
        r^{\,n/q^{e_q}}\equiv1\pmod m.
\]
If this happens, that central Sylow subgroup splits off as a direct cyclic
factor.  Conversely, because all Sylow subgroups of \(G\) are cyclic, any
nontrivial cyclic direct factor must contain the full Sylow subgroup for some
prime \(q\), and hence gives a central Sylow subgroup of the complement.  This
is exactly the displayed congruence.
\end{proof}

\begin{corollary}\label{cor:zm-sub-fundamental-blocks}
Let
\[
        S=ZM(m,n,r)=\langle a,b\mid a^m=b^n=1,\ b^{-1}ab=a^r\rangle
\]
be nonabelian, and for \(q\mid n\) write \(q^{e_q}\Vert n\).  Then \(S\) is a
fundamental block for \(\lambda<59/8\) if and only if
\[
        \sum_{m_1\mid m}\sum_{n_1\mid n}
        \gcd\left(m_1,\frac{r^n-1}{r^{n_1}-1}\right)
        <59\cdot2^{\pi(mn)-3}
\]
and
\[
        r^{\,n/q^{e_q}}\not\equiv1\pmod m
        \qquad\text{for every }q\mid n.
\]
\end{corollary}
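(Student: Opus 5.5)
The plan is to unwind the definition of a fundamental block and match each clause to a result already proved. By definition, \(S\) is a fundamental block for \(\lambda<59/8\) exactly when three things hold: \(S\) is nontrivial and squarefree-reduced, \(S\) has no central cyclic Sylow subgroup, and \(\lambda(S)<59/8\). Since \(S\) is nonabelian it is nontrivial. Moreover, a central Sylow subgroup of prime order is in particular a central cyclic Sylow subgroup, so the squarefree-reduced condition is implied by the absence of central cyclic Sylow subgroups. Hence it suffices to show two equivalences: \(\lambda(S)<59/8\) is equivalent to the displayed subgroup-count inequality, and the absence of central cyclic Sylow subgroups is equivalent to the displayed congruence condition.

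The first equivalence is Proposition~\ref{prop:solv-threshold-terminal-families}(b). It is just Proposition~\ref{prop:zm-counts} with \(\pi(S)=\pi(mn)\), multiplied out by \(2^{\pi(mn)}\).

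For the second equivalence, I would argue as in the proof of Proposition~\ref{prop:terminal-blocks-sub}(b), and would avoid quoting the statement of that proposition. The reason is that "cyclically indecomposable" and "no central cyclic Sylow subgroup" are a priori different notions. The argument runs as follows.
\begin{enumerate}
\item Since \((m,r-1)=1\), we have \(Z(S)\cap\langle a\rangle=1\). Indeed, \(a^i\) is central iff it commutes with \(b\), i.e. \(m\mid i(r-1)\), i.e. \(m\mid i\).
\item Every Sylow \(p\)-subgroup with \(p\mid m\) is a subgroup of \(\langle a\rangle\), the unique (normal) Hall subgroup of order \(m\), because \((m,n)=1\). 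By the previous step, such a Sylow subgroup is never central.
\item For \(q\mid n\), a Sylow \(q\)-subgroup of \(S\) is conjugate to \(Q_q=\langle b^{n/q^{e_q}}\rangle\), and centrality is conjugation-invariant. The subgroup \(Q_q\) commutes with \(b\). It commutes with \(a\) exactly when \(a^{r^{n/q^{e_q}}}=a\), i.e. exactly when \(r^{n/q^{e_q}}\equiv1\pmod m\).
\end{enumerate}
Hence \(S\) has a central cyclic Sylow subgroup iff this congruence holds for some \(q\mid n\). Since every Sylow subgroup of \(S\) is cyclic, this gives precisely the stated condition.

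Finally, I would note that the external cyclic indecomposability of a fundamental block is automatic by Lemma~\ref{lem:external-cyclic-normal-form}, so no further condition is needed. The main point requiring care is the identification of the Sylow subgroups of \(S\) inside \(\langle a\rangle\) and \(\langle b\rangle\) up to conjugacy. Checking centrality via the single relation \(b^{-1}ab=a^r\) is routine.
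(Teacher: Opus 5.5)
Your proposal is correct and follows essentially the same route as the paper. The paper also takes the inequality from Proposition~\ref{prop:solv-threshold-terminal-families}(b) and identifies the congruence condition with the absence of central cyclic Sylow subgroups, using that \(\langle a\rangle\) has no nontrivial central elements and testing when \(\langle b^{n/q^{e_q}}\rangle\) is central; you additionally spell out details the paper leaves implicit (the Sylow \(p\)-subgroups for \(p\mid m\) lie in \(\langle a\rangle\), conjugacy of Sylow \(q\)-subgroups, and squarefree-reducedness following from having no central cyclic Sylow subgroup).
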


\begin{proof}
The displayed subgroup-count inequality is exactly the condition that \(S\)
satisfy \(\lambda<59/8\), by
Proposition~\ref{prop:solv-threshold-terminal-families}{\rm(b)}.

It remains to identify the fundamental-block condition.  Since
\((m,r-1)=1\), no nonidentity element of the cyclic kernel \(\langle a\rangle\)
is central.  All Sylow subgroups of \(S\) are cyclic.  The Sylow
\(q\)-subgroup of the cyclic complement \(\langle b\rangle\) is generated by
\(b^{n/q^{e_q}}\), and it is central precisely when
\[
        r^{\,n/q^{e_q}}\equiv1\pmod m.
\]
Thus the second displayed condition is equivalent to saying that \(S\) has no
central cyclic Sylow subgroup.  Together with the \(\lambda<59/8\) inequality, this is
precisely the definition of a fundamental block in the subgroup-count section.
\end{proof}

\begin{corollary}\label{cor:nilpotent-sub-primary-blocks}
Let \(N\) be nilpotent and satisfy
\[
        \sub(N)<59\cdot2^{\pi(N)-3}.
\]
Let \(K\) be the squarefree-reduced core from
Lemma~\ref{lem:solv-threshold-reduced-core}, and write
\[
        K=P_1\times\cdots\times P_r
\]
as the product of its nontrivial Sylow subgroups.  Then \(r\le4\).  At most
one of the \(P_i\) is non-cyclic; if one is non-cyclic, then \(r\le3\).

If all \(P_i\) are cyclic, say
\[
        P_i\cong C_{p_i^{a_i}},
        \qquad a_1\ge a_2\ge\cdots\ge a_r\ge2,
\]
then the possible exponent patterns are exactly
\[
\begin{array}{ll}
r=0: & \text{empty},\\
r=1: & 2\le a_1\le13,\\
r=2: & (a_1+1)(a_2+1)\le29,\\
r=3: & (a_1,a_2,a_3)=(a,2,2),\ 2\le a\le5,\quad\text{or }(3,3,2),\\
r=4: & (a_1,a_2,a_3,a_4)=(3,2,2,2)\quad\text{or }(2,2,2,2).
\end{array}
\]
\end{corollary}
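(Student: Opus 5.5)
Since \(N\) is nilpotent and \(K\) is its squarefree-reduced core (Lemma~\ref{lem:solv-threshold-reduced-core}), \(K\) is again nilpotent, every Sylow subgroup \(P_i\) of \(K\) has order \(p_i^{a_i}\) with \(a_i\ge2\), and \(\sub(K)=\prod_i\sub(P_i)<59\cdot2^{r-3}\). Dividing by \(2^r\), the whole statement becomes the multiplicative constraint
\[
        \prod_{i=1}^r\frac{\sub(P_i)}{2}<\frac{59}{8},
\]
where each factor is at least \(3/2\). Every assertion then comes from lower bounds on the individual factors.

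First, a cyclic factor \(C_{p^a}\) has \(\sub=a+1\), so its factor is \((a+1)/2\ge3/2\). For a non-cyclic \(p\)-group \(P\) of order \(p^a\) with \(a\ge2\), I would show \(\sub(P)\ge5\). Any non-cyclic \(p\)-group has at least \(p+1\ge3\) maximal subgroups, and together with \(1\) and \(P\) this gives \(\sub(P)\ge p+3\ge5\). The minimum is attained by \(V_4\), so a non-cyclic factor is at least \(5/2\). Then \((3/2)^5=243/32>59/8\) gives \(r\le4\). With one non-cyclic factor, \(\tfrac52(3/2)^3=135/16>59/8\) forces \(r\le3\). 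Two non-cyclic factors give \(\tfrac{25}{4}\cdot\tfrac32=\tfrac{75}{8}>\tfrac{59}{8}\) once \(r\ge3\), which is fine, but for \(r=2\) the product \(25/4<59/8\) does not exclude them.

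This \(r=2\) case is the main obstacle. If the plain bound fails, I would sharpen it: two distinct primes cannot both be \(2\), so one non-cyclic factor has \(p\ge3\) and hence \(\sub\ge p+3\ge6\). That gives \(\tfrac52\cdot3=\tfrac{15}{2}=\tfrac{60}{8}>\tfrac{59}{8}\), which excludes two non-cyclic factors. For three or more factors the sharpened bound makes the exclusion even easier, so at most one \(P_i\) is non-cyclic.

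In the all-cyclic case the condition is exactly \(\prod_i(a_i+1)<59\cdot2^{r-3}\), and I enumerate it with \(a_i\ge2\).
\begin{itemize}
\item \(r=1\): \(a+1<59/4\), so \(a\le13\).
\item \(r=2\): \((a_1+1)(a_2+1)<29.5\), i.e.\ \(\le29\).
\item \(r=3\): \(\prod(a_i+1)<59\). This allows \((a,2,2)\) with \(9(a+1)\le58\), so \(a\le5\), and \((3,3,2)\) with \(48\). The pattern \((3,3,3)\) gives \(64\) and \((4,3,2)\) gives \(60\), both excluded.
\item \(r=4\): \(\prod(a_i+1)<118\). Here \(81\) and \(108\) are allowed, while \((4,2,2,2)\) gives \(135\) and \((3,3,2,2)\) gives \(144\).
\end{itemize}
The empty case \(r=0\) is trivial. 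Conversely, each listed pattern satisfies the inequality, by Lemma~\ref{lem:sub-cyclic-factor-over-block}.
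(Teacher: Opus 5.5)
Your proposal is correct and follows the paper's proof essentially step by step: the same per-factor lower bounds (\(3/2\) in general, and \(p+3\) for a non-cyclic factor via its at least \(p+1\) maximal subgroups), the same inequalities \((3/2)^5>59/8\) and \(\tfrac52(3/2)^3>59/8\), and the same exclusion of two non-cyclic factors via \((p+3)(q+3)/4\ge 15/2>59/8\). The all-cyclic case is the same elementary enumeration of \(\prod_i(a_i+1)<59\cdot2^{r-3}\).
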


\begin{proof}
Because \(K\) is squarefree-reduced, no \(P_i\) has prime order.  Hence
\(\sub(P_i)\ge3\), and therefore \(\lambda(P_i)\ge3/2\).  Since
\[
        \lambda(K)=\prod_{i=1}^r\lambda(P_i)<\frac{59}{8},
\]
it follows that \(r\le4\), as \((3/2)^5>59/8\).

For a non-cyclic \(p\)-group \(P\), the quotient \(P/\Phi(P)\) has rank at
least \(2\), so \(P\) has at least \(p+1\) maximal subgroups.  Including
\(1\) and \(P\), we obtain \(\sub(P)\ge p+3\).
If \(K\) had non-cyclic Sylow subgroups for distinct primes \(p<q\),
then coprime multiplicativity, with every other factor contributing at
least \(1\), would give
\[
        \lambda(K)\ge\frac{(p+3)(q+3)}4
        \ge\frac{5\cdot6}{4}
        =\frac{15}{2}>\frac{59}{8},
\]
a contradiction.  Thus at most one Sylow factor is non-cyclic.  Such a
factor contributes at least \(5/2\), and
\[
        \frac52\left(\frac32\right)^3>\frac{59}{8}
\]
shows that \(r\le3\) in that case.

Assume finally that all \(P_i\) are cyclic.  Then
\[
        \lambda(K)=\prod_{i=1}^r\frac{a_i+1}{2}<\frac{59}{8}.
\]
Thus
\[
        \prod_{i=1}^r(a_i+1)<59\cdot2^{r-3}.
\]
The displayed list follows by checking this elementary inequality with
\(a_1\ge\cdots\ge a_r\ge2\).
\end{proof}

\begin{corollary}\label{cor:nilpotent-sub-fundamental-blocks}
Let \(S\ne1\) be nilpotent.  Then \(S\) is a fundamental block for the
subgroup-count threshold if and only if \(S\) is a non-cyclic \(p\)-group with
\[
        \sub(S)\le14.
\]
\end{corollary}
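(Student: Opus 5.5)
For the forward direction, take \(S\ne1\) nilpotent and a fundamental block.  I will first show that \(S\) has a single nontrivial Sylow subgroup, then that this Sylow subgroup is non-cyclic, and finally bound \(\sub(S)\).

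Write \(S=\prod_{p}P_p\) as the product of its nontrivial Sylow subgroups.  In a nilpotent group every Sylow subgroup is central in the direct product of the other factors.  Hence any cyclic \(P_p\) is a central cyclic Sylow subgroup of \(S\), which the definition of a fundamental block forbids.  So every \(P_p\) is non-cyclic.

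The proof of Corollary~\ref{cor:nilpotent-sub-primary-blocks} showed that non-cyclic Sylow factors for two distinct primes would give \(\lambda(S)\ge 15/2>59/8\).  Therefore \(S\) has exactly one prime divisor, and \(S=P\) is a non-cyclic \(p\)-group.  With \(\pi(S)=1\), the threshold reads \(\sub(S)<59\cdot 2^{-2}=14.75\), that is, \(\sub(S)\le14\).

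Conversely, let \(S\) be a non-cyclic \(p\)-group with \(\sub(S)\le 14\).  Then \(\lambda(S)=\sub(S)/2\le 7<59/8\).  Its only Sylow subgroup is \(S\) itself, which is non-cyclic.  So \(S\) has no central cyclic Sylow subgroup, and in particular no central Sylow subgroup of prime order, since a group of prime order is cyclic.  Thus \(S\) is squarefree-reduced, nontrivial, and has no central cyclic Sylow subgroup.  These are exactly the defining conditions of a fundamental block.

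The main point needing care is the treatment of central cyclic Sylow subgroups.  In a nilpotent group a cyclic Sylow factor, even of order \(p^a\) with \(a\ge2\), is automatically central.  The definition of a fundamental block therefore removes all cyclic primary components, not just those of prime order.  This is why cyclic groups \(C_{p^a}\) are excluded even though they are cyclically indecomposable, as noted after Proposition~\ref{prop:terminal-blocks-sub}.  Once this is settled, the rest is the arithmetic \(\lfloor 59/4\rfloor=14\) and the two-prime bound already established in Corollary~\ref{cor:nilpotent-sub-primary-blocks}.
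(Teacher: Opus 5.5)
Your proof is correct and follows essentially the same route as the paper. A cyclic Sylow factor of a nilpotent group is central, so a nilpotent block has only non-cyclic Sylow factors. The two-prime bound \(\lambda\ge 15/2>59/8\) from Corollary~\ref{cor:nilpotent-sub-primary-blocks} then leaves a single non-cyclic \(p\)-group, and \(\sub(S)<59/4\) gives \(\sub(S)\le14\). The converse is checked directly from the definition, as in the paper.

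One small wording fix: a Sylow factor is not ``central in the direct product of the other factors''. What you mean is that it centralizes the other factors, and together with cyclicity this gives centrality in \(S\).
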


\begin{proof}
In a finite nilpotent group every cyclic Sylow subgroup is central: it
commutes with itself and with all the other Sylow factors.  Hence a
nilpotent fundamental block \(S\) has only non-cyclic Sylow subgroups.
Since \(S\) is its own squarefree-reduced core,
Corollary~\ref{cor:nilpotent-sub-primary-blocks} forces \(S\) to be a
non-cyclic \(p\)-group.  Then \(\lambda(S)<59/8\) is equivalent to
\(\sub(S)<59/4\), and hence to \(\sub(S)\le14\).

Conversely, a non-cyclic \(p\)-group has no central cyclic Sylow subgroup
and is squarefree-reduced.  If it has at most fourteen subgroups, then
\(\lambda(S)\le7<59/8\), so it is a fundamental block.
\end{proof}

\begin{corollary}\label{cor:mixed-sub-fundamental-blocks}
Let
\[
        S=N\times ZM(m,n,r),
\]
where \(N\) is nilpotent, \(ZM(m,n,r)\) is nonabelian, and \((|N|,mn)=1\).
For \(q\mid n\), write \(q^{e_q}\Vert n\).  Then \(S\) is a fundamental block
for the subgroup-count threshold if and only if the following three conditions
hold:
\[
        \sub(N)
        \sum_{m_1\mid m}\sum_{n_1\mid n}
        \gcd\left(m_1,\frac{r^n-1}{r^{n_1}-1}\right)
        <59\cdot2^{\pi(N)+\pi(mn)-3},
\]
every nontrivial Sylow subgroup of \(N\) is non-cyclic, and
\[
        r^{\,n/q^{e_q}}\not\equiv1\pmod m
        \qquad\text{for every }q\mid n.
\]
\end{corollary}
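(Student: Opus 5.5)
The plan is to decompose the definition of a fundamental block into its three ingredients and match each with one of the displayed conditions. Recall that a fundamental block is a nontrivial group that is squarefree-reduced, has no central cyclic Sylow subgroup, and satisfies \(\lambda<59/8\). Since \(ZM(m,n,r)\) is nonabelian, \(S\ne1\) automatically. Being squarefree-reduced is implied by having no central cyclic Sylow subgroup, because a central Sylow subgroup of prime order is cyclic. So two things remain to prove. First, the numerical inequality is equivalent to \(\lambda(S)<59/8\). Second, the other two conditions together are equivalent to \(S\) having no central cyclic Sylow subgroup.

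For the numerical part, I invoke Proposition~\ref{prop:solv-threshold-terminal-families}(c) directly. Since \((|N|,mn)=1\), subgroup counts multiply and \(\pi(S)=\pi(N)+\pi(mn)\). Proposition~\ref{prop:zm-counts} gives \(\sub(ZM(m,n,r))\) as the displayed double sum. Hence the first displayed inequality is exactly \(\sub(S)<59\cdot2^{\pi(S)-3}\).

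For the central-Sylow part, I use the coprime direct decomposition. For each prime \(p\mid|S|\), the Sylow \(p\)-subgroup of \(S\) is either a Sylow subgroup of \(N\) (if \(p\mid|N|\)) or a Sylow subgroup of \(ZM(m,n,r)\) (if \(p\mid mn\)). Because the factors have coprime orders, such a subgroup is central in \(S\) if and only if it is central in its own factor.

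In the nilpotent factor \(N\), every Sylow subgroup is a direct factor. A cyclic Sylow subgroup of \(N\) is abelian and commutes with the other Sylow factors, so it is central. This is the observation already used in Corollary~\ref{cor:nilpotent-sub-fundamental-blocks}. Therefore \(N\) contributes no central cyclic Sylow subgroup exactly when every nontrivial Sylow subgroup of \(N\) is non-cyclic.

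In the \(ZM\) factor, all Sylow subgroups are cyclic. Here I reuse the argument of Corollary~\ref{cor:zm-sub-fundamental-blocks}. Since \((m,r-1)=1\), no nontrivial element of \(\langle a\rangle\) is central, so no Sylow subgroup for a prime dividing \(m\) is central. For \(q\mid n\), the Sylow \(q\)-subgroup is conjugate to \(\langle b^{n/q^{e_q}}\rangle\). This subgroup commutes with \(b\) and centralizes \(\langle a\rangle\) exactly when \(r^{n/q^{e_q}}\equiv1\pmod m\).

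The one point needing care is this \(ZM\) step. The Sylow \(q\)-subgroup need not literally be \(\langle b^{n/q^{e_q}}\rangle\), only a conjugate of it. Centrality, however, is invariant under conjugation, so the criterion is unaffected and no real obstacle remains.

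Combining the three equivalences gives exactly the definition of a fundamental block, which proves the corollary in both directions.
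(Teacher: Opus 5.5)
Your proposal is correct and follows essentially the same route as the paper. It uses Proposition~\ref{prop:solv-threshold-terminal-families}(c) for the numerical inequality, the centrality of cyclic Sylow subgroups in the nilpotent factor, and the kernel/complement analysis of Corollary~\ref{cor:zm-sub-fundamental-blocks} for the \(ZM\) factor. You also spell out three points the paper leaves implicit: squarefree-reducedness follows from having no central cyclic Sylow subgroup, \(S\ne1\) is automatic, and centrality is conjugation-invariant, so the choice of Sylow \(q\)-subgroup does not matter.
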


\begin{proof}
The first displayed inequality is the condition \(\lambda<59/8\) for
\(N\times ZM(m,n,r)\), by
Proposition~\ref{prop:solv-threshold-terminal-families}{\rm(c)}.  It remains to
spell out the fundamental-block condition, namely that no central cyclic Sylow
subgroup remains.

In the nilpotent factor \(N\), a cyclic Sylow subgroup is central in \(N\), and
hence central in the direct product \(S\).  Thus the nilpotent factor contributes
no central cyclic Sylow subgroup exactly when all its nontrivial Sylow subgroups
are non-cyclic.  In the \(ZM\)-factor, the kernel \(\langle a\rangle\) has no
nontrivial central element because \((m,r-1)=1\).  The Sylow \(q\)-subgroup of
the complement is central exactly when
\[
        r^{\,n/q^{e_q}}\equiv1\pmod m,
\]
as in Corollary~\ref{cor:zm-sub-fundamental-blocks}.  Therefore the two
structural conditions are exactly equivalent to saying that \(S\) has no
central cyclic Sylow subgroup.  Together with the \(\lambda<59/8\) inequality, this is
the definition of a fundamental block.
\end{proof}

The next section begins the proof of the cyclic-subgroup solvability theorem;
Section~\ref{sec:from-simple} completes it using the simple-group estimates
proved in the appendices.

\section{\texorpdfstring{The cyclic-subgroup solvability threshold \(\eta<4\)}{The cyclic-subgroup solvability threshold eta<4}}\label{sec:cyc-solv}

We now turn to the cyclic-subgroup solvability theorem.

\begin{theorem}\label{thm:main}
Let \(G\) be a finite group.  If
\[
        \eta(G)<4,
\]
then \(G\) is solvable.
\end{theorem}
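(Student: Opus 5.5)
We argue by minimal counterexample, in parallel with the proof of Theorem~\ref{thm:solvable}, but with the key input being the lower bound
\[
        \cyc(S)\ge 2^{\pi(\Aut(S))+2}
\]
for every finite nonabelian simple group \(S\), announced in the introduction and proved family by family in the appendices. Let \(G\) be a nonsolvable group of minimal order with \(\cyc(G)<2^{\pi(G)+2}\). We reduce \(G\) to the situation where a nonabelian simple section carries all the primes of \(G\), and then apply this bound.

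\textbf{Reduction steps.} First, \(G\) has no nontrivial normal \(p\)-subgroup. Suppose \(N\) is such a subgroup. If \(N\) is a normal Sylow subgroup, write \(G=N\rtimes H\). Then \(H\) is nonsolvable and \(\pi(H)=\pi(G)-1\), so minimality gives \(\cyc(H)\ge 2^{\pi(G)+1}\). Moreover \(\cyc(G)\ge \cyc(N)\cyc(H)\ge 2\cyc(H)\); when \(N\) is abelian this is Corollary~\ref{cor:cyc-nm} combined with Theorem~\ref{thm:product-cyclic-grps}. In general we instead lift the cyclic subgroups \(\langle h\rangle\le H\) and add the cyclic subgroups of \(N\), which yields at least \(\cyc(H)+\cyc(N)-1\). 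Hence we aim to use abelian chief factors. Concretely, take \(N\) minimal normal and elementary abelian. Since element orders in \(G/N\) divide those of lifts, every cyclic subgroup of \(G/N\) lifts to a cyclic subgroup of \(G\) (take the preimage of a generator). Thus \(\cyc(G)\ge\cyc(G/N)\), and \(\pi(G/N)=\pi(G)\) unless \(N\) is Sylow. If \(N\) is Sylow and abelian, Corollary~\ref{cor:cyc-nm} applies to \(N\) and \(G/N\) directly. So every minimal normal subgroup is \(S^k\) with \(S\) nonabelian simple.

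\textbf{Socle structure.} Next we handle the socle \(M=S_1^{k_1}\times\cdots\) by a cruder count. Since \(\cyc(G)\ge\cyc(M)\ge\prod\cyc(S_i)^{k_i}\) by Theorem~\ref{thm:product-cyclic-grps}, each simple factor already contributes at least \(2^{\pi(\Aut(S_i))+2}\ge 16\). Two factors therefore give at least \(2^{\pi(S_1)+\pi(S_2)+4}\ge 2^{\pi(M)+5}\), because all factors share the prime \(2\). It remains to absorb the primes of \(G/M\). Here \(G\) embeds in \(\Aut(M)\), so every prime of \(G\) divides some \(|\Aut(S_i)|\) or \(k_i!\), and the primes of \(k_i!\) are at most \(k_i\). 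The surplus factor \(2^{(k-1)\cdot 4}\) coming from multiple copies easily dominates \(2^{\pi(k!)}\). This settles the case of a nonsimple socle. In the remaining case \(S\le G\le\Aut(S)\), and the bound gives
\[
        \cyc(G)\ge\cyc(S)\ge 2^{\pi(\Aut(S))+2}\ge 2^{\pi(G)+2},
\]
which is exactly the contradiction we need. This is why the bound is stated with \(\pi(\Aut(S))\) rather than \(\pi(S)\).

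\textbf{Main obstacle.} The real work is proving \(\cyc(S)\ge 2^{\pi(\Aut(S))+2}\) for all \(S\), using Lemma~\ref{lem:cyclic-subgroup-number-in-terms-order}. For \(\PSL(2,q)\) the involutions alone give about \(q^2/2\) cyclic subgroups (Lemma~\ref{lem:num-invol}), while \(\pi(\Aut(S))\) is \(O(\log q/\log\log q)\) plus \(\pi(f)\); an inequality like Lemma~\ref{lem:ineq-primes} then handles all but finitely many \(q\). For alternating groups we count \(3\)-cycles and transpositions, and for Lie type we count unipotent or semisimple classes against a bound on the number of primes dividing \(|\Aut(S)|\). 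The small residual cases and the sporadic groups we would verify by computer. The sharp case \(A_5\) has \(\cyc=16=2^{3+2}\), which confirms that the bound cannot be improved.
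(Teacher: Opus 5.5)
Your reduction of Theorem~\ref{thm:main} to Theorem~\ref{thm:simple} is correct and is essentially the paper's proof. The quotient versus coprime-extension dichotomy for an abelian minimal normal subgroup $N$ is exactly Lemma~\ref{lem:radical}. Your exponent bookkeeping over $\Soc(G)=\prod_i T_i^{m_i}$, using $\cyc(G)\ge\prod_i\cyc(T_i)^{m_i}$ and $\pi(G)\le\sum_i\pi(\Aut(T_i))+\sum_i\pi(m_i!)$, is the same comparison the paper makes. The one step you assert without justification is the embedding $G\hookrightarrow\Aut(\Soc(G))$. It needs $C_G(\Soc(G))=1$, which holds because a nontrivial centralizer would contain a minimal normal subgroup $L\le\Soc(G)$ that centralizes itself and hence is abelian.

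The genuine problem is in your plan for Theorem~\ref{thm:simple}, which you rightly call the real work. For $A_n$, counting $3$-cycles and transpositions cannot succeed. Transpositions are odd, so they are not in $A_n$. More seriously, any fixed cycle type contributes only polynomially many cyclic subgroups (for instance $\binom n3$ from $3$-cycles). By contrast, $2^{\pi(\Aut(A_n))+2}=2^{\pi(n!)+2}$ grows like $2^{(1+o(1))n/\log n}$; already for $n=100$ you get $\binom{100}{3}=161700<2^{27}$.

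You need cycles whose length grows with $n$. The paper chooses a prime $r$ with $n/2<r\le n$. The $r$-cycles are even and generate $\binom nr(r-2)!\ge(r-2)!$ distinct cyclic subgroups. For $n\ge22$ one has $\pi(n!)\le n/2<r$ and $r\ge13$, so $(r-2)!\ge4\cdot2^r\ge2^{\pi(n!)+2}$. The range $7\le n\le21$ is checked with the same count, and $n=5,6$ are done directly.

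Also, $\cyc(A_5)=1+15+10+6=32$, not $16$; it is the equality $32=2^{3+2}$ that makes $A_5$ the sharp case. For Lie type, counting classes must be turned into a count of cyclic subgroups with controlled normalizers. The paper does this with a regular semisimple generator $x$ of a suitable maximal torus, for which $|N_S(\langle x\rangle)|\le|T|\,|C_W(w\sigma)|$ (Lemma~\ref{lem:regular-torus-bound}).
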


\begin{remark}\label{rem:cyc-solvable-tight}
The constant \(4\) in Theorem~\ref{thm:main} is sharp.  The group \(A_5\) is
not solvable and satisfies
\[
        \eta(A_5)=4.
\]
Moreover, if \(p_1,\ldots,p_r\) are distinct primes greater than \(5\), then
\[
        G=A_5\times C_{p_1}\times\cdots\times C_{p_r}
\]
is nonsolvable and still has normalized value \(\eta(G)=4\), by the coprime
direct-product formula in Theorem~\ref{thm:product-cyclic-grps}.
\end{remark}

The proof rests on the following estimate for finite nonabelian simple groups.

\begin{theorem}\label{thm:simple}
For every finite nonabelian simple group \(S\),
\[
        \cyc(S)\ge 2^{\pi(\Aut(S))+2}.
\]
\end{theorem}

Appendices~\ref{sec:alternating}--\ref{sec:remaining-cases} establish the
simple-group estimate family by family.  Section~\ref{sec:from-simple}
assembles those estimates and uses the solvable radical and the socle to
complete the proof of Theorem~\ref{thm:main}.

\subsection{Additional preliminaries for the cyclic-subgroup solvability theorem}

For a finite group \(G\), let \(\Rad(G)\) denote the solvable radical, that is,
the largest solvable normal subgroup of \(G\).  Let \(\Soc(G)\) denote the
socle of \(G\), the subgroup generated by the minimal normal subgroups of
\(G\).

\begin{lemma}\label{lem:quotient}
If \(N\triangleleft G\), then
\[
        \cyc(G/N)\le \cyc(G).
\]
\end{lemma}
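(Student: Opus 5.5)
The plan is to build an injective map from the cyclic subgroups of \(G/N\) to the cyclic subgroups of \(G\), using the correspondence theorem for subgroups of a quotient.

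Let \(\bar C=\langle gN\rangle\) be a cyclic subgroup of \(G/N\). Choose a coset representative \(g\), and among all admissible choices of generator and representative, fix one with \(o(g)\) minimal. Send \(\bar C\) to \(\langle g\rangle\le G\). The image \(\langle g\rangle N/N\) equals \(\bar C\), so distinct cyclic subgroups of \(G/N\) have distinct images. The map is therefore injective, and \(\cyc(G/N)\le\cyc(G)\) follows at once.

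The only thing to verify is that the choice of \(g\) is well defined. We do not need any canonical choice: any fixed choice function works, because injectivity only uses the identity \(\langle g\rangle N/N=\langle gN\rangle\). So the "minimal order" condition is not needed and can be dropped; we simply fix any generator \(gN\) of \(\bar C\) and any preimage \(g\in G\).

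There is no real obstacle. The one point to state explicitly is that the projection \(\langle g\rangle\mapsto \langle g\rangle N/N\) is a left inverse of the chosen map. This left inverse is what gives injectivity.
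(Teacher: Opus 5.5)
Your argument is correct and is essentially the paper's: the paper shows that \(H\mapsto HN/N\) is a surjection from the cyclic subgroups of \(G\) onto those of \(G/N\), and your choice of a preimage generator is a section of that surjection, with the projection as its left inverse. The detour through a minimal-order representative is unnecessary, as you note yourself.
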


\begin{proof}
Every cyclic subgroup of \(G/N\) has the form \(\langle gN\rangle\), hence is
the image of the cyclic subgroup \(\langle g\rangle\le G\).  Thus the natural
map from cyclic subgroups of \(G\) to cyclic subgroups of \(G/N\), given by
\(H\mapsto HN/N\), is surjective.
\end{proof}

\begin{lemma}\label{lem:pi-estimates}
For every positive integer \(n\), the following estimates hold.
\begin{enumerate}[label=(\alph*)]
\item If \(n\) is odd, then
\[
        2^{\pi(n)}\le \frac{2}{\sqrt 3}\sqrt n.
\]
\item If \(8\mid n\), then
\[
        2^{\pi(n)}\le \sqrt{\frac{2n}{3}}.
\]
\item For every \(n\ge 1\),
\[
        2^{\pi(n)}\le 2\sqrt n.
\]
\item For every \(n\ge 1\),
\[
        2^{\pi(n)}\le n.
\]
\end{enumerate}
\end{lemma}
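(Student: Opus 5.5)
The four estimates are multiplicative in nature, so the plan is to write \(n=\prod_{p\mid n}p^{a_p}\) and compare \(2^{\pi(n)}=\prod_{p\mid n}2\) with \(\sqrt n=\prod_{p\mid n}p^{a_p/2}\) prime by prime. Since \(p^{a_p/2}\ge\sqrt p\), it suffices to bound the ratio
\[
        \prod_{p\mid n}\frac{2}{\sqrt p}
\]
from above. For every prime \(p\ge5\) the factor \(2/\sqrt p\) is at most \(1\). Only \(p=2\) and \(p=3\) can make the factor exceed \(1\), with values \(2/\sqrt2=\sqrt2\) and \(2/\sqrt3\).

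For (a), \(n\) is odd, so only the prime \(3\) can contribute a factor larger than \(1\). That factor is at most \(2/\sqrt3\), giving \(2^{\pi(n)}\le(2/\sqrt3)\sqrt n\).

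For (c), in general the product is at most \(\sqrt2\cdot(2/\sqrt3)=2\sqrt2/\sqrt3<2\), which already gives \(2^{\pi(n)}\le2\sqrt n\). For (b), when \(8\mid n\) the prime \(2\) enters with exponent at least \(3\), so its factor is \(2/2^{a_2/2}\le 2/2^{3/2}=1/\sqrt2\). The total ratio is then at most \((1/\sqrt2)(2/\sqrt3)=\sqrt{2/3}\), giving \(2^{\pi(n)}\le\sqrt{2/3}\,\sqrt n=\sqrt{2n/3}\).

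Part (d) is immediate, since each prime factor satisfies \(p^{a_p}\ge2\), so \(n\ge2^{\pi(n)}\); the empty-product case \(n=1\) is trivial in all four parts. There is no real obstacle here. The only point needing care is isolating the two small primes \(2\) and \(3\), and using the exponent at the prime \(2\) in (b) rather than merely \(\sqrt2\).
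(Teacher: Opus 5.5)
Your proof is correct and follows essentially the same prime-by-prime comparison as the paper, which proves (a) by writing \(4^{\pi(n)}=\operatorname{rad}(n)\prod_{r\mid n}4/r\) (the square of your per-prime factor \(2/\sqrt r\)) and proves (d) by \(2^{\pi(n)}\le\operatorname{rad}(n)\le n\). The only cosmetic difference is that the paper obtains (b) and (c) by splitting off the \(2\)-part and applying (a) to the odd part, whereas you handle them directly inside the same product. Your bound \(\sqrt2\cdot 2/\sqrt3<2\) for (c) is slightly more explicit than the paper's one-line remark there.
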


\begin{proof}
If \(n\) is odd, then
\[
        4^{\pi(n)}=\operatorname{rad}(n)\prod_{\substack{r\mid n\\r\text{ prime}}}\frac4r.
\]
Among odd primes, the product \(\prod 4/r\) is maximized by including only the
prime \(3\), since all primes \(r\ge5\) contribute a factor at most
\(4/5<1\).  Hence
\[
        4^{\pi(n)}\le \frac43\operatorname{rad}(n)\le \frac43 n,
\]
which gives (a).  If \(8\mid n\), write \(n=2^a m\), where \(a\ge3\) and
\(m\) is odd.  Then
\[
        2^{\pi(n)}=2\cdot 2^{\pi(m)}
        \le 2\cdot \frac{2}{\sqrt3}\sqrt m
        =\frac4{\sqrt3}\sqrt{\frac{n}{2^a}}
        \le \sqrt{\frac{2n}{3}}.
\]
Part (c) follows by applying (a) to the odd part and including the extra factor
\(2\) contributed by the prime \(2\).  Part (d) follows from
\(2^{\pi(n)}\le\operatorname{rad}(n)\le n\).
\end{proof}

\subsection{Simple groups and the reduction}\label{subsec:simple-families}

The proof of Theorem~\ref{thm:simple} is organized family by family.

If \(S\) is nonabelian simple, then \(Z(S)=1\), and the inner automorphism group is naturally identified with \(S\).  Hence
\[
        |\Aut(S)|=|S|\,|\Out(S)|.
\]
Thus each family requires two inputs: a lower bound for \(\cyc(S)\), and enough
information on the prime divisors of \(|\Aut(S)|\).

The passage from Theorem~\ref{thm:simple} to Theorem~\ref{thm:main} uses the
solvable radical and the socle.

Appendix~\ref{sec:alternating} treats \(A_n\).  Appendix~\ref{sec:psl2} treats
the rank-one family \(\PSL(2,q)\).  Appendix~\ref{sec:lie-type} treats the
remaining Lie-type families, apart from a finite list of small cases.
Appendix~\ref{sec:remaining-cases} records the character-table checks
for those cases and for the sporadic groups.

\section{From simple groups to solvability}\label{sec:from-simple}

This section assembles the family estimates proved in
Appendices~\ref{sec:alternating}--\ref{sec:remaining-cases} and then passes from
simple groups to arbitrary finite groups.

\begin{proof}[Proof of Theorem~\ref{thm:simple}]
By the classification of finite simple groups, the finite nonabelian simple
groups are the alternating groups, the simple groups of Lie type, and the 26
sporadic groups \cite{GLS}.  Proposition~\ref{prop:alternating} treats the
alternating groups.  For groups of Lie type, Proposition~\ref{prop:psl2}
treats \(\PSL(2,q)\); Propositions~\ref{prop:linear-unitary}
and~\ref{prop:BCD} treat the remaining classical groups;
Proposition~\ref{prop:suzuki-ree} treats the Suzuki and Ree groups;
Proposition~\ref{prop:exceptional} treats the exceptional groups; and
Proposition~\ref{prop:table-checks} treats the residual small Lie-type cases
and the Tits group.  Finally, Table~\ref{tab:sporadic} treats the sporadic
groups.  Hence every finite nonabelian simple group satisfies
\[
        \cyc(S)
        \ge 2^{\pi(\Aut(S))+2}.
\]
\end{proof}

\subsection{Minimal counterexamples and the socle}

The reduction from Theorem~\ref{thm:simple} to Theorem~\ref{thm:main} uses
only elementary counting, the comparison in Corollary~\ref{lem:coprime-extension},
and the structure of the socle after the solvable radical has been removed.

\begin{lemma}\label{lem:radical}
Let \(G\) be a finite nonsolvable group of minimal order among those satisfying
\[
        \cyc(G)<2^{\pi(G)+2}.
\]
Then
\[
        \Rad(G)=1.
\]
\end{lemma}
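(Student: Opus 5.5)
We argue by contradiction and suppose that \(R=\Rad(G)\ne1\).  We pick a minimal normal subgroup \(N\) of \(G\) contained in \(R\).  Then \(N\) is solvable and minimal normal, hence elementary abelian of order \(p^k\) for some prime \(p\).  The quotient \(G/N\) is nonsolvable and has smaller order.  Lemma~\ref{lem:quotient} gives \(\cyc(G/N)\le\cyc(G)\), and this is the basic comparison we will use.

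\emph{Case 1: \(\pi(G/N)=\pi(G)\).}  Here \(p\) still divides \(|G/N|\); equivalently, \(N\) is not a Sylow \(p\)-subgroup of \(G\).  By the minimality of \(G\),
\[
        \cyc(G)\ge\cyc(G/N)\ge2^{\pi(G/N)+2}=2^{\pi(G)+2},
\]
which contradicts the hypothesis.

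\emph{Case 2: \(\pi(G/N)=\pi(G)-1\).}  Then \(N\) is an abelian normal Sylow \(p\)-subgroup of \(G\), and hence a normal Hall subgroup.  Schur--Zassenhaus gives \(G=N\rtimes H\) with \(H\cong G/N\).  Thus \(G\) is an extension of the abelian group \(N\) by \(H\), and the two orders are coprime.  Corollary~\ref{lem:coprime-extension} then yields
\[
        \cyc(G)\ge\cyc(N\times H).
\]
By Theorem~\ref{thm:product-cyclic-grps}, this equals \(\cyc(N)\cyc(H)\).  Since \(N\ne1\), it has at least the trivial subgroup and one subgroup of order \(p\), so \(\cyc(N)\ge2\).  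Also \(H\) is nonsolvable and smaller than \(G\), so minimality gives \(\cyc(H)\ge2^{\pi(H)+2}=2^{\pi(G)+1}\).  Multiplying,
\[
        \cyc(G)\ge2\cdot2^{\pi(G)+1}=2^{\pi(G)+2},
\]
again a contradiction.

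Both cases contradict the hypothesis, so \(\Rad(G)=1\).  The only real subtlety is Case~2, where the prime \(p\) is lost on passing to the quotient.  That is exactly why we need the coprime-extension comparison (Corollary~\ref{lem:coprime-extension}) rather than the quotient inequality alone.  It is also why it matters that the abelian factor \(N\) contributes a full factor of \(2\), via the multiplicativity in Theorem~\ref{thm:product-cyclic-grps}.
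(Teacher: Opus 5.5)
Your proof is correct and matches the paper's argument step for step. You take a minimal normal subgroup \(N\le\Rad(G)\) and split according to whether \(p\) divides \(|G/N|\), using Lemma~\ref{lem:quotient} in one case and Corollary~\ref{lem:coprime-extension} with Theorem~\ref{thm:product-cyclic-grps} in the other. The only difference is your appeal to Schur--Zassenhaus, which is harmless but unnecessary, since Corollary~\ref{lem:coprime-extension} applies to any extension of \(N\) by \(G/N\).
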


\begin{proof}
Suppose not, and let \(R=\Rad(G)\ne1\).  Choose a minimal normal subgroup
\(N\triangleleft G\) with \(N\le R\).  Then \(N\) is elementary abelian, say a
\(p\)-group.  Since \(N\) is solvable and \(G\) is nonsolvable, the quotient
\(G/N\) is nonsolvable.

If \(p\mid |G/N|\), then \(\pi(G/N)=\pi(G)\).  By minimality of \(G\),
\[
        \cyc(G/N)\ge 2^{\pi(G/N)+2}=2^{\pi(G)+2}.
\]
By Lemma~\ref{lem:quotient}, \(\cyc(G/N)\le\cyc(G)\), contradicting the
choice of \(G\).

Assume now that \(p\nmid |G/N|\).  Then \((|N|,|G/N|)=1\), and
\(\pi(G/N)=\pi(G)-1\).  By Lemma~\ref{lem:coprime-extension}, applied to the
extension of \(N\) by \(G/N\),
\[
        \cyc(G)\ge \cyc(N\times G/N)=\cyc(N)\cyc(G/N).
\]
Since \(N\ne1\), we have \(\cyc(N)\ge2\).  Again by minimality,
\[
        \cyc(G/N)\ge 2^{\pi(G/N)+2}=2^{\pi(G)+1}.
\]
Thus
\[
        \cyc(G)\ge 2\cdot 2^{\pi(G)+1}=2^{\pi(G)+2},
\]
again a contradiction.
\end{proof}

A minimal normal subgroup of a finite group is a direct product of isomorphic
simple groups \cite{Isac-ams}; hence, if \(\Rad(G)=1\), then \(\Soc(G)\) is a
direct product of powers of nonabelian simple groups.

\begin{proof}[Proof of Theorem~\ref{thm:main}]
The hypothesis \(\eta(G)<4\) means
\[
        \cyc(G)<2^{\pi(G)+2}.
\]
Suppose, toward a contradiction, that \(G\) is a finite nonsolvable group of
minimal order among those satisfying this inequality.
By Lemma~\ref{lem:radical}, \(\Rad(G)=1\).  Therefore
\[
        \Soc(G)=T_1^{m_1}\times\cdots\times T_r^{m_r},
\]
where the \(T_i\) are pairwise nonisomorphic finite nonabelian simple groups.

The centralizer \(C_G(\Soc(G))\) is trivial.  Indeed, if this centralizer were
nontrivial, then it would contain a minimal normal subgroup \(L\) of \(G\).
Since \(\Rad(G)=1\), the subgroup \(L\) is nonabelian.  But \(L\le \Soc(G)\) and
\(L\le C_G(\Soc(G))\).  Thus \(L\) centralizes itself, forcing \(L\) to be abelian,
a contradiction.  Hence conjugation gives an embedding
\[
        G\hookrightarrow \Aut(\Soc(G))
        \cong \prod_{i=1}^r \Aut(T_i)\wr S_{m_i}.
\]
The wreath product description comes from the fact that automorphisms of
\(T_i^{m_i}\) act on the individual simple factors and may also permute the
\(m_i\) isomorphic factors; factors from different isomorphism classes cannot
be permuted.
For \(1\le i\le r\), put
\[
        a_i=\pi(\Aut(T_i)),\qquad b_i=\pi(m_i!).
\]
The embedding gives
\[
        \pi(G)\le \sum_{i=1}^r a_i+\sum_{i=1}^r b_i.
\]

Since \(\Soc(G)\le G\), every cyclic subgroup of \(\Soc(G)\) is also a cyclic
subgroup of \(G\).  Hence \(\cyc(G)\ge \cyc(\Soc(G))\).  Therefore, by
Theorem~\ref{thm:product-cyclic-grps} and Theorem~\ref{thm:simple},
\[
\begin{aligned}
        \cyc(G)
        &\ge \cyc(\Soc(G)) \\
        &\ge \prod_{i=1}^r \cyc(T_i)^{m_i} \\
        &\ge \prod_{i=1}^r 2^{m_i(a_i+2)}
         =2^{\sum_i m_i(a_i+2)}.
\end{aligned}
\]
We compare exponents.  For every \(i\),
\[
        m_i(a_i+2)-a_i-b_i=(m_i-1)a_i+2m_i-b_i.
\]
If \(m_i=1\), then \(b_i=0\), and this expression equals \(2\).  If
\(m_i\ge2\), then \(a_i\ge3\), by Burnside's \(p^aq^b\)-theorem
\cite{Isac-ams}, and \(b_i=\pi(m_i!)\le m_i\).  Hence
\[
        (m_i-1)a_i+2m_i-b_i\ge 3(m_i-1)+m_i=4m_i-3\ge5.
\]
Therefore
\[
        \sum_i m_i(a_i+2)
        \ge
        \sum_i a_i+\sum_i b_i+2
        \ge \pi(G)+2.
\]
Consequently
\[
        \cyc(G)
        \ge 2^{\pi(G)+2},
\]
contradicting the choice of \(G\).
\end{proof}

\section{\texorpdfstring{Reduced cores below \(\eta<4\)}{Reduced cores below eta<4}}
\label{sec:cyc-threshold-structure}

Theorem~\ref{thm:main} shows that every group with \(\eta(G)<4\) is solvable.
We finish with the coprime cyclic-factor decomposition for this threshold and
exact criteria for selected families.  These are structural reductions, not a
classification of all groups with \(\eta<4\).  Unlike the groups below
\(\lambda<5/2\), those considered here need not have cyclic Sylow subgroups;
for example, \(A_4\) and \(Q_8\) satisfy \(\eta<4\).

We use the squarefree-reduced convention of
Section~\ref{sec:sub-threshold-structure}.  A fundamental block for the
cyclic-subgroup threshold is a nontrivial squarefree-reduced group with no
central cyclic Sylow subgroup and with \(\eta<4\).  The external cyclic normal
form in Lemma~\ref{lem:external-cyclic-normal-form} applies unchanged.
Together with Theorem~\ref{thm:product-cyclic-grps}, it gives
\[
        G\cong A\times S\times C,\qquad \eta(G)=\eta(S)\delta(C),
\]
where the factors have the same roles as in
Corollary~\ref{cor:sub-block-decomposition}.  Thus only the numerical condition
changes, to \(\eta(S)\delta(C)<4\).  The precise statement is
Corollary~\ref{cor:cyc-block-decomposition}.

\begin{question}\label{ques:finitely-many-cyc-blocks}
Are there only finitely many fundamental blocks \(S\) with
\[
        \eta(S)<4?
\]
\end{question}

\begin{lemma}\label{lem:cyc-threshold-reduced-core}
Let \(G\) be a finite group satisfying
\[
        \cyc(G)<2^{\pi(G)+2}.
\]
Then
\[
        G\cong A\times K,
\]
where \(A\) is cyclic of squarefree order, \((|A|,|K|)=1\), the group \(K\) is
squarefree-reduced, and
\[
        \cyc(K)<2^{\pi(K)+2}.
\]
Conversely, every such direct product satisfies the inequality for \(G\) if
and only if \(K\) satisfies the displayed inequality.
\end{lemma}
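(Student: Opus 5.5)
The plan mirrors the proof of Lemma~\ref{lem:solv-threshold-reduced-core}, with subgroup counts replaced by cyclic-subgroup counts and the multiplicativity coming from Theorem~\ref{thm:product-cyclic-grps}.

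\emph{Splitting off one factor.} Suppose \(P\) is a central Sylow \(p\)-subgroup of \(G\) of prime order. Since \(P\) is a normal Hall subgroup, Schur--Zassenhaus gives a complement \(H\cong G/P\). Because \(P\le Z(G)\), the action of \(H\) on \(P\) is trivial, so \(G=P\times H\) with \((|P|,|H|)=1\). The equality case of Theorem~\ref{thm:product-cyclic-grps} gives
\[
\cyc(G)=\cyc(C_p)\cyc(H)=2\cyc(H).
\]
Since \(p\nmid|H|\), we also have \(\pi(G)=\pi(H)+1\). Hence \(\cyc(G)<2^{\pi(G)+2}\) holds if and only if \(\cyc(H)<2^{\pi(H)+2}\).

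\emph{Iterating.} Apply the same step to \(H\). A central Sylow subgroup of prime order in \(H\) is also a central Sylow subgroup of prime order in \(G=P\times H\), and the converse holds as well, because the factors have coprime orders. The order strictly decreases at each step, so the process terminates. The removed factors have distinct prime orders and commute, so their product \(A\) is cyclic of squarefree order with \((|A|,|K|)=1\). The final complement \(K\) has no central Sylow subgroup of prime order, so it is squarefree-reduced. Multiplicativity at each step shows that \(\cyc(K)<2^{\pi(K)+2}\).

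\emph{Converse.} Let \(G=A\times K\) with \(A\) cyclic of squarefree order and coprime to \(|K|\). Theorem~\ref{thm:product-cyclic-grps} gives
\[
\cyc(G)=\cyc(A)\cyc(K)=2^{\pi(A)}\cyc(K),
\]
and \(\pi(G)=\pi(A)+\pi(K)\). So \(\cyc(G)<2^{\pi(G)+2}\) if and only if \(\cyc(K)<2^{\pi(K)+2}\).

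\emph{Main obstacle.} There is no real difficulty here. The one point to verify is the equality case of Theorem~\ref{thm:product-cyclic-grps}, which is exactly what makes \(\cyc\) multiply over coprime direct factors. It replaces the subgroup-splitting argument used in the \(\sub\) version.
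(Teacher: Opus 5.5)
Your proposal is correct and takes the same route as the paper. You split off a central Sylow subgroup of prime order via Schur--Zassenhaus and use the coprime equality case of Theorem~\ref{thm:product-cyclic-grps} to get \(\cyc(G)=2\cyc(H)\) with \(\pi(G)=\pi(H)+1\), then iterate and obtain the converse from the same multiplicativity; your check that central Sylow subgroups of prime order of \(H\) and of \(G=P\times H\) correspond makes explicit a step the paper leaves implicit in ``iterating''.
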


\begin{proof}
Suppose that \(P\) is a central Sylow subgroup of prime order \(p\).  By Schur--Zassenhaus,
\[
        G=P\rtimes H,
\]
where \(H\cong G/P\).  Since \(P\le Z(G)\), the action is trivial and
\[
        G=P\times H.
\]
The two factors have coprime orders, whence
\[
        \cyc(G)=2\cyc(H),
        \qquad
        \pi(G)=\pi(H)+1.
\]
Thus \(G\) satisfies the strict inequality precisely when \(H\) does.  Iterating
this argument removes all central Sylow subgroups of prime order and gives the
decomposition \(G\cong A\times K\).  The converse follows from
Theorem~\ref{thm:product-cyclic-grps}.
\end{proof}

\begin{lemma}\label{lem:cyc-cyclic-factor-over-block}
Let \(S\) be a finite group and let \(C\) be cyclic, with
\((|S|,|C|)=1\).  Then
\[
        \eta(S\times C)=\eta(S)\delta(C).
\]
Consequently \(S\times C\) satisfies \(\eta<4\)
if and only if
\[
        \eta(S)\delta(C)<4.
\]
\end{lemma}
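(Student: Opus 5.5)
The proof follows the same route as Lemma~\ref{lem:sub-cyclic-factor-over-block}, with subgroup counts replaced by cyclic-subgroup counts. Since \((|S|,|C|)=1\), the equality case of Theorem~\ref{thm:product-cyclic-grps} gives
\[
        \cyc(S\times C)=\cyc(S)\cyc(C).
\]
For the cyclic group \(C\), every subgroup is cyclic, and there is exactly one subgroup for each divisor of \(|C|\). Hence \(\cyc(C)=d(|C|)\). This is also the equality case of Theorem~\ref{thm:Richards}.

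Next we deal with the normalizing exponent. The prime divisors of \(|S\times C|=|S||C|\) form the disjoint union of those of \(|S|\) and of \(|C|\), because the orders are coprime. Thus
\[
        \pi(S\times C)=\pi(S)+\pi(C).
\]
Dividing the product formula by \(2^{\pi(S)+\pi(C)}\) gives
\[
        \eta(S\times C)=\frac{\cyc(S)}{2^{\pi(S)}}\cdot\frac{d(|C|)}{2^{\pi(C)}}=\eta(S)\delta(C).
\]
The stated equivalence with \(\eta(S)\delta(C)<4\) then follows immediately by substitution.

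There is no real obstacle here. The only points to check are the degenerate cases \(S=1\) or \(C=1\), and these hold automatically under the conventions \(\pi(1)=0\), \(d(1)=1\), \(\eta(1)=1\) and \(\delta(1)=1\).
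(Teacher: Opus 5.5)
Your proposal is correct and follows the paper's proof exactly: the equality case of Theorem~\ref{thm:product-cyclic-grps} for coprime orders, the identity \(\cyc(C)=d(|C|)\), additivity of \(\pi\) over coprime factors, and division by \(2^{\pi(S)+\pi(C)}\). The remark about the degenerate cases \(S=1\) or \(C=1\) is a harmless addition that the paper leaves implicit.
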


\begin{proof}
Theorem~\ref{thm:product-cyclic-grps} gives
\[
        \cyc(S\times C)=\cyc(S)\cyc(C)=\cyc(S)d(|C|).
\]
Since \(\pi(S\times C)=\pi(S)+\pi(C)\), division by
\(2^{\pi(S)+\pi(C)}\) gives the formula.
\end{proof}

\begin{corollary}\label{cor:cyc-block-decomposition}
Let \(G\) be a finite group satisfying
\[
        \cyc(G)<2^{\pi(G)+2}.
\]
Then
\[
        G\cong A\times S\times C,
\]
where \(A\) is cyclic of squarefree order, \(C\) is cyclic,
\[
        (|A|,|S||C|)=1,\qquad (|S|,|C|)=1,
\]
and either \(S=1\) or \(S\) is a fundamental block for the cyclic-subgroup
threshold.  With the convention \(\eta(1)=1\), the remaining condition is
\[
        \eta(S)\delta(C)<4.
\]
Moreover, if
\[
        C\cong C_{\ell_1^{a_1}}\times\cdots\times C_{\ell_r^{a_r}},
\]
with distinct primes \(\ell_i\), then every nontrivial exponent satisfies
\(a_i\ge2\).

Conversely, every product with these properties satisfies the displayed
cyclic-subgroup inequality.
When \(S=1\), this is the purely cyclic reduced core case; the possible
exponent patterns for \(C\) are exactly the cyclic patterns listed in
Corollary~\ref{cor:nilpotent-cyc-primary-blocks}.
\end{corollary}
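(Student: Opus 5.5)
The argument will follow the proof of Corollary~\ref{cor:sub-block-decomposition}, with Theorem~\ref{thm:product-cyclic-grps} taking the place of coprime multiplicativity of subgroup counts.

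\textbf{Necessity.} First, Lemma~\ref{lem:cyc-threshold-reduced-core} gives
\[
        G\cong A\times K,
\]
where \(A\) is cyclic of squarefree order, \((|A|,|K|)=1\), \(K\) is squarefree-reduced, and \(\cyc(K)<2^{\pi(K)+2}\).

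Next, apply Lemma~\ref{lem:external-cyclic-normal-form} to \(K\). That lemma is purely group-theoretic and does not depend on the counting function. It writes \(K\cong S\times C\), where \(C\) is the canonical product of the central cyclic Sylow subgroups of \(K\) and \((|S|,|C|)=1\). Either \(S=1\), or \(S\) has no central cyclic Sylow subgroup. It also shows that every nontrivial exponent of \(C\) satisfies \(a_i\ge2\), because \(K\) is squarefree-reduced.

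As in the subgroup-count case, \(S\) inherits squarefree-reducedness. A central Sylow subgroup of prime order in \(S\) would also be one in \(K=S\times C\), since the two factors have coprime orders.

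Now Lemma~\ref{lem:cyc-cyclic-factor-over-block} gives \(\eta(K)=\eta(S)\delta(C)\). Theorem~\ref{thm:product-cyclic-grps} shows that \(A\) is neutral: \(\cyc(A)=2^{\pi(A)}\) and \(\cyc\) is multiplicative on coprime factors. Hence
\[
        \eta(G)=\eta(S)\delta(C)<4.
\]
Since \(\delta(C)\ge1\), this forces \(\eta(S)<4\). So if \(S\ne1\), it is a fundamental block for the cyclic-subgroup threshold.

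\textbf{Converse.} Take any product \(A\times S\times C\) with pairwise coprime orders and \(\eta(S)\delta(C)<4\). Applying Theorem~\ref{thm:product-cyclic-grps} twice gives
\[
        \cyc(G)=2^{\pi(A)}\cyc(S)\,d(|C|).
\]
Since \(\pi(G)=\pi(A)+\pi(S)+\pi(C)\), dividing by \(2^{\pi(G)}\) yields \(\eta(G)=\eta(S)\delta(C)<4\).

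\textbf{The case \(S=1\).} Here the condition reduces to \(\delta(C)=\prod_i(a_i+1)/2<4\), with every \(a_i\ge2\). This is exactly the purely cyclic pattern condition, which I would cite from Corollary~\ref{cor:nilpotent-cyc-primary-blocks}.

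The main point needing care is that the cyclic-count multiplicativity in Theorem~\ref{thm:product-cyclic-grps} is an equality only for coprime orders. Every direct product used above does have coprime factor orders, so this causes no trouble.
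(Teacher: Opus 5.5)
Your proposal is correct and follows the paper's proof essentially step for step. The paper applies Lemma~\ref{lem:cyc-threshold-reduced-core}, then Lemma~\ref{lem:external-cyclic-normal-form} to \(K\), then uses \(\eta(K)=\eta(S)\delta(C)\) from Lemma~\ref{lem:cyc-cyclic-factor-over-block} with \(A\) neutral, and obtains the converse from the same coprime multiplicativity.
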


\begin{proof}
Apply Lemma~\ref{lem:cyc-threshold-reduced-core} to write
\[
        G\cong A\times K,
\]
where \(A\) is cyclic squarefree, \((|A|,|K|)=1\), and \(K\) is
squarefree-reduced with \(\eta(K)<4\).  Apply
Lemma~\ref{lem:external-cyclic-normal-form} to \(K\):
\[
        K\cong S\times C,
\]
where \(C\) is cyclic, \((|S|,|C|)=1\), and either \(S=1\) or \(S\) has no
central cyclic Sylow subgroup.  As in the subgroup-count case, the
squarefree-reduced property descends from \(K\) to \(S\), and the exponents in
\(C\) are at least \(2\).

Finally, Lemma~\ref{lem:cyc-cyclic-factor-over-block} gives
\[
        \eta(K)=\eta(S)\delta(C),
\]
and the factor \(A\) is neutral for \(\eta\).  Since \(\delta(C)\ge1\) and
\(\eta(K)<4\), we have \(\eta(S)<4\).  Therefore, if \(S\ne1\), then \(S\) is
a fundamental block for \(\eta<4\).  This proves the necessity.  The converse
follows from the same equality in Lemma~\ref{lem:cyc-cyclic-factor-over-block}.
\end{proof}

\begin{corollary}\label{cor:cyc-admissible-cyclic-decorations}
Let \(S\) be fixed with \(\eta(S)<4\), and let
\[
        C\cong C_{\ell_1^{a_1}}\times\cdots\times C_{\ell_r^{a_r}},
        \qquad a_i\ge2,
\]
where the primes \(\ell_i\) are distinct and do not divide \(|S|\).  Then
\[
        S\times C
\]
satisfies \(\eta<4\) if and only if
\[
        \eta(S)\prod_{i=1}^r\frac{a_i+1}{2}<4.
\]
In particular, for a fixed block \(S\), there are only finitely many possible
exponent patterns \((a_1,\ldots,a_r)\) for non-squarefree coprime cyclic
factors.
\end{corollary}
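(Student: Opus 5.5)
The statement is the cyclic-subgroup analogue of Corollary~\ref{cor:sub-admissible-cyclic-decorations}, and I would prove it the same way, by invoking Lemma~\ref{lem:cyc-cyclic-factor-over-block}.

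Since the primes \(\ell_i\) are distinct and do not divide \(|S|\), the group \(C\) is cyclic of order \(\prod_i\ell_i^{a_i}\), and \((|S|,|C|)=1\). The lemma gives
\[
        \eta(S\times C)=\eta(S)\delta(C),
\]
so \(S\times C\) satisfies \(\eta<4\) if and only if \(\eta(S)\delta(C)<4\). It remains to compute \(\delta(C)\). We have \(d(|C|)=\prod_{i=1}^r(a_i+1)\) and \(\pi(C)=r\), hence
\[
        \delta(C)=\prod_{i=1}^r\frac{a_i+1}{2}.
\]
Substituting this proves the displayed equivalence.

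For finiteness, note that each \(a_i\ge2\), so every factor satisfies \((a_i+1)/2\ge3/2\). Also \(\eta(S)\ge1\) by Theorem~\ref{thm:Richards}. An admissible pattern therefore satisfies
\[
        \prod_{i=1}^r\frac{a_i+1}{2}<\frac{4}{\eta(S)}\le4.
\]
This bound has two consequences. First, \((3/2)^r<4\), so \(r\le3\). Second, each single factor satisfies \((a_i+1)/2<4/(3/2)^{r-1}\le4\), so \(a_i\le6\). Consequently only finitely many tuples \((a_1,\ldots,a_r)\) are possible; I would treat these as unordered exponent patterns, since the primes themselves are unconstrained.

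There is no real obstacle. The only point needing care is that the equivalence concerns \(\eta\) of the product, which the coprime multiplicativity in Theorem~\ref{thm:product-cyclic-grps} already handles through Lemma~\ref{lem:cyc-cyclic-factor-over-block}.
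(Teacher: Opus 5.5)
Your proposal is correct and follows the paper's proof: both apply Lemma~\ref{lem:cyc-cyclic-factor-over-block} with \(\delta(C)=\prod_i(a_i+1)/2\), and both get finiteness from each factor being at least \(3/2\) while the product stays below \(4/\eta(S)\). Your explicit bounds \(r\le3\) and \(a_i\le6\), obtained via \(\eta(S)\ge1\), simply spell out that finiteness argument concretely.
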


\begin{proof}
This follows from Lemma~\ref{lem:cyc-cyclic-factor-over-block} and the same
formula for \(\delta(C)\).  Since every \(a_i\ge2\), each factor
\((a_i+1)/2\) is at least \(3/2\), while their product is bounded above by
\(4/\eta(S)\).
\end{proof}

\begin{corollary}\label{cor:cyc-universal-decoration-patterns}
Let \(S\ne1\) be a fundamental block for the threshold \(\eta<4\).  If
\[
        C\cong C_{\ell_1^{a_1}}\times\cdots\times C_{\ell_r^{a_r}},
        \qquad a_1\ge\cdots\ge a_r\ge2,
\]
is a cyclic group of order coprime to \(|S|\) such that \(S\times C\) still
satisfies \(\eta<4\), then one of the following
holds:
\[
\begin{array}{c|c}
r&\text{possible exponent patterns}\\
\hline
0&\text{empty}\\
1&(a),\ 2\le a\le5\\
2&(3,2)\text{ or }(2,2).
\end{array}
\]
No admissible decoration has \(r\ge3\).
\end{corollary}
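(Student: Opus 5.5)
The plan mirrors the proof of Corollary~\ref{cor:sub-universal-decoration-patterns}: first get a uniform lower bound for \(\eta(S)\), then check the elementary product inequality. Since \(S\ne1\) is a fundamental block, it is squarefree-reduced and has no central cyclic Sylow subgroup.

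\emph{Lower bound for \(\eta(S)\).} If \(S\) were cyclic of squarefree order, each Sylow subgroup would be central of prime order, contradicting squarefree reduction. Hence \(S\) is not cyclic of squarefree order, and Theorem~\ref{thm:nilpotency-criterion} gives \(\eta(S)\ge5/4\). By Corollary~\ref{cor:cyc-admissible-cyclic-decorations}, any admissible decoration then satisfies
\[
        \prod_{i=1}^r\frac{a_i+1}{2}<\frac{4}{\eta(S)}\le\frac{16}{5},
        \qquad\text{that is,}\qquad
        \prod_{i=1}^r(a_i+1)<\frac{16}{5}\,2^{r}.
\]

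\emph{Case check.} Each factor \((a_i+1)/2\) is at least \(3/2\), and \((3/2)^3=27/8>16/5\), so \(r\ge3\) is impossible. For \(r=1\), the condition \(a+1<32/5\) gives \(a\le5\). For \(r=2\), the condition \((a_1+1)(a_2+1)<64/5=12.8\) with \(a_1\ge a_2\ge2\) leaves \((2,2)\) with product \(9\) and \((3,2)\) with product \(12\). The next candidates, \((4,2)\) with product \(15\) and \((3,3)\) with product \(16\), fail. The case \(r=0\) is trivial. This reproduces the table.

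\emph{Main obstacle.} The only real input is the lower bound \(\eta(S)\ge5/4\). This rests on Theorem~\ref{thm:nilpotency-criterion} and on the observation that a fundamental block cannot be cyclic of squarefree order; everything after that is routine checking of the inequality.
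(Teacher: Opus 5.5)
Your proposal is correct and follows essentially the same route as the paper. The paper's proof also first notes that a nontrivial squarefree-reduced $S$ is not cyclic of squarefree order, uses the first-gap result (Corollary~\ref{cor:initial-threshold-lower-bounds}) to get $\eta(S)\ge 5/4$, deduces $\prod_i (a_i+1)/2 < 16/5$, and then checks $r=0,1,2$ after excluding $r\ge3$; your explicit verification of the cases $(4,2)$ and $(3,3)$ just spells out the check the paper leaves implicit.
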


\begin{proof}
Since \(S\) is squarefree-reduced and nontrivial, it is not cyclic of
squarefree order.  Corollary~\ref{cor:initial-threshold-lower-bounds} gives
\[
        \cyc(S)\ge5\cdot2^{\pi(S)-2},
\]
and hence \(\eta(S)\ge5/4\).  Therefore any admissible decoration satisfies
\[
        \prod_{i=1}^r\frac{a_i+1}{2}
        <\frac4{\eta(S)}
        \le \frac{16}{5}.
\]
Since every \(a_i\ge2\), this excludes \(r\ge3\).  The displayed list follows
by checking \(r=0,1,2\).
\end{proof}

\begin{lemma}\label{lem:cyc-threshold-order-pattern}
Let \(G\) be a finite group satisfying
\[
        \cyc(G)<2^{\pi(G)+2},
\]
and write
\[
        |G|=\prod_{i=1}^t p_i^{a_i}.
\]
Then
\[
        \prod_{i=1}^t(a_i+1)<4\cdot2^t.
\]
This is the same as
\[
        \prod_{i=1}^t\frac{a_i+1}{2}<4.
\]
In particular, after removing the exponents equal to \(1\), the remaining
multiset of exponents is one of the following:
\[
        \varnothing,\qquad
        (a)\ \text{with }2\le a\le6,\qquad
        (2,b)\ \text{with }2\le b\le4,\qquad
        (2,2,2).
\]
\end{lemma}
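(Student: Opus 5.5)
The argument has two parts: a lower bound from Theorem~\ref{thm:Richards}, and a finite enumeration of exponent multisets. By Theorem~\ref{thm:Richards},
\[
        \cyc(G)\ge\cyc(C_{|G|})=d(|G|)=\prod_{i=1}^t(a_i+1).
\]
Combined with the hypothesis \(\cyc(G)<2^{\pi(G)+2}=4\cdot2^t\), this gives \(\prod_i(a_i+1)<4\cdot2^t\) directly. Dividing by \(2^t\) gives the equivalent normalized form \(\prod_i (a_i+1)/2<4\).

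For the list of patterns, I will note that each exponent \(a_i=1\) contributes the factor \((1+1)/2=1\). So the condition depends only on the multiset of exponents \(a_i\ge2\). Each such exponent contributes a factor \((a+1)/2\ge3/2\), and the factor is increasing in \(a\). I will sort these exponents as \(e_1\ge e_2\ge\cdots\ge e_\ell\ge2\) and enumerate by \(\ell\).

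\begin{itemize}
\item \(\ell=0\) gives the empty pattern.
\item \(\ell=1\): the condition \((e+1)/2<4\) gives \(e\le6\).
\item \(\ell=2\): here \((e_1+1)(e_2+1)<16\). If \(e_2=2\), then \(e_1+1<16/3\), so \(e_1\le4\). If \(e_2\ge3\), then \((e_1+1)(e_2+1)\ge16\), which is impossible. This yields the patterns \((2,b)\) with \(2\le b\le4\).
\item \(\ell=3\): here \((e_1+1)(e_2+1)(e_3+1)<32\). The pattern \((2,2,2)\) gives \(27\). Raising any exponent gives at least \(4\cdot3\cdot3=36\), so only \((2,2,2)\) survives.
\item \(\ell\ge4\): \((3/2)^4=81/16>4\), so there are no patterns.
\end{itemize}

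This reproduces exactly the displayed list.

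There is no real obstacle. The only point needing care is the two-exponent case, where the larger exponent in the written pattern \((2,b)\) is \(b\). I will check that the boundary products \(3\cdot5=15<16\) and \(4\cdot4=16\not<16\) are handled with the strict inequality.
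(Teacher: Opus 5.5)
Your proposal is correct and follows essentially the same route as the paper. Theorem~\ref{thm:Richards} gives $\cyc(G)\ge d(|G|)=\prod_i(a_i+1)$, and the list of exponent patterns comes from the same elementary case split on how many exponents exceed $1$, with the strict inequality handled correctly at the boundaries $3\cdot5=15<16$ and $4\cdot4=16$. The only cosmetic difference is that the paper rules out an exponent $\ge7$ globally at the start, whereas you obtain that bound inside the one-exponent case; the other cases already give stronger bounds, so this changes nothing.
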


\begin{proof}
Theorem~\ref{thm:Richards} gives
\[
        \cyc(G)\ge \cyc(C_{|G|})=d(|G|)
        =\prod_{i=1}^t(a_i+1).
\]
The displayed inequality follows from the hypothesis.  It remains only to list
the possible exponent patterns.  If some \(a_i\ge7\), then
\((a_i+1)/2\ge4\), impossible.  If at least four exponents are larger than
\(1\), then the product is at least \((3/2)^4>4\), again impossible.  If three
exponents are larger than \(1\), then any exponent at least \(3\) gives product
at least
\[
        \frac32\cdot\frac32\cdot2>4;
\]
hence the only three-exponent case is \((2,2,2)\).  If two exponents are larger
than \(1\), then the smaller one must be \(2\), because \(2\cdot2=4\) is already
too large.  Writing the larger exponent as \(b\), the condition
\[
        \frac32\cdot\frac{b+1}{2}<4
\]
gives \(b\le4\).  The one-exponent case gives \(a\le6\).
\end{proof}

\begin{proposition}\label{prop:cyc-abelian-hall-formula}
Let \(G=A\rtimes B\), where \(A\) is an abelian normal Hall subgroup of \(G\).
For \(b\in B\), put \(k_b=o(b)\), and define
\[
        N_b(a)=a\,b(a)\,b^2(a)\cdots b^{k_b-1}(a)
        \qquad (a\in A),
\]
where \(b(a)=bab^{-1}\).  Then
\[
        \cyc(G)=
        \sum_{b\in B}\sum_{a\in A}
        \frac{1}{\phi(k_b)\,\phi(o(N_b(a)))}.
\]
Consequently \(G\) satisfies \(\eta<4\) if and
only if the displayed sum is smaller than \(2^{\pi(G)+2}\).
\end{proposition}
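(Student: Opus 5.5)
The plan is to apply Lemma~\ref{lem:cyclic-subgroup-number-in-terms-order}, which gives \(\cyc(G)=\sum_{g\in G}1/\phi(o(g))\), and to compute the order of each element of \(G\).

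By Schur--Zassenhaus every element of \(G=A\rtimes B\) can be written uniquely as \(g=ab\) with \(a\in A\) and \(b\in B\). We read \(ab\) with \(b\) acting on \(A\) by \(b(a)=bab^{-1}\), so that \(ba=b(a)b\). Iterating this relation gives
\[
        (ab)^{j}=a\,b(a)\cdots b^{j-1}(a)\,b^{j}.
\]
In particular, with \(k=k_b\),
\[
        (ab)^{k}=N_b(a).
\]

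Next we determine \(o(ab)\). Since \(A\) is normal, the image of \(ab\) in \(G/A\cong B\) is \(bA\), which has order \(k_b\). Hence \(k_b\mid o(ab)\), and
\[
        o(ab)=k_b\cdot o\bigl((ab)^{k_b}\bigr)=k_b\,o(N_b(a)).
\]
The element \(N_b(a)\) lies in \(A\), so its order divides \(|A|\). Because \(A\) is a Hall subgroup, \(k_b\) divides \(|B|\), which is coprime to \(|A|\). So \(k_b\) and \(o(N_b(a))\) are coprime, and multiplicativity of \(\phi\) gives
\[
        \phi(o(ab))=\phi(k_b)\,\phi(o(N_b(a))).
\]

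Summing over the bijection \((a,b)\mapsto ab\) from \(A\times B\) to \(G\) then yields the displayed formula. The final equivalence is immediate, since \(\eta(G)<4\) means exactly \(\cyc(G)<2^{\pi(G)+2}\).

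The only point that needs care is the convention for the action, so that the product \(a\,b(a)\cdots b^{k-1}(a)\) matches the definition of \(N_b\). Abelianness of \(A\) makes the order of the factors in \(N_b(a)\) irrelevant, so any consistent convention works. I expect no real obstacle here.
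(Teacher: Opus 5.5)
Your proposal is correct and is essentially the paper's proof. Both compute \((ab)^{k_b}=N_b(a)\), obtain \(o(ab)=k_b\,o(N_b(a))\) because \(ab\) maps to an element of order \(k_b\) in \(G/A\cong B\), and then combine the coprimality of \(|A|\) and \(|B|\) with Lemma~\ref{lem:cyclic-subgroup-number-in-terms-order} and the multiplicativity of \(\phi\) (two minor remarks: the unique factorization \(g=ab\) comes directly from the semidirect-product hypothesis without Schur--Zassenhaus, and with \(b(a)=bab^{-1}\) the identity \((ab)^{k_b}=N_b(a)\) does not even use that \(A\) is abelian).
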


\begin{proof}
For \(a\in A\) and \(b\in B\), the image of \(ab\) in \(B\) has order \(k_b\).
Moreover,
\[
        (ab)^{k_b}=N_b(a)\in A.
\]
Since \((|A|,|B|)=1\), the order of \(N_b(a)\) is coprime to \(k_b\).  Hence
\[
        o(ab)=k_b\,o(N_b(a)).
\]
Indeed, if \((ab)^s=1\), then \(b^s=1\), and therefore \(k_b\mid s\).  Writing
\(s=k_bj\), one gets \(N_b(a)^j=1\), and hence \(o(N_b(a))\mid j\).  The
formula now follows from Lemma~\ref{lem:cyclic-subgroup-number-in-terms-order}
and the multiplicativity of \(\phi\) on coprime integers.
\end{proof}

\begin{proposition}\label{prop:cyc-threshold-terminal-families}
The following criteria characterize the groups satisfying \(\eta<4\)
within each of the stated families.
\begin{enumerate}[label={\rm(\alph*)}]
\item If \(G\) is nilpotent and \(G=\prod_{p\mid |G|}P_p\) is its Sylow
decomposition, then
\[
        \cyc(G)<2^{\pi(G)+2}
        \quad\Longleftrightarrow\quad
        \prod_{p\mid |G|}\cyc(P_p)<2^{\pi(G)+2}.
\]
If \(G\) is also squarefree-reduced, then each Sylow factor \(P_p\) satisfies
\(\cyc(P_p)<8\).  Consequently each \(P_p\) is one of
\[
        C_{p^a}\ (2\le a\le6),\quad
        C_2^2,\quad C_4\times C_2,\quad D_8,\quad Q_8,\quad
        C_3^2,\quad C_5^2,
\]
where \(D_8\) denotes the dihedral group of order \(8\).  The remaining
condition is
\[
        \prod_{p\mid |G|}\frac{\cyc(P_p)}2<4.
\]

\item If all Sylow subgroups of \(G\) are cyclic, then
\[
        G\cong ZM(m,n,r),
\]
where
\[
        (m,n)=(m,r-1)=1,\qquad r^n\equiv1\pmod m.
\]
In this case \(G\) satisfies \(\eta<4\) if and only if
\[
        \sum_{\substack{m_1\mid m,\ n_1\mid n\\ m/m_1\mid r^{n_1}-1}}
        \gcd\left(m_1,\frac{r^n-1}{r^{n_1}-1}\right)
        <2^{\pi(mn)+2}.
\]

\item If \(G=A\rtimes B\), where \(A\) is an abelian normal Hall subgroup, then
the exact condition is the inequality in
Proposition~\ref{prop:cyc-abelian-hall-formula}.

\item If \(G=N\times ZM(m,n,r)\), where \(N\) is nilpotent and
\((|N|,mn)=1\), then \(G\) satisfies \(\eta<4\) if and only if
\[
        \cyc(N)
        \sum_{\substack{m_1\mid m,\ n_1\mid n\\ m/m_1\mid r^{n_1}-1}}
        \gcd\left(m_1,\frac{r^n-1}{r^{n_1}-1}\right)
        <2^{\pi(N)+\pi(mn)+2}.
\]
\end{enumerate}
\end{proposition}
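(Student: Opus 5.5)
For \textup{(a)}, the plan is to use coprime multiplicativity. A nilpotent group is the direct product of its Sylow subgroups, and these have pairwise coprime orders, so the equality case of Theorem~\ref{thm:product-cyclic-grps} gives \(\cyc(G)=\prod_p\cyc(P_p)\). This proves the first equivalence. Dividing by \(2^{\pi(G)}\) turns it into the form \(\prod_p \cyc(P_p)/2<4\), which is the remaining condition at the end of \textup{(a)}.

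Now assume \(G\) is squarefree-reduced. In a nilpotent group every Sylow subgroup is a direct factor. A Sylow subgroup of prime order is abelian and a direct factor, hence central. Squarefree reduction therefore forces \(|P_p|\ge p^2\) for every \(p\mid|G|\). By Theorem~\ref{thm:Richards}, this gives \(\cyc(P_p)\ge d(|P_p|)\ge3\), so each factor \(\cyc(P_p)/2\) is at least \(3/2>1\). Fix one prime \(p\). The product of the other factors is at least \(1\), so the inequality \(\prod_p\cyc(P_p)/2<4\) yields \(\cyc(P_p)<8\) for that \(p\).

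It then remains to list the \(p\)-groups \(P\) of order \(p^a\), \(a\ge2\), with \(\cyc(P)<8\). This listing is the main step.
\begin{itemize}
\item \emph{Cyclic case.} Here \(\cyc(C_{p^a})=a+1<8\), which gives \(2\le a\le6\).
\item \emph{Non-cyclic, not generalized quaternion.} Theorem~\ref{thm:amiri-jaco} and Lemma~\ref{lem:strong-dom-implies-less-cyc} give \(\cyc(P)\ge\cyc(C_{p^{a-1}}\times C_p)=(a-1)p+2\), by the count in the proof of Theorem~\ref{prop:sub-bound-metacyclic}. The bound \((a-1)p+2<8\) leaves \(p=2\) with \(a\le3\), and \(p\in\{3,5\}\) with \(a=2\).
\item \emph{Direct checks for \(p=2\).} One verifies \(\cyc(C_2^2)=4\), \(\cyc(C_4\times C_2)=6\), \(\cyc(D_8)=7\) and \(\cyc(C_2^3)=8\), so \(C_2^3\) is excluded.
\item \emph{Odd primes.} One verifies \(\cyc(C_3^2)=5\) and \(\cyc(C_5^2)=7\).
\item \emph{Generalized quaternion.} We get \(\cyc(Q_8)=5\). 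For order at least \(16\), note \(\cyc(Q_{16})=8\). For larger generalized quaternion groups, Theorem~\ref{thm:Richards} already gives \(d(2^a)\ge6\), and a direct count of the order-\(4\) cyclic subgroups pushes this past \(7\).
\end{itemize}
These checks produce exactly the displayed list.

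For \textup{(b)}, a group with all Sylow subgroups cyclic is a \(ZM\)-group by Zassenhaus \cite{Scott}, so it has the stated presentation. The displayed sum is exactly \(\cyc(G)\) by Proposition~\ref{prop:zm-counts}. Also \(\pi(G)=\pi(mn)\), so \(\eta<4\) is the same as this sum being less than \(2^{\pi(mn)+2}\).

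Part \textup{(c)} is a restatement of Proposition~\ref{prop:cyc-abelian-hall-formula}.

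For \textup{(d)}, the factors \(N\) and \(ZM(m,n,r)\) have coprime orders. Theorem~\ref{thm:product-cyclic-grps} therefore gives \(\cyc(G)=\cyc(N)\,\cyc(ZM(m,n,r))\), and we also have \(\pi(G)=\pi(N)+\pi(mn)\). Substituting the formula from \textup{(b)} gives the inequality in \textup{(d)}.
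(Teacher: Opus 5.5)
Your proposal is correct and follows the paper's proof essentially step for step. It uses coprime multiplicativity for (a), the Amiri domination bound \((a-1)p+2<8\) together with direct checks to isolate the non-cyclic factors, and direct appeals to Proposition~\ref{prop:zm-counts}, Proposition~\ref{prop:cyc-abelian-hall-formula}, and Theorem~\ref{thm:product-cyclic-grps} for (b)--(d). The only cosmetic difference is that the paper handles generalized quaternion groups with the exact count \(a+2^{a-2}\), which is what your direct count of the order-\(4\) cyclic subgroups amounts to.
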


\begin{proof}
For nilpotent groups, the Sylow subgroups have pairwise coprime orders, and
Theorem~\ref{thm:product-cyclic-grps} gives the exact product formula for
\(\cyc(G)\).  If \(G\) is squarefree-reduced and nilpotent, then no Sylow subgroup has
prime order.  Moreover,
\[
        \prod_{p\mid |G|}\frac{\cyc(P_p)}{2}<4,
\]
and every factor in this product is at least \(1\).  Hence each
\(\cyc(P_p)<8\).  A cyclic group \(C_{p^a}\) has \(a+1\) cyclic subgroups,
which gives \(2\le a\le6\) in the squarefree-reduced case.  If \(P\) has order \(p^a\)
and is neither cyclic nor generalized quaternion, Theorem~\ref{thm:amiri-jaco}
and Lemma~\ref{lem:strong-dom-implies-less-cyc} give
\[
        \cyc(P)\ge \cyc(C_{p^{a-1}}\times C_p).
\]
A direct count gives
\[
        \cyc(C_{p^{a-1}}\times C_p)=2+p(a-1),
\]
and therefore \(p(a-1)<6\).  Thus the only non-cyclic, non-generalized
quaternion possibilities have \((p,a)=(2,2),(2,3),(3,2)\), or \((5,2)\).  This
gives \(C_2^2\), \(C_4\times C_2\), \(D_8\), \(C_3^2\), and \(C_5^2\); the
elementary abelian group \(C_2^3\) has \(8\) cyclic subgroups and is excluded
by the strict inequality.  Finally, a generalized quaternion group of order
\(2^a\), \(a\ge3\), has
\[
        a+2^{a-2}
\]
cyclic subgroups: the \(a\) subgroups in its cyclic subgroup of index \(2\),
and \(2^{a-2}\) cyclic subgroups of order \(4\) outside it.  The inequality
\(a+2^{a-2}<8\) leaves only \(a=3\), that is, \(Q_8\).

Groups with cyclic Sylow subgroups are precisely the \(ZM\)-groups \cite{Scott},
and the displayed condition is the cyclic-subgroup formula in
Proposition~\ref{prop:zm-counts}.  The abelian-Hall case is exactly
Proposition~\ref{prop:cyc-abelian-hall-formula}.  The mixed direct-product case
follows again from coprime direct-product multiplicativity of cyclic-subgroup
counts.
\end{proof}

\begin{corollary}\label{cor:zm-cyc-fundamental-blocks}
Let
\[
        S=ZM(m,n,r)=\langle a,b\mid a^m=b^n=1,\ b^{-1}ab=a^r\rangle
\]
be nonabelian, and for \(q\mid n\) write \(q^{e_q}\Vert n\).  Then \(S\) is a
fundamental block for the cyclic-subgroup threshold if and only if
\[
        \sum_{\substack{m_1\mid m,\ n_1\mid n\\ m/m_1\mid r^{n_1}-1}}
        \gcd\left(m_1,\frac{r^n-1}{r^{n_1}-1}\right)
        <2^{\pi(mn)+2}
\]
and
\[
        r^{\,n/q^{e_q}}\not\equiv1\pmod m
        \qquad\text{for every }q\mid n.
\]
\end{corollary}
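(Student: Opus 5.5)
The plan mirrors the proof of Corollary~\ref{cor:zm-sub-fundamental-blocks}, with the subgroup count replaced by the cyclic-subgroup count.

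\emph{Numerical condition.} First I will show that the displayed sum is exactly \(\cyc(S)\). This is the cyclic-subgroup formula of Proposition~\ref{prop:zm-counts}, also recorded in Proposition~\ref{prop:cyc-threshold-terminal-families}{\rm(b)}. Since \(|S|=mn\) and \((m,n)=1\), we have \(\pi(S)=\pi(mn)\). Hence the first inequality is precisely \(\cyc(S)<2^{\pi(S)+2}\), that is, \(\eta(S)<4\).

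\emph{Structural condition.} Next I will show that the congruence condition is equivalent to the statement that \(S\) has no central cyclic Sylow subgroup. All Sylow subgroups of \(S\) are cyclic. Because \((m,n)=1\), each Sylow subgroup is either the Sylow subgroup of \(\langle a\rangle\) for some \(p\mid m\), or conjugate to the Sylow \(q\)-subgroup \(\langle b^{n/q^{e_q}}\rangle\) of \(\langle b\rangle\) for some \(q\mid n\). Since \((m,r-1)=1\), no nontrivial element of \(\langle a\rangle\) is central: \(b^{-1}a^ib=a^{ri}\) equals \(a^i\) only if \(m\mid i(r-1)\), which forces \(m\mid i\). So Sylow subgroups of the first kind are never central. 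A Sylow subgroup of the second kind commutes with \(b\). It therefore lies in the centre exactly when it commutes with \(a\), that is, exactly when \(r^{n/q^{e_q}}\equiv1\pmod m\). Central subgroups are normal, so the choice of conjugate does not matter.

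\emph{Squarefree-reducedness.} A central Sylow subgroup of prime order is in particular a central cyclic Sylow subgroup. Hence the congruence condition also gives squarefree-reducedness. Nonabelianness of \(S\) makes \(S\) nontrivial.

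\emph{Assembly.} Combining the two equivalences with the definition of a fundamental block for the cyclic-subgroup threshold (nontrivial, squarefree-reduced, no central cyclic Sylow subgroup, and \(\eta<4\)) gives the corollary in both directions. The only step needing any care is the identification of the Sylow subgroups and their centrality. It is routine because \(\langle a\rangle\) is a normal Hall subgroup with complement \(\langle b\rangle\), so I expect no real obstacle.
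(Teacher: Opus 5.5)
Your proposal is correct and follows essentially the same route as the paper. You identify the displayed sum with \(\cyc(S)\) through the cyclic-subgroup formula of Proposition~\ref{prop:zm-counts}, and you show that the congruence condition is exactly the absence of a central Sylow subgroup, using \((m,r-1)=1\) to rule out the kernel \(\langle a\rangle\). Your write-up is slightly more explicit than the paper's, which leaves implicit that nontriviality and squarefree-reducedness come for free.
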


\begin{proof}
The first displayed inequality is the condition \(\eta<4\), by
Proposition~\ref{prop:cyc-threshold-terminal-families}{\rm(b)}.  The second
condition is the same central-Sylow condition as in
Corollary~\ref{cor:zm-sub-fundamental-blocks}: it says exactly that no Sylow
subgroup of the cyclic complement \(\langle b\rangle\) acts trivially on the
cyclic kernel \(\langle a\rangle\).  Since \((m,r-1)=1\), the kernel contains no
nontrivial central element.  Therefore \(S\) has no central cyclic Sylow
subgroup.  This is precisely the additional condition needed for \(S\) to be a
fundamental block below \(\eta<4\).
\end{proof}

\begin{corollary}\label{cor:nilpotent-cyc-primary-blocks}
Let \(N\) be nilpotent and satisfy
\[
        \cyc(N)<2^{\pi(N)+2}.
\]
Let \(K\) be its squarefree-reduced core and write
\[
        K=P_1\times\cdots\times P_r
\]
as the product of its nontrivial Sylow subgroups.  Then \(r\le3\).  At most one
of the \(P_i\) is non-cyclic; if one is non-cyclic, then \(r\le2\).

If all \(P_i\) are cyclic, say
\[
        P_i\cong C_{p_i^{a_i}},
        \qquad a_1\ge a_2\ge\cdots\ge a_r\ge2,
\]
then the possible exponent patterns are exactly
\[
\begin{array}{ll}
r=0: & \text{empty},\\
r=1: & 2\le a_1\le6,\\
r=2: & (a_1,a_2)=(a,2),\quad 2\le a\le4,\\
r=3: & (a_1,a_2,a_3)=(2,2,2).
\end{array}
\]
\end{corollary}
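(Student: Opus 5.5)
The proof mirrors that of Corollary~\ref{cor:nilpotent-sub-primary-blocks}, now driven by the factorization of \(\eta\) over coprime factors. Since \(K\) is nilpotent, Theorem~\ref{thm:product-cyclic-grps} applied repeatedly to the Sylow decomposition gives
\[
        \eta(K)=\prod_{i=1}^r\frac{\cyc(P_i)}{2}.
\]
By Lemma~\ref{lem:cyc-threshold-reduced-core}, \(\eta(K)=\eta(N)<4\). Because \(K\) is squarefree-reduced and nilpotent, every Sylow subgroup is central, so none of the \(P_i\) has prime order. Hence \(\cyc(P_i)\ge3\) (a cyclic \(C_{p^a}\) with \(a\ge2\) has \(a+1\ge3\) cyclic subgroups, and a non-cyclic \(p\)-group has at least \(p+2\ge4\)). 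Each factor is therefore at least \(3/2\). Since \((3/2)^4=81/16>4\), we get \(r\le3\).

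For the non-cyclic factors, I would use the list in Proposition~\ref{prop:cyc-threshold-terminal-families}(a), or directly the bound \(\cyc(P)\ge p+2\ge4\). This holds because a non-cyclic \(p\)-group has at least \(p+1\) subgroups of order \(p\), or, in the generalized quaternion case, has \(\cyc(Q_{2^a})=a+2^{a-2}\ge5\). Either way, a non-cyclic \(P_i\) contributes a factor at least \(2\).
\begin{itemize}
\item Two non-cyclic factors would give \(\eta(K)\ge2\cdot2=4\), a contradiction, so at most one \(P_i\) is non-cyclic.
\item If one is non-cyclic and \(r=3\), then \(\eta(K)\ge2\cdot(3/2)^2=9/2>4\). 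Hence \(r\le2\) in that case.
\end{itemize}

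If all \(P_i\cong C_{p_i^{a_i}}\) with \(a_i\ge2\), the condition becomes \(\prod_i (a_i+1)/2<4\). I would enumerate this exactly as in Lemma~\ref{lem:cyc-threshold-order-pattern}, which it specializes to with all exponents at least \(2\).
\begin{itemize}
\item \(r=1\): \(a_1+1<8\), so \(2\le a_1\le6\).
\item \(r=2\): \((a_1+1)(a_2+1)<16\) with \(a_2\ge2\). If \(a_2\ge3\), the product is at least \(16\), which is too large, so \(a_2=2\) and \(3(a_1+1)<16\), giving \(a_1\le4\).
\item \(r=3\): \((a_1+1)(a_2+1)(a_3+1)<32\). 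Since \(3\cdot3\cdot4=36>32\), all exponents equal \(2\), and indeed \(27<32\).
\end{itemize}
The converse direction ("exactly") holds because each listed pattern satisfies the strict inequality, and the corresponding cyclic group is squarefree-reduced with this value of \(\eta\).

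The only step needing care is the lower bound \(\cyc(P)\ge4\) for a non-cyclic \(p\)-group, including the generalized quaternion case. I would settle it with the elementary subgroup count of order-\(p\) subgroups above, quoting the Q-group count already given in the proof of Proposition~\ref{prop:cyc-threshold-terminal-families}.
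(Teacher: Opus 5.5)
Your proposal is correct and follows the paper's proof essentially step by step. It uses the same lower bounds \(\cyc(P_i)\ge3\) for every factor and \(\cyc(P_i)\ge4\) for non-cyclic factors (via the \(\equiv1\pmod p\) count of subgroups of order \(p\), with the generalized quaternion case handled separately), the same comparisons \((3/2)^4>4\), \(2\cdot2\ge4\) and \(2(3/2)^2>4\), and the same enumeration of \(\prod_i(a_i+1)<2^{r+2}\).

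One small slip: it is not true that every Sylow subgroup of a nilpotent group is central (take \(K=Q_8\)). What you actually need, and what does hold, is that a Sylow subgroup of prime order is abelian and commutes with the other Sylow factors, hence is central. Squarefree-reducedness then excludes it.
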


\begin{proof}
For a nontrivial squarefree-reduced \(p\)-group \(P\), one has \(\cyc(P)\ge3\), with
equality only in the cyclic case \(P\cong C_{p^2}\).  Hence every Sylow factor
of \(K\) contributes at least \(3/2\) to \(\eta(K)\), and
\[
        \eta(K)=\prod_i \frac{\cyc(P_i)}2<4
\]
forces \(r\le3\).  If \(P_i\) is non-cyclic, then \(P_i\) has at least two
distinct subgroups of order \(p\), unless it is generalized quaternion.  In the
first case the number of subgroups of order \(p\) is congruent to \(1\) modulo
\(p\), and hence is at least \(p+1\); in the generalized quaternion case,
\(Q_8\) already has five cyclic subgroups.  In all cases
\(\cyc(P_i)\ge4\).  Thus a non-cyclic Sylow factor contributes at least \(2\),
and the inequalities
\[
        2^2\ge4,\qquad
        2\left(\frac32\right)^2>4
\]
show that there is at most one non-cyclic Sylow factor and that, if it exists,
then \(r\le2\).

If all Sylow factors are cyclic, then
\[
        \eta(K)=\prod_i\frac{a_i+1}{2}<4,
\]
or
\[
        \prod_i(a_i+1)<2^{r+2}.
\]
The displayed patterns follow immediately from this inequality with
\(a_1\ge\cdots\ge a_r\ge2\).
\end{proof}

\begin{corollary}\label{cor:nilpotent-cyc-fundamental-blocks}
Let \(S\ne1\) be nilpotent.  Then \(S\) is a fundamental block for the
cyclic-subgroup threshold if and only if \(S\) is a non-cyclic \(p\)-group with
\[
        \cyc(S)<8.
\]
Consequently \(S\) is one of
\[
        C_2^2,\quad C_4\times C_2,\quad D_8,\quad Q_8,\quad
        C_3^2,\quad C_5^2,
\]
where \(D_8\) denotes the dihedral group of order \(8\).
\end{corollary}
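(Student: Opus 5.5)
The argument mirrors the proof of Corollary~\ref{cor:nilpotent-sub-fundamental-blocks}, with Corollary~\ref{cor:nilpotent-cyc-primary-blocks} and Proposition~\ref{prop:cyc-threshold-terminal-families}(a) in place of their subgroup-count analogues.

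\emph{Necessity.} Let \(S\) be a nilpotent fundamental block for \(\eta<4\).

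\begin{enumerate}[label=(\arabic*)]
\item Every cyclic Sylow subgroup of a nilpotent group is central: it is abelian and commutes with the other Sylow factors. Since \(S\) has no central cyclic Sylow subgroup, all its Sylow subgroups are non-cyclic.
\item \(S\) is squarefree-reduced, so it is its own core \(K\) in Corollary~\ref{cor:nilpotent-cyc-primary-blocks}. That corollary allows at most one non-cyclic Sylow factor. With step (1) and \(S\ne1\), \(S\) is a single non-cyclic \(p\)-group.
\item Here \(\pi(S)=1\), so \(\eta(S)<4\) reads \(\cyc(S)<2^{3}=8\).
\end{enumerate}

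\emph{Sufficiency.} Let \(S\) be a non-cyclic \(p\)-group with \(\cyc(S)<8\).

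\begin{enumerate}[label=(\arabic*)]
\item Its only Sylow subgroup is \(S\) itself, which is non-cyclic. So \(S\) has no central cyclic Sylow subgroup.
\item \(S\) is not of prime order, so it is squarefree-reduced.
\item \(\eta(S)=\cyc(S)/2<4\).
\end{enumerate}

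Hence \(S\) is a fundamental block.

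\emph{The explicit list.} Proposition~\ref{prop:cyc-threshold-terminal-families}(a) lists all squarefree-reduced \(p\)-groups with \(\cyc<8\). Deleting the cyclic entries \(C_{p^a}\) leaves \(C_2^2,\ C_4\times C_2,\ D_8,\ Q_8,\ C_3^2,\ C_5^2\).

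It remains to confirm that each of these really has fewer than \(8\) cyclic subgroups:
\[
\cyc(C_2^2)=5,\quad \cyc(C_4\times C_2)=6,\quad \cyc(D_8)=7,\quad \cyc(Q_8)=5,\quad \cyc(C_3^2)=6,\quad \cyc(C_5^2)=8.
\]
These come from \(2+p(a-1)\) for the abelian groups, \(a+2^{a-2}\) for \(Q_8\), and a direct count for \(D_8\) (identity, five involutions, one \(C_4\)).

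The value \(\cyc(C_5^2)=2+5=7\)? No: the formula gives \(2+5\cdot1=7\), but a direct count gives the trivial subgroup plus \(p+1=6\) subgroups of order \(5\), so \(\cyc(C_5^2)=7\). The formula \(2+p(a-1)\) and the direct count agree, so \(C_5^2\) is admissible.

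I expect the main obstacle to be exactly this final check that every listed group, \(C_5^2\) in particular, actually satisfies the strict bound \(\cyc<8\).
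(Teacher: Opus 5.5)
Your argument is correct and is essentially the paper's proof: cyclic Sylow subgroups of a nilpotent group are central, Corollary~\ref{cor:nilpotent-cyc-primary-blocks} then leaves a single non-cyclic \(p\)-group, \(\eta<4\) becomes \(\cyc(S)<8\), and the list is the non-cyclic part of Proposition~\ref{prop:cyc-threshold-terminal-families}(a); your explicit converse direction is a small addition that the paper leaves implicit. The only blemish is your final display, where the correct values are \(\cyc(C_2^2)=4\), \(\cyc(C_3^2)=5\), and \(\cyc(C_5^2)=7\) (exactly what \(2+p(a-1)\) gives), not \(5,6,8\); this is harmless, because all six values stay below \(8\) and the check is not even needed for the \emph{consequently} clause, which only asserts that a block lies in the list.
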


\begin{proof}
As in the subgroup-count case, a nilpotent fundamental block has only
non-cyclic Sylow subgroups.  Corollary~\ref{cor:nilpotent-cyc-primary-blocks}
shows that there can be at most one such Sylow factor.  Thus \(S\) is a
non-cyclic \(p\)-group, and the normalized condition \(\eta(S)<4\) is exactly
\(\cyc(S)<8\).

The displayed list is the non-cyclic part of the squarefree-reduced
\(p\)-group list in Proposition~\ref{prop:cyc-threshold-terminal-families}{\rm(a)}.
\end{proof}

\begin{corollary}\label{cor:mixed-cyc-fundamental-blocks}
Let
\[
        S=N\times ZM(m,n,r),
\]
where \(N\) is nilpotent, \(ZM(m,n,r)\) is nonabelian, and \((|N|,mn)=1\).
For \(q\mid n\), write \(q^{e_q}\Vert n\).  Then \(S\) is a fundamental block
for the cyclic-subgroup threshold if and only if the following three conditions
hold:
\[
        \cyc(N)
        \sum_{\substack{m_1\mid m,\ n_1\mid n\\ m/m_1\mid r^{n_1}-1}}
        \gcd\left(m_1,\frac{r^n-1}{r^{n_1}-1}\right)
        <2^{\pi(N)+\pi(mn)+2},
\]
every nontrivial Sylow subgroup of \(N\) is non-cyclic, and
\[
        r^{\,n/q^{e_q}}\not\equiv1\pmod m
        \qquad\text{for every }q\mid n.
\]
\end{corollary}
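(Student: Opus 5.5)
The proof follows the pattern of Corollary~\ref{cor:mixed-sub-fundamental-blocks}, with the cyclic-subgroup count in place of the subgroup count. There are two parts. First, the displayed inequality is exactly the condition \(\eta(S)<4\). Second, the two structural conditions together are equivalent to \(S\) having no central cyclic Sylow subgroup; squarefree-reducedness is then automatic, since a central Sylow subgroup of prime order is a central cyclic Sylow subgroup.

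For the numerical part, we invoke Proposition~\ref{prop:cyc-threshold-terminal-families}{\rm(d)}. Because \((|N|,mn)=1\), Theorem~\ref{thm:product-cyclic-grps} gives \(\cyc(S)=\cyc(N)\cyc(ZM(m,n,r))\). The cyclic-subgroup formula of Proposition~\ref{prop:zm-counts} evaluates the second factor, and \(\pi(S)=\pi(N)+\pi(mn)\). Hence \(\cyc(S)<2^{\pi(S)+2}\) is precisely the displayed inequality.

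For the structural part, we describe the Sylow subgroups of \(S\). Since the orders of the two factors are coprime, every Sylow subgroup of \(S\) lies entirely in \(N\) or entirely in \(ZM(m,n,r)\). Also \(Z(S)=Z(N)\times Z(ZM(m,n,r))\).
\begin{enumerate}[label=(\roman*)]
\item For a Sylow \(p\)-subgroup \(P_p\) of \(N\): it is central in \(N\) because \(N\) is nilpotent, so it is central in \(S\). Hence \(P_p\) is a central cyclic Sylow subgroup of \(S\) exactly when it is cyclic.
\item For the factor \(ZM(m,n,r)\), we argue as in Corollary~\ref{cor:zm-sub-fundamental-blocks}. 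Since \((m,r-1)=1\), no nonidentity element of \(\langle a\rangle\) is central. Every Sylow subgroup for a prime dividing \(m\) lies in \(\langle a\rangle\), so none of these is central.
\item For a prime \(q\mid n\), the Sylow \(q\)-subgroup is conjugate to \(\langle b^{n/q^{e_q}}\rangle\). It is central iff it centralizes \(\langle a\rangle\), that is, iff \(r^{n/q^{e_q}}\equiv1\pmod m\).
\end{enumerate}
Thus \(S\) has no central cyclic Sylow subgroup iff every nontrivial Sylow subgroup of \(N\) is non-cyclic and \(r^{n/q^{e_q}}\not\equiv1\pmod m\) for all \(q\mid n\). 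Combining this with the numerical part gives the definition of a fundamental block for the cyclic-subgroup threshold. The same argument yields both directions of the equivalence.

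The only delicate step is the centrality criterion for the \(q\)-Sylow subgroups of \(ZM(m,n,r)\) in (iii). A central Sylow subgroup is normal, hence unique. It therefore suffices to test the specific conjugate \(\langle b^{n/q^{e_q}}\rangle\), and this subgroup commutes with \(b\) automatically. So centrality reduces to the action on \(a\), which is \(a\mapsto a^{r^{n/q^{e_q}}}\).
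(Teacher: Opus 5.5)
Your proposal is correct and follows the same route as the paper: the inequality comes from Proposition~\ref{prop:cyc-threshold-terminal-families}(d), and the two structural conditions are the componentwise translation of ``no central cyclic Sylow subgroup,'' exactly as in Corollary~\ref{cor:mixed-sub-fundamental-blocks}.

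There is one slip to fix in (i). A Sylow subgroup of a nilpotent group need not be central; for example, take \(N=Q_8\). The correct justification is that a \emph{cyclic} Sylow subgroup \(P_p\) of \(N\) is central, because \(N\) is the direct product of its Sylow subgroups and \(P_p\) is abelian. That is all your ``exactly when it is cyclic'' conclusion requires.
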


\begin{proof}
The first displayed inequality is the cyclic-subgroup threshold condition for
\(N\times ZM(m,n,r)\), by
Proposition~\ref{prop:cyc-threshold-terminal-families}{\rm(d)}.  The absence of
central cyclic Sylow subgroups is exactly the same componentwise condition as in
Corollary~\ref{cor:mixed-sub-fundamental-blocks}: cyclic Sylow subgroups of the
nilpotent factor are central, and in the \(ZM\)-factor the only possible central
Sylow subgroups come from the cyclic complement and are detected by the displayed
congruence.  Hence the three stated conditions are precisely the cyclic-subgroup
threshold together with the definition of a fundamental block.
\end{proof}

The criteria above describe fundamental blocks in selected families, but do
not classify all groups below the solvability bounds.  The finiteness
questions in Questions~\ref{ques:finitely-many-sub-blocks}
and~\ref{ques:finitely-many-cyc-blocks}, together with the question about
normalized values in Question~\ref{ques:next-normalized-values}, remain open.

\section*{Acknowledgements}
Hiranya Kishore Dey acknowledges the INSPIRE Faculty Fellowship for support
during the preparation of this work, and thanks the Department of Science and
Technology (DST), India, for funding.  He also thanks the Department of
Mathematics at the Indian Institute of Technology Jammu and the Department of
Mathematics at the Indian Institute of Science Education and Research Kolkata
for their research environments.

Khyati Sharma thanks Dr. Oorna Mitra and the Department of Mathematics, IISER
Kolkata, for financial support through the DST INSPIRE project and for providing
a good working environment during this work.  She also acknowledges the support
of the IISER Berhampur institute post-doctoral fellowship.

Cesar Galindo was partially supported by Grant INV-2025-213-3452 from the
School of Science of Universidad de los Andes.

\subsection*{Data Availability}
The class data and computational output supporting the finite-case checks
are supplied in Online Resource~1.  The underlying character tables and
tables of marks are available in CTblLib and TomLib \cite{CTblLib,TomLib}.

\subsection*{Code Availability}
The GAP code and its output are provided in Online Resource~1.

\subsection*{Competing Interests}
The authors have no competing interests to declare that are relevant to the content of this article.

\appendix

\section{Auxiliary estimates and Lie-type notation}
\label{app:auxiliary-estimates}

This appendix proves the numerical estimate used in the subgroup-count
arguments and fixes the Lie-type notation for the simple-group checks in
Appendices~\ref{sec:psl2} and~\ref{sec:lie-type}.

\subsection{The numerical estimate for \texorpdfstring{\(\PSL(2,q)\)}{PSL(2,q)}}

\begin{proof}[Proof of Lemma~\ref{lem:ineq-primes}]
Write \(q=p^f\), where \(p\) is an odd prime, and put
\[
        M=\frac{q^2-1}{2}.
\]
Then \(N=qM\) and \((q,M)=1\), since \(p\nmid q^2-1\). Hence
\[
        t=\pi(N)=1+\pi(M).
\]
Let \(s=\pi(M)=t-1\), and let \(P_s\) be the product of the first \(s\)
primes.  Since \(M\) has \(s\) distinct prime divisors, we have
\[
        P_s\le M\le q(q-1).
\]

Assume first that \(q\ge7\).  If \(s\ge4\), then
\[
        P_s\ge 2\cdot3\cdot5\cdot7\cdot 2^{s-4}
        > 10\cdot 2^s
        = 5\cdot 2^t.
\]
Thus \(q(q-1)\ge 5\cdot 2^t\).  If \(s\le3\), then for \(q=7,9\) one
has \(t=3\), and the desired inequality is immediate.  For \(q\ge11\), one has
\[
        q(q-1)\ge 110\ge 80\ge 5\cdot 2^t,
\]
because \(t=s+1\le4\).  This proves the first inequality.

Now assume that \(q\ge37\).  If \(s\ge5\), then
\[
        P_s\ge 2\cdot3\cdot5\cdot7\cdot11\cdot 2^{s-5}
        > 59\cdot 2^s
        = 59\cdot 2^{t-1}.
\]
If \(s\le4\), then
\[
        q(q-1)\ge 37\cdot36>59\cdot16\ge 59\cdot2^s
        =59\cdot2^{t-1}.
\]
This proves the second inequality.
\end{proof}

\subsection{Lie-type notation and maximal tori}\label{subsec:lie-notation}

For the Lie-type families, write \(q=p^f\), where \(p\) is prime and
\(f\ge1\).  Let \(\mathbf G\) be a connected simple algebraic group over
\(\overline{\mathbb F}_p\), and let \(F\) be a Steinberg endomorphism, so that
some positive power of \(F\) is a Frobenius endomorphism.  This includes the
exceptional isogenies defining the Suzuki and Ree groups.  Here \(q\) is the
conventional field parameter; for those groups the normalized parameter
attached to \(F\) is \(\sqrt q\); see
\cite[Section~1.4 and Example~1.4.21]{GeckMalle2016}.
The finite groups of Lie type arise from the fixed-point groups
\(\mathbf G^F\), after passing to a derived subgroup and then to a central
quotient when necessary.  In these appendices \(S\) denotes the resulting finite
simple group.

Fix an \(F\)-stable maximal torus \(\mathbf T\) of \(\mathbf G\), and write
\[
        W=N_{\mathbf G}(\mathbf T)/\mathbf T
\]
for the Weyl group, and let \(\sigma\) be the automorphism of \(W\) induced
by \(F\).  The \(\mathbf G^F\)-conjugacy classes of \(F\)-stable maximal tori
are parametrized by the \(W\)-conjugacy classes in the coset \(W\sigma\).
For a torus \(\mathbf T_w\) corresponding to \(w\sigma\), the normalizer
formula used in the proof is
\[
        N_{\mathbf G}(\mathbf T_w)^F/\mathbf T_w^F
        \cong C_W(w\sigma).
\]
This is the parametrization and normalizer formula of Malle--Testerman
\cite[Propositions~25.1--25.3]{MalleTesterman2011}.

When a torus is used inside the finite simple group \(S\), we write
\[
        T=S\cap \mathbf T_w^F
\]
or use the image of \(\mathbf T_w^F\) in the relevant central quotient.
Passing to a central quotient can only decrease the torus order used in the
denominator of our estimates.

A semisimple element \(x\in \mathbf G^F\) is regular if
\[
        C_{\mathbf G}(x)^\circ
\]
is a maximal torus.  If \(x\in\mathbf T\), this means that no root of
\(\mathbf G\) with respect to \(\mathbf T\) vanishes on \(x\); see
Malle--Testerman \cite[Corollary~14.10]{MalleTesterman2011}.  The estimates in
Appendix~\ref{sec:lie-type} use cyclic subgroups generated by regular
semisimple elements lying in suitable maximal tori.

For every torus used in Subsections~\ref{subsec:classical-groups}--\ref{subsec:exceptional-groups}, we set
\[
        m(T)=|C_W(w\sigma)|.
\]
When an estimate uses a larger number, the tables say explicitly that it is a
bound for \(m(T)\).  The symbol \(m(T)\) does not mean the abstract quotient
\(N_S(T)/T\) of the finite torus.  This convention avoids the
finite-normalizer degeneracies discussed by Baykalov
\cite[Corollary~1.5]{Baykalov2024}.

\subsection{The tori used in the proof}\label{subsec:tori-used}

The torus terminology used in Appendices~\ref{sec:psl2} and~\ref{sec:lie-type}
is as follows.  In \(\PSL(2,q)\), the split torus is the image of the diagonal
torus over \(\mathbb F_q\), while the nonsplit torus is the anisotropic torus
obtained from the quadratic extension.  Their projective orders are \(q-1,q+1\)
when \(q\) is even, and \((q-1)/2,(q+1)/2\) when \(q\) is odd.

For the classical groups of higher rank, a Singer torus in \(\PSL(n,q)\) is
the projective image of the determinant-one subgroup of the multiplication
torus \(\mathbb F_{q^n}^{\times}\) on the natural \(n\)-dimensional
\(\mathbb F_q\)-space.  The unitary torus used here is the one-cycle torus,
corresponding to an \(n\)-cycle in the Weyl-group parametrization; we do not
assume that it acts irreducibly on the natural unitary module.  In the
symplectic and orthogonal groups, an irreducible negative torus is the torus
corresponding, in the signed-cycle parametrization of maximal tori, to one
negative cycle of length \(n\); its order is bounded by \(q^n+1\).  Type
\((n-1)^-,1^-\) means one negative cycle of length \(n-1\) and one negative
cycle of length \(1\).  The cyclic decompositions and torus orders used in
Table~\ref{tab:classical-tori} are recorded by Buturlakin--Grechkoseeva
\cite[Theorems~2.1, 2.2, and 3--7]{ButurlakinGrechkoseeva2007}.

For exceptional groups, a Coxeter torus means the maximal torus corresponding
to a Coxeter element of the Weyl group.  In the twisted cases, the same role is
played by a class in the coset \(W\sigma\); we call this a twisted Coxeter torus.
The exceptional torus data used in Appendix~\ref{sec:lie-type} are taken from
the tables of Javed--Parkin--Rowley--Walton cited in
Subsections~\ref{subsec:suzuki-ree} and~\ref{subsec:exceptional-groups}.

\section{Alternating groups}\label{sec:alternating}

We count cyclic subgroups generated by prime cycles in \(A_n\).  The
automorphism groups of alternating groups are taken from Dixon--Mortimer
\cite{DixonMortimer1996}.  Recall that
\[
        \Aut(A_n)\cong \Aut(S_n)\cong S_n
        \qquad (n\ge4,\ n\ne6),
\]
while
\[
        \Aut(A_6)\cong \Aut(S_6)\cong S_6\rtimes C_2.
\]
In particular, for every \(n\ge5\),
\[
        \pi(\Aut(A_n))=\pi(n!).
\]

\begin{proposition}\label{prop:alternating}
For every \(n\ge5\),
\[
        \cyc(A_n)\ge 2^{\pi(\Aut(A_n))+2}.
\]
\end{proposition}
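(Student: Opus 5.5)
We will bound \(\cyc(A_n)\) from below by counting only the subgroups generated by \(p\)-cycles for the largest prime \(p\le n\) (or for a few primes), and compare with \(2^{\pi(n!)+2}\).  Since \(\pi(\Aut(A_n))=\pi(n!)\) for \(n\ge5\), and \(\pi(n!)\) is the number of primes \(\le n\), the target is \(2^{\pi_0(n)+2}\), where \(\pi_0(n)\) denotes the prime-counting function.  The number of \(p\)-cycles in \(A_n\), for an odd prime \(p\le n\), is \(\binom{n}{p}(p-1)!\).  Each cyclic subgroup of order \(p\) generated by a \(p\)-cycle contains exactly \(p-1\) such cycles.  Hence these subgroups contribute
\[
        \binom{n}{p}(p-2)!
\]
distinct cyclic subgroups.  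For \(p=3\) this gives \(\binom n3\) subgroups.  Together with the trivial subgroup, this yields \(\cyc(A_n)\ge 1+\binom n3\).

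The key steps are as follows.  First, handle small \(n\) directly.  For \(n=5\), we have \(\pi(5!)=3\), so the target is \(32\); the exact value \(\cyc(A_5)=1+15+10+6=32\) (with six subgroups of order \(5\)) meets it with equality.  For \(n=6\), the target is again \(32\), and \(\binom63=20\) three-cycle subgroups plus \(45\) involution subgroups already suffice.  For \(n=7\), the target is \(64\), and \(\binom73=35\) together with \(\binom75\cdot 3!=126\) subgroups of \(5\)-cycles is plenty.  Second, for general \(n\), use the crude estimate \(\pi_0(n)\le (n+1)/2\) for \(n\ge 1\), or more sharply \(\pi_0(n)\le n/2\) for \(n\ge 8\), since primes other than \(2\) are odd.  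The target is then at most \(2^{n/2+2}\).  Third, take the contribution from the largest odd prime \(p\le n\) and \(p\ge n/2\) (Bertrand's postulate): \(\binom np (p-2)!\ge (p-2)!\ge (\lceil n/2\rceil-2)!\).  Comparing \((n/2-2)!\) with \(2^{n/2+2}\) works only for large \(n\).  A cleaner alternative is to sum over all \(p\)-cycle subgroups with \(p=n\) or \(n-1\) (whichever is an odd prime), or to simply use \(3\)-cycles and \(5\)-cycles: \(\binom n5\cdot 6\) grows polynomially, while \(2^{n/2}\) grows exponentially, so polynomial bounds alone fail.  The argument should therefore rely on the Bertrand prime \(p\in(n/2,n]\).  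Its contribution is at least \(\binom{n}{p}(p-2)!\ge (p-2)!\ge(\lfloor n/2\rfloor -1)!\), which dominates \(2^{n/2+2}\) once \(n\) exceeds roughly \(20\).

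The main obstacle is the middle range \(8\le n\le 20\) or so, where the factorial bound has not yet overtaken the exponential.  There I will compute explicitly with the exact prime count \(\pi_0(n)\in\{4,\dots,8\}\), so the target is at most \(2^{10}\).  I will compare this with \(\binom n p(p-2)!\) for the largest prime \(p\le n\).  For example, \(n=8\) with \(p=7\) gives \(8\cdot 120=960\ge 2^{6}\), and \(n=11\) with \(p=11\) gives \(9!\gg 2^7\).  The intermediate values between consecutive primes are handled by monotonicity in \(n\): \(\cyc(A_n)\) is nondecreasing, since \(A_{n}\le A_{n+1}\), and \(\pi_0\) is constant between primes.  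It therefore suffices to check each prime \(n\) and \(n=8\), which is a short finite table.
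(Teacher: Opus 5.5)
Your proposal is correct and follows essentially the paper's proof: for a Bertrand prime \(n/2<r\le n\) you count the \(\binom nr(r-2)!\) cyclic subgroups generated by \(r\)-cycles, use \(\pi(n!)\le n/2\) to beat \(2^{\pi(n!)+2}\) for large \(n\), and check the small cases directly. The only difference is that you reduce the middle range to prime \(n\) via \(\cyc(A_n)\le\cyc(A_{n+1})\), whereas the paper tabulates every \(n\le21\). This monotonicity alone already finishes the proof without Bertrand's postulate, since \(\cyc(A_p)>(p-2)!\ge2^{(p+5)/2}\ge2^{\pi(p!)+2}\) for every prime \(p\ge7\).
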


\begin{proof}
For \(n=5\), \(A_5\) has \(32\) cyclic subgroups, and
\(\pi(\Aut(A_5))=\pi(120)=3\); equality follows.  For \(n=6\), one has
\(\cyc(A_6)=167\), while \(\pi(\Aut(A_6))=3\); the desired bound is
\(32\).  The same values can be recovered from the character tables,
using the \textit{Atlas}/GAP data as in Appendix~\ref{sec:remaining-cases}
\cite{ATLAS,GAP,CTblLib}.

Assume \(n\ge7\).  By Bertrand's postulate, choose an odd prime \(r\) with
\[
        \frac n2<r\le n.
\]
Every \(r\)-cycle lies in \(A_n\).  The number of cyclic subgroups generated by \(r\)-cycles is
\[
        N_r=\frac{\binom nr (r-1)!}{r-1}
            =\binom nr (r-2)!
            =\frac{n!}{r(r-1)(n-r)!}.
\]
For \(n\ge22\), we have \(r\ge13\), and the elementary bound \(\pi(n!)\le n/2<r\) gives \(\pi(n!)\le r\).  Hence
\[
        2^{\pi(\Aut(A_n))+2}=2^{\pi(n!)+2}
        \le 4\cdot 2^r.
\]
For \(r\ge13\), \((r-2)!\ge4\cdot2^r\).  Thus
\[
        \cyc(A_n)\ge N_r\ge (r-2)!\ge 2^{\pi(n!)+2}.
\]

For \(7\le n\le21\), the same lower bound \(N_r\) gives:
\[
\begin{array}{c|c|c|c}
 n&r&\text{lower bound}&\text{upper bound}\\
\hline
7&5&126&64\\
8&7&960&64\\
9&7&4320&64\\
10&7&14400&64\\
11,12&11&\ge 362880&\le128\\
13,14,15,16&13&\ge 39916800&\le256\\
17,18&17&\ge 1307674368000&\le512\\
19,20,21&19&\ge 355687428096000&\le1024.
\end{array}
\]
This proves the result.
\end{proof}

\section{The groups \texorpdfstring{\(\PSL(2,q)\)}{PSL(2,q)}}\label{sec:psl2}

Throughout this appendix, \(q=p^f\ge4\).  We use the rank-one structure of
\(\PSL(2,q)\): the unipotent elements come from Sylow \(p\)-subgroups, and
the semisimple elements lie in split or nonsplit maximal tori.  The relevant
order formula, Sylow \(p\)-subgroups, maximal tori, and semisimple elements
are recorded by Malle--Testerman
\cite[Theorem~14.2, Corollaries~24.6 and~24.11, Propositions~25.3 and~26.6]{MalleTesterman2011}.
The split and nonsplit
projective torus orders are listed in Subsection~\ref{subsec:tori-used}.

\begin{lemma}\label{lem:psl2-formula}
Let \(S=\PSL(2,q)\).
\begin{enumerate}[label=(\alph*)]
\item If \(q\) is even, then
\[
\cyc(S)=1+(q^2-1)
+\frac{q(q+1)}2(d(q-1)-1)
+\frac{q(q-1)}2(d(q+1)-1).
\]
\item If \(q\) is odd, put
\[
        a=\frac{q-1}{2},\qquad b=\frac{q+1}{2},
\]
and
\[
\epsilon(n)=\begin{cases}1,&2\mid n,\\0,&2\nmid n.\end{cases}
\]
Then
\[
\begin{aligned}
\cyc(S)=&1+\frac{q^2-1}{p-1}+I(q)\\
&+\frac{q(q+1)}2\bigl(d(a)-1-\epsilon(a)\bigr)\\
&+\frac{q(q-1)}2\bigl(d(b)-1-\epsilon(b)\bigr),
\end{aligned}
\]
where
\[
I(q)=
\begin{cases}
\dfrac{q(q+1)}2,&q\equiv1\pmod4,\\[4pt]
\dfrac{q(q-1)}2,&q\equiv3\pmod4.
\end{cases}
\]
\end{enumerate}
\end{lemma}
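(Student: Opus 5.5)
The plan is to partition the nonidentity elements of \(S=\PSL(2,q)\) by type: unipotent, split semisimple, and nonsplit semisimple. Every nontrivial cyclic subgroup \(\langle x\rangle\) lies in exactly one class of subgroups determined by \(x\). We then count cyclic subgroups type by type. The rank-one structure recorded by Malle--Testerman supplies the needed facts:
\begin{enumerate}[label=(\roman*)]
\item the \(q+1\) Sylow \(p\)-subgroups are elementary abelian of order \(q\) and intersect trivially;
\item there are \(q(q+1)/2\) conjugates of the split torus, which is cyclic of order \(q-1\) or \((q-1)/2\);
\item there are \(q(q-1)/2\) conjugates of the nonsplit torus, which is cyclic of order \(q+1\) or \((q+1)/2\);
\item any two distinct maximal tori intersect trivially, except for the central-involution subtlety discussed below.
\end{enumerate}
The trivial subgroup contributes the \(1\).

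\textbf{Unipotent part.} Each Sylow \(p\)-subgroup \(U\cong \mathbb F_q^+\) contains \((q-1)/(p-1)\) subgroups of order \(p\), and every unipotent cyclic subgroup is of this form. Since the Sylow subgroups are TI, the total is \((q+1)(q-1)/(p-1)\). For \(q\) even this is \(q^2-1\), because \(p=2\); for \(q\) odd it is the term \((q^2-1)/(p-1)\).

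\textbf{Semisimple part, \(q\) even.} Every nonidentity semisimple element lies in a unique maximal torus: its centralizer is that torus, and distinct tori meet trivially. A cyclic torus of order \(n\) has \(d(n)-1\) nontrivial subgroups. Multiplying by the number of conjugates of each torus gives (a).

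\textbf{Semisimple part, \(q\) odd.} The tori are cyclic of orders \(a=(q-1)/2\) and \(b=(q+1)/2\). Exactly one of \(a,b\) is even, and the unique involution in that torus is \emph{not} regular. Its centralizer is dihedral of order \(q\mp1\) and contains several tori of that type. So, apart from the involution subgroup, every nontrivial subgroup of a torus lies in a unique torus. This gives \(d(a)-1-\epsilon(a)\) and \(d(b)-1-\epsilon(b)\) subgroups per torus, multiplied by the number of conjugates. The subgroups of order \(2\) are counted separately as \(I(q)\), the number of involutions from Lemma~\ref{lem:num-invol}, since each involution generates its own cyclic subgroup. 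This term then also recovers the involution count, as the paper remarks.

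\textbf{Main obstacle.} The delicate step is the claim that every non-involution nontrivial semisimple element has a cyclic (toral) centralizer, so that it lies in a unique torus. We would argue this by lifting to \(\mathrm{SL}(2,q)\). An element with distinct eigenvalues \(\lambda\neq\pm\lambda^{-1}\) has a toral centralizer there. Modulo \(\pm1\), extra centralizing elements arise only when \(\lambda^{-1}=-\lambda\), that is \(\lambda^2=-1\), which is exactly the involution case in \(S\). Completeness of the partition then follows from the Jordan decomposition, since every element of \(S\) is unipotent or semisimple: \(p\) is coprime to both torus orders, so there are no mixed elements. As a final sanity check, the counts should agree on \(A_5\cong\PSL(2,4)\cong\PSL(2,5)\), where both formulas give \(32\).
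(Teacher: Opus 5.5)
Your proposal is correct and follows essentially the same route as the paper. Both arguments place the nontrivial unipotent elements in the \(q+1\) TI Sylow \(p\)-subgroups and the semisimple elements in the \(q(q\pm1)/2\) cyclic tori, use that a non-involution semisimple element has a torus as its centralizer, and count the involution subgroups separately via \(I(q)\).

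One small correction, which does not affect your count: for \(q>5\), the dihedral centralizer of order \(q\mp1\) contains a \emph{unique} cyclic subgroup of order \((q\mp1)/2\). So each involution lies in exactly one torus rather than several, which is why \(I(q)\) equals the number of even-order tori. Subtracting \(\epsilon\) and adding \(I(q)\) is therefore just bookkeeping, not a correction for overlapping tori.
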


\begin{proof}
We use the rank-one data from Malle--Testerman
\cite[Theorem~14.2, Corollaries~24.6 and~24.11, Propositions~25.3 and~26.6]{MalleTesterman2011}.
The nonidentity unipotent elements are precisely the nonidentity elements in
the \(q+1\) Sylow \(p\)-subgroups, each of order \(q\).  The split and nonsplit maximal tori are
cyclic; their projective orders are \(q-1\) and \(q+1\) when \(q\) is even, and
\((q-1)/2\) and \((q+1)/2\) when \(q\) is odd.  Their normalizers have twice
the corresponding torus order, and hence the numbers of split and nonsplit
tori are \(q(q+1)/2\) and \(q(q-1)/2\).  A nontrivial semisimple element has
one of these tori as its centralizer when its order is different from \(2\).
Thus such an element is counted in a unique split or nonsplit torus.  The
involutions are the unique involutions in the even split or nonsplit tori.

If \(q\) is even, the nontrivial unipotent elements are involutions, and there
are \((q+1)(q-1)=q^2-1\) of them.  The split and nonsplit tori have odd
orders \(q-1\) and \(q+1\), respectively.  Counting the nonidentity cyclic
subgroups in these tori gives the two terms involving \(d(q-1)-1\) and
\(d(q+1)-1\), and this proves (a).

If \(q\) is odd, the split and nonsplit tori have orders \((q-1)/2\) and \((q+1)/2\).  The number of cyclic unipotent subgroups of order \(p\) is
\[
        (q+1)\frac{q-1}{p-1}=\frac{q^2-1}{p-1}.
\]
Here \(\PSL(2,q)\) has \(q+1\) Sylow \(p\)-subgroups, each with \(q-1\)
nonidentity elements, and each cyclic subgroup of order \(p\) has \(p-1\)
generators.  The involutions occur in the split tori when \(q\equiv1\pmod4\)
and in the nonsplit tori when \(q\equiv3\pmod4\).  Hence the number of
involutions is \(q(q+1)/2\) in the first case and \(q(q-1)/2\) in the second.
Counting cyclic subgroups inside the split and nonsplit tori, and subtracting
the involution in a torus whenever the torus has even order, gives (b).
\end{proof}

\begin{proposition}\label{prop:psl2}
For every \(q=p^f\ge4\),
\[
        \cyc(\PSL(2,q))\ge 2^{\pi(\Aut(\PSL(2,q)))+2}.
\]
\end{proposition}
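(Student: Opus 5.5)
We need \(\cyc(\PSL(2,q))\ge 2^{\pi(\Aut(S))+2}\).  The plan is to compare an explicit lower bound from Lemma~\ref{lem:psl2-formula} with an upper bound for \(2^{\pi(\Aut(S))}\) from Lemma~\ref{lem:pi-estimates}.

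\textbf{Upper bound on the right-hand side.}  Here \(|\Aut(S)|=q(q^2-1)f\), since \(|\Out(S)|=\gcd(2,q-1)f\), so
\[
        \pi(\Aut(S))\le \pi(q^2-1)+1+\pi(f).
\]
Note that \(q\pm1\) are coprime for \(q\) even and share only the prime \(2\) for \(q\) odd.  Applying Lemma~\ref{lem:pi-estimates}(c) to each of \(q-1\) and \(q+1\), we get
\[
        2^{\pi(q^2-1)}\le 4\sqrt{q^2-1}<4q.
\]
Also \(2^{\pi(f)}\le f\le\log_2 q\) by Lemma~\ref{lem:pi-estimates}(d).  Hence the target is at most roughly
\[
        2^{\pi(\Aut(S))+2}\le 32\,q\log_2 q
\]
(sharper: use part (a) on the odd parts when \(q\) is even).

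\textbf{Lower bound on the left-hand side.}  Drop all torus contributions except the guaranteed ones.  For \(q\) odd, the involutions alone give
\[
        \cyc(S)\ge 1+\frac{q^2-1}{p-1}+\frac{q(q-1)}{2}.
\]
For \(q\) even, the unipotent term alone gives \(\cyc(S)\ge q^2\).  In both cases \(\cyc(S)\gtrsim q^2/2\).  We then need
\[
        q^2/2\ge 32q\log_2 q,
\]
i.e.\ \(q\ge 64\log_2 q\), which holds for \(q\ge 2^{9}\) or so.  To shrink the exceptional range, use the sharper count instead.  With \(f=1\) there is no \(\pi(f)\) term, and the bound \(2^{\pi(n)}\le\frac{2}{\sqrt3}\sqrt n\) on odd parts reduces the constant substantially.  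We also add the torus terms \(\frac{q(q\pm1)}{2}(d(\cdot)-1-\epsilon)\), which are positive whenever the torus order has an odd divisor or is divisible by \(4\).  This gives \(\cyc(S)\) of order \(q^2\) with constant close to \(1\) or more.  With these refinements the general inequality should hold for all \(q\) beyond a modest bound, say \(q\ge 100\).

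\textbf{Residual cases.}  For the remaining \(q\) below the cutoff, I would verify directly by evaluating the exact formula of Lemma~\ref{lem:psl2-formula} against \(2^{\pi(q(q^2-1)f)+2}\).  This is a finite table; \(q=4,5\) gives \(A_5\), where equality \(32=32\) holds, and \(q=9\) gives \(A_6\), already known from Proposition~\ref{prop:alternating}.  The main obstacle is calibrating the crossover so that the finite table is genuinely small.  The delicate spots are small \(q\) with many prime factors in \(q^2-1\) (e.g.\ \(q=29,31,41,71\)) and even \(q=2^f\) with \(\pi(f)\) contributing a new prime (e.g.\ \(q=32,128\)).  There I would use the exact formula, noting for instance that for \(q\) even the unipotent term \(q^2-1\) alone beats \(2^{\pi+2}\) once \(q\ge 64\), because \(\pi(q^2-1)\) is small for such \(q\).
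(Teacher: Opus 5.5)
Your strategy is the same as the paper's. You bound \(2^{\pi(\Aut(S))+2}\) from above with Lemma~\ref{lem:pi-estimates}, bound \(\cyc(S)\) from below by counting involutions (the \(q^2-1\) unipotent ones for even \(q\), the \(q(q-1)/2\) semisimple ones for odd \(q\)), and check a finite residual list.

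As written, however, the proposal stops short of a proof, because the step that makes the residual list small is never carried out. Replacing \(2^{\pi(f)}\) by \(\log_2 q\) and applying part~(c) separately to \(q-1\) and \(q+1\) inflates the target to \(32q\log_2 q\). The resulting condition \(q\ge 64\log_2 q\) only holds from roughly \(q\approx 600\) on; it already fails at \(q=2^9\), since \(512<576\). That leaves more than a hundred prime powers, and you do not check them. Two further claims are stated rather than derived: that refinements push the cutoff down to \(q\ge100\), and that for even \(q\ge64\) the involutions win ``because \(\pi(q^2-1)\) is small''.

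The refinements you gesture at are the right ones, and the paper carries them out as follows. Keep \(f\) linear: since \(|\Aut(S)|=fq(q^2-1)\) and \(2^{\pi(f)}\le f\), one has \(2^{\pi(\Aut(S))+2}\le 8f\,2^{\pi(q^2-1)}\).

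For even \(q\), the integer \(q^2-1\) is odd, so part~(a) gives \(2^{\pi(\Aut(S))+2}<(16/\sqrt3)fq\). When \(f\ge6\) one has \(q=2^f\ge10f+1\), so the \(q^2-1\) involutions already exceed this bound.

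For odd \(q\), \(8\mid q^2-1\), so part~(b) gives \(2^{\pi(\Aut(S))+2}<8\sqrt{2/3}\,fq<7fq\). The \(q(q-1)/2\) involutions then suffice whenever \(q-1\ge14f\).

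The only residual cases are then:
\begin{enumerate}
\item \(q\in\{4,8,16,32\}\). Here \(A_5\) gives the equality \(32=32\), and \(q^2-1=63,255,1023\) beats \(32,64,128\) respectively.
\item \(q\in\{5,7,9,11,13,25,27\}\). These are handled by Lemma~\ref{lem:psl2-formula} or the involution count against the bounds \(32\) or \(64\).
\end{enumerate}
In particular, the cases you flag as delicate (\(q=29,31,41,71,128\)) are covered by the general inequality and need no separate treatment.
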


\begin{proof}
Put \(S=\PSL(2,q)\).  For \(q>3\), the automorphism description in Malle--Testerman
\cite[Corollary~24.6 and Proposition~24.23]{MalleTesterman2011} gives
\(\Aut(S)\cong P\Gamma L_2(q)\) and
\[
        |\Aut(S)|=f\,q(q^2-1).
\]
Thus the prime divisors of \(|\Aut(S)|\) are precisely those of \(f q(q^2-1)\).
Let
\[
        T=\pi(fq(q^2-1)).
\]
It suffices to prove \(\cyc(S)\ge2^{T+2}\).

First assume \(q=2^f\) is even.  Then
\[
        T\le1+\pi(f)+\pi(q^2-1),
\]
and since \(f\ge2\),
\[
        2^{T+2}\le 8f\,2^{\pi(q^2-1)}.
\]
The number \(q^2-1\) is odd.  Lemma~\ref{lem:pi-estimates} gives
\[
        2^{\pi(q^2-1)}<\frac{2q}{\sqrt3}.
\]
Thus
\[
        2^{T+2}<\frac{16}{\sqrt3}fq.
\]
If \(f\ge6\), then \(q=2^f\ge10f+1\), and therefore
\[
        q^2-1>\frac{16}{\sqrt3}fq.
\]
Since \(\PSL(2,q)\) has \(q^2-1\) involutions, the result follows.  The
remaining even cases are \(q=4,8,16,32\).  For \(q=4\),
\(\PSL(2,4)\cong A_5\), and equality holds.  For \(q=8,16,32\), the
involution count already gives
\[
\begin{array}{c|c|c|c}
q&T&q^2-1&2^{T+2}\\
\hline
8&3&63&32\\
16&4&255&64\\
32&5&1023&128.
\end{array}
\]

Now assume \(q\) is odd.  Again
\[
        T\le1+\pi(f)+\pi(q^2-1),
\]
hence
\[
        2^{T+2}\le8f\,2^{\pi(q^2-1)}.
\]
Since \(8\mid q^2-1\), Lemma~\ref{lem:pi-estimates} gives
\[
        2^{\pi(q^2-1)}<\sqrt{\frac23}\,q.
\]
Hence
\[
        2^{T+2}<8\sqrt{\frac23}\,fq<7fq.
\]
The involutions alone give
\[
        \cyc(\PSL(2,q))\ge\frac{q(q-1)}2.
\]
Thus we are done whenever \(q-1\ge14f\).  The remaining odd simple cases are
\[
        q\in\{5,7,9,11,13,25,27\}.
\]
For these, either Lemma~\ref{lem:psl2-formula} or the involution count gives:
\[
\begin{array}{c|c|c|c}
q&T&\text{lower bound for }\cyc(\PSL(2,q))&2^{T+2}\\
\hline
5&3&32&32\\
7&3&79&32\\
9&3&45&32\\
11&4&244&64\\
13&4&91&64\\
25&4&325&64\\
27&4&351&64.
\end{array}
\]
This proves the proposition.
\end{proof}

\section{Groups of Lie type other than \texorpdfstring{\(\PSL(2,q)\)}{PSL(2,q)}}\label{sec:lie-type}

We next treat the groups of Lie type other than \(\PSL(2,q)\).  The argument is
based on regular semisimple elements in maximal tori.  The notation for
\(\mathbf G\), \(F\), \(W\), \(\mathbf T_w\), and \(m(T)\) was fixed in
Subsection~\ref{subsec:lie-notation}.  Throughout this appendix the order formulas
are those of Malle--Testerman
\cite[Corollary~24.6 and Table~24.1]{MalleTesterman2011}.
For the passage to central quotients we use
\cite[Table~24.2 and Proposition~24.21]{MalleTesterman2011}; the descriptions
of diagonal, field, and graph automorphisms are taken from
\cite[Section~24.2]{MalleTesterman2011}.

\subsection{The regular torus bound}

We first turn the torus notation from Subsection~\ref{subsec:lie-notation} into
a counting estimate.

\begin{lemma}\label{lem:regular-torus-bound}
Let \(S\) be a finite simple group of Lie type, realized in its adjoint form as
a subgroup of \(\mathbf G^F\).  Let \(\mathbf T=\mathbf T_w\) be an
\(F\)-stable maximal torus of \(\mathbf G\), and put
\[
        T=S\cap \mathbf T^F.
\]
Suppose that \(T\) contains a regular semisimple element \(x\), with
\[
        C_{\mathbf G}(x)^\circ=\mathbf T.
\]
Put \(C=\langle x\rangle\).  Then
\[
        \cyc(S)\ge [S:N_S(C)]
        \ge \frac{|S|}{|T|\,|C_W(w\sigma)|}
        \ge \frac{|S|}{|T|\,|W|}.
\]
\end{lemma}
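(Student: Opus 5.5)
The first inequality is a conjugacy count; the second comes from bounding \(N_S(C)\) through the torus and the Weyl group.

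\emph{First inequality.} The \(S\)-conjugates of \(C\) are cyclic subgroups of \(S\). There are exactly \([S:N_S(C)]\) of them, so
\[
        \cyc(S)\ge [S:N_S(C)].
\]

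\emph{Second inequality.} It suffices to show
\[
        |N_S(C)|\le |T|\,|C_W(w\sigma)|.
\]
Let \(g\in N_S(C)\subseteq \mathbf G^F\). Then \(gxg^{-1}=x^k\) for some \(k\) coprime to \(o(x)\). The element \(x\) generates \(C\), and \(x\) is a power of \(x^k\). Hence
\[
        C_{\mathbf G}(x^k)=C_{\mathbf G}(x),
\]
and in particular \(C_{\mathbf G}(x^k)^\circ=\mathbf T\). Conjugating by \(g\) gives
\[
        g\,C_{\mathbf G}(x)^\circ g^{-1}=C_{\mathbf G}(x^k)^\circ,
        \qquad\text{so}\qquad g\mathbf T g^{-1}=\mathbf T.
\]
Therefore \(N_S(C)\le S\cap N_{\mathbf G}(\mathbf T)^F\).

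Reduce modulo \(\mathbf T^F\). The subgroup \(N_S(C)\cap\mathbf T^F\) lies in \(S\cap\mathbf T^F=T\). The image of \(N_S(C)\) in
\[
        N_{\mathbf G}(\mathbf T)^F/\mathbf T^F\cong C_W(w\sigma)
\]
(the normalizer formula recorded in Subsection~\ref{subsec:lie-notation}) has order at most \(|C_W(w\sigma)|\). Combining these,
\[
        |N_S(C)|\le |T|\,|C_W(w\sigma)|,
\]
which gives the second inequality.

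\emph{Third inequality.} This is immediate, since \(C_W(w\sigma)\le W\).

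\emph{Main obstacle.} The delicate point is setting up the ambient group so that the normalizer formula applies. Here \(S\) sits inside \(\mathbf G^F\) for the adjoint group \(\mathbf G\). The normalizer formula \(N_{\mathbf G}(\mathbf T_w)^F/\mathbf T_w^F\cong C_W(w\sigma)\) is valid for any connected reductive \(\mathbf G\), so this causes no issue. One must only be careful to work with \(\mathbf T^F\) rather than \(T\) when taking the quotient, since \(S\cap N_{\mathbf G}(\mathbf T)^F\) need not contain all of \(\mathbf T^F\).

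For the Suzuki and Ree groups, \(F\) is a Steinberg rather than a Frobenius endomorphism. The same parametrization still holds (Malle--Testerman, as cited), so no separate argument is needed.

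Regularity is used only to get \(C_{\mathbf G}(x)^\circ=\mathbf T\). This identity is what forces normalizers of \(C\) to normalize \(\mathbf T\).
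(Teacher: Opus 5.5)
Your proposal is correct and is essentially the paper's argument: conjugation by \(g\in N_S(C)\) sends \(x\) to another generator \(x^k\) of \(C\), so it preserves \(C_{\mathbf G}(x)^\circ=\mathbf T\). Hence \(N_S(C)\le S\cap N_{\mathbf G}(\mathbf T)^F\), and splitting this group into the kernel \(S\cap\mathbf T^F=T\) and an image in \(C_W(w\sigma)\) gives the bound; the only addition is that you spell out the orbit--stabilizer count behind \(\cyc(S)\ge[S:N_S(C)]\), which the paper leaves implicit.
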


\begin{proof}
If \(g\in N_S(C)\), then \(gxg^{-1}=x^a\) for some \(a\) coprime to
\(|x|\).  Hence \(x^a\) generates the same cyclic subgroup as \(x\), and has
the same connected algebraic centralizer:
\[
        C_{\mathbf G}(x^a)^\circ=C_{\mathbf G}(x)^\circ=\mathbf T.
\]
It follows that
\[
        g\mathbf T g^{-1}
        =C_{\mathbf G}(gxg^{-1})^\circ
        =C_{\mathbf G}(x^a)^\circ
        =\mathbf T.
\]
Thus \(N_S(C)\le S\cap N_{\mathbf G}(\mathbf T)^F\).

The kernel of \(N_{\mathbf G}(\mathbf T)^F\to W\) is \(\mathbf T^F\), and its
intersection with \(S\) is \(T\).  Hence
\[
        |S\cap N_{\mathbf G}(\mathbf T)^F|
        \le |T|\,|C_W(w\sigma)|,
\]
by the normalizer formula from Subsection~\ref{subsec:lie-notation}.  Therefore
\[
        |N_S(C)|\le |T|\,|C_W(w\sigma)|,
\]
which gives the asserted lower bounds for \(\cyc(S)\).
\end{proof}

The existence hypothesis in Lemma~\ref{lem:regular-torus-bound} can be
checked directly when the finite torus \(T\) is cyclic.  If
\(T=\langle x\rangle\) and no root of \(\mathbf G\) with respect to
\(\mathbf T\) is trivial on \(T\), then \(x\) is regular semisimple:
indeed, \(\alpha(x)=1\) would imply \(\alpha(x^j)=1\) for every integer
\(j\), making \(\alpha\) trivial on \(T\).  Thus \(\alpha(x)\ne1\) for
every root \(\alpha\), as required by the root criterion recalled in
Subsection~\ref{subsec:lie-notation}.  For a noncyclic \(T\), noncontainment
in each individual root kernel does not suffice; one must exhibit an
element outside their union.

\subsection{Classical groups}\label{subsec:classical-groups}

Table~\ref{tab:classical-tori} gives the tori used for the classical groups,
using the terminology fixed in Subsection~\ref{subsec:tori-used}.  The normalizer
factors are the algebraic normalizer factors supplied by the Weyl-group
parametrization.  The relevant torus orders and cyclic decompositions for the
finite classical groups are recorded explicitly by Buturlakin--Grechkoseeva
\cite[Theorems~2.1, 2.2, and 3--7]{ButurlakinGrechkoseeva2007}: the linear and unitary rows are the one-cycle cases
in \cite[Theorems~2.1 and~2.2]{ButurlakinGrechkoseeva2007}; the symplectic row is the one negative cycle case
in \cite[Theorem~3]{ButurlakinGrechkoseeva2007}; the odd orthogonal row is the corresponding one negative
cycle case in \cite[Theorem~4]{ButurlakinGrechkoseeva2007}; and the even orthogonal rows follow from
\cite[Theorems~5--7]{ButurlakinGrechkoseeva2007}.

\begin{table}[ht]
\centering
\caption{Maximal tori used for the classical groups.}\label{tab:classical-tori}
\[
\begin{array}{c|c|c|c}
S&T&\text{order/bound for }|T|&\text{bound for }m(T)\\
\hline
\PSL(n,q),\ n\ge3&\text{Singer torus}&\dfrac{q^n-1}{(q-1)(n,q-1)}&n\\[8pt]
\PSU(n,q),\ n\ge3&\text{unitary one-cycle torus}&\dfrac{q^n-(-1)^n}{(q+1)(n,q+1)}&n\\[8pt]
\PSp(2n,q),\ n\ge2&\text{irreducible negative torus}&\le q^n+1&\le2n\\
\POmega(2n+1,q),\ n\ge2&\text{irreducible negative torus}&\le q^n+1&\le2n\\
\POmega^-(2n,q),\ n\ge4&\text{irreducible negative torus}&\le q^n+1&\le2n\\
\POmega^+(2n,q),\ n\ge4&\text{type }(n-1)^-,1^-&\le(q^{n-1}+1)(q+1)&\le4n.
\end{array}
\]
\end{table}

The simple groups with small exceptional isomorphisms are omitted from
Table~\ref{tab:classical-tori} only when the corresponding simple group has
appeared in an earlier appendix, for example \(\PSp(4,2)'\cong A_6\).
The tori in the first five rows are cyclic.  They are nondegenerate by
Baykalov \cite[Corollary~1.5 and Theorem~4.10]{Baykalov2024}: a single
\(n\)-cycle with \(n\ge3\), or a single negative cycle, avoids all the
listed exceptional types, which require positive cycles of length \(1\)
or \(2\).  The rank-two odd orthogonal case follows likewise from
\cite[Theorem~4.7]{Baykalov2024}; in even characteristic the odd orthogonal
case is identified with the symplectic case.  Their generators are therefore
regular by the observation following Lemma~\ref{lem:regular-torus-bound}.
For the last row we construct a regular element in the proof of
Proposition~\ref{prop:BCD}, without assuming that the torus is cyclic.
The remaining small cases outside the numerical estimates are verified in
Proposition~\ref{prop:table-checks}.

\begin{proposition}\label{prop:linear-unitary}
For all simple groups \(\PSL(n,q)\), \(n\ge3\), and \(\PSU(n,q)\), \(n\ge3\),
\[
        \cyc(S)\ge 2^{\pi(\Aut(S))+2}.
\]
\end{proposition}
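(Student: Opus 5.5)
We apply Lemma~\ref{lem:regular-torus-bound} to the Singer torus (for \(\PSL(n,q)\)) and to the unitary one-cycle torus (for \(\PSU(n,q)\)) of Table~\ref{tab:classical-tori}. Both tori are cyclic and nondegenerate, so a generator is regular semisimple. With \(m(T)\le n\), this gives
\[
        \cyc(S)\ge \frac{|S|}{n\,|T|}.
\]
For \(\PSL(n,q)\) we have
\[
|S|=\frac{q^{n(n-1)/2}\prod_{i=2}^n(q^i-1)}{(n,q-1)}
\qquad\text{and}\qquad
|T|=\frac{q^n-1}{(q-1)(n,q-1)}.
\]
The factors \((n,q-1)\) cancel, and the bound becomes
\[
\cyc(S)\ge \frac{q^{n(n-1)/2}(q-1)\prod_{i=2}^{n-1}(q^i-1)}{n}.
\]
The unitary case is analogous, with \(q^i-(-1)^i\) in place of \(q^i-1\) and \(q+1\) in place of \(q-1\). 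In both cases the lower bound is at least roughly \(q^{n^2-2}/(2n)\).

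Next we bound the right-hand side \(2^{\pi(\Aut(S))+2}\) from above. By Malle--Testerman, \(|\Out(S)|\) divides \(2f(n,q\mp1)\), so every prime divisor of \(|\Aut(S)|\) divides \(|S|\), \(f\), \(2\), or \((n,q\mp1)\); the last of these divides \(|S|\) anyway. Hence
\[
\pi(\Aut(S))\le \pi(|S|)+\pi(f)+1.
\]
By Lemma~\ref{lem:pi-estimates}(c) and (d), \(2^{\pi(|S|)}\le 2\sqrt{|S|}\) and \(2^{\pi(f)}\le f\). This gives
\[
2^{\pi(\Aut(S))+2}\le 16 f\sqrt{|S|}.
\]
A sharper route is to write \(\pi(|S|)\le 1+\pi\bigl(\prod_{i=2}^n(q^i-1)\bigr)\), separating the characteristic prime \(p\), and then apply Lemma~\ref{lem:pi-estimates}(c) to the product \(\prod_{i=2}^n(q^i-1)\), which is at most \(q^{(n^2+n-2)/2}\).

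It therefore suffices to check
\[
\frac{|S|}{n|T|}\ \ge\ 16f\sqrt{|S|},
\qquad\text{that is,}\qquad
\sqrt{|S|}\ge 16\,f\,n\,|T|.
\]
Here \(\sqrt{|S|}\) grows like \(q^{(n^2-1)/2}\), while \(|T|\) is at most about \(q^{n-1}\) and \(f\le\log_2 q\). Since \((n^2-1)/2-(n-1)=(n-1)^2/2\ge 2\) for \(n\ge3\), the inequality holds once \(q\) or \(n\) is moderately large. Concretely, we verify it for \(n\ge5\) and all \(q\), then for \(n=4\) with \(q\ge 4\), and for \(n=3\) with \(q\) beyond some explicit bound (about \(q\ge 64\)), using \(f\le q/2\). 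We also use \(n\le 2^{n-1}\) and \(q^i-1\ge q^{i}/2\) to keep the algebra clean.

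The main obstacle is the finite residue of small \((n,q)\), especially \(n=3\) with small \(q\) and \(n=4\) with \(q\le 3\). There the crude bound \(2^{\pi(|S|)}\le2\sqrt{|S|}\) is far too lossy. For these cases we compute \(\pi(\Aut(S))\) exactly from the factorization of \(|S|\) together with \(|\Out(S)|\), and compare \(2^{\pi(\Aut(S))+2}\) with the torus bound \(|S|/(n|T|)\) directly. Since \(\pi(\Aut(S))\) is tiny in these cases (typically at most \(6\)), the torus bound, which is already in the hundreds or more, should suffice. Any groups where it does not, as well as groups with exceptional isomorphisms (\(\PSL(3,2)\cong\PSL(2,7)\), \(\PSL(4,2)\cong A_8\), \(\PSU(4,2)\cong\PSp(4,3)\), and the solvable \(\PSU(3,2)\), which is excluded), we defer to the previous appendices or to the character-table checks of Proposition~\ref{prop:table-checks}.
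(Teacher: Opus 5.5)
Your proposal is correct and follows essentially the paper's proof. It uses the same Singer and unitary one-cycle tori in Lemma~\ref{lem:regular-torus-bound}, and a square-root bound on \(2^{\pi(\Aut(S))+2}\) from Lemma~\ref{lem:pi-estimates} giving the same exponent gap \((n-1)^2/2\) and a finite residue of small \((n,q)\). The paper's bound \(8\sqrt{2nf}\,q^{(n^2-1)/2}\) plays the role of your \(16f\sqrt{|S|}\). For \(n=3\) the paper uses your ``sharper route'' (separating \(p\) and taking only the primes of \((q^2-1)(q^2\pm q+1)\)), so only \(\PSL(3,3)\) remains rather than all \(q<64\). One slip: \(|S|/(n|T|)\) has order \(q^{n(n-1)}\), not \(q^{n^2-2}\), though your working criterion \(\sqrt{|S|}\ge16fn|T|\) is unaffected.
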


\begin{proof}
We begin with \(\PSL(n,q)\).  The unitary case is treated after the linear
estimates.

Let \(S=\PSL(n,q)\).  The Singer torus gives
\[
\cyc(S)\ge \frac{|S|}{n|T|}
=\frac{q^{n(n-1)/2}(q-1)\prod_{i=2}^{n-1}(q^i-1)}{n}.
\]
In particular,
\[
        \cyc(S)\ge \frac{q^{n(n-1)}}{n2^{n-1}}.
\]
The outer automorphism group has diagonal part of order dividing \((n,q-1)\),
field part of order \(f\), and graph part of order at most \(2\), by the
automorphism description in Malle--Testerman
\cite[Section~24.2]{MalleTesterman2011}.  Hence
\[
        |\Out(\PSL(n,q))|\le 2(n,q-1)f\le2nf.
\]
Together with the order formula
\cite[Corollary~24.6]{MalleTesterman2011}, which gives
\(|S|<q^{n^2-1}\), this gives, by Lemma~\ref{lem:pi-estimates},
\[
        2^{\pi(\Aut(S))+2}
        \le 8\sqrt{2nf}\,q^{(n^2-1)/2}.
\]
Thus the preceding lower bound proves the result whenever
\[
        q^{(n-1)^2/2}\ge 8n2^{n-1}\sqrt{2nf}.
\]
Since \(f\le2^{f-1}\le q/2\), it suffices that
\[
        q^{(n-1)^2-1}\ge64n^3 4^{n-1}.
\]
At \(q=2,n=6\), this is \(2^{24}\ge64\cdot6^3\cdot4^5\).
The ratio of the left side to the right side increases with \(q\), and,
at \(q=2\), increases with \(n\ge6\), since its successive ratio is
\(2^{2n-3}(n/(n+1))^3>1\).  Thus all \(n\ge6\) are covered.

The remaining linear ranks are \(n=3,4,5\).  For \(n=3\), the exact Singer-torus bound becomes
\[
        \cyc(\PSL(3,q))\ge \frac{q^3(q-1)^2(q+1)}3.
\]
For \(n=3\), the same order and outer automorphism formulas
\cite[Corollary~24.6 and Section~24.2]{MalleTesterman2011} show that the prime divisors of \(|\Aut(\PSL(3,q))|\) are among those of
\[
        6f\,p\,(q-1)(q+1)(q^2+q+1),
\]
where the factors \(2\) and \(3\) already divide \(p(q^2-1)\).
Lemma~\ref{lem:pi-estimates} therefore gives
\[
        2^{\pi(\Aut(\PSL(3,q)))+2}
        \le16f\sqrt{(q^2-1)(q^2+q+1)}
        <16fq(q+1)\le8q^2(q+1).
\]
The torus bound dominates the last expression whenever
\(q(q-1)^2\ge24\), hence for \(q\ge4\).
For \(q=3\), the torus bound is \(144\), while the required bound is at most
\(64\).  The case \(q=2\) is \(\PSL(2,7)\), treated in
Proposition~\ref{prop:psl2}.

For \(n=4\) and \(q\ge4\), the exact torus bound gives
\[
        \cyc(\PSL(4,q))\ge
        \frac{q^6(q-1)(q^2-1)(q^3-1)}4\ge\frac{q^{12}}8,
\]
since \((1-q^{-1})(1-q^{-2})(1-q^{-3})\ge
(3/4)(15/16)(63/64)>1/2\).
The general upper estimate is at most \(16q^8\), using \(f\le q/2\),
and \(q^{12}/8\ge16q^8\) for \(q\ge4\).
This leaves only \(q=2,3\).  The case \(q=2\) is \(\PSL(4,2)\cong A_8\), treated in
Proposition~\ref{prop:alternating}.  For \(q=3\), the bound gives \(75816\),
while the required bound is at most \(64\).

For \(n=5\) and \(q\ge3\), the sufficient condition above follows from
\(q^{15}\ge3^{15}>2\,048\,000=64\cdot5^3\cdot4^4\).
At \(q=2\), the exact Singer-torus bound gives \(64512\), while the required bound is at
most \(128\).  This proves the linear case.

For \(\PSU(n,q)\), the same argument applies, using the corresponding bound
\[
        |\Out(\PSU(n,q))|\le 2(n,q+1)f\le2nf
\]
from the same automorphism formulas \cite[Section~24.2]{MalleTesterman2011}.
The order bound \(|S|<q^{n^2-1}\) follows by pairing the factors
\((1-q^{-2j})(1+q^{-(2j+1)})<1\) in the unitary order formula;
any remaining even-indexed factor is also less than \(1\).
The unitary torus lower bound is at least its linear counterpart, so the
comparisons for \(n\ge4\) apply unchanged.
For \(n=3\), the torus bound is
\[
        \cyc(\PSU(3,q))\ge \frac{q^3(q+1)^2(q-1)}3,
\]
and the corresponding prime-divisor estimate involves only the primes dividing
\[
        6f\,p\,(q-1)(q+1)(q^2-q+1).
\]
As in the linear case, Lemma~\ref{lem:pi-estimates} gives
\[
        2^{\pi(\Aut(\PSU(3,q)))+2}
        \le16f\sqrt{(q^2-1)(q^2-q+1)}<16fq^2\le8q^3.
\]
The torus bound dominates \(8q^3\) because \((q+1)^2(q-1)\ge24\) for
\(q\ge3\).  For \(n=4\), the only small parameters left are \(q=2,3\),
where the unitary one-cycle torus lower bounds are \(1296\) and \(163296\).  For
\(n=5\), the only such parameter is \(q=2\), where the lower bound is
\(248832\).  In these three cases the required bound is at most \(128\).  The
nonsimple case \(\PSU(3,2)\) is excluded.
\end{proof}

\begin{proposition}\label{prop:BCD}
Let \(S\) be a finite simple classical group of type \(B_n\), \(C_n\),
\(D_n^+\), or \(D_n^-\).  Then
\[
        \cyc(S)
        \ge 2^{\pi(\Aut(S))+2}.
\]
\end{proposition}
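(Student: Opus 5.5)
I will follow the template of Proposition~\ref{prop:linear-unitary}.

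\textbf{Torus count.} For each type I take the torus from Table~\ref{tab:classical-tori} and apply Lemma~\ref{lem:regular-torus-bound}:
\[
        \cyc(S)\ge \frac{|S|}{|T|\,m(T)}.
\]
For \(\PSp(2n,q)\), \(\POmega(2n+1,q)\) and \(\POmega^-(2n,q)\) I use the irreducible negative torus, which is cyclic and nondegenerate. For these this gives
\[
        \cyc(S)\ge \frac{|S|}{2n(q^n+1)}.
\]

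\textbf{Upper bound on the right side.} From Malle--Testerman, \(|\Out(S)|\) is bounded by the product of three factors.
\begin{itemize}
\item The diagonal part is at most \(4\).
\item The field part is \(f\).
\item The graph part is at most \(2\) for types \(B_2\) with \(p=2\) and \(D_n\), and \(6\) for \(D_4^+\).
\end{itemize}
This gives \(|\Out(S)|\le 24f\). Lemma~\ref{lem:pi-estimates}(c) then gives
\[
        2^{\pi(\Aut(S))+2}\le 8\sqrt{24 f\,|S|}.
\]
I combine this with the crude order bounds \(|S|<q^{\dim}\) and \(|S|\ge q^{\dim}/c\), where \(\dim=n(2n+1)\) or \(n(2n-1)\) and \(c\) is an absolute constant coming from \(\prod(1-q^{-2i})>1/4\) and the central quotient of order at most \(4\).

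\textbf{Generic range.} Using \(f\le q/2\), the inequality reduces to
\[
        q^{\dim/2-n-1}\gtrsim \mathrm{const}\cdot n,
\]
which holds for all \(n\) beyond a small bound and for \(q\) not tiny. I will check that the ratio of the two sides is monotone in \(q\) and in \(n\), exactly as in the linear case. The finitely many leftover pairs \((n,q)\) I will settle by evaluating the exact torus bound numerically, as in the displayed small-case tables. Genuinely tiny groups, for example \(\PSp(4,3)\), \(\PSp(6,2)\) and \(\POmega^+(8,2)\), would be deferred to Proposition~\ref{prop:table-checks}. Exceptional isomorphisms such as \(\PSp(4,2)'\cong A_6\) are excluded because they have already been treated.

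\textbf{Main obstacle: \(\POmega^+(2n,q)\).} The torus of type \((n-1)^-,1^-\) need not be cyclic, so the remark following Lemma~\ref{lem:regular-torus-bound} does not apply directly, and a regular element must be constructed. My plan is as follows.
\begin{itemize}
\item Take \(x=(y,z)\), with \(y\) a generator of the cyclic factor of order \(q^{n-1}+1\) (in \(\Omega^-(2n-2,q)\)) and \(z\) an element of order \(q+1\) in the \(\Omega^-(2,q)\) factor.
\item The roots of \(D_n\) evaluated on this torus take two kinds of values.
  \begin{itemize}
  \item \(\varepsilon_i\pm\varepsilon_j\) inside the block give \(\lambda^{q^a\pm q^b}\), where \(\lambda\) is a primitive \((q^{n-1}+1)\)-th root of unity.
  \item \(\varepsilon_i\pm\varepsilon_n\) give \(\lambda^{q^a}\mu^{\pm1}\), where \(\mu\) is a primitive \((q+1)\)-th root of unity.
  \end{itemize}
\item The first kind are nontrivial by the same primitive-divisor argument as in the negative cycle case, that is, by Baykalov.
\item For the second kind, triviality would force \(\lambda^{q^a(q+1)}=1\), so the order of \(\lambda\) would divide \(q+1\). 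This is impossible once \(q^{n-1}+1>q+1\), which holds since \(n\ge4\). When \(q\) is odd, I must handle the projective image and a possible common factor \(2\) with some care.
\end{itemize}
I would first try to verify the second kind by this order comparison directly. If the central quotient spoils it, I would replace \(z\) by a suitable power so that it still avoids the root kernels.

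Once \(x\) is regular, Lemma~\ref{lem:regular-torus-bound} applies with \(m(T)\le 4n\), and the numerical comparison above finishes the proof.
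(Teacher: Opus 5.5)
Your proposal is correct and follows essentially the paper's route: the regular-torus bound of Lemma~\ref{lem:regular-torus-bound} with the tori of Table~\ref{tab:classical-tori}, the divisibility \(|\Out(S)|\mid 24f\), a numerical comparison leaving finitely many small cases, and, for \(\POmega^+(2n,q)\), an explicit regular element in the \((n-1)^-,1^-\) torus established by the same order comparison. The paper takes the last coordinate equal to \(1\) and \(\lambda=\zeta^{\gcd(2,q-1)}\); your own argument already shows the choice of \(z\) is irrelevant. This also settles your factor-\(2\) worry, since for odd \(q\) the relevant tori in \(\Omega^-(2n-2,q)\) and \(\Omega^-(2,q)\) have orders \((q^{n-1}+1)/2\) and \((q+1)/2\), and \((q^{n-1}+1)/2\) still exceeds both \(q+1\) and \(q^{n-2}+1\).

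The remaining differences are bookkeeping. The paper bounds \(2^{\pi(\Aut(S))}\) through the prime support of \(p\), \(f\), \(24\) and the cyclotomic factors, so that \(P_n(q)\) cancels, rather than through \(\sqrt{|\Aut(S)|}\). Also, \(\PSp(4,3)\) is not among the table checks, but your exact torus bound \(25920/20=1296\) disposes of it, and so does the identification \(\PSp(4,3)\cong\PSU(4,2)\).
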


\begin{proof}
For the simple classical groups of types \(B,C,D\), the diagonal,
field and graph automorphisms give the uniform divisibility
\[
        |\Out(S)|\mid24f,
\]
which also covers triality in type \(D_4\); see Malle--Testerman
\cite[Section~24.2]{MalleTesterman2011}.

First consider \(B_n\) and \(C_n\), with \(n\ge2\).  Put
\[
        P_n(q)=\prod_{i=1}^n(q^{2i}-1).
\]
The order formula \cite[Corollary~24.6]{MalleTesterman2011} has the form
\[
        |S|=\frac1d q^{n^2}P_n(q),\qquad d\le2.
\]
Using the irreducible negative torus, Lemma~\ref{lem:regular-torus-bound} gives
\[
        \cyc(S)
        \ge \frac{q^{n(n-1)}P_n(q)}{8n}.
\]
The prime divisors of \(|\Aut(S)|=|S|\,|\Out(S)|\) are therefore contained
among those of \(p\), \(f\), the fixed integer \(24\), and \(P_n(q)\).  A
conservative estimate is
\[
        2^{\pi(\Aut(S))+2}\le 32fP_n(q).
\]
The required comparison is
\[
        q^{n(n-1)}\ge 256nf.
\]
This holds for \(n\ge4\) and all \(q\ge2\), and for \(n=3\) except possibly
\(q=2,3\).  Those two cases, together with the separate rank-two case
\(\PSp(4,q)\), are treated separately.  For \(\PSp(4,q)\) the negative torus
of order \(q^2+1\) gives
\[
        \cyc(S)\ge \frac{q^4(q^2-1)^2}{8}.
\]
Moreover
\[
        2^{\pi(\Aut(S))+2}\le64fq^2.
\]
This follows from the order formula for \(\PSp(4,q)\), the bound on
\(|\Out(S)|\), and Lemma~\ref{lem:pi-estimates}.  For \(q\ge3\), this lower
bound is sufficient.  The case \(q=2\) gives \(\PSp(4,2)'\cong A_6\), treated
in Proposition~\ref{prop:alternating}.  The cases \(B_3(2),B_3(3),C_3(3)\)
are verified in Proposition~\ref{prop:table-checks}.

Now consider \(D_n^-\), \(n\ge4\).  Let
\[
        P_{n-1}(q)=\prod_{i=1}^{n-1}(q^{2i}-1).
\]
The order formula \cite[Corollary~24.6]{MalleTesterman2011} is
\[
        |S|=\frac1d q^{n(n-1)}(q^n+1)P_{n-1}(q),\qquad d\le4.
\]
The irreducible negative torus gives
\[
        \cyc(S)
        \ge \frac{q^{n(n-1)}P_{n-1}(q)}{8n}.
\]
Using the same divisibility \(|\Out(S)|\mid24f\) and
\[
        2^{\pi(q^n+1)}\le 4q^{n/2},
\]
we obtain
\[
        2^{\pi(\Aut(S))+2}
        \le 128fP_{n-1}(q)q^{n/2}.
\]
The required comparison is
\[
        q^{n(n-1)-n/2}\ge1024nf.
\]
This holds for \(n\ge5\), and for \(n=4\) except possibly \(q=2\).  The
remaining case \(D_4^-(2)\) is verified in
Proposition~\ref{prop:table-checks}.

Finally consider \(D_n^+\), \(n\ge4\).  The order formula
\cite[Corollary~24.6]{MalleTesterman2011} is
\[
        |S|=\frac1d q^{n(n-1)}(q^n-1)P_{n-1}(q),\qquad d\le4.
\]
For the torus of type \((n-1)^-,1^-\), put \(k=n-1\) and
\(e=\gcd(2,q-1)\).  Choose \(\zeta\) of order \(q^k+1\), and set
\(\lambda=\zeta^e\).  In the standard negative-block description take
\(\widetilde x\) with torus coordinates
\[
        (\lambda,\lambda^q,\ldots,\lambda^{q^{k-1}},1).
\]
This element belongs to \(\Omega^+_{2n}(q)\): for odd \(q\) its block
exponents are \((2,0)\), whose sum is even, by
\cite[Lemma~4.6 and the proof of Theorem~4.8]{Baykalov2024}; for even
\(q\) the finite torus already lies in \(\Omega^+_{2n}(q)\), as in
\cite[Section~4.4]{Baykalov2024}.  Since \(k\ge3\), the order of
\(\lambda\) satisfies
\[
        L=\frac{q^k+1}{e}>q^{k-1}+1.
\]
The first \(k\) coordinates are nonidentity, and no two are equal or
mutually inverse: such an equality would give \(L\mid q^j-1\) or
\(L\mid q^j+1\) for some \(1\le j<k\), contradicting the displayed
bound.  In type \(D_n\) the roots evaluate as products or ratios of two
distinct coordinates, or their inverses, so no root takes the value \(1\)
on \(\widetilde x\), including roots involving the last coordinate.
Root values are preserved under the central quotient
\cite[Remark~2.6]{Baykalov2024}; hence the image of \(\widetilde x\) in
\(S\) is regular.  Lemma~\ref{lem:regular-torus-bound} therefore applies,
and the torus bounds are
\[
        |T|\le(q^{n-1}+1)(q+1)\le4q^n,
        \qquad m(T)\le4n.
\]
Therefore
\[
        \cyc(S)
        \ge
        \frac{q^{n(n-2)}(q^n-1)P_{n-1}(q)}{64n}.
\]
Again using \(|\Out(S)|\mid24f\),
\[
        2^{\pi(\Aut(S))+2}
        \le 32f(q^n-1)P_{n-1}(q).
\]
The required comparison is
\[
        q^{n(n-2)}\ge2048nf.
\]
This holds for \(n\ge5\), and for \(n=4\) except possibly \(q=2,3\).  The
cases \(D_4^+(2)\) and \(D_4^+(3)\) are verified in
Proposition~\ref{prop:table-checks}.
\end{proof}

\subsection{Suzuki and Ree groups}\label{subsec:suzuki-ree}

\begin{proposition}\label{prop:suzuki-ree}
The simple groups \({}^2B_2(q)\), \({}^2G_2(q)\), and \({}^2F_4(q)\), including the Tits group \({}^2F_4(2)'\), satisfy
\[
        \cyc(S)\ge 2^{\pi(\Aut(S))+2}.
\]
\end{proposition}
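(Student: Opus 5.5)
I will follow the same template as Propositions~\ref{prop:linear-unitary} and~\ref{prop:BCD}: combine the regular torus bound of Lemma~\ref{lem:regular-torus-bound} with an upper bound for \(2^{\pi(\Aut(S))+2}\) obtained from Lemma~\ref{lem:pi-estimates}.

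For the outer automorphisms, write \(q=p^f\) with \(f\) odd. In each of the three families \(\Out(S)\) is cyclic of order \(f\): there are no diagonal or graph automorphisms, only field automorphisms (Malle--Testerman, Section~24.2). Hence
\[
        \pi(\Aut(S))\le \pi(f)+\pi(|S|).
\]

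For the lower bound I will use the twisted Coxeter tori, which are cyclic, together with their normalizer factors \(m(T)\):
\begin{itemize}
\item for \({}^2B_2(q)\), the torus of order \(q+\sqrt{2q}+1\) with \(m(T)=4\);
\item for \({}^2G_2(q)\), the torus of order \(q+\sqrt{3q}+1\) with \(m(T)=6\);
\item for \({}^2F_4(q)\), the torus of order \(q^2+\sqrt{2q^3}+q+\sqrt{2q}+1\) with \(m(T)=12\).
\end{itemize}
These are taken from the tables of Javed--Parkin--Rowley--Walton. Because each torus is cyclic, a generator is regular once no root is trivial on it; this is the observation following Lemma~\ref{lem:regular-torus-bound}, and the nondegeneracy can be read off from the standard torus data. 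Lemma~\ref{lem:regular-torus-bound} then gives
\[
        \cyc(S)\ge \frac{|S|}{|T|\,m(T)}.
\]
For example, this yields \(\cyc({}^2B_2(q))\ge q^2(q-1)(q-\sqrt{2q}+1)/4\).

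For the upper bound I will use the order formulas:
\begin{itemize}
\item \(|{}^2B_2(q)|=q^2(q^2+1)(q-1)\);
\item \(|{}^2G_2(q)|=q^3(q^3+1)(q-1)\);
\item \(|{}^2F_4(q)|=q^{12}(q^6+1)(q^4-1)(q^3+1)(q-1)\).
\end{itemize}
The prime \(p\) contributes one factor of \(2\). The \(p'\)-part \(n\) is handled by Lemma~\ref{lem:pi-estimates}(c), which gives \(2^{\pi(n)}\le2\sqrt n\); in the Suzuki case \(n\) is odd, so part~(a) applies instead. The field part contributes \(2^{\pi(f)}\le f\le\log_2 q\). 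Altogether
\[
        2^{\pi(\Aut(S))+2}\le 16 f\sqrt{n}.
\]
This is roughly of size \(q^{3/2}\log q\), \(q^2\log q\) and \(q^7\log q\) in the three families. The torus bounds grow like \(q^4/4\), \(q^6/6\) and \(q^{24}/12\) respectively, so the comparison holds for all but the smallest parameter in each family.

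The main obstacle is the finite list of small cases. I expect these to be \(Sz(8)\), \(Sz(32)\), \({}^2G_2(27)\), \({}^2F_4(8)\) if needed, and the Tits group \({}^2F_4(2)'\). The Tits group is not of the form \(\mathbf G^F\), so Lemma~\ref{lem:regular-torus-bound} does not apply to it directly, and \({}^2F_4(2)\) itself is not simple. For these groups I would:
\begin{itemize}
\item write down \(\pi(\Aut(S))\) explicitly;
\item for \(Sz(8)\), compute \(\cyc(S)\) directly from Lemma~\ref{lem:cyclic-subgroup-number-in-terms-order} and the character-table class data. Here \(|\Aut(S)|=2^6\cdot3\cdot5\cdot7\cdot13\), so the target is \(2^7=128\). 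Its \(455\) Sylow-\(2\) structure alone gives many cyclic subgroups, for instance the \(65\cdot 7\) involutions counted via centralizers;
\item for the Tits group, use the \textit{Atlas}/GAP class data as in Proposition~\ref{prop:table-checks}. Here \(\pi(\Aut)=4\), so the target is only \(64\), and elements of order \(13\) alone give far more cyclic subgroups than that.
\end{itemize}
A secondary point to verify is the regularity of the \({}^2F_4\) Coxeter torus generator. If that proves delicate, I would instead bound \(\cyc(S)\) below by the number of Sylow \(r\)-subgroups for a primitive prime divisor \(r\) of \(q^{12}-1\), which gives an equally large bound.
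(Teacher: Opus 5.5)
Your proposal is correct and follows essentially the same route as the paper. It applies Lemma~\ref{lem:regular-torus-bound} to a cyclic maximal torus in each family, bounds $2^{\pi(\Aut(S))+2}$ from above using the order formula and $|\Out(S)|=f$, and checks the Tits group separately from its character table. The differences are minor: the paper uses the smaller Suzuki torus of order $q-\sqrt{2q}+1$; for ${}^2F_4(q)$ it uses the torus of order $q^2+1$ with the coarse normalizer factor $|W(F_4)|=1152$; it bounds each factor by the trivial estimate $2^{\pi(n)}\le n$, so it tabulates $Sz(8)$, $Sz(32)$, $Sz(128)$; and it cites Baykalov's Corollary~1.5 for the nondegeneracy you leave to ``standard torus data.'' Note also that $Sz(8)$ has $65$ Sylow $2$-subgroups and $455$ involutions, not $455$ Sylow $2$-subgroups, and your own torus bound $560\ge 128$ already settles it against the exact target.
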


\begin{proof}
We use maximal tori and algebraic normalizer factors for the Suzuki and Ree
groups.  These twisted families are discussed in Malle--Testerman
\cite[Sections~22.1, 22.2, and 25.1]{MalleTesterman2011}.  The order formulas
and outer automorphism groups used here are taken from Malle--Testerman
\cite[Corollary~24.6 and Section~24.2]{MalleTesterman2011}.  The relevant
torus orders and algebraic normalizer factors are listed by
Javed--Parkin--Rowley--Walton
\cite[Tables~10, 11, and 13]{JavedParkinRowleyWalton2024}.  The square roots
in these torus orders are integers because the Suzuki and Ree parameters have odd exponent \(f\).
For the parameters \(f\ge3\) considered below, every finite maximal torus
of these groups is nondegenerate by Baykalov
\cite[Corollary~1.5]{Baykalov2024}; his parameter \(q(\sigma)\) is
\(\sqrt q\) in our notation.  Thus no root is trivial on such a torus,
and each cyclic torus used below has a regular generator by the
observation following Lemma~\ref{lem:regular-torus-bound}.
First let \(S={}^2B_2(q)\), where \(q=2^f\), \(f=2m+1\ge3\).  Then
\[
        |S|=q^2(q^2+1)(q-1),
        \qquad |\Out(S)|=f.
\]
The prime divisors of \(|\Aut(S)|\) are among those of \(2\), \(f\),
\(q-1\), and \(q^2+1\).  Hence
\[
        2^{\pi(\Aut(S))+2}\le 8f(q-1)(q^2+1).
\]
There are cyclic maximal tori of orders
\[
        q\pm\sqrt{2q}+1,
\]
with normalizer quotient of order \(4\).  Let
\[
        t=q-\sqrt{2q}+1.
\]
Then
\[
        \cyc(S)\ge\frac{|S|}{4t}.
\]
Since \(t\le q+1\), the general case follows from
\[
        \frac{q^2(q^2+1)(q-1)}{4(q+1)}
        \ge 8f(q-1)(q^2+1),
\]
that is,
\[
        q^2\ge32f(q+1).
\]
This holds for \(f\ge9\).  The remaining cases \(f=3,5,7\), i.e.\ \(q=8,32,128\), are listed explicitly:
\[
\begin{array}{c|c|c|c}
q&t&|S|/(4t)&2^{\pi(\Aut(S))+2}\\
\hline
8&5&1456&128\\
32&25&325376&64\\
128&113&75427840&256.
\end{array}
\]

Now let \(S={}^2G_2(q)\), where \(q=3^f\), \(f=2m+1\ge3\).  Then
\[
        |S|=q^3(q^3+1)(q-1),\qquad |\Out(S)|=f.
\]
The relevant cyclic tori have orders
\[
        q\pm\sqrt{3q}+1
\]
and normalizer quotient of order \(6\).  Using a torus of order at most \(2q\) gives
\[
        \cyc(S)
        \ge \frac{q^2(q^3+1)(q-1)}{12}.
\]
The prime divisors of \(|\Aut(S)|\) are among those of \(3\), \(f\),
\(q-1\), and \(q^3+1\).  Therefore
\[
        2^{\pi(\Aut(S))+2}
        \le16f(q-1)(q^3+1).
\]
The required comparison is \(q^2\ge192f\), which holds already at \(q=27\).
The group \({}^2G_2(3)\) is not simple and its derived subgroup is
\(\PSL(2,8)\), treated in Proposition~\ref{prop:psl2}.

Finally let \(S={}^2F_4(q)\), where \(q=2^f\), \(f=2m+1\ge3\).  Then
\[
|S|=q^{12}(q^6+1)(q^4-1)(q^3+1)(q-1),
\qquad |\Out(S)|=f.
\]
Choose the cyclic torus \(T\) of order \(q^2+1\), given by the row
\(w=[20,1]\) in \cite[Table~13]{JavedParkinRowleyWalton2024}.
Its generator is regular by the preceding nondegeneracy argument.
Using \(|W(F_4)|=1152\) and retaining the coarse bound
\[
        |T|=q^2+1\le(q+1)^4<2q^4\qquad(q\ge8),
\]
Lemma~\ref{lem:regular-torus-bound} gives
\[
        \cyc(S)\ge \frac{|S|}{2304q^4}.
\]
The prime divisors of \(|\Aut(S)|\) are among those of \(2\), \(f\),
\(q-1\), \(q^3+1\), \(q^4-1\), and \(q^6+1\), and therefore
\[
2^{\pi(\Aut(S))+2}
\le16f(q-1)(q^3+1)(q^4-1)(q^6+1).
\]
The required comparison is \(q^8\ge36864f\), which holds already for \(q=8\).
The exceptional group \({}^2F_4(2)'\) is verified in
Proposition~\ref{prop:table-checks}.
\end{proof}

\subsection{Exceptional groups}\label{subsec:exceptional-groups}

For the exceptional groups we record the torus input explicitly, using
the Coxeter and twisted Coxeter tori defined in Subsection~\ref{subsec:tori-used}.
The parametrization of maximal tori by Weyl-group classes, the order
formula for \(\mathbf T_w^F\), and the algebraic normalizer formula are those of
Malle--Testerman \cite[Propositions~25.1--25.3]{MalleTesterman2011}.  The
relevant exceptional torus data are tabulated by Javed--Parkin--Rowley--Walton
\cite[Tables~4--8, 12, and 14]{JavedParkinRowleyWalton2024}.  In their
notation, the entries used in Table~\ref{tab:exceptional-tori} are
\cite[Table~4]{JavedParkinRowleyWalton2024}, class \(6A\), for \(G_2(q)\);
\cite[Table~5]{JavedParkinRowleyWalton2024}, class \(12A\), for \(F_4(q)\);
\cite[Table~6]{JavedParkinRowleyWalton2024}, class \(12A\), for \(E_6(q)\);
\cite[Table~14]{JavedParkinRowleyWalton2024}, the row \(w=[2,18,6,3]\), with
\[
        T_w(q)=q^6-q^3+1=\Phi_{18}(q),
\]
for \({}^2E_6(q)\); \cite[Table~7]{JavedParkinRowleyWalton2024}, class \(18A\), for \(E_7(q)\); \cite[Table~8]{JavedParkinRowleyWalton2024}, class
\(30D\), for \(E_8(q)\); and \cite[Table~12]{JavedParkinRowleyWalton2024}, the row \(w=[1,2]\), for
\({}^3D_4(q)\), where \(T_w(q)=q^4-q^2+1=\Phi_{12}(q)\).

These entries give cyclic tori in the adjoint fixed-point groups, so
\(T=S\cap\mathbf T_w^F\) is cyclic as well.  For \(q>2\), Baykalov
\cite[Corollary~1.5]{Baykalov2024} shows that \(T\) is nondegenerate,
meaning that no root is trivial on \(T\).  At \(q=2\), the torus estimate
below is used only for \(F_4,E_6,{}^2E_6,E_7,E_8\).  The selected classes
have types \(F_4,E_6,E_6(a_1),E_7,E_8\), respectively, in Baykalov's
notation, and none occurs in his exceptional lists
\cite[Theorem~5.1]{Baykalov2024}.  For \({}^2E_6\), the type
\(E_6(a_1)\) is identified by the entry numbered \(24\) in the
correspondence between \cite[Tables~6 and~14]{JavedParkinRowleyWalton2024}.
Thus \(T\) is nondegenerate in every application below, and its generator
is regular by the observation following Lemma~\ref{lem:regular-torus-bound}.
This verifies the existence hypothesis inside the simple group, including
when it is a proper subgroup of the adjoint fixed-point group.

\begin{table}[ht]
\centering
\caption{Maximal tori used for the exceptional groups.}\label{tab:exceptional-tori}
\[
\begin{array}{c|c|c|c|c}
S&\text{torus class}&|T|\text{ before quotient}&\text{bound for }|T|&\text{bound for }m(T)\\
\hline
G_2(q)&\text{Coxeter}&\Phi_6(q)&\le(q+1)^2&\le 12\\
F_4(q)&\text{Coxeter}&\Phi_{12}(q)&\le(q+1)^4&\le 1152\\
E_6(q)&\text{Coxeter}&\Phi_3(q)\Phi_{12}(q)&\le(q+1)^6&\le 51840\\
{}^2E_6(q)&\text{twisted Coxeter}&\Phi_{18}(q)&\le(q+1)^6&\le 51840\\
E_7(q)&\text{Coxeter}&\Phi_2(q)\Phi_{18}(q)&\le(q+1)^7&\le 2903040\\
E_8(q)&\text{Coxeter}&\Phi_{30}(q)&\le(q+1)^8&\le 696729600\\
{}^3D_4(q)&\text{twisted Coxeter}&\Phi_{12}(q)&\le(q+1)^4&\le 192.
\end{array}
\]
\end{table}

Here \(m(T)=|C_W(w\sigma)|\) is the algebraic normalizer factor.  In
Table~\ref{tab:exceptional-tori} we use only the coarse bound
\(|C_W(w\sigma)|\le |W|\).  Passing from the simply connected or adjoint algebraic group to the
finite simple central quotient can only decrease the order of the torus used in
the denominator of Lemma~\ref{lem:regular-torus-bound}; therefore the upper
bounds for \(|T|\) in Table~\ref{tab:exceptional-tori} remain valid for the simple group.

\begin{proposition}\label{prop:exceptional}
The simple groups
\[
G_2(q),\ F_4(q),\ E_6(q),\ {}^2E_6(q),\ E_7(q),\ E_8(q),\ {}^3D_4(q)
\]
satisfy
\[
        \cyc(S)
        \ge 2^{\pi(\Aut(S))+2}.
\]
\end{proposition}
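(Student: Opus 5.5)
I will follow the same template as for the Suzuki and Ree groups: apply Lemma~\ref{lem:regular-torus-bound} to the torus listed in Table~\ref{tab:exceptional-tori}, and compare the result with an upper bound for \(2^{\pi(\Aut(S))+2}\).

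The regularity hypothesis has already been verified in the paragraph preceding Table~\ref{tab:exceptional-tori}. Hence for each family
\[
        \cyc(S)\ge\frac{|S|}{|T|\,m(T)}\ge\frac{|S|}{(q+1)^{r}\,|W|},
\]
where \(r\) is the exponent in the table column bounding \(|T|\), and \(|W|\) is the bound for \(m(T)\) recorded there.

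For the upper bound, write \(|S|=\frac1d q^N\prod_i\Phi_i(q)^{e_i}\) using the order formula \cite[Corollary~24.6]{MalleTesterman2011}. By the description of outer automorphisms in \cite[Section~24.2]{MalleTesterman2011}, \(|\Out(S)|\) divides \(c\,f\), where \(c\le 6\): \(c=3\) for \(E_6\) and \({}^3D_4\), \(c\le 6\) for \({}^2E_6\) and \(E_6\), \(c=2\) for \(E_7\), \(c\le 2\) for \(G_2,F_4\) in characteristic \(2\) or \(3\), and \(c=1\) otherwise. The primes dividing \(|\Aut(S)|\) are therefore among \(p\), the primes of \(c\), the primes of \(f\), and the primes of the polynomial part \(P(q)=|S|_{p'}\cdot d\). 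Using Lemma~\ref{lem:pi-estimates}(d), \(2^{\pi(f)}\le f\le q/2\), together with the crude bound \(2^{\pi(P(q))}\le P(q)\), gives
\[
        2^{\pi(\Aut(S))+2}\le 2^{4}\,c\,f\,P(q)\le 8c\,q\,P(q).
\]
The required inequality then reduces to
\[
        q^{N-1}\ge 8c\,d\,|W|\,(q+1)^r ,
\]
and \(N\) is large in each family: \(6,24,36,36,63,120,12\) respectively.

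I check this family by family, replacing \(q+1\) by at most \(\tfrac32 q\). For \(F_4,E_6,{}^2E_6,E_7,E_8\), the left side already beats the right at \(q=2\). For example, for \(E_8\) we need \(2^{119}\ge 8\cdot696729600\cdot3^8\), and for \(F_4\) we need \(2^{23}\ge16\cdot1152\cdot81\approx1.5\cdot10^6\). For \({}^3D_4\), we need \(q^{11}\ge 24\cdot192\cdot(q+1)^4\), which holds for \(q\ge 3\). At \(q=2\) the cruder estimate may fail; there I will use \(2^{\pi(P)}\) computed directly, since \(|{}^3D_4(2)|=2^{12}\cdot3^4\cdot7^2\cdot13\) has only \(4\) primes, or else appeal to Proposition~\ref{prop:table-checks}.

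The main obstacle is \(G_2(q)\). Here \(N=6\) and \(|W|=12\), so the crude bound \(2^{\pi(P)}\le P\) loses too much. Instead I will use \(P(q)=(q^6-1)(q^2-1)\) with Lemma~\ref{lem:pi-estimates}(c), giving \(2^{\pi(P)}\le 2\sqrt{P}\le 2q^4\). This yields the sufficient condition
\[
        \frac{q^6(q^6-1)(q^2-1)}{12(q+1)^2}\ge 16\cdot2f\cdot2q^4 ,
\]
which holds comfortably for \(q\ge3\). The case \(G_2(2)'\cong\PSU(3,3)\) is covered by Proposition~\ref{prop:linear-unitary}. Any leftover small parameter produced by these sharper estimates is referred to the character-table checks in Proposition~\ref{prop:table-checks}.
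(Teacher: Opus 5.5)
Your argument is the paper's: Lemma~\ref{lem:regular-torus-bound} applied to the tori of Table~\ref{tab:exceptional-tori}, compared with a crude upper bound for \(2^{\pi(\Aut(S))+2}\).

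For \(F_4,E_6,{}^2E_6,E_7,E_8\) the only bookkeeping difference is how you handle the field degree. You absorb it through \(f\le q/2\), paying one power of \(q\). The paper keeps \(64f\) and uses \(f\le\log_2 q\) with a logarithmic-derivative monotonicity argument. Both versions close at \(q=2\). In the worst case \(c=6\), \(d=3\), which occurs for \({}^2E_6(2)\), your condition reads \(2^{35}\approx3.4\cdot10^{10}\ge 8\cdot6\cdot3\cdot51840\cdot3^6\approx5.4\cdot10^9\). Note that \(|\Out(E_6(q))|=2\gcd(3,q-1)f\), so your ``\(c=3\) for \(E_6\)'' is wrong; only \(c\le6\) is correct.

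Your \(G_2\) step is a genuine improvement over the paper. You apply Lemma~\ref{lem:pi-estimates}(c) to \(P(q)=(q^6-1)(q^2-1)\) instead of using \(2^{\pi(P)}\le P\). This makes your inequality hold for every \(q\ge3\); at \(q=3\) it reads \(22113\ge5184\). The paper's cruder condition \(q^6/(12(q+1)^2)\ge64f\) fails for \(q=3,4,5\), so those groups have to be taken from Table~\ref{tab:lie-residual}.

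There is, however, a false claim in your \({}^3D_4\) step. The inequality \(q^{11}\ge 4608(q+1)^4\) does not hold at \(q=3\), since \(3^{11}=177147<4608\cdot4^4=1179648\); it holds only from \(q=4\) on. So \({}^3D_4(3)\) is not covered by your estimate as written. You can fix this in either of two ways:
\begin{itemize}
\item take \({}^3D_4(3)\) from Table~\ref{tab:lie-residual}, as the paper does;
\item reuse your \(G_2\) refinement: with \(2^{\pi(P)}\le2\sqrt P<2q^8\), your upper bound becomes at most \(96\cdot3^8<7\cdot10^5\), while the torus lower bound \(3^{12}P(3)/(192\cdot4^4)\) is about \(4.2\cdot10^8\).
\end{itemize}

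For \({}^3D_4(2)\), your first option (computing \(\pi(P)\) directly) is not justified by the paragraph you cite. That paragraph establishes regularity of the torus generator only for \(q>2\), and at \(q=2\) only for \(F_4,E_6,{}^2E_6,E_7,E_8\). So at \(q=2\) the character-table check is the correct route, unless you separately verify that the elements of order \(13\) in \({}^3D_4(2)\) are regular.
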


\begin{proof}
Using the order formulas \cite[Corollary~24.6]{MalleTesterman2011}, write
\[
        |S|=\frac1d q^N P(q),
\]
where \(q^N\) is the unipotent part and \(P(q)\) is the product of the
non-unipotent cyclotomic factors.  Let \(M\) be the bound for \(m(T)\) recorded
in Table~\ref{tab:exceptional-tori}.  Applying
Lemma~\ref{lem:regular-torus-bound} to the corresponding torus gives
\[
        \cyc(S)\ge \frac{q^NP(q)}{d\,M\,(q+1)^\ell},
\]
where \(\ell\) is the absolute rank.  On the other hand, the same references give
the outer automorphism groups as products of bounded diagonal and graph parts
with the field automorphism group of order \(f\).  Combining this with the order formula gives the conservative estimate
\[
        2^{\pi(\Aut(S))+2}\le 64fP(q).
\]
Here the constant \(64\) absorbs the prime \(p\) and the bounded diagonal and graph factors.
Thus it is enough to check
\[
        \frac{q^N}{d\,M\,(q+1)^\ell}\ge 64f.
\]

The numerical data needed for this last inequality are:
\[
\begin{array}{c|c|c|c|c}
S&N&\ell&M&d\\
\hline
G_2(q)&6&2&12&1\\
F_4(q)&24&4&1152&1\\
E_6(q)&36&6&51840&\le3\\
{}^2E_6(q)&36&6&51840&\le3\\
E_7(q)&63&7&2903040&\le2\\
E_8(q)&120&8&696729600&1\\
{}^3D_4(q)&12&4&192&1.
\end{array}
\]

To control the field degree as \(q\) grows, use \(f\le\log_2q\).
For the exponents in the table, \(N-\ell\ge2\), and
\[
        q\frac{d}{dq}\log\!\left(
        \frac{q^N}{(q+1)^\ell\log_2q}\right)
        =N-\frac{\ell q}{q+1}-\frac1{\log q}
        >N-\ell-\frac1{\log2}>0
        \qquad(q\ge2),
\]
where \(\log\) denotes the natural logarithm.
Thus a comparison with \(64\log_2q\) at an initial parameter suffices for
all larger parameters, using the displayed upper bounds for \(d\).
For \(F_4,E_6,{}^2E_6,E_7,E_8\), this comparison already holds at \(q=2\).  In type \(F_4\),
\[
        \frac{2^{24}}{1152\cdot3^4}>64
\]
and direct substitution of the data in the table gives the same conclusion for
the other four families.

For \(G_2(q)\), it reads
\[
        \frac{q^6}{12(q+1)^2}\ge64f.
\]
At \(q=7\), we have \(f=1\) and \(7^6/(12\cdot8^2)>64\).
For \(q\ge8\), the monotonicity above applies, since
\(8^6/(12\cdot9^2)>64\log_2 8\).
The remaining simple cases \(q=3,4,5\) are
verified in Proposition~\ref{prop:table-checks}; \(G_2(2)\) is not simple and
\(G_2(2)'\cong\PSU(3,3)\), treated in Proposition~\ref{prop:linear-unitary}
\cite{ATLAS}.

For \({}^3D_4(q)\), it reads
\[
        \frac{q^{12}}{192(q+1)^4}\ge64f.
\]
At \(q=4\), we have \(4^{12}/(192\cdot5^4)>64\log_2 4\), so the same
monotonicity proves the inequality for all \(q\ge4\).
The remaining cases \(q=2,3\) are verified in
Proposition~\ref{prop:table-checks}.
\end{proof}

The propositions in this appendix reduce the remaining Lie-type cases to the
finite character-table checks recorded in Proposition~\ref{prop:table-checks}.

\section{The remaining finite cases}\label{sec:remaining-cases}

Only finitely many groups are left outside the uniform estimates: a short list
of small Lie-type groups, the Tits group, and the 26 sporadic groups.  These
cases are treated from character tables.  If the table has
conjugacy classes \(C\), with representative order \(o(C)\), then
Lemma~\ref{lem:cyclic-subgroup-number-in-terms-order} gives the exact value
\[
        \cyc(G)=\sum_C\frac{|C|}{\phi(o(C))}.
\]
The character tables used here are the GAP/CTblLib tables \cite{GAP,CTblLib}.
Their sources include the \textit{Atlas of Finite Groups}
\cite{ATLAS,CTblLib}.  No enumeration of group elements is involved.

For each finite case we compare the value of \(\cyc(G)\) with
\[
        B(G,d):=2^{\pi(|G|d)+2},
\]
where \(d\) is chosen to be divisible by the relevant outer automorphism
factor.  Hence \(\pi(\Aut(G))\le \pi(|G|d)\), and proving
\(\cyc(G)\ge B(G,d)\) proves the desired inequality.  For the Lie-type
residual rows in Table~\ref{tab:lie-residual}, the uniform value \(d=24\) is
larger than needed in some cases; this only makes \(B(G,d)\) larger, and \(24\)
is divisible by all outer-automorphism factors occurring there.  The exact
values needed for those cases are recorded in Table~\ref{tab:lie-residual}.

\begin{table}[ht]
\centering
\caption{Residual Lie-type cases checked from character tables.}\label{tab:lie-residual}
\small
\begin{tabular}{@{}lrrr@{}}
\toprule
\(G\) & \(d\) & \(\cyc(G)\) & \(B(G,d)\) \\
\midrule
\(\Sp(6,2)\) & 24 & 414920 & 64 \\
\(\Omega_7(3)\) & 24 & 931607744 & 128 \\
\(\PSp(6,3)\) & 24 & 765946976 & 128 \\
\(\Omega_8^-(2)\) & 24 & 37244792 & 128 \\
\(\Omega_8^+(2)\) & 24 & 42067832 & 64 \\
\(\POmega_8^+(3)\) & 24 & 863873478392 & 128 \\
\(G_2(3)\) & 24 & 1052690 & 64 \\
\(G_2(4)\) & 24 & 49287032 & 128 \\
\(G_2(5)\) & 24 & 851271782 & 128 \\
\({}^3D_4(2)\) & 24 & 34618040 & 64 \\
\({}^3D_4(3)\) & 24 & 1258595228984 & 128 \\
\({}^2F_4(2)'\) & 2 & 4025192 & 64 \\
\bottomrule
\end{tabular}
\end{table}

A standalone GAP verification script, execution instructions, and the complete
successful reference output are provided in Online Resource~1.  The reference
run used GAP~4.12.1, CTblLib~1.3.4, and TomLib~1.2.9 and verified all
finite-case values in Tables~\ref{tab:solvable-finite-simple-checks},
\ref{tab:lie-residual}, and~\ref{tab:sporadic}.  The output records the runtime
versions, table identifiers, class data, and all comparisons.  The logged
class contributions and subgroup-class sums were also checked independently
using exact arithmetic.  The outer-automorphism orders and divisibility bounds
remain literature inputs; they are not inferred from the character tables.

\begin{proposition}\label{prop:table-checks}
The small Lie-type residual cases cited in Appendix~\ref{sec:lie-type} satisfy
\[
        \cyc(S)\ge 2^{\pi(\Aut(S))+2}.
\]
The Tits group \({}^2F_4(2)'\) also satisfies this inequality.
\end{proposition}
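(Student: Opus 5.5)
The plan is to verify each residual case directly, as set up at the start of Appendix~\ref{sec:remaining-cases}. For every group \(G\) listed in Table~\ref{tab:lie-residual}, I will compute \(\cyc(G)\) exactly from its GAP/CTblLib character table via Lemma~\ref{lem:cyclic-subgroup-number-in-terms-order}, in the form
\[
        \cyc(G)=\sum_C\frac{|C|}{\phi(o(C))},
\]
where \(C\) runs over the conjugacy classes. Only class sizes and representative orders enter, so no element enumeration is needed.

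The first step is to match the residual cases with the table entries and to supply the automorphism input.

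\begin{itemize}
\item The cases left open in Proposition~\ref{prop:BCD} are \(B_3(2)\cong\Sp(6,2)\), \(B_3(3)=\Omega_7(3)\), \(C_3(3)=\PSp(6,3)\), \(D_4^-(2)\), \(D_4^+(2)\), and \(D_4^+(3)\).
\item The cases left open in Proposition~\ref{prop:exceptional} are \(G_2(3)\), \(G_2(4)\), \(G_2(5)\), \({}^3D_4(2)\), and \({}^3D_4(3)\).
\item The Tits group \({}^2F_4(2)'\) is treated separately.
\end{itemize}

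For each group I will cite the outer automorphism order from the Atlas or Malle--Testerman:
\begin{itemize}
\item \(|\Out|\) divides \(24f\) in types \(B,C,D\), including triality;
\item \(|\Out|\) divides \(2f\) for \(G_2\), and \(3f\) for \({}^3D_4\), with \(f\le2\) in every case here;
\item \(|\Out({}^2F_4(2)')|=2\).
\end{itemize}
It then follows that \(d=24\), respectively \(d=2\) for the Tits group, is divisible by \(|\Out(G)|\). Since \(|\Aut(G)|=|G|\,|\Out(G)|\), this gives \(\pi(\Aut(G))\le\pi(|G|d)\), and hence \(2^{\pi(\Aut(G))+2}\le B(G,d)\).

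The second step is the comparison itself. I will compute \(B(G,d)=2^{\pi(|G|d)+2}\) from the factored orders; every value lies between \(64\) and \(128\), since each group has at most five prime divisors once \(2\) and \(3\) are included. I will then check \(\cyc(G)\ge B(G,d)\) entry by entry. The margins are enormous: the smallest computed value is \(414920\), while the largest bound is \(128\). So the comparison can also be justified by hand. The contribution of a single class of elements of large order, say of a regular semisimple element, already exceeds \(128\), and the exact sum only makes this stronger.

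The main obstacle is not the arithmetic but making sure the table lookups are correct. Each CTblLib table has to be the simple group itself and not an extension or cover. For example, I will take \(\Sp(6,2)\) rather than \(2.\Sp(6,2)\), and \(\POmega_8^+(3)\) rather than \(\Omega_8^+(3)\). In addition, \(\Out\) must be taken from the literature rather than inferred from the tables. I will cross-check each identification by comparing the table's group order with the order formula of Malle--Testerman, and I will rely on the script in Online Resource~1 for the exact class sums.
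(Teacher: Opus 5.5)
Your proposal is correct and is essentially the paper's argument. The paper likewise computes \(\cyc(G)\) exactly from the CTblLib class data via \(\sum_C |C|/\phi(o(C))\), takes \(d=24\) for the Lie-type rows and \(d=2\) for \({}^2F_4(2)'\) so that \(d\) is divisible by \(|\Out(G)|\), and concludes \(\cyc(G)\ge B(G,d)\ge 2^{\pi(\Aut(G))+2}\) row by row from Table~\ref{tab:lie-residual}; your observation that a single large conjugacy class already exceeds \(128\) is a valid hand check but is not needed.
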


\begin{proof}
The required cases are exactly the rows in Table~\ref{tab:lie-residual}.  In
every row the value of \(\cyc(G)\) recorded in the table is larger than
\(B(G,d)\), and \(d\) is divisible by the relevant outer automorphism factor.
Hence
\[
        \cyc(G)\ge B(G,d)\ge 2^{\pi(\Aut(G))+2}.
\]
\end{proof}

\subsection{Sporadic groups}

The remaining finite simple groups are the 26 sporadic groups.  For them, the
\textit{Atlas} records \(|\Out(S)|\in\{1,2\}\) \cite{ATLAS}.  Since all sporadic
groups have even order, passing from \(|S|\) to \(|\Aut(S)|\) introduces no new
prime divisors.  Thus the required bound is \(2^{\pi(S)+2}\).

Table~\ref{tab:sporadic} lists all 26 sporadic groups, grouped by the required
bound.  The last column gives either the exact minimum among the listed groups
in that row, for the first four rows, or a common lower bound, for the last
five rows.  The complete reference output in Online Resource~1 gives the exact
value for each individual group and confirms all nine row bounds.

\begin{table}[ht]
\centering
\caption{Sporadic groups grouped by the required bound.}\label{tab:sporadic}
\small
\begin{tabular}{@{}>{\raggedright\arraybackslash}p{0.53\textwidth}rr@{}}
\toprule
Groups \(S\) & \(2^{\pi(\Aut(S))+2}\) & lower bound for \(\cyc(S)\) \\
\midrule
\(M_{11}, M_{12}, J_2\) & 64 & 2576 \\
\(M_{22}, J_3, HS, McL, He\) & 128 & 111806 \\
\(M_{23}, M_{24}, J_1, Ru, Suz, Co_2, Co_3, Fi_{22}, HN\) & 256 & 40966 \\
\(ON, Co_1, Th\) & 512 & 60030047156 \\
\(Fi_{23}, Ly\) & 1024 & \(>10^{15}\) \\
\(Fi_{24}'\) & 2048 & \(>10^{23}\) \\
\(J_4\) & 4096 & \(>10^{18}\) \\
\(B\) & 8192 & \(>10^{32}\) \\
\(M\) & 131072 & \(>10^{49}\) \\
\bottomrule
\end{tabular}
\end{table}

In every row of Table~\ref{tab:sporadic}, the lower bound for \(\cyc(S)\)
exceeds the required bound.  Hence every sporadic simple group \(S\) satisfies
\[
        \cyc(S)\ge 2^{\pi(\Aut(S))+2}.
\]

\end{document}